\let\oldtocsection=\tocsection
\let\oldtocsubsection=\tocsubsection
\renewcommand{\tocsection}[2]{\hspace{0em}\oldtocsection{#1}{#2}}
\renewcommand{\tocsubsection}[2]{\hspace{2em}\oldtocsubsection{#1}{#2}}
\newtheorem{theorem}{Theorem}[section]
\newtheorem{proposition}[theorem]{Proposition}
\newtheorem{corollary}[theorem]{Corollary}
\newtheorem{lemma}[theorem]{Lemma}
\theoremstyle{definition}
\newtheorem{definition}[theorem]{Definition}
\newtheorem{example}[theorem]{Example}
\numberwithin{equation}{section}
\theoremstyle{remark}
\newtheorem{remark}[theorem]{Remark}
\DeclareMathOperator*{\colim}{colim}
\DeclareMathOperator{\coker}{coker}
\newcommand{\Dir}{\mathsf{Dir}^a}
\newcommand{\Dirk}{\mathsf{Dir}^a_{\kappa}}
\newcommand{\Invk}{\mathsf{Inv}^a_\kappa}
\newcommand{\elTate}{\mathsf{Tate}^{el}}
\newcommand{\elTatek}{\mathsf{Tate}^{el}_{\kappa}}
\newcommand{\elTatec}{\mathsf{Tate}^{el}_{\aleph_0}}
\newcommand{\nelTate}{\mathsf{n}\text{-}\mathsf{Tate}^{el}}
\newcommand{\nelTatek}{\mathsf{n}\text{-}\mathsf{Tate}^{el}_\kappa}
\newcommand{\nelTatec}{\mathsf{n}\text{-}\mathsf{Tate}^{el}_{\aleph_0}}
\newcommand{\Tate}{\mathsf{Tate}}
\newcommand{\Tatek}{\mathsf{Tate}_{\kappa}}
\newcommand{\Tatec}{\mathsf{Tate}_{\aleph_0}}
\newcommand{\nTate}{\mathsf{n}\text{-}\mathsf{Tate}}
\newcommand{\nTatek}{\mathsf{n}\text{-}\mathsf{Tate}_\kappa}
\newcommand{\nTatec}{\mathsf{n}\text{-}\mathsf{Tate}_{\aleph_0}}
\newcommand{\elDrate}{\mathsf{Tate}^{Dr,el}}
\newcommand{\Drate}{\mathsf{Tate}^{Dr}}
\newcommand{\Dratec}{\mathsf{Tate}_{\aleph_0}^{Dr}}
\newcommand{\Ind}{\mathsf{Ind}^a}
\newcommand{\Indk}{\mathsf{Ind}^a_{\kappa}}
\newcommand{\Indc}{\mathsf{Ind}^a_{\aleph_0}}
\newcommand{\FM}{\mathcal{FM}}
\newcommand{\FMk}{\mathcal{FM}_\kappa}
\newcommand{\Pro}{\mathsf{Pro}^a}
\newcommand{\Prok}{\mathsf{Pro}^a_{\kappa}}
\newcommand{\Proc}{\mathsf{Pro}^a_{\aleph_0}}
\newcommand{\ic}{ic}
\newcommand{\Mod}{\mathsf{Mod}}
\newcommand{\lex}{\mathsf{Lex}}
\newcommand{\ab}{\mathsf{Ab}}
\newcommand{\Gr}{Gr}
\newcommand{\Vect}{\mathsf{Vect}}
\newcommand{\Frac}{Frac}
\newcommand{\Calk}{\mathsf{Calk}}
\newcommand{\Calkk}{\mathsf{Calk}_\kappa}
\newcommand{\Calkc}{\mathsf{Calk}_{\aleph_0}}
\newcommand{\End}{\mathsf{End}}
\newcommand{\Mat}{\mathsf{Mat}}
\newcommand{\Fun}{\mathsf{Fun}}
\newcommand{\into}{\hookrightarrow}
\newcommand{\onto}{\twoheadrightarrow}
\newcommand{\op}{op}
\newcommand{\Ab}{\mathbb{A}}
\DeclareMathOperator{\QCoh}{QCoh}
\DeclareMathOperator{\Coh}{Coh}
\newcommand{\Cc}{\mathcal{C}}
\newcommand{\Dc}{\mathcal{D}}
\newcommand{\Ec}{\mathcal{E}}
\newcommand{\Fc}{\mathcal{F}}
\newcommand{\Oc}{\mathcal{O}}
\newcommand{\Nb}{\mathbb{N}}
\DeclareMathOperator{\Spec}{Spec}
\begin{document}
\title{Tate Objects in Exact Categories}
\author{Oliver Braunling, Michael Groechenig, and Jesse Wolfson}
\dedicatory{\rm With an appendix by \textsc{Jan \v S\v tov\'\i\v cek} and \textsc{Jan Trlifaj}}

\address{Department of Mathematics, Universit\"{a}t Duisberg-Essen}
\email{oliver.braeunling@uni.due.de}
\address{Department of Mathematics, Imperial College London}
\email{m.groechenig@imperial.ac.uk}
\address{Department of Mathematics, Northwestern University}
\email{wolfson@math.northwestern.edu}

\address{Department of Algebra, Faculty of Mathematics and Physics, Charles University\\
Sokolovsk\'{a} 83, 186 75 Praha~8, Czech Republic}
\email{stovicek@karlin.mff.cuni.cz}

\address{Department of Algebra, Faculty of Mathematics and Physics, Charles University\\
Sokolovsk\'{a} 83, 186 75 Praha~8, Czech Republic}
\email{trlifaj@karlin.mff.cuni.cz}

\begin{abstract}
    We study elementary Tate objects in an exact category. We characterize the category of elementary Tate objects as the smallest sub-category of admissible Ind-Pro objects which contains the categories of admissible Ind-objects and admissible Pro-objects, and which is closed under extensions.  We compare Beilinson's approach to Tate modules to Drinfeld's. We establish several properties of the Sato Grassmannian of an elementary Tate object in an idempotent complete exact category (e.g. it is a directed poset). We conclude with a brief treatment of $n$-Tate modules and $n$-dimensional ad\`{e}les.

    An appendix due to J. \v S\v tov\'\i\v cek and J. Trlifaj identifies the category of flat Mittag-Leffler modules with the idempotent completion of the category of admissible Ind-objects in the category of finitely generated projective modules.
\end{abstract}

\thanks{O.B.\ was supported by DFG SFB/TR 45 ``Periods, moduli spaces and arithmetic of algebraic varieties'' and Alexander von Humboldt Foundation. J.W.\ was partially supported by an NSF Graduate Research Fellowship under Grant No.\ DGE-0824162, and by an NSF Research Training Group in the Mathematical Sciences under Grant No.\ DMS-0636646. He was a guest of Kyoji Saito at IPMU while this paper was being completed. J.\v{S} and J.T. were supported by GA\v{C}R P201/12/G028.}

\keywords{Drinfeld bundle, local compactness, Tate extension, categorical Sato Grassmannian, higher ad\`{e}les.}
\subjclass[2010]{18E10 (Primary), 11R56, 13C60 (Secondary)}

\maketitle
\tableofcontents

\section{Introduction}
In the article \emph{Residues of Differentials on Curves} \cite{Tat:68} J.\ Tate developed a new understanding of the classical theory of residues. Given a differential $fdg$ on a curve $X$ defined over a field $k$, the residue at a point $x$ can be defined as the trace of a suitable operator assigned to $f$ and $g$ acting on the infinite-dimensional vector space $\widehat{F}_x \cong k_x((t))$. This approach to residues immediately implies the independence of local coordinates, and it allowed Tate to give an intrinsic proof of the \emph{sum-of-residues theorem}. The work of Parshin \cite{Par:76}, Arbarello--de Concini--Kac \cite{AdK:87} and Beilinson \cite{Bei:80} brought Tate's techniques to new heights.

A conceptual approach to the infinite-dimensional vector space $k((t))$ is provided by Lefschetz's theory of \emph{locally linearly compact} vector spaces \cite[Chapter II.6]{Lef:42}. A topological vector space $U$ over a discrete field $k$ is said to be \emph{discrete} if it has the discrete topology. The \emph{topological dual} $U^{\vee}$ of a discrete vector space is called a \emph{linearly compact} vector space. A \emph{locally linearly compact} vector space $W$ can be written as an extension
$$0 \to U^{\vee} \to W \to V \to 0$$
of a discrete vector space $V$ by a \emph{linearly compact} vector space $U^{\vee}$.

For the example of formal Laurent series one endows $k((t))$ with the finest linear topology such that $t^n \rightarrow 0$ for $n \rightarrow \infty$. The aforementioned extension is induced by the direct sum decomposition $k((t)) = k[[t]] \oplus k((t))/k[[t]]$.

While the theory of locally linearly compact vector spaces may be sufficient for the purpose of (equal characteristic) algebraic geometry, arithmetic considerations necessitate an analogous treatment of \emph{locally linearly compact} abelian groups. Indeed, the short exact sequence
$$0 \to \mathbb{Z}_p \to \mathbb{Q}_p \to \mathbb{Q}_p/\mathbb{Z}_p \to 0$$
certainly realizes the $p$-adic numbers $\mathbb{Q}_p$ as an extension of the \emph{discrete} abelian group $\mathbb{Q}_p/\mathbb{Z}_p$ by the \emph{compact} abelian group $\mathbb{Z}_p$.

Moreover, Parshin and Beilinson's theory of ad\`eles (\cite{Par:76},\cite{Bei:80}) and Drinfeld's theory of infinite-dimensional vector bundles \cite{Dri:06} suggest that the constructions above should be \emph{iterated} and \emph{studied in families}.

For iteration, there are many examples of the objects we wish to describe, for instance, the vector space $k((t_1))\cdots((t_n))$. The problem is to find the correct notion of morphisms between these objects, i.e. to formulate an appropriate categorical framework. While a ``semi-topological'' approach suffices for applications to residues, as in \cite{Yek:92}, it has been known for some time (e.g. \cite{Kat:00}), that the notion of topology is insufficient to fully describe ad\`{e}les and higher local fields above dimension 2. One solution, proposed by Beilinson \cite{Bei:87} and, more recently, Kato \cite{Kat:00}, is to recast locally linear compact objects in terms of formal limits and colimits. This allows one to define a notion of \emph{Tate objects} in an arbitrary category, and, by recursion, to consider $n$-Tate objects.

In this article, we develop the properties of Ind, Pro and Tate objects in arbitrary exact categories. If one views an exact category $\Cc$ as a non-commutative analogue of the category of finite dimensional vector bundles on a scheme, then one can think of Ind, Pro and Tate objects in $\Cc$ as modeling families of discrete, linearly compact, and locally linearly compact objects respectively.

\subsubsection*{Ind-Objects}
We begin with the category $\Ind(\Cc)$, whose objects are certain formal colimits in $\Cc$ (Definition \ref{defi:ind}). We establish its main properties in Section \ref{sec:ind}, and then introduce two related categories: the category $\FM(\Cc)$ whose objects are direct summands of objects in $\Ind(\Cc)$, and a full sub-category $P(\Cc)\subset\FM(\Cc)$.

One can understand these categories as generalizations of familiar categories of modules. Denote by $P_f(R)$ the category of finitely generated projective (left) modules over a ring, by $P(R)$ the category of all projective (left) modules, and by $\FM(R)$ the category of flat Mittag-Leffler (left) modules.
\begin{theorem}[\ref{cor:projRgeneral}, \ref{p:more}]
    Let $R$ be a ring.
    \begin{enumerate}
        \item The categories $P(R)$ and $P(P_f(R))$ are equivalent.
        \item (\v S\v tov\'\i\v cek, Trlifaj) The categories $\FM(R)$ and $\FM(P_f(R))$ are equivalent.
    \end{enumerate}
\end{theorem}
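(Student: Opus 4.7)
The plan is to derive both parts from two inputs. The hard input, yielding part (2), is the Štovíček--Trlifaj theorem proved in the appendix. The soft input, yielding part (1) once (2) is available, is Kaplansky's classical theorem that every projective module is a direct summand of a free module.

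For part (2), the natural starting point is the realization functor
\[
    \Phi : \Ind(P_f(R)) \longrightarrow \Mod(R), \qquad (X_i)_{i\in I} \longmapsto \colim_{i\in I} X_i.
\]
Its image is contained in $\FM(R)$: the admissible transition maps between finitely generated projectives are pure monomorphisms, so the resulting directed union is a flat Mittag--Leffler module. Because $\FM(R) \subset \Mod(R)$ is closed under direct summands, $\Phi$ extends to the idempotent completion, producing
\[
    \tilde\Phi : \FM(P_f(R)) = \Ind(P_f(R))^{\ic} \longrightarrow \FM(R).
\]
The appendix then supplies the essential surjectivity and full faithfulness of $\tilde\Phi$. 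This is part (2).

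For part (1), I would argue that $\tilde\Phi$ restricts to an equivalence between the sub-categories $P(P_f(R))$ and $P(R)$. The category $P(\Cc) \subset \FM(\Cc)$ should be the full sub-category of direct summands of ``free'' Ind-objects, i.e.\ of objects of the form $\colim_{J\subset I \text{ finite}} \bigoplus_{j\in J} X_j$ for a set $I$ and $X_j \in \Cc$. For $\Cc = P_f(R)$, such free Ind-objects go under $\tilde\Phi$ to honest free $R$-modules $R^{(I)}$, so their direct summands map to projective $R$-modules. Conversely, by Kaplansky every projective $R$-module is a direct summand of a free module, so lies in the essential image. Full faithfulness on $P(P_f(R))$ is inherited from part (2).

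\textbf{Main obstacle.} The essential work is in part (2), i.e., in the appendix, which rests on Mittag--Leffler techniques for realizing every flat Mittag--Leffler module as the colimit of an admissible Ind-system of finitely generated projectives. Granted this, part (1) is essentially formal. The only delicate point for (1) is matching the formal definition of $P(\Cc)$ used in the body of the paper with Kaplansky's characterization of projective modules as retracts of free modules --- a bookkeeping step that I expect to be routine but worth checking explicitly.
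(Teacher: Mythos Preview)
Your proposal is correct, but the logical dependence you set up between the two parts differs from the paper's, and there is a small misattribution worth flagging.

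For part~(2) you defer to the appendix, which is exactly what the paper does; there is nothing to add.

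For part~(1), however, the paper does \emph{not} deduce it from part~(2). It proves Corollary~\ref{cor:projRgeneral} directly, well before the appendix, using the genuine Kaplansky theorem (every projective module is a direct sum of \emph{countably generated} projectives) together with Proposition~\ref{prop:pc} identifying $P_{\aleph_0}(R)$ with $\Indc(P_f(R))^{\ic}$. Your route is in fact more elementary: you only use that a projective module is a retract of a free module, which is essentially the definition and not a theorem of Kaplansky. Since any free module $R^{(I)}$ is visibly a direct sum of objects of $P_f(R)$, it lies in $\Ind(P_f(R))$, and hence its retracts lie in $P(P_f(R))$. This gives essential surjectivity without invoking Kaplansky's decomposition theorem at all. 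Note also that you do not need part~(2) for full faithfulness: the functor $\Ind(P_f(R))\to\lex(P_f(R))\simeq\Mod(R)$ is fully faithful by construction (Lemma~\ref{lemma:lexR}), and this passes to the idempotent completion since $\Mod(R)$ is already idempotent complete. So your argument for~(1) is self-contained and shorter than the paper's; the only correction is to drop the attribution to Kaplansky and the claimed dependence on~(2).
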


\subsubsection*{Pro-Objects}
We now turn to the category $\Pro(\Cc)$, whose objects are certain formal limits in $\Cc$. The properties of $\Pro(\Cc)$ follow by formal duality from those of $\Ind(\Cc)$, and we summarize them in Theorem \ref{thm:prox}. We also introduce two related categories: $\FM^\vee(\Cc)$ and a full sub-category $P^\vee(\Cc)\subset\FM^\vee(\Cc)$.

Denote by $P^\vee(R^\circ)$ the category of topological duals of discrete projective (right) $R$-modules, and by $\FM(R^\circ)$ the category of discrete flat Mittag-Leffler (right) modules.
\begin{theorem}[\ref{cor:projdualinproic}, \ref{prop:inddualpro}]
    Let $R$ be a ring.
    \begin{enumerate}
        \item The categories $P^\vee(R^\circ)$ and $P^\vee(P_f(R))$ are equivalent.
        \item The duality $\hom(-,R)\colon P_f(R)^{\op}\to^\simeq P_f(R)$ extends to a pair of exact duality equivalences
            \begin{equation*}
                \hom(-,R)\colon\FM(R^\circ)^{\op}\to^\simeq\FM^\vee(P_f(R))\colon\hom(-,R).
            \end{equation*}
    \end{enumerate}
\end{theorem}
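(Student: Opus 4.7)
The strategy for both parts is to deduce them from the corresponding statements of the earlier theorem by applying the duality $\hom(-,R)$, which exchanges finitely generated projective left and right $R$-modules, converts direct sums into direct products, and thereby intertwines filtered colimits with cofiltered limits, and $\Ind$ with $\Pro$.

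For Part (1), I would apply Part (1) of the earlier theorem to $R^{\circ}$ to obtain $P(R^{\circ})\simeq P(P_f(R^{\circ}))$. Topological dualization sends the left-hand side, by definition, to $P^\vee(R^{\circ})$. On the right-hand side, the classical duality $\hom(-,R)\colon P_f(R)^{\op}\simeq P_f(R^{\circ})$ together with the formal duality $\Ind(\Cc^{\op})\simeq\Pro(\Cc)^{\op}$ (compatible with passage to direct summands) identifies $P(P_f(R^{\circ}))$ with $P^\vee(P_f(R))$, yielding the desired equivalence.

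For Part (2), the first task is to check that $\hom(-,R)$ sends $\FM(R^{\circ})$ into $\FM^\vee(P_f(R))$. A discrete flat Mittag-Leffler right $R$-module $M$ is, up to direct summand, a filtered colimit $\colim_i M_i$ of finitely generated projective right modules with split monomorphic transition maps; dualizing gives $\hom(M,R)$ as a summand of the cofiltered limit $\lim_i\hom(M_i,R)$ along split epimorphisms of finitely generated projective left modules, which is exactly an object of $\FM^\vee(P_f(R))$. The reverse direction is analogous. Bi-duality $\hom(\hom(M,R),R)\simeq M$ then follows from reflexivity of each $M_i\in P_f(R^{\circ})$ together with the compatibility of $\hom(-,R)$ with the filtered colimits and cofiltered limits in question, passed to summands. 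Exactness of both functors follows since admissible short exact sequences pull back to levelwise short exact sequences of finitely generated projectives, on which $\hom(-,R)$ is exact; this is then combined with the exactness of filtered colimits and of the Mittag-Leffler-type cofiltered limits that arise.

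The main obstacle will be verifying that $\hom(-,R)$ intertwines the idempotent completions implicit in the definitions of $\FM$ and $\FM^\vee$; this is where the ``flat Mittag-Leffler equals direct summand of an admissible $\Ind$ of $P_f$'' type identification (for both $R^{\circ}$-modules and for objects of $\FM^\vee(P_f(R))$) has to be invoked carefully, to ensure that splittings on one side correspond under duality to splittings on the other. Once the admissible $\Ind$/$\Pro$ cores are controlled, this reduces to the categorical fact that a contravariant equivalence extends uniquely to idempotent completions.
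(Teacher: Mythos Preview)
Your plan for Part~(2) is correct in spirit but considerably more laborious than necessary. The paper's argument is a two-line formality: since $\Pro(\Dc):=\Ind(\Dc^{\op})^{\op}$ by definition, the equivalence $\hom(-,R)\colon P_f(R^\circ)^{\op}\to P_f(R)$ immediately induces $\Ind(P_f(R^\circ))^{\op}\simeq\Pro(P_f(R))$ by the general functoriality result (Proposition~\ref{prop:FtoIndF} and its Pro-analogue), and idempotent completion gives $\FM(P_f(R^\circ))^{\op}\simeq\FM^\vee(P_f(R))$. One then invokes the \v{S}\v{t}ov\'{\i}\v{c}ek--Trlifaj identification $\FM(R^\circ)\simeq\FM(P_f(R^\circ))$. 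There is no need to verify bi-duality, exactness, or compatibility with splittings by hand; all of this is packaged into the formal statement that an exact equivalence $\Cc^{\op}\to\Dc$ extends to $\FM(\Cc)^{\op}\to\FM^\vee(\Dc)$.

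For Part~(1) there is a genuine gap. You write that ``topological dualization sends the left-hand side, by definition, to $P^\vee(R^\circ)$,'' treating $(-)^\vee\colon P(R^\circ)^{\op}\to P^\vee(R^\circ)$ as an equivalence by fiat. But $P^\vee(R^\circ)$ is defined as a full subcategory of $\Mod(R)_{top}$, so you must show that every continuous $R$-linear map between topological duals of projectives arises as the dual of a module map. This is not formal: it requires the argument of Proposition~\ref{prop:tprointop}, namely that any continuous map from a product $\prod_I M_i$ (with $M_i\in P_f(R)$) to a discrete finitely generated projective module factors through a finite sub-product, using the definition of the product topology. Without this, your formal chain of equivalences produces an abstract equivalence $P(R^\circ)^{\op}\simeq P^\vee(P_f(R))$, but does not identify the target with the topologically defined $P^\vee(R^\circ)$.
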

The duality between $\FM(R^\circ)$ and $\FM^\vee(P_f(R))$ stands in contrast to the pathologies of topological duality for flat Mittag-Leffler modules, such as the failure to preserve exact sequences. The categorical approach avoids these, and it ensures that the properties of $\FM^\vee(\Cc)$ follow formally from the properties we establish for $\FM(\Cc)$.

\subsubsection*{Tate Objects}
The category $\elTate(\Cc)$ of \emph{elementary Tate objects} was introduced by Beilinson \cite{Bei:87} following Tate's treatment of residues. While the following characterization of $\elTate(\Cc)$ should not be surprising to experts, we are unable to find a previous instance of it in print.
\begin{theorem}[\ref{thm:eltatechar}]
    Let $\Cc$ be an exact category. The category $\elTate(\Cc)$ is the smallest full sub-category of $\Ind(\Pro(\Cc))$ which
    \begin{enumerate}
        \item contains the sub-category $\Ind(\Cc)\subset\Ind(\Pro(\Cc))$,
        \item contains the sub-category $\Pro(\Cc)\subset\Ind(\Pro(\Cc))$, and
        \item is closed under extensions.
    \end{enumerate}
\end{theorem}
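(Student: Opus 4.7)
The plan is to prove two inclusions. The inclusion $\elTate(\Cc) \subseteq \mathcal{D}$ for any full subcategory $\mathcal{D} \subseteq \Ind(\Pro(\Cc))$ satisfying (1)--(3) is immediate from the definition of an elementary Tate object: every $V \in \elTate(\Cc)$ admits a lattice $L \subset V$, giving an admissible extension $0 \to L \to V \to V/L \to 0$ with $L \in \Pro(\Cc) \subseteq \mathcal{D}$ and $V/L \in \Ind(\Cc) \subseteq \mathcal{D}$, and extension-closure places $V$ in $\mathcal{D}$.

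For the reverse inclusion I verify that $\elTate(\Cc)$ itself satisfies (1)--(3). Conditions (1) and (2) are tautological: $I \in \Ind(\Cc)$ has lattice $0$, and $P \in \Pro(\Cc)$ has lattice $P$. For (3), given an admissible sequence $0 \to V' \to V \to V'' \to 0$ with $V'$ and $V''$ elementary Tate, I would choose lattices $L' \subset V'$ and $L'' \subset V''$. Form $V/L'$, which sits in $0 \to V'/L' \to V/L' \to V'' \to 0$, and let $M \subset V/L'$ be the preimage of $L''$. Then $M$ fits in $0 \to V'/L' \to M \to L'' \to 0$, an extension of a pro- by an ind-object. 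Granted that every such $M$ is elementary Tate, with some lattice $L_M$, the same $L_M$ is also a lattice for $V/L'$: $(V/L')/L_M$ is an extension of the Ind-objects $V''/L''$ and $M/L_M$, and $\Ind(\Cc)$ is extension-closed in $\Ind(\Pro(\Cc))$ (established earlier in the paper). The preimage $N \subset V$ of $L_M$ then fits in $0 \to L' \to N \to L_M \to 0$ and lies in $\Pro(\Cc)$ by extension-closure of $\Pro(\Cc)$, while $V/N = (V/L')/L_M \in \Ind(\Cc)$, exhibiting $N$ as a lattice of $V$.

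Everything therefore rests on the base case: every admissible extension $0 \to I \to M \to P \to 0$ with $I \in \Ind(\Cc)$ and $P \in \Pro(\Cc)$ is elementary Tate. My plan is to construct an admissible Pro-subobject $L_0 \hookrightarrow M$ with $L_0 \in \Pro(\Cc)$ such that the composite $L_0 \hookrightarrow M \twoheadrightarrow P$ is an admissible epic. Once obtained, $L_0 \cap I$ is an admissible subobject of both $L_0 \in \Pro(\Cc)$ and $I \in \Ind(\Cc)$, hence lies in $\Ind(\Cc) \cap \Pro(\Cc) \subseteq \Cc^{\ic}$; therefore $M/L_0 \cong I/(L_0 \cap I) \in \Ind(\Cc)$, and $L_0$ is the desired lattice. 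To produce $L_0$ I would combine the admissible Ind-Pro presentation $M = \colim_k L_k$ (with $L_k \in \Pro(\Cc)$) with a Pro-presentation $P = \lim_j X_j$ (with $X_j \in \Cc$). Since each composite $M \twoheadrightarrow P \twoheadrightarrow X_j$ is admissibly epic and $X_j \in \Cc$, one may choose for each $j$ an index $k(j)$ so that $L_{k(j)} \to X_j$ is already admissibly epic, then assemble these into a single Pro-subobject by passing to a cofinal refinement of $\{L_k\}$ compatible with the pro-diagram of $P$.

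The principal obstacle is precisely this last step. No individual $L_k$ need project surjectively onto $P$, so the Ind-direction of $M$ and the Pro-direction of $P$ must be reconciled simultaneously. Carrying it out will require careful bookkeeping with the admissibility of both presentations and repeated invocation of the closure properties of $\Ind(\Cc)$ and $\Pro(\Cc)$ inside $\Ind(\Pro(\Cc))$ established earlier in the paper.
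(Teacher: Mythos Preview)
Your overall route is genuinely different from the paper's, and most of it is sound. The paper never reduces to your ``base case'' $0\to I\to M\to P\to 0$; instead it straightens the entire exact sequence $\widehat{X}\into F\onto\widehat{Z}$ (via Lemma~\ref{lemma:indleftspecial} and the proof of Theorem~\ref{thm:indleftspecial}) to an exact sequence of elementary Tate \emph{diagrams} over a common index set $I$, and then applies the $(3\times 3)$-Lemma: for each $i\le j$ one gets $X_j/X_i\into F_j/F_i\onto Z_j/Z_i$ in $\Prok(\Cc)$ with the outer terms in $\Cc$, so $F_j/F_i\in\Cc$ by extension-closure. This handles the general extension in one stroke and never isolates the awkward ``Ind-by-Pro'' case. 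Your reduction, by contrast, is clean and correct, and your use of Noether-type arguments and extension-closure of $\Ind(\Cc)$ and $\Pro(\Cc)$ inside $\Ind(\Pro(\Cc))$ is exactly right (these are established in the paper independently of the theorem you are proving).

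The gap is precisely where you flag it, but your proposed fix is on the wrong track. In the base case you want an admissible monic $L_0\into M$ with $L_0\in\Pro(\Cc)$ and $L_0\to P$ admissibly epic. Your plan---pick indices $k(j)$ so that $L_{k(j)}\onto X_j$ and then glue---does not work: maps \emph{out of} a colimit do not factor through a single stage, so there is no reason any $L_k\to X_j$ is epic, let alone admissibly epic, and the assembly step has no mechanism to force compatibility. The construction you are reaching for is exactly what left s-filtering provides. Since $\Pro(\Cc)$ is left s-filtering in $\Ind(\Pro(\Cc))$ (this is Proposition~\ref{prop:cleftsfiltinind} applied one level up), B\"uhler's Proposition~\ref{prop:buh} gives, for the admissible epic $M\onto P$, an admissible monic $L_0\into M$ with $L_0\in\Pro(\Cc)$ and $L_0\onto P$ admissibly epic. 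From there your argument runs: $L_0\cap I=\ker(L_0\onto P)$ is an admissible sub-object of both $L_0$ and $I$, hence lies in $\Pro(\Cc)\cap\Ind(\Cc)=\Cc$ by Propositions~\ref{prop:indintate}(1) and~\ref{prop:proandindintate}; the bicartesian square with parallel monics $L_0\cap I\into L_0$ and $I\into M$ gives $M/L_0\cong I/(L_0\cap I)\in\Ind(\Cc)$, so $L_0$ is the desired lattice. Replace your cofinal-refinement paragraph with this one-line appeal and the proof is complete.
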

Theorem \ref{thm:eltatechar} provides a basic tool for producing Tate objects in practice, and forms the basis for our approach to Beilinson--Parshin ad\`{e}les (Section \ref{sec:bp}). It also indicates how the theory of Tate objects should be generalized to homological settings. While we do not pursue this here, this generalization allows for applications to the study of perfect complexes and the algebraic $K$-theory of schemes.\footnote{D. Clausen has also informed us that he uses this generalization in on-going work on Artin reciprocity.}

We now introduce the category $\Tate(\Cc)$, whose objects are direct summands of elementary Tate objects. Drinfeld \cite{Dri:06} recently proposed a full sub-category $\Drate(R)$ of the category of topological $R$-modules as a category of Tate modules.\footnote{A module in $\Drate(R)$ is a topological direct summand of a module $P\oplus Q^\vee$, where $P$ and $Q$ are discrete projective modules and $Q^\vee$ is the topological dual.}${}^,~$\footnote{As an unfortunate consequence of Tate's mathematical creativity, there are also several other and very different notions of ``Tate module''.}

\begin{theorem}[\ref{thm:drincomp}]
    Let $R$ be a ring. There exists a fully faithful embedding
    \begin{equation*}
        \Drate(R)\into \Tate(P_f(R)).
    \end{equation*}
\end{theorem}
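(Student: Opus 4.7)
The plan is to define the embedding in two stages: first on modules of the form $P \oplus Q^\vee$ with $P, Q$ discrete projective, and then extend to arbitrary direct summands via idempotent completeness of $\Tate(P_f(R))$.

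\textbf{Stage 1: Defining the functor on standard modules.} Given a discrete projective $P \in P(R)$, Theorem \ref{cor:projRgeneral} identifies $P$ with an object of $P(P_f(R)) \subseteq \Ind(P_f(R)) \subseteq \Ind(\Pro(P_f(R)))$. Dually, given $Q$ discrete projective, Theorem \ref{cor:projdualinproic} identifies $Q^\vee$ with an object of $P^\vee(P_f(R)) \subseteq \Pro(P_f(R)) \subseteq \Ind(\Pro(P_f(R)))$. The direct sum $P \oplus Q^\vee$ sits in a split short exact sequence $0 \to Q^\vee \to P \oplus Q^\vee \to P \to 0$ in $\Ind(\Pro(P_f(R)))$, so by Theorem \ref{thm:eltatechar} it lies in $\elTate(P_f(R)) \subseteq \Tate(P_f(R))$.

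\textbf{Stage 2: Matching Hom-sets on standard modules.} Writing $P = \colim_i P_i$ and $Q^\vee = \lim_j Q_j^\vee$ with $P_i, Q_j \in P_f(R)$, I decompose a morphism $P \oplus Q^\vee \to P' \oplus Q'^\vee$ into its four matrix components. The diagonal entries $\hom(P,P')$ and $\hom(Q^\vee, Q'^\vee)$ match on both sides by the cited theorems. The Ind-Pro Hom formula rewrites $\hom_{\Ind(\Pro(P_f(R)))}(P, Q'^\vee)$ as $\lim_i \hom_R(P_i, Q'^\vee) = \hom_R(P, Q'^\vee)$, which equals the continuous-Hom side since $P$ is discrete. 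The remaining component $\hom_{\Ind(\Pro(P_f(R)))}(Q^\vee, P')$ unfolds via the Ind-Pro Hom formula to $\colim_{i,j} \hom_R(Q_j^\vee, P_i')$; this matches the Drinfeld side because a continuous map from the linearly compact $Q^\vee$ to the discrete $P'$ must kill a basic open neighborhood of $0$, hence factors through some finite-rank quotient $Q_j^\vee$ and then through some finitely generated summand $P_i' \subseteq P'$.

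\textbf{Stage 3: Passing to summands.} An arbitrary $M \in \Drate(R)$ is cut out by a continuous idempotent $e$ on some standard $P \oplus Q^\vee$. Since $\Tate(P_f(R))$ is idempotent complete by construction, the image of $e$ under the functor of Stage 2 splits in $\Tate(P_f(R))$, and I send $M$ to this image. Full faithfulness then extends from standard modules to all of $\Drate(R)$ by the universal property of idempotent completion (applied on both sides, noting that $\Drate(R)$ is itself defined as a category of summands).

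\textbf{Main obstacle.} The technical crux is verifying that the topology Drinfeld places on $Q^\vee$---whose basic open neighborhoods of $0$ are annihilators of finitely generated submodules of $Q$---corresponds precisely to the Pro-system of finite-rank duals appearing in Theorem \ref{cor:projdualinproic}. Once this identification of topological and categorical filtrations is confirmed, the Hom calculations for $\hom(Q^\vee, P')$ and $\hom(Q^\vee, Q'^\vee)$ become formal consequences of the Ind-Pro Hom formula, and the rest of the argument runs without further difficulty.
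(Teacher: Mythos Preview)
Your proposal is correct and follows essentially the same approach as the paper: both arguments reduce to the matrix decomposition of $\hom(P_0\oplus Q_0^\vee,\,P_1\oplus Q_1^\vee)$ into four blocks, verify each block separately using Corollaries~\ref{cor:projRgeneral} and~\ref{cor:projdualinproic} together with the Ind--Pro hom formula, and then extend to summands by idempotent completeness. The only cosmetic difference is direction: the paper builds a faithful functor $\tilde\tau\colon\FM(\Pro(P_f(R)))\to\Mod(R)_{top}$ and shows it is full on the four relevant hom-sets, whereas you build the functor in the opposite direction; the substance of the verification (your ``main obstacle'', handled in the paper as Proposition~\ref{prop:tprointop}) is identical.
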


\subsubsection*{Sato Grassmannians}
The linear algebra of Tate spaces admits a strong analogy to that of Hilbert space. A key object in this analogy is the \emph{Sato Grassmannian}, which parametrizes certain sub-spaces of a Tate space called \emph{lattices}; a basic example is the sub-space $k[[t]]\subset k((t))$. The two key features of the Sato Grassmannian are that a) the quotient of a lattice by a sub-lattice is finite dimensional, and b) given any two lattices $L_0$ and $L_1$ in a Tate space, there exists a common sub-lattice $N$ contained in $L_0$ and $L_1$, and also a common enveloping lattice containing them both.

Sato Grassmannians admit a natural generalization to \emph{elementary Tate objects}. The first feature holds in general (Proposition \ref{prop:latcomm}). For the second, we show the following.
\begin{theorem}[\ref{thm:grdir}]
    Let $\Cc$ be an idempotent complete exact category. The poset underlying the Sato Grassmannian $\Gr(V)$ of an elementary Tate object $V$ in $\Cc$ is directed and co-directed.
\end{theorem}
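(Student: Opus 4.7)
\smallskip

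\noindent\textbf{Proof plan.} Recall that a \emph{lattice} in an elementary Tate object $V$ is an admissible sub-object $L\hookrightarrow V$ with $L\in\Pro(\Cc)$ and $V/L\in\Ind(\Cc)$. To prove $\Gr(V)$ is directed and co-directed, it suffices to exhibit, for any pair of lattices $L_0,L_1\subset V$, a common sub-lattice $L_0\cap L_1$ and a common enveloping lattice $L_0+L_1$.

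\smallskip

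\noindent I would begin with co-directedness. Form the pullback $L_0\cap L_1$ in $\Ind(\Pro(\Cc))$; since both $L_i\hookrightarrow V$ are admissible monics, the composite $L_0\cap L_1\hookrightarrow L_0\hookrightarrow V$ is an admissible monic as well. The key structural observation is that the map $L_0\hookrightarrow V\twoheadrightarrow V/L_1$ has kernel $L_0\cap L_1$ and factors as
$$L_0\twoheadrightarrow L_0/(L_0\cap L_1)\hookrightarrow V/L_1.$$
Thus $M:=L_0/(L_0\cap L_1)$ is simultaneously an admissible quotient of the Pro-object $L_0$ and an admissible sub-object of the Ind-object $V/L_1$.

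\smallskip

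\noindent The crucial input is the lemma that, in an idempotent complete exact category, an object of $\Ind(\Pro(\Cc))$ which is both an admissible quotient of an object of $\Pro(\Cc)$ and an admissible sub-object of an object of $\Ind(\Cc)$ must already lie in $\Cc$. Granted this, $M\in\Cc$, and the two short exact sequences
$$0\to L_0\cap L_1\to L_0\to M\to 0\quad\text{and}\quad 0\to M\to V/(L_0\cap L_1)\to V/L_0\to 0$$
show, respectively, that $L_0\cap L_1$ is an extension of a Pro-object by an object of $\Cc$ (hence lies in $\Pro(\Cc)$) and that $V/(L_0\cap L_1)$ is an extension of two Ind-objects (hence lies in $\Ind(\Cc)$, using Theorem~\ref{thm:eltatechar} and closure of $\Ind(\Cc)$ under extensions). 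Thus $L_0\cap L_1$ is a lattice dominated by both $L_i$. Directedness is then formally dual: form the admissible image $L_0+L_1:=\mathrm{im}(L_0\oplus L_1\to V)$; the same object $M$ reappears as $(L_0+L_1)/L_1$, and the analogous pair of extensions shows $L_0+L_1\in\Pro(\Cc)$ and $V/(L_0+L_1)\in\Ind(\Cc)$.

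\smallskip

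\noindent The main obstacle is the intersection lemma invoked above: an object of $\Ind(\Pro(\Cc))$ which is simultaneously an admissible quotient of a Pro-object and an admissible sub-object of an Ind-object actually lies in $\Cc$. This is precisely where idempotent completeness enters—the argument should produce $M$ as a direct summand of an object of $\Cc$ (via a Mittag-Leffler/telescope splitting applied to the bi-directed system expressing $M$ both as a Pro-limit and an Ind-colimit), which is then split off using the idempotent completeness assumption. Once this lemma is in hand, the rest of the proof is formal manipulation of extensions.
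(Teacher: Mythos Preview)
Your plan has a genuine gap at the very first step: you form $L_0\cap L_1$ as a pullback and $L_0+L_1$ as an image, but $\elTate(\Cc)$ is only an exact category, not an abelian one, and neither construction is available there. You can of course take the pullback in the ambient abelian category $\lex(\Pro(\Cc))$, but then you do not know that $L_0\cap L_1$ lies in $\elTate(\Cc)$, nor that $L_0\cap L_1\to L_0$ is an \emph{admissible} monic. Your claim that ``the composite $L_0\cap L_1\hookrightarrow L_0\hookrightarrow V$ is an admissible monic'' assumes exactly this. The same problem infects your object $M=L_0/(L_0\cap L_1)$: until you know $L_0\cap L_1\hookrightarrow L_0$ is admissible, $M$ is only a quotient in $\lex(\Pro(\Cc))$, so your ``intersection lemma'' would have to apply to arbitrary (not admissible) sub- and quotient-objects, which is much stronger than what Propositions~\ref{prop:proandindintate} and~\ref{prop:indintate} give you. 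The argument is circular as written.

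The paper avoids this trap by never attempting to build the intersection or sum. For directedness it uses that $\Pro(\Cc)$ is left filtering in $\elTate(\Cc)$: the map $L_0\oplus L_1\to V$ factors through some $V_i\hookrightarrow V$ coming from an elementary Tate diagram, so $V_i$ is a lattice \emph{a priori}. For co-directedness it uses that $\Ind(\Cc)$ is right filtering: $V\to V/L_0\oplus V/L_1$ factors through an admissible epic $V\twoheadrightarrow V/N$ with $V/N\in\Ind(\Cc)$, and one sets $N:=\ker$. In both cases the candidate bound exists with the right admissibility built in; what remains is to show the induced maps $L_a\to V_i$ (resp.\ $N\to L_a$) are admissible monics. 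That is the content of Lemma~\ref{lemma:grdirkey}, and its proof is where idempotent completeness actually enters: one constructs the cokernel in the idempotent completion $\Tate(\Cc)$ via a pushout and a splitting, and then shows it lies in $\FM(\Cc)\cap\FM^\vee(\Cc)=\Cc$ by exhibiting it as a retract of both a Pro-object and an Ind-object (using the filtering properties again). Your sketch of the ``intersection lemma'' via a telescope splitting does not supply this mechanism.
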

We view this as the most important theorem of this paper. For a Tate vector space $V$, a key insight of \cite{SeW:85} is that a pair of lattices $L_0$ and $L_1$ of $V$ can be viewed as an analogue of a Fredholm operator on Hilbert space. The \emph{index space} of this ``operator'' is the finite dimensional $\mathbb{Z}/2$-graded vector space
\begin{equation*}
    L_0/(L_0\cap L_1)\oplus L_1/(L_0\cap L_1).
\end{equation*}
Similarly, a pair of lattices $L_0,~L_1$ in a Tate module $V\in\elTate(P_f(R))$ can be thought of as a family of Fredholm operators parametrized by $\Spec(R)$. The theorem guarantees that a common sub-lattice $N$ of $L_0$ and $L_1$ exists, and one can therefore define the \emph{index bundle} of $L_0$ and $L_1$ to be the finitely generated $\mathbb{Z}/2$-graded projective module
\begin{equation*}
    L_0/N\oplus L_1/N.
\end{equation*}
This definition recalls Atiyah's construction of the index of a continuous family of Fredholm operators \cite[Appendix A]{Ati:67}. We develop this analogy further in \cite{BGW:13}, where we show that the assignment of an index bundle to a pair of lattices extends, independent of the choice of sub-lattice, to a natural map from $\Gr(V)\times\Gr(V)$ to the algebraic $K$-theory space of $R$.

\subsubsection*{n-Tate Objects}
For any exact category $\Cc$, the category $\Tate(\Cc)$ is also an exact category. We can therefore define $n\text{-}\Tate(\Cc)$ to be the category of Tate objects in $(n-1)\text{-}\Tate(\Cc)$. Our earlier results imply that the categories $n\text{-}\Tate(\Cc)$ satisfy the expected properties for all $n$ (see Theorem \ref{thm:ntatex}). We conclude by exhibiting a principle example of interest.\footnote{That the ad\`{e}les form an $n$-Tate object should not be surprising to experts, but we have been unable to find it in the literature.}

\begin{theorem}[\ref{thm:adeles}]
    Let $X$ be an $n$-dimensional Noetherian scheme. Denote by $\Coh_0(X)\subset\Coh(X)$ the full sub-category of sheaves having 0-dimensional support. The $n$-dimensional ad\`{e}les give an exact functor
    \begin{equation*}
        \begin{xy}
            \morphism<1000,0>[\Coh(X)`\nelTate(\Coh_0(X));\Ab^n_X(-)]
        \end{xy}.
    \end{equation*}
\end{theorem}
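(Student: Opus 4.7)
I proceed by induction on the dimension $n$. The base case $n=0$ is immediate: a Noetherian $0$-dimensional scheme $X$ satisfies $\Coh(X)=\Coh_0(X)$, the adèle functor $\Ab^0_X$ is the identity on $\Coh(X)$, which is trivially exact and lands in $0\text{-}\elTate(\Coh_0(X))=\Coh_0(X)$.

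For the inductive step, the plan is to exploit the recursive nature of the Beilinson--Parshin construction. Given $\mathcal{F}\in\Coh(X)$, for each codimension-$1$ irreducible closed subscheme $Z\subset X$ with generic point $\eta$, I would assemble two natural constructions into a short exact sequence. The first is the formal completion $\widehat{\mathcal{F}}_Z=\lim_k\mathcal{F}/\mathcal{I}_Z^k\mathcal{F}$, a pro-object whose terms are coherent sheaves supported on $Z$, an $(n-1)$-dimensional Noetherian scheme. The second is the localization $j_\ast j^\ast\mathcal{F}$ away from $Z$ (for $j\colon X\setminus Z\into X$), naturally written as an ind-object of $\Coh(X)$ via a filtered colimit of its coherent subsheaves. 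By the inductive hypothesis, the $(n-1)$-dimensional adèle functor $\Ab^{n-1}_Z\colon\Coh(Z)\to(n-1)\text{-}\elTate(\Coh_0(Z))$ is exact, so applying it levelwise to (the pushforwards along $Z\into X$ of) the above Pro- and Ind-diagrams yields a pro-object and an ind-object in $(n-1)\text{-}\elTate(\Coh_0(X))$. The $n$-dimensional adèle of $\mathcal{F}$ is then built as an extension of this Ind-part by this Pro-part, and Theorem~\ref{thm:eltatechar} places the result in $\elTate((n-1)\text{-}\elTate(\Coh_0(X)))=n\text{-}\elTate(\Coh_0(X))$.

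Exactness of $\Ab^n_X$ is then checked step by step: completion of coherent sheaves along a closed subscheme of a Noetherian scheme is exact by the Artin--Rees lemma, the localization functor $j_\ast j^\ast$ is exact on coherent sheaves, and the $\Ind$ and $\Pro$ constructions preserve admissible short exact sequences. Composing these yields exactness of $\Ab^n_X$, and the inductive hypothesis upgrades exactness at each stage into exactness in $n\text{-}\elTate(\Coh_0(X))$.

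The central difficulty I anticipate is the combinatorial bookkeeping required to organize the data over all codimension-$1$ subschemes (and, more generally, over all maximal flags of length $n+1$) so as to produce an admissible --- not merely abstract --- elementary Tate object at each step of the induction. There may be infinitely many codimension-$1$ subschemes of $X$, and one must argue either by reducing to the finitely many whose generic point lies in the support of $\mathcal{F}$ or via a Mittag-Leffler-type argument to ensure that the infinite products appearing in the classical definition still admit lattices in the sense of Sato Grassmannians. A subordinate difficulty is verifying that the inductive reconstruction sketched above agrees with the Beilinson--Parshin adèles as classically defined, which one handles by comparing the two on stalks along flags, exhibiting each flag contribution as the expected iterated Laurent series object.
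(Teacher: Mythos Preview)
Your overall strategy---induction on $n$, exhibiting $\Ab^n_X(\Fc)$ as an extension of an Ind-object by a Pro-object in $(n-1)$-Tate, and invoking Theorem~\ref{thm:eltatechar}---matches the paper's approach exactly. The paper also handles exactness just as you suggest, by straightening the exact sequence $j_{\eta,\ast}j_\eta^\ast\Fc^0\into j_{\eta,\ast}j_\eta^\ast\Fc^1\onto j_{\eta,\ast}j_\eta^\ast\Fc^2$ as an Ind-diagram of exact sequences of coherent sheaves and applying the inductive hypothesis termwise.

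However, there is a genuine gap in your identification of the Ind-direction and in your proposed resolution of the ``central difficulty.'' The Ind-direction is \emph{not} localization $j_\ast j^\ast\Fc$ away from a single codimension-$1$ subscheme $Z$; it is the colimit over coherent subsheaves $\Fc_i\subset j_{\eta,\ast}j_\eta^\ast\Fc$ with $j_\eta^\ast\Fc_i=j_\eta^\ast\Fc$, where $\eta$ is the generic point of $X$ itself. With this correction, the candidate lattice is $\prod_{p\in|X|_{n-1}}\lim_r j_{\overline{p^r},\ast}\Ab^{n-1}_{\overline{p^r}}(j_{\overline{p^r}}^\ast\Fc_i)$ for any fixed $i$, and the task is to show that for $j\ge i$ the quotient lands in $(n-1)$-Tate rather than merely in $\Pro((n-1)\text{-}\Tate)$. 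Your suggestion to ``reduce to the finitely many codimension-$1$ subschemes whose generic point lies in the support of $\Fc$'' does not work: the support of $\Fc$ can be all of $X$. The correct observation (and this is the crux of the paper's argument) is that the quotient $Q_j:=\Fc_j/\Fc_i$ satisfies $j_\eta^\ast Q_j=0$, hence has support of dimension at most $n-1$; since $X$ is Noetherian this support meets only finitely many $p\in|X|_{n-1}$, and coherence of $Q_j$ forces the inverse system over $r$ to stabilize on each such $\overline{p}$. Thus the infinite product $\prod_p\lim_r$ applied to $Q_j$ collapses to a finite direct sum of $(n-1)$-Tate objects, which is exactly what is needed for the lattice condition.
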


\subsubsection*{History}
The categories $\Ind(\Cc)$, $\Pro(\Cc)$ and $\Tate(\Cc)$ play a great role in various applications, notably in algebraic $K$-theory (e.g.\ \cite{Sch:04}, \cite{Sai:13}, \cite{BGW:13}), in integrable hierarchies of differential equations (e.g.\ \cite{SaS:83}, \cite{Sat:87}), in chiral algebras \cite{BeD:04}, in Drinfeld's study of infinite-dimensional vector bundles in algebraic geometry \cite{Dri:06}, in Parshin and Beilinson's theory of multidimensional ad\`{e}les of schemes (\cite{Par:76},\cite{Bei:80}), in reciprocity laws (e.g.\ \cite{OsZ:11},\cite{OsZ:13},\cite{BGW:14}), in de Rham epsilon factors \cite{BBE:02}, and in the representation theory of double loop groups (e.g.\ \cite{ArK:10}, \cite{FrZ:12}).\footnote{Yekutieli \cite{Yek:06} has also introduced a related construction of ``Dir-Inv-modules''. The category of countable admissible Ind-Pro objects in the category of $R$-modules is equivalent to the full sub-category of \emph{complete} Dir-Inv $R$-modules.}${}^,~$\footnote{Tate objects in categories of coherent sheaves have also appeared in \cite[3.1]{KOZ:09} in the context of a two-dimensional Krichever correspondence; in \emph{loc. cit.} they are referred to as ``ind-pro coherent sheaves.''} A non-linear analogue of $\elTate(\Cc)$, due to Kato \cite{Kat:00}, plays a key role in the study of higher local fields, of algebraic groups over them (e.g.\ \cite{GaK:04}), and of formal loop spaces of schemes (e.g.\ \cite{KaV:04}).

The notion of \emph{Tate objects} in an arbitrary exact category was introduced by Beilinson \cite{Bei:87} and has also been recently studied by Previdi in \cite{Pre:11}; if we restrict to countable indexing diagrams, our ``elementary Tate objects'' coincide with their approach (Proposition \ref{prop:eltatec=bei}). Moving beyond countable diagrams allows us to treat examples such as the ad\`{e}les of curves over fields of uncountable cardinality.

Sato Grassmannians were introduced by Sato and Sato in their study of integrable hierarchies of differential equations (e.g.\ \cite{Sat:87}). Segal and Wilson \cite{SeW:85} introduced an analogous Grassmannian for polarized Hilbert spaces and this has since played a major role in the study of loop groups (e.g.\ \cite{PrS:86}). Sato Grassmannians for elementary Tate objects in exact categories $\Cc$ have been studied recently by Previdi \cite{Pre:12}. In order to ensure their good behavior, Previdi introduced two properties for exact categories: ``partially abelian'' and ``AIC plus AIC$^{\op}$''. The first notion turns out to be unnecessarily strong and, as was explained to us by T. B\"{u}hler, is equivalent to being abelian. The second condition, according to B\"{u}hler, is equivalent to Schneider's notion of ``quasi-abelian'' \cite{Sch:99} and Rump's ``almost abelian'' \cite{Rum:01}. Unfortunately, many exact categories of interest fail to satisfy ``AIC plus AIC$^{op}$''. A basic example is the category of vector bundles over the real line $\mathbb{R}$: the intersection of the admissible monics
\begin{align*}
    \begin{xy}
        \morphism(-1000,0)<500,0>[\mathbb{R}\times\mathbb{R}`\mathbb{R}\times\mathbb{R}^2;]
        \morphism(-1000,-250)/|->/<500,0>[(x,t)`(x,(t,0));]
        \place(0,0)[\text{and}]
        \morphism(500,0)<500,0>[\mathbb{R}\times\mathbb{R}`\mathbb{R}\times\mathbb{R}^2;]
        \morphism(500,-250)/|->/<500,0>[(x,t)`(x,(t,xt));]
    \end{xy}
\end{align*}
is not a vector bundle.

The Beilinson--Parshin ad\`{e}les were introduced by Parshin \cite{Par:76}, for 2-dimensional schemes, and by Beilinson \cite{Bei:80}, in the general case. From the time of their introduction, it appears to have been known, or at least strongly expected, that the $n$-dimensional ad\`{e}les should have the structure of an $n$-Tate object, but we are unable to find this in print. A closely related, more recent treatment of the $n$-dimensional ad\`{e}les appears in \cite{Osi:07}.

\subsubsection*{How to Read this Paper}
Ind, Pro and Tate objects arise widely in practice, and our hope is that this paper will be a useful reference.

The core results in this paper are summarized, for $n\text{-}\Tate(\Cc)$, in Theorem \ref{thm:ntatex}. A similar summary of results for $\Pro(\Cc)$ appears in Theorem \ref{thm:prox}, and the analogous results for $\Ind(\Cc)$ are developed in Section \ref{sec:ind}.

Section \ref{sec:ind} contains the bulk of the technical work in this paper, and develops the essential properties of the categories $\Ind(\Cc)$, $\FM(\Cc)$, and $P(\Cc)$. Much of this falls under the heading of ``things work as expected'', and the length of the section is a function of recording the proofs. With these proofs in hand, the analogous results for $\Pro(\Cc)$, $\FM^\vee(\Cc)$, and $P^\vee(\Cc)$ follow immediately, and Section \ref{sec:tate} extends these results to $\elTate(\Cc)$ and $\Tate(\Cc)$.

In Section \ref{sec:applat}, we recall Sato Grassmannians and establish Theorem \ref{thm:grdir}. The results of Sections \ref{sec:ind}-\ref{sec:applat} extend naturally to the setting of $n$-Tate objects, and in Section \ref{sec:ntate} we present these properties, and treat the Beilinson--Parshin ad\`{e}les.

Appendix \ref{sec:buhler} repeats a proof due to T. B\"{u}hler which shows that the present approach to left s-filtering sub-categories is equivalent to Schlichting's.

Appendix \ref{sec:stovicektrlifaj}, due to J. \v S\v tov\'\i\v cek and J. Trlifaj, identifies the category $\FM(R)$ of flat Mittag-Leffler modules with the category $\FM(P_f(R))$ and discusses a few relevant properties of these categories.

\subsection*{Acknowledgements}
We are very grateful to J. \v S\v tov\'\i\v cek and J. Trlifaj for helpful explanations, for simplifying the original example in Section \ref{sec:notic}, and for providing Appendix \ref{sec:stovicektrlifaj}. We are very grateful to T. B\"{u}hler for his generous and detailed comments on an earlier draft, and for helpful explanations of several aspects of exact categories. We thank V. Drinfeld, E. Getzler and A. Yekutieli for helpful conversations, B. Keller and M. Morrow for helpful correspondence, and X. Zhu for alerting us to a confusion in an earlier draft. We would also like to thank T. Hausel for supporting a visit of the first and the third author to EPF Lausanne, where part of this work was carried out.

\section{Preliminaries}\label{sec:prel}
\subsection{Cardinal Arithmetic}
We recall the following standard lemma.
\begin{lemma}
    Let $\kappa$ be an infinite cardinal. Let $K$ be a set of cardinality $\kappa$. The disjoint union $\coprod_{n\in\mathbb{N}}K^n$ of all finite tuples of elements of $K$ has cardinality $\kappa$. In particular, the disjoint union of all finite subsets of $K$ has cardinality at most $\kappa$.
\end{lemma}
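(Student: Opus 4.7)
The plan is to reduce everything to the classical identity $\kappa\cdot\kappa=\kappa$ for infinite cardinals $\kappa$, which is a standard consequence of Zermelo's well-ordering theorem and Hessenberg's theorem. I will assume this as known.

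First I would handle $|K^n|$ for a fixed $n$. By induction on $n\geq 1$, one has $|K^n|=\kappa$: the base case $n=1$ is trivial, and the inductive step follows from $|K^{n+1}|=|K^n\times K|=\kappa\cdot\kappa=\kappa$. For $n=0$, $K^0$ is a singleton.

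Next I would assemble the disjoint union. Since $\mathbb{N}$ is countable and $\aleph_0\leq\kappa$, we have
\begin{equation*}
    \Bigl|\coprod_{n\in\mathbb{N}}K^n\Bigr|=\sum_{n\in\mathbb{N}}|K^n|\leq \aleph_0\cdot\kappa=\kappa.
\end{equation*}
The reverse inequality is immediate from the inclusion $K=K^1\hookrightarrow\coprod_{n\in\mathbb{N}}K^n$, so equality holds.

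For the final clause, every finite subset $S\subseteq K$ of cardinality $n$ can be presented (non-uniquely) as an $n$-tuple by choosing an enumeration, which provides an injection from the set of all finite subsets of $K$ into $\coprod_{n\in\mathbb{N}}K^n$. Hence the cardinality of the set of finite subsets is bounded above by $\kappa$. There is no real obstacle here; the only subtlety worth flagging is the use of the axiom of choice implicit in $\kappa\cdot\kappa=\kappa$ and in enumerating each finite subset, both of which are harmless in the exact-categorical setting of the paper.
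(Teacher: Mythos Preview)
Your proof is correct and follows the same approach as the paper, which simply records it as ``a standard induction, using the facts that $|K\times K|=\kappa$ and $|K\coprod K|=\kappa$.'' Your write-up is a clean expansion of exactly this.
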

\begin{proof}
    The proof is a standard induction, using the facts that $|K\times K|=\kappa$ and $|K\coprod K|=\kappa$.
\end{proof}

\subsection{Exact Categories}
Exact categories provide a general framework for linear algebra. We refer the reader to B\"{u}hler's survey \cite{Buh:10} for a full treatment.

\begin{definition}
    Let $\Cc$ be an additive category. A \emph{kernel-cokernel pair} is a sequence
    \begin{equation*}
        X\into Y\onto Z
    \end{equation*}
    such that $X\into Y$ is the kernel of $Y\onto Z$, and $Y\onto Z$ is the cokernel of $X\into Y$.

    An \emph{exact category} is an additive category $\Cc$ equipped with a class $\Ec\Cc$ of kernel-cokernel pairs. An \emph{exact sequence} is a kernel-cokernel pair in $\Ec\Cc$. An \emph{admissible monic} is a map $X\into Y$ which serves as the kernel in an exact sequence; an \emph{admissible epic} is $Y\onto Z$ is a map which serves as a cokernel in an exact sequence. We require that
    \begin{enumerate}
        \item for all $X\in \Cc$, the identity $1_X$ is both an admissible monic and an admissible epic,
        \item the class of admissible monics and the class of admissible epics are closed under composition,
        \item the pushout of an admissible monic along an arbitrary morphism exists and is an admissible monic, and the pullback of an admissible epic along an arbitrary morphism exists and is an admissible epic.
    \end{enumerate}
    A functor $F\colon\Cc\to\Dc$ between exact categories is \emph{exact} if $F(\Ec\Cc)\subset \Ec\Dc$. A fully faithful embedding $F\colon\Cc\to\Dc$ is \emph{fully exact} if every exact sequence in $\Dc$ of the form $F(X)\into F(Y)\onto F(Z)$ is the image of an exact sequence in $\Cc$.
\end{definition}

Every additive category $\Cc$ defines an exact category where the class $\Ec\Cc$ consists of all split exact sequences. The category $P_f(R)$ of finitely generated projective modules over a ring $R$ provides a motivating example. For another source of examples, every abelian category $\Cc$ defines an exact category in which $\Ec\Cc$ is the class of all kernel-cokernel pairs.

\subsubsection{Idempotent Completeness}
We recall two conditions on exact categories: idempotent completeness and weak idempotent completeness. In practice, the former is both more important and better behaved than the latter. The category $F_f(R)$ of finitely generated free modules over a ring $R$ provides an example of an exact category which is not idempotent complete. The category $P_f(R)$ of finitely generated projective $R$-modules provides an example of one which is.

\begin{definition}
    An exact category $\Cc$ is \emph{weakly idempotent complete} if every retract has a kernel. Explicitly, we require that any map $r\colon X\to Y$ for which there exists a right inverse $s\colon Y\to X$ admits a kernel in $\Cc$.
\end{definition}

\begin{remark}
    This condition is actually self-dual. For any additive category $\Cc$, all retracts have kernels in $\Cc$ if and only if all retracts have kernels in $\Cc^{\op}$. See \cite[Lemma 7.1]{Buh:10}.
\end{remark}

\begin{definition}
    An exact category $\Cc$ is \emph{idempotent complete} if, for every $p\colon X\to X$ such that $p^2=p$, there exists an isomorphism $X\cong Y\oplus Z$ which takes $p$ to the projection $0\oplus 1_Z$.
\end{definition}

\begin{example}\cite[Exercise 7.11]{Buh:10}
    Let $R=\mathbb{Q}\times\mathbb{Q}$. The category of finitely generated free $R$-modules is weakly idempotent complete, but not idempotent complete.\footnote{This example relies on the difference, for disconnected spaces, between a module being free and it being free on each component.}
\end{example}

\begin{definition}
    Let $\Cc$ be a category. Define the \emph{idempotent completion} $\Cc^{\ic}$ of $\Cc$ to be the category whose objects are pairs $(X,p)$, with $p\colon X\to X$ an idempotent in $\Cc$. Morphisms $(X,p)\to (Y,q)$ in $\Cc^{\ic}$ correspond to morphisms $g\colon X\to Y$ in $\Cc$ such that $qgp=g$; composition is induced by composition in $\Cc$.
\end{definition}

\begin{example}
    Let $F_f(R)$ and $P_f(R)$ denote the categories of finitely generated free and projective $R$-modules. The idempotent completion $F_f(R)^{\ic}$ is equivalent to $P_f(R)$.
\end{example}

The assignment $X\mapsto(X,1)$ defines a fully faithful embedding $\Cc\into\Cc^{\ic}$. We do not distinguish between $\Cc$ and its essential image under this embedding.

\begin{proposition}\cite[Proposition 6.10]{Buh:10}
    Let $\Cc$ be an exact category. Let $\Cc^{\ic}$ be the idempotent completion of $\Cc$. Define $\Ec(\Cc^{\ic})$ to consist of sequences which are direct summands of exact sequences in $\Cc$. Explicitly, a sequence
    \begin{equation*}
        (X_0,p_1)\to (X_1,p_1)\to(X_2,p_2)
    \end{equation*}
    is exact in $\Cc^{\ic}$ if there exists a sequence
    \begin{equation*}
        (X_0',p_0')\to (X_1',p_1')\to (X_2',p_2')
    \end{equation*}
    such that, for all $i$, $(X_i,p_i)\oplus(X_i',p_i')$ is isomorphic to an object $Y_i$ in $\Cc$, and such that the sequence
    \begin{equation*}
        Y_0\to Y_1\to Y_2
    \end{equation*}
    is exact in $\Cc$. The category $\Cc^{\ic}$ is an idempotent complete exact category. The embedding $\Cc\into\Cc^{\ic}$ is fully exact. This embedding is 2-universal in the category of exact functors $\Cc\to\Dc$ with $\Dc$ idempotent complete.
\end{proposition}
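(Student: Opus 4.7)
The plan is to regard $\Cc^{\ic}$ as the category obtained from $\Cc$ by formally splitting idempotents and to bootstrap every desired property from the corresponding property of $\Cc$ by means of the basic decomposition $(X,1_X)\cong(X,p)\oplus(X,1-p)$ in $\Cc^{\ic}$. One verifies this isomorphism directly using the maps $a\mapsto(pa,(1-p)a)$ and $(a,b)\mapsto pa+(1-p)b$; as a consequence, every object of $\Cc^{\ic}$ is a direct summand of an object of $\Cc$, and every morphism in $\Cc^{\ic}$ is a summand of a morphism in $\Cc$. All verifications below hinge on enlarging a configuration in $\Cc^{\ic}$ by suitable complementary idempotent summands so that it sits inside $\Cc$, performing the construction there, and extracting the relevant summand.

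Next I verify the three axioms of an exact structure on $(\Cc^{\ic},\Ec(\Cc^{\ic}))$. For axiom (1), the identity of $(X,p)$ is $p$ itself, and the sequence $0\into(X,p)\xrightarrow{p}(X,p)$ is obtained as a direct summand of the exact sequence $0\into X\xrightarrow{1_X}X$ in $\Cc$ by adding the complementary sequence $0\into(X,1-p)\xrightarrow{1-p}(X,1-p)$; the cokernel side is dual. For axiom (2), given composable admissible monics $A\into B$ and $B\into C$ in $\Cc^{\ic}$, one presents them as summands of admissible monics $A'\into B'$ and $B''\into C'$ in $\Cc$, harmonizes the two presentations of $B$ by direct-summing with the respective complementary $\Cc$-objects so that the two copies of $B$ become isomorphic inside $\Cc$, composes the resulting admissible monics in $\Cc$ (allowed by axiom (2) in $\Cc$), and extracts the original composite as a summand. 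For axiom (3), given an exact sequence $(X,p)\into(Y,q)\onto(Z,r)$ realized as a summand of $X'\into Y'\onto Z'$ and a morphism $(X,p)\to(W,s)$, lift the morphism to a map $X'\to W'$ in $\Cc$ (augmenting $W$ by its complement), form the pushout in $\Cc$, and identify the pushout of $(X,p)\to(W,s)$ along $(X,p)\into(Y,q)$ as a summand, where the relevant idempotent on the pushout is induced by functoriality. Pullbacks of admissible epics are dual. The main obstacle in this step is the bookkeeping that aligns the different presentations of an object of $\Cc^{\ic}$ as a summand of distinct $\Cc$-objects; this is purely formal but has to be done carefully.

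Idempotent completeness of $\Cc^{\ic}$ is routine: an idempotent $e\colon(X,p)\to(X,p)$ amounts to $e\in\End_{\Cc}(X)$ with $pep=e$ and $e^2=e$, and splits as $(X,e)$. For full exactness of $\Cc\into\Cc^{\ic}$, suppose $X\to Y\to Z$ lies in $\Ec(\Cc^{\ic})$ with $X,Y,Z\in\Cc$; the defining direct-summand decomposition exhibits it as a retract of an exact sequence in $\Cc$, and one checks that a retract of an exact sequence in $\Cc$ whose terms all lie in $\Cc$ is itself exact in $\Cc$ (the key point being that the kernel-cokernel structure is preserved under splitting by an endomorphism of a short exact sequence). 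Finally, for the $2$-universal property, given an exact $F\colon\Cc\to\Dc$ with $\Dc$ idempotent complete, define $\tilde F(X,p)$ to be the object splitting the idempotent $F(p)\in\End_{\Dc}(F(X))$ and extend to morphisms by the evident assignment; exactness of $\tilde F$ follows because the image of a summand of an exact sequence in $\Cc$ is a summand of an exact sequence in $\Dc$, which is exact by idempotent completeness of $\Dc$. Uniqueness of $\tilde F$ up to unique natural isomorphism is a direct consequence of the universal property of splittings of idempotents.
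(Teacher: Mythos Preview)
The paper does not prove this proposition; it is quoted verbatim from B\"uhler's survey \cite[Proposition 6.10]{Buh:10}, so there is no ``paper's own proof'' to compare against. Your sketch follows the standard hands-on strategy and is correct in outline, but two steps are more delicate than you indicate.

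For axiom (2), your ``harmonization'' step is problematic as stated. You write that one should direct-sum with ``the respective complementary $\Cc$-objects,'' but the complements $B'_c$ and $B''_c$ of $B$ inside $B'$ and $B''$ lie only in $\Cc^{\ic}$, not in $\Cc$; so $A'\oplus B''_c$ need not be an object of $\Cc$ and you cannot compose there. A correct fix is to work with $\bar f\oplus 1_{\tilde B}\colon \bar A\oplus\tilde B\to\bar B\oplus\tilde B$ and $1_{\bar B}\oplus\tilde g\colon\bar B\oplus\tilde B\to\bar B\oplus\tilde C$ (both admissible monics in $\Cc$), but then $gf$ is \emph{not} a summand of the na\"\i ve composite; one must first insert the automorphism of $\bar B\oplus\tilde B\cong B\oplus\bar B^c\oplus B\oplus\tilde B^c$ that swaps the two copies of $B$. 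Only after this swap does the idempotent pair $(e_A\oplus 0,\,0\oplus e_C)$ commute with the composite, exhibiting $gf$ as a summand. This is not hard, but it is not the ``purely formal bookkeeping'' you describe.

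For full exactness, you assert that ``a retract of an exact sequence in $\Cc$ whose terms all lie in $\Cc$ is itself exact in $\Cc$.'' This is true but not obvious from the axioms: in an arbitrary exact category, a retract of an admissible monic need not be admissible. The cleanest justification uses the Gabriel--Quillen embedding $\Cc\hookrightarrow\lex(\Cc)$, which reflects exactness; in the abelian category $\lex(\Cc)$ a retract of a short exact sequence is short exact, and then one pulls back. In fact, the whole proposition admits a short proof along these lines: since $\lex(\Cc)$ is idempotent complete, $\Cc^{\ic}$ embeds in $\lex(\Cc)$; one checks that $\Cc^{\ic}$ is closed under extensions there (add complements to reduce to closure of $\Cc$), and the induced exact structure coincides with the summand description. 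This avoids most of the bookkeeping you encountered.
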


\subsubsection{Exact, Full Sub-Categories}
We recall here three conditions which may hold for exact, full sub-categories $\Cc\subset\Dc$. As explained to us by T. B\"{u}hler, if a sub-category satisfies all three of these conditions, then it is ``left s-filtering'' in the sense of Schlichting \cite[Definition 1.5]{Sch:04}. Left s-filtering sub-categories play many of the same roles for exact categories as Serre sub-categories play for abelian categories.

\begin{definition}
    A full sub-category $\Cc$ of an exact category $\Dc$ is \emph{closed under extensions} if, for every exact sequence
    \begin{equation*}
        X\into F\onto Z
    \end{equation*}
    with $X$ and $Z$ in $\Cc$, we have $F\in\Cc$ as well.
\end{definition}

The following lemma is a simple exercise in the definitions.
\begin{lemma}\label{lemma:extclosed}
    Let $\Dc$ be an exact category. Let $\Cc\subset\Dc$ be closed under extensions. Define a sequence in $\Cc$ to be exact if it is exact in $\Dc$. This endows $\Cc$ with the structure of an exact category.
\end{lemma}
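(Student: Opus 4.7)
The plan is to verify the three exact category axioms for $\Cc$ equipped with the class of sequences that are exact in $\Dc$. Before doing so, I would first observe that $\Cc$ is additive: the zero object lies in $\Cc$ (the trivial extension $0\into 0\onto 0$), and for any $X,Y\in\Cc$ the split sequence $X\into X\oplus Y\onto Y$ is exact in $\Dc$, so closure under extensions forces $X\oplus Y\in\Cc$. The key conceptual point, which I would emphasize up front, is that a sequence $X\into Y\onto Z$ qualifies as exact in $\Cc$ precisely when it is exact in $\Dc$ \emph{and} all three terms lie in $\Cc$; consequently an admissible monic in $\Cc$ is a map $X\into Y$ admitting a cokernel $Y/X\in\Cc$ (and dually for admissible epics).

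Axiom (1) is immediate: for $X\in\Cc$, the sequence $0\into X\xrightarrow{1_X}X\onto 0$ is exact in $\Dc$ and lives in $\Cc$. For (2), suppose $X\into Y$ and $Y\into Z$ are admissible monics in $\Cc$ with cokernels $Y/X,\,Z/Y\in\Cc$. Their composition $X\into Z$ is an admissible monic in $\Dc$, and the noether-style short exact sequence
\begin{equation*}
    Y/X\into Z/X\onto Z/Y
\end{equation*}
(which holds in any exact category, see \cite[Lemma 3.5]{Buh:10}) exhibits $Z/X$ as an extension of two objects of $\Cc$, hence $Z/X\in\Cc$ by hypothesis. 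Thus the composition is an admissible monic in $\Cc$. The statement for admissible epics is dual.

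For (3), let $X\into Y$ be an admissible monic in $\Cc$ with cokernel $Z\in\Cc$, and let $X\to X'$ be any morphism in $\Cc$. Form the pushout $Y'=Y\sqcup_X X'$ in $\Dc$; by the standard properties of exact categories this gives an exact sequence $X'\into Y'\onto Z$ in $\Dc$. Since $X',Z\in\Cc$, closure under extensions forces $Y'\in\Cc$, so the pushout exists in $\Cc$ and the induced map $X'\into Y'$ is an admissible monic in $\Cc$. The pullback statement for admissible epics is dual.

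The main (in fact only) obstacle here is a conceptual one rather than a technical one: one must be scrupulous that in the induced structure an ``admissible monic in $\Cc$'' requires its cokernel (computed in $\Dc$) to lie in $\Cc$. Once this is fixed, every axiom reduces either to a trivial observation or to the remark that extensions-closure transfers exactness in $\Dc$ back into $\Cc$ whenever two of the three terms of an exact sequence lie in $\Cc$. No obscure axiom or idempotent completeness arguments are needed.
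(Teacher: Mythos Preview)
Your proof is correct and is precisely the standard verification; the paper itself omits the argument entirely, remarking only that it is ``a simple exercise in the definitions.'' The one cosmetic point is that ``$0\into X\xrightarrow{1_X}X\onto 0$'' is not a single kernel--cokernel pair but shorthand for the two sequences $X\xrightarrow{1_X}X\onto 0$ and $0\into X\xrightarrow{1_X}X$, which together witness that $1_X$ is both an admissible monic and an admissible epic in $\Cc$.
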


\begin{definition}
    An exact, full sub-category $\Cc\subset\Dc$ is \emph{left special} if, for every admissible epic $F\onto X$ in $\Dc$ with $X\in\Cc$, there exists a commutative diagram in $\Dc$
    \begin{equation*}
        \begin{xy}
            \square/^{ (}->`>`>`^{ (}->/[Z`Y`G`F;```]
            \square(500,0)/->>`>`=`->>/[Y`X`F`X;```]
        \end{xy}
    \end{equation*}
    in which the top row is an exact sequence in $\Cc$ and the bottom row is an exact sequence in $\Dc$. We say $\Cc$ is \emph{right special} if $\Cc^{\op}$ is \emph{left special} in $\Dc^{\op}$.
\end{definition}

\begin{remark}
    We adapt the terminology ``left special'' from Schlichting \cite[Definition 1.5]{Sch:04}. Right special sub-categories were previously studied by Keller as sub-categories satisfying ``Condition C2'' \cite[Section 12.1]{Kel:96}.
\end{remark}

\begin{lemma}\label{lemma:lsextclosed}
    Left special sub-categories are closed under extensions.
\end{lemma}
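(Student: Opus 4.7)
My plan is to recognize $F$ as (isomorphic to) a pushout that can be formed inside $\Cc$, and then to apply the short five lemma. Given the exact sequence $X \into F \onto Z$ in $\Dc$ with $X, Z \in \Cc$, apply the left special property to the admissible epic $F \onto Z$ to obtain a commutative diagram in $\Dc$
\[
\begin{array}{ccccc}
K & \into & Y & \onto & Z \\
\downarrow & & \downarrow & & \Vert \\
X & \into & F & \onto & Z
\end{array}
\]
with top row exact in $\Cc$. In particular, $K, Y \in \Cc$ and $K \into Y$ is an admissible monic in $\Cc$.

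Next, form the pushout $P := Y \sqcup_K X$ in $\Dc$, which exists because $K \into Y$ is admissible monic. The standard exact-category formalism furnishes an exact sequence $X \into P \onto Z$ in $\Dc$ (pushing out along the kernel reproduces the same cokernel). On the other hand, since $\Cc$ is itself exact, the analogous pushout $P'$ exists in $\Cc$, with an exact sequence $X \into P' \onto Z$ in $\Cc$, hence in $\Dc$. The $\Dc$-universal property of $P$ applied to the cospan $Y \to P' \leftarrow X$ yields a map $P \to P'$ compatible with the identities on $X$ and $Z$; the short five lemma identifies this as an isomorphism, so $P \cong P' \in \Cc$.

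Finally, the $\Dc$-universal property of $P$ applied to the cospan $Y \to F \leftarrow X$ --- which agrees on $K$ by commutativity of the diagram from the first step --- produces a map $P \to F$ inducing the identity on $X$ and on $Z$. Another application of the short five lemma makes this map an isomorphism, and hence $F \cong P \in \Cc$. The one subtle point (rather than a real obstacle) is ensuring that the pushout formed inside $\Cc$ coincides with the one formed in $\Dc$; the double use of the short five lemma above handles this cleanly.
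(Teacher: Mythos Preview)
Your proof is correct and follows essentially the same approach as the paper: apply the left special property to obtain an exact sequence in $\Cc$ mapping to the given one, then push out along the kernel map and invoke the 5-Lemma. The paper forms a single pushout and applies the 5-Lemma once, tacitly treating the pushout as lying in $\Cc$ while simultaneously using its universal property in $\Dc$; your double application of the 5-Lemma makes this identification of the $\Cc$-pushout with the $\Dc$-pushout explicit, which is a reasonable bit of extra care but not a genuinely different argument.
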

\begin{proof}
    Let $\Cc\subset\Dc$ be a left special sub-category. Let
    \begin{equation*}
        X\into F\onto Z
    \end{equation*}
    be an exact sequence in $\Dc$ with $X$ and $Z$ in $\Cc$. By assumption, there exists a commuting diagram
    in $\Dc$
    \begin{equation*}
        \begin{xy}
            \square/^{ (}->`>`>`^{ (}->/[V`Y`X`F;```]
            \square(500,0)/->>`>`=`->>/[Y`Z`F`Z;```]
        \end{xy}
    \end{equation*}
    in which the top row is an exact sequence in $\Cc$ and the bottom row is an exact sequence in $\Dc$. Pushing out the admissible monic $V\into Y$ along the map $V\to X$, we obtain a second commuting diagram in $\Dc$
    \begin{equation*}
        \begin{xy}
            \square/^{ (}->`=`>`^{ (}->/[X`Y'`X`F;```]
            \square(500,0)/->>`>`=`->>/[Y'`Z`F`Z;```]
        \end{xy}
    \end{equation*}
    in which the top row is an exact sequence in $\Cc$ and the bottom row is an exact sequence in $\Dc$. The 5-Lemma \cite[Corollary 3.2]{Buh:10} implies that the map $Y'\to F$ is an isomorphism.
\end{proof}

\begin{definition}\label{def:lfilt}
    An exact, full sub-category $\Cc\subset\Dc$ is \emph{left filtering} if every morphism $X\to F$ in $\Dc$, with $X\in\Cc$, factors through an admissible monic $X'\into F$ with $X'\in\Cc$:
    \begin{equation*}
        \begin{xy}
            \qtriangle/>`>`<-_{) }/<400,400>[X`F`X';``]
        \end{xy}
    \end{equation*}
    We say $\Cc$ is \emph{right filtering} if $\Cc^{\op}$ is left filtering in $\Dc^{\op}$.
\end{definition}

\begin{definition}[Schlichting \cite{Sch:04}]\label{def:lsfilt}
    An exact, full sub-category $\Cc\subset\Dc$ is \emph{left special filtering}, or ``left s-filtering'' for short, if it is left special and left filtering. We say $\Cc$ is \emph{right s-filtering} if $\Cc^{\op}$ is left s-filtering in $\Dc^{\op}$.
\end{definition}

\begin{remark}
    We differ slightly from Schlichting in our presentation of left s-filtering. See Appendix \ref{sec:buhler} for a proof that the definitions agree.
\end{remark}

We record the following elementary observations.
\begin{lemma}\label{lemma:leftstransitive}
    Let $\Cc\subset\Dc\subset\Dc'$ be a chain of exact, fully faithful embeddings.
    \begin{enumerate}
        \item If $\Cc$ is closed under extensions in $\Dc'$, then $\Cc$ is closed under extensions in $\Dc$.
        \item If $\Cc$ is left special in $\Dc'$, then $\Cc$ is left special in $\Dc$.
        \item If $\Cc$ is left special in $\Dc'$ and left filtering in $\Dc$, then $\Cc$ is left s-filtering in $\Dc$.
    \end{enumerate}
\end{lemma}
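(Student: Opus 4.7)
The plan is to handle the three parts in order, with (1) essentially immediate from exactness, (2) the main content, and (3) a formal consequence of (2). For (1), an exact sequence $X \into F \onto Z$ in $\Dc$ with $X, Z \in \Cc$ remains exact in $\Dc'$ by exactness of the embedding $\Dc \hookrightarrow \Dc'$; the hypothesis that $\Cc$ is closed under extensions in $\Dc'$ then forces $F \in \Cc$.

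For (2), given an admissible epic $F \onto X$ in $\Dc$ with $X \in \Cc$, I would first apply the left special property in $\Dc'$ (using exactness of the embedding to view $F \onto X$ as an admissible epic in $\Dc'$) to produce a commutative diagram in $\Dc'$ with top row $Z \into Y \onto X$ exact in $\Cc$ and bottom row $G \into F \onto X$ exact in $\Dc'$. The top row already lies in $\Dc$ via $\Cc \subset \Dc$. To bring the bottom row into $\Dc$, I would use that $F \onto X$ has a kernel $G_\Dc$ in $\Dc$ (by admissibility); the exact embedding sends $G_\Dc \into F \onto X$ to an exact sequence in $\Dc'$, so by uniqueness of kernels in $\Dc'$ there is a canonical isomorphism $G \cong G_\Dc$ through which I would transport the diagram, replacing $G$ by $G_\Dc \in \Dc$. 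Finally, the vertical morphisms $Z \to G_\Dc$ and $Y \to F$ between objects of $\Dc$ descend uniquely from $\Dc'$ to $\Dc$ by full-faithfulness, and commutativity in $\Dc$ transfers from commutativity in $\Dc'$ for the same reason. This exhibits the required left special diagram in $\Dc$; note that this works smoothly only because Definition 2.8 imposes no admissibility requirement on the vertical map $Y \to F$, so there is nothing subtle about transferring its admissibility status from $\Dc'$ to $\Dc$.

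For (3), part (2) shows $\Cc$ is left special in $\Dc$; combined with the hypothesis that $\Cc$ is left filtering in $\Dc$, Definition \ref{def:lsfilt} directly yields that $\Cc$ is left s-filtering in $\Dc$. The main (mild) obstacle is the kernel-identification step in (2); beyond that, the arguments are formal consequences of exactness and full-faithfulness of the embeddings.
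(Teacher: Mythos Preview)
Your proof is correct. The paper records this lemma without proof, calling the three statements ``elementary observations,'' so there is no argument in the paper to compare against; your approach is the natural one and is presumably what the authors had in mind.
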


The conditions of left filtering and left s-filtering play a role in forming quotient categories. We summarize the key facts, which we learned from Schlichting and B\"{u}hler, here.
\begin{proposition}
    Let $\Cc\subset\Dc$ be a full sub-category of an exact category. Denote by $\Sigma_e$ the collection of admissible epics in $\Dc$ with kernel in $\Cc$.
    \begin{enumerate}
        \item Then $\Sigma_e$ admits a calculus of left fractions in $\Dc$ if and only if $\Cc$ is left filtering and closed under extensions in $\Dc$.
        \item Denote by $\Dc[\Sigma_e^{-1}]$ the localization of $\Dc$ at $\Sigma_e$. If $\Cc$ is left s-filtering in $\Dc$, then every admissible monic in $\Dc$ with cokernel in $\Cc$ is invertible in $\Dc[\Sigma_e^{-1}]$. In this case, we alternately denote $\Dc[\Sigma_e^{-1}]$ by $\Dc/\Cc$.
        \item If $\Cc\subset\Dc$ is left s-filtering, then $\Dc/\Cc$ has a natural structure of an exact category in which a sequence is exact if and only if it is the image of an exact sequence under the map $\Dc\to\Dc/\Cc$.
    \end{enumerate}
\end{proposition}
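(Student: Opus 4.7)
For part (1), I would verify the axioms of a calculus of left fractions directly. $(\Leftarrow)$ Assume $\Cc$ is left filtering and closed under extensions. Identities lie in $\Sigma_e$ since $0 \in \Cc$, and $\Sigma_e$ is closed under composition because the kernel of $X \onto X' \onto X''$ is an extension of $\ker(X \to X')$ by $\ker(X' \to X'')$. For the Ore axiom---completing $X' \xleftarrow{s} X \xrightarrow{u} Y$ with $s \in \Sigma_e$ to a commutative square with $t \colon Y \onto Y'$ in $\Sigma_e$---let $K := \ker(s) \in \Cc$, apply left filtering to factor $K \into X \xrightarrow{u} Y$ through an admissible monic $K' \into Y$ with $K' \in \Cc$, and let $t \colon Y \onto Y/K' =: Y'$ be the quotient. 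The factorization through $K'$ implies that $tu$ kills $K$, hence descends to $X' = X/K$ as the required map $v$. Cancellation is automatic since admissible epics are epimorphisms. $(\Rightarrow)$ Conversely, given $X \into F \onto Z$ with $X, Z \in \Cc$, the composition of $F \onto Z$ and $Z \onto 0$ (both in $\Sigma_e$) lies in $\Sigma_e$, whence $F = \ker(F \to 0) \in \Cc$. For left filtering, given $u \colon K \to Y$ with $K \in \Cc$, apply Ore to $0 \xleftarrow{s} K \xrightarrow{u} Y$, where $s \colon K \onto 0$ lies in $\Sigma_e$; the resulting $t \colon Y \onto Y'$ in $\Sigma_e$ satisfies $tu = 0$, so $u$ factors through the admissible monic $\ker(t) \into Y$ with $\ker(t) \in \Cc$.

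For part (2), given $m \colon A \into B$ with cokernel $Z := B/A \in \Cc$, I would apply the left special property to $\pi \colon B \onto Z$ to produce an exact sequence $W \into Y \onto Z$ in $\Cc$ together with a compatible morphism $Y \to B$. The induced map $W \to A = \ker(\pi)$ yields a pushout $P := A \cup_W Y$ sitting in an exact sequence $A \into P \onto Y/W \cong Z$, and the 5-Lemma identifies $P \cong B$. The canonical admissible epic $A \oplus Y \onto P \cong B$ has kernel the antidiagonal image of $W \in \Cc$, so it lies in $\Sigma_e$ and becomes invertible in $\Dc[\Sigma_e^{-1}]$; similarly the projection $A \oplus Y \onto A$ has kernel $Y \in \Cc$ and becomes invertible. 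The inclusion $\iota \colon A \into A \oplus Y$ satisfies $\text{addition} \circ \iota = m$ and $\text{projection} \circ \iota = 1_A$, so $m$ equals the composition $\text{addition} \circ \text{projection}^{-1}$ of isomorphisms in $\Dc[\Sigma_e^{-1}]$, and is therefore itself invertible.

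For part (3), I would endow $\Dc/\Cc$ with the class of sequences arising as images of exact sequences in $\Dc$, and verify the exact category axioms by lifting diagrams back to $\Dc$ through the calculus of fractions of part (1) and the invertibility result of part (2). Well-definedness---invariance of the exact class under isomorphism in $\Dc/\Cc$---uses the left s-filtering property to ensure that spans representing the same morphism admit compatible lifts, while composability of admissible monics and admissible epics is inherited from $\Dc$ by closure of $\Sigma_e$ under composition. The main obstacle is verifying the pushout/pullback axiom: given an admissible monic in $\Dc/\Cc$ represented by an admissible monic in $\Dc$ and an arbitrary morphism in $\Dc/\Cc$ lifted to a span, one must combine the Ore axiom with left s-filtering to rigidify the resulting diagram so that a genuine pushout can be formed in $\Dc$ and subsequently shown to represent the desired pushout in $\Dc/\Cc$.
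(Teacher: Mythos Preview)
Your arguments for parts (1) and (2) are correct, and your sketch for (3) identifies the right ingredients. The paper itself does not actually prove this proposition: it attributes (1) and (2) to B\"uhler with the remark that ``the proofs are straightforward; we leave them to the interested reader,'' and for (3) it simply refers to \cite[Proposition 1.16]{Sch:04}. So you have supplied more than the paper does.

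A few small comments. In (1)$(\Rightarrow)$, your extension argument implicitly uses $0\in\Cc$; this is guaranteed since $1_X\in\Sigma_e$ forces $\ker(1_X)=0\in\Cc$. In (2), your pushout step is exactly the construction in \cite[Proposition 2.12]{Buh:10}, which gives the exact sequence $W\into A\oplus Y\onto P$ and hence confirms that $A\oplus Y\onto P\cong B$ really is an admissible epic with kernel $W\in\Cc$; your identification $P\cong B$ via the 5-Lemma is the same maneuver the paper uses in Lemma~\ref{lemma:lsextclosed}. For (3), your outline is accurate in spirit, but the verification of the pushout/pullback axiom is genuinely the delicate point, and carrying it out in full is essentially the content of Schlichting's proof; the paper does not attempt to reproduce it.
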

The first two statements are due to B\"{u}hler. Once one knows the statements, the proofs are straightforward; we leave them to the interested reader. The third statement is due to Schlichting; we refer the reader to \cite[Proposition 1.16]{Sch:04} for the proof.

\subsection{Left Exact Presheaves}
\begin{definition}
    Let $\Cc$ be an exact category. Denote by $\lex(\Cc)$ the category of left-exact presheaves of abelian groups, i.e.\ functors $F\colon\Cc^{\op}\to\ab$ such that if
    \begin{equation*}
        X\into Y\onto Z
    \end{equation*}
    is a short exact sequence in $\Cc$, then the sequence of abelian groups
    \begin{equation*}
        0\to F(Z)\to F(Y)\to F(X)
    \end{equation*}
    is exact.
\end{definition}

The category $\lex(\Cc)$ is familiar in many contexts.
\begin{lemma}\label{lemma:lexR}
    Let $R$ be a ring. Denote by $\Mod(R)$ the category of (left) $R$-modules. The assignment
    \begin{equation*}
        M\mapsto\hom_R(-,M)
    \end{equation*}
    defines an equivalence of categories
    \begin{equation*}
        \begin{xy}
            \morphism<750,0>[\Mod(R)`\lex(P_f(R));\simeq]
        \end{xy}
    \end{equation*}
\end{lemma}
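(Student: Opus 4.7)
The plan is to exhibit an explicit quasi-inverse to $M \mapsto \hom_R(-,M)$, namely evaluation at the free rank-one module $R \in P_f(R)$, and to check that both composites are naturally isomorphic to the identity. The key preliminary observation is that every kernel-cokernel pair in $P_f(R)$ splits, since the right-hand term is projective. Consequently, ``left exact'' reduces to ``additive'' here, and $\lex(P_f(R))$ is the category of additive presheaves $F\colon P_f(R)^{\op}\to\ab$.

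First I would define $\Phi\colon\lex(P_f(R))\to\Mod(R)$ by $\Phi(F)=F(R)$, endowed with the left $R$-action sending $r\in R$ to $F(\rho_r)$, where $\rho_r\colon R\to R$ is right multiplication by $r$. Since right multiplication is $R$-linear and satisfies $\rho_r\rho_s=\rho_{sr}$, its image under the contravariant $F$ indeed gives a left action. A direct check shows $\Phi\circ\Psi$ agrees with the identity on $\Mod(R)$, where $\Psi(M)=\hom_R(-,M)$: the canonical isomorphism $\hom_R(R,M)\cong M$ transports the $R$-action given by precomposition with $\rho_r$ to the ordinary action on $M$.

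For the other composite I would construct a natural transformation $\eta_F\colon F\to\hom_R(-,F(R))$ by declaring $\eta_F(P)(x)(p)=F(\tilde p)(x)$, where $\tilde p\colon R\to P$ is the map $r\mapsto rp$. The verifications that $\eta_F(P)(x)$ is $R$-linear in $p$ and natural in $P$ reduce to the identities $\widetilde{rp}=\tilde p\circ\rho_r$ and $f\circ\tilde p=\widetilde{f(p)}$ for any $R$-linear $f\colon P\to Q$. At $P=R$ the map $\eta_F(R)$ is literally the identity on $F(R)$. By additivity of $F$ (a consequence of the left-exactness/splitting observation above), $\eta_F$ is then an isomorphism on every finitely generated free module $R^n$.

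Finally, to pass from $R^n$ to an arbitrary finitely generated projective $P$, write $P$ as a summand of some $R^n$ via $i\colon P\to R^n$, $p\colon R^n\to P$, $pi=1_P$. Naturality of $\eta_F$ along $i$ and $p$, combined with the fact that $F(p)$, $p^{\ast}$, $F(i)$, $i^{\ast}$ realize both $F(P)$ and $\hom_R(P,F(R))$ as the same retract of their rank-$n$ free counterparts, forces $\eta_F(P)$ to be an isomorphism. The only genuine subtlety is the bookkeeping of left versus right actions and the direction of arrows under the contravariant functor $F$; once conventions are fixed the argument is essentially a Yoneda-type computation using that $R$ is a projective generator of $P_f(R)$.
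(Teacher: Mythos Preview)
Your proof is correct, but it takes a different route from the paper's. The paper constructs the quasi-inverse via the category of elements: it sends $F$ to $\colim_{P_f(R)\downarrow F} N$, the colimit (in $\Mod(R)$) over all maps $N\to F$ with $N\in P_f(R)$, and appeals to the finite presentability of objects of $P_f(R)$ to conclude this is a two-sided inverse. You instead use evaluation at $R$ and a direct Yoneda-type argument, exploiting that every object of $P_f(R)$ is a retract of some $R^n$ and that left-exactness on a split exact category forces additivity.

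Your approach is more elementary and explicit; it gives the inverse on the nose and makes the module structure on $F(R)$ transparent. The paper's approach, while terser here, is the one that generalizes: the category-of-elements description works for arbitrary $\Cc$ in place of $P_f(R)$ and is precisely what the paper needs later (Definition~\ref{def:catel} and the functoriality arguments in Proposition~\ref{prop:FtoIndF}). So both are valid; yours is cleaner for this specific lemma, while the paper's sets up machinery it reuses.
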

\begin{proof}
    The inverse equivalence can be described as follows. Let $F$ be a left-exact presheaf of abelian groups. Denote by $P_f(R)\downarrow F$ the category whose objects are morphisms of left exact presheaves $N\rightarrow F$ where $N$ is a finitely generated projective left $R$-module. Morphisms of $P_f(R)\downarrow F$ are commuting triangles over $F$. The inverse equivalence $\lex(P_f(R))\rightarrow \Mod(R)$ is given on objects by
    \begin{equation*}
        F\mapsto\colim_{P_f(R)\downarrow F} N
    \end{equation*}
    where the colimit is formed in the category of $R$-modules. Because finitely generated projective modules are in fact finitely presented, we can conclude that the above functor is a left and a right inverse.
\end{proof}

Similarly, we have the following.
\begin{lemma}\label{lemma:qcoh=lex(coh)}
    Let $X$ be a Noetherian scheme, and denote by $\Coh(X)$ and $\QCoh(X)$ the categories of coherent and quasi-coherent sheaves on $X$. Then
    \begin{equation*}
        \QCoh(X)\simeq \lex(\Coh(X)).
    \end{equation*}
\end{lemma}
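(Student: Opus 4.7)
My plan is to imitate the proof of Lemma \ref{lemma:lexR} verbatim, replacing the pair $\bigl(P_f(R),\Mod(R)\bigr)$ with $\bigl(\Coh(X),\QCoh(X)\bigr)$. Concretely, I will build the forward functor
\begin{equation*}
    \Phi\colon\QCoh(X)\to\lex(\Coh(X)),\qquad \Fc\mapsto \hom_{\QCoh(X)}(-,\Fc)\bigr|_{\Coh(X)},
\end{equation*}
and an inverse functor
\begin{equation*}
    \Psi\colon\lex(\Coh(X))\to\QCoh(X),\qquad F\mapsto \colim_{\Coh(X)\downarrow F}\mathcal{N},
\end{equation*}
where $\Coh(X)\downarrow F$ is the comma category of pairs $(\mathcal{N},\mathcal{N}\to F)$ with $\mathcal{N}\in\Coh(X)$ and the colimit is formed in $\QCoh(X)$. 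That $\Phi$ lands in $\lex(\Coh(X))$ is immediate from left-exactness of $\hom(-,\Fc)$.

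The verification that $\Psi\Phi\simeq\mathrm{id}$ and $\Phi\Psi\simeq\mathrm{id}$ then reduces to two classical facts about Noetherian schemes, which play the roles respectively of ``every module is a filtered colimit of its finitely generated projective quotients-of-free probes'' and ``finitely generated projective modules are finitely presented.'' First, on a Noetherian scheme, every quasi-coherent sheaf is the filtered colimit of its coherent subsheaves (see EGA I, 6.9.9, or \cite[Exercise II.5.15]{} in Hartshorne); this gives $\Psi\Phi(\Fc)\cong\Fc$ because the comma category $\Coh(X)\downarrow\Phi(\Fc)$ has the poset of coherent subsheaves of $\Fc$ as a cofinal subcategory. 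Second, coherent sheaves are compact objects in $\QCoh(X)$, i.e.\ $\hom_{\QCoh(X)}(\mathcal{N},-)$ commutes with filtered colimits when $\mathcal{N}\in\Coh(X)$ and $X$ is Noetherian. With these two inputs, the same formal argument as in Lemma \ref{lemma:lexR} shows that $\Phi$ and $\Psi$ are mutually inverse.

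The main obstacle — or rather the only non-formal step — is the compactness of coherent sheaves on a Noetherian scheme. Locally on an affine open $U=\Spec A$ with $A$ Noetherian, a coherent sheaf $\mathcal{N}|_U$ corresponds to a finitely presented $A$-module, and $\hom_{A}(N,-)$ commutes with filtered colimits of $A$-modules. Globalizing requires using a finite affine cover (which exists because $X$ is quasi-compact as a Noetherian scheme) together with the \v Cech description of $\hom_{\QCoh(X)}(\mathcal{N},-)$, and observing that filtered colimits commute with finite limits in $\ab$. Once this is established, everything else is routine category theory and the proof is complete.
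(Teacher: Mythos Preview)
The paper gives no proof for this lemma; it simply prefaces the statement with ``Similarly, we have the following,'' pointing back to Lemma~\ref{lemma:lexR}. Your plan to transport that argument---using that every quasi-coherent sheaf on a Noetherian scheme is the filtered union of its coherent subsheaves, and that coherent sheaves are compact objects of $\QCoh(X)$---is precisely what the paper intends, and it is correct.

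One small point you glide over: for the compactness step in $\Phi\Psi\simeq\mathrm{id}$ you need the indexing category $\Coh(X)\downarrow F$ to be filtered. This does hold, because $\Coh(X)$ is abelian and $F$ is left exact: given parallel arrows $f,g\colon(\mathcal{N}_1,x_1)\rightrightarrows(\mathcal{N}_2,x_2)$, factor $f-g$ through its image $\mathcal{I}\hookrightarrow\mathcal{N}_2$; left exactness applied to $0\to\ker(f-g)\to\mathcal{N}_1\to\mathcal{I}\to 0$ makes $F(\mathcal{I})\to F(\mathcal{N}_1)$ injective, forcing the image of $x_2$ in $F(\mathcal{I})$ to vanish, whence $x_2$ lifts to $F(\mathcal{N}_2/\mathcal{I})$. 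It is worth recording this, since the analogue in Lemma~\ref{lemma:lexR} is hidden in the phrase ``finitely presented.''
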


These examples anticipate two other characterizations of $\lex(\Cc)$:
\begin{enumerate}
    \item Keller \cite[Appendix A]{Kel:90}, following Freyd and Quillen, exhibits $\lex(\Cc)$ as a localization, with respect to a Serre sub-category, of the category $\ab^{\Cc^{\op}}$ of presheaves of abelian groups on $\Cc$. Define a presheaf $F$ of abelian groups to be \emph{effaceable} if, for every $Y\in\Cc$ and every section
        \begin{equation*}
            \begin{xy}
                \morphism[Y`F;]
            \end{xy}
        \end{equation*}
        there exists an admissible epic in $\Cc$, as in the figure below, whose target is $Y$ and such that the restriction of the section along this epic is 0.
        \begin{equation*}
            \begin{xy}
                \btriangle/->>`>`>/<400,400>[X`Y`F;`0`]
            \end{xy}
        \end{equation*}
        Effaceable presheaves form a Serre sub-category of $\ab^{\Cc^{\op}}$, and $\lex(\Cc)$ is the associated Serre quotient. This exhibits $\lex(\Cc)$ as an abelian category.
    \item Thomason and Trobaugh \cite[Appendix A]{ThT:90}, following Laumon and Gabriel, observe that the admissible epics in $\Cc$ define a pre-topology.\footnote{See also \cite[Appendix A]{Buh:10}.}  They exhibit $\lex(\Cc)$ as the category of sheaves of abelian groups with respect to this topology. This shows that the inclusion of $\lex(\Cc)$ into $\ab^{\Cc^{\op}}$ preserves and reflects limits, and that the localization
        \begin{equation*}
            \begin{xy}
                \morphism[\ab^{\Cc^{\op}}`\lex(\Cc);]
            \end{xy}
        \end{equation*}
        preserves finite limits.
\end{enumerate}

\begin{proposition}\label{prop:lexleftspecial}
    Every exact category $\Cc$ is left special in $\lex(\Cc)$.
\end{proposition}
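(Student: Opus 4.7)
The plan is to exploit the two characterizations of $\lex(\Cc)$ recalled just above the statement. Keller's description shows $\lex(\Cc)$ is abelian, so admissible epics in $\lex(\Cc)$ are simply epimorphisms; the Thomason--Trobaugh description identifies $\lex(\Cc)$ with the category of sheaves of abelian groups on $\Cc$ for the pretopology whose covers are the admissible epics in $\Cc$. Identifying $X \in \Cc$ with its image $h_X := \hom_\Cc(-,X) \in \lex(\Cc)$ under the Yoneda embedding, the problem reduces to the following: given an epimorphism $F \onto h_X$ in $\lex(\Cc)$, produce an admissible epic $Y \onto X$ in $\Cc$ such that the associated Yoneda map $h_Y \to h_X$ factors through $F$.

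The key step is to apply the sheaf-theoretic criterion for epimorphisms to the universal section $1_X \in h_X(X)$. Since $F \onto h_X$ is a sheaf epimorphism, there is a cover in the admissible-epic pretopology -- that is, an admissible epic $p\colon Y \onto X$ in $\Cc$ -- such that the restricted section $p \in h_X(Y)$ lifts to a section $t \in F(Y)$. By Yoneda, $t$ corresponds to a morphism $h_Y \to F$, and the lifting condition says precisely that the composite $h_Y \to F \onto h_X$ equals the map induced by $p$. This supplies the right-hand square of the required diagram, with the right vertical map being $1_X$.

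To complete the left-hand square, set $Z := \ker(p)$ in $\Cc$, which exists because $p$ is an admissible epic, and set $G := \ker(F \onto h_X)$ in the abelian category $\lex(\Cc)$. The fully exact Yoneda embedding carries the exact sequence $Z \into Y \onto X$ to an exact sequence $h_Z \into h_Y \onto h_X$ in $\lex(\Cc)$, so the composite $h_Z \to h_Y \to F \to h_X$ vanishes and $h_Z \to F$ factors uniquely through $G$. This yields the vertical map $Z \to G$ and the commutativity of the left square. The top row is exact in $\Cc$ by construction, and the bottom row is exact in $\lex(\Cc)$ because $G$ was defined as the kernel of the epi $F \onto h_X$.

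I expect the only real subtlety to be the first reduction, namely the identification of admissible epics in $\lex(\Cc)$ with morphisms that are locally surjective in the admissible-epic pretopology on $\Cc$; this rests squarely on the Thomason--Trobaugh sheaf description of $\lex(\Cc)$. Everything downstream is just Yoneda combined with the universal properties of kernels in the ambient abelian category. The argument succeeds because the Grothendieck topology on $\Cc$ is designed so that admissible epics are precisely the covers, which forces the local lift of the identity section of $h_X$ to be a genuine admissible epic in $\Cc$ rather than some more exotic covering object.
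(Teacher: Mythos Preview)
Your proof is correct and takes essentially the same approach as the paper: find an admissible epic $Y \onto X$ in $\Cc$ factoring through $F \onto h_X$, then pass to kernels to complete the diagram. The paper's main text obtains $Y$ via the effaceable-presheaf description of $\lex(\Cc)$, but its footnote gives exactly your sheaf-epimorphism argument as the alternative route to the same triangle.
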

\begin{proof}
    Let $F\onto Z$ be an epic in $\lex(\Cc)$ with $Z\in\Cc$. The cokernel, in $\ab^{\Cc^{\op}}$, of this map is an effaceable presheaf,\footnote{If we wished to use Thomason's description of $\lex(\Cc)$, the existence of this triangle follows from the observation that the map $F\onto Z$ is an epimorphism of sheaves. Therefore there exists a cover $Z'\onto Z$, i.e. an admissible epic, which factors through the map from $F$.} so there exists an admissible epic $Y\onto Z$ in $\Cc$ fitting into a commuting triangle
    \begin{equation*}
        \begin{xy}
            \dtriangle/>`->>`->>/<400,400>[Y`F`Z;``]
        \end{xy}.
    \end{equation*}
    Taking the kernels of the maps to $Z$, we obtain the desired commuting diagram
    \begin{equation*}
        \begin{xy}
            \square/^{ (}->`>`>`^{ (}->/[X`Y`G`F;```]
            \square(500,0)/->>`>`=`->>/[Y`Z`F`Z;```]
        \end{xy}
    \end{equation*}
\end{proof}

\begin{remark}
    In general, $\Cc$ is very far from being left filtering in $\lex(\Cc)$. As a basic example, consider the category $F_f(\mathbb{Z})$ of finitely generated free abelian groups. The category $\lex(F_f(\mathbb{Z}))$ is equivalent to the category of abelian groups. The map $\mathbb{Z}\to\mathbb{Z}/2$ does not factor through a monic from a free abelian group.
\end{remark}

The proof of Lemma \ref{lemma:lexR} contains a basic construction we will use again.
\begin{definition}\label{def:catel}
    Let $\Cc$ be an exact category, and let $F$ be a left exact presheaf. Define the \emph{category of elements} $\Cc\downarrow F$ of $F$ as follows.
    \begin{enumerate}
        \item Objects are maps $X\to F$ in $\lex(\Cc)$ from an object $X\in\Cc$ to $F$.
        \item A morphism from $X\to F$ to $Y\to F$ is a commuting triangle
            \begin{equation*}
                \begin{xy}
                    \Vtriangle<350,400>[X`Y`F;``]
                \end{xy}
            \end{equation*}
    \end{enumerate}
\end{definition}

The following is a standard fact about categories of elements (e.g. see \cite[Theorem 2.15.6]{Bor:94-1}).
\begin{lemma}
    Let $F$ be a left exact presheaf on $\Cc$. The canonical map
    \begin{equation*}
        \colim_{\Cc\downarrow F} X\to F
    \end{equation*}
    is an isomorphism, where the colimit is taken in $\lex(\Cc)$.
\end{lemma}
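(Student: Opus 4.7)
The plan is to reduce the statement to the density theorem in the larger category $\ab^{\Cc^{\op}}$ of all presheaves of abelian groups, and then transfer the result to $\lex(\Cc)$ using the reflective embedding recalled immediately before the lemma.

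First I would observe that since $\Cc$ is additive, each representable presheaf $h_X := \hom_\Cc(-,X)$ sends a short exact sequence $A \into B \onto C$ in $\Cc$ to a left exact sequence
\begin{equation*}
0 \to \hom_\Cc(C,X) \to \hom_\Cc(B,X) \to \hom_\Cc(A,X)
\end{equation*}
of abelian groups. Hence $h_X \in \lex(\Cc)$, and the tautological functor $\Cc \downarrow F \to \ab^{\Cc^{\op}}$ sending $(X\to F)$ to $h_X$ factors through the inclusion $\lex(\Cc) \into \ab^{\Cc^{\op}}$.

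Next I would invoke the standard density theorem (co-Yoneda lemma) for the unrestricted presheaf category $\ab^{\Cc^{\op}}$: for any presheaf $F$, the canonical map
\begin{equation*}
\colim_{\Cc \downarrow F} h_X \longrightarrow F
\end{equation*}
is an isomorphism in $\ab^{\Cc^{\op}}$, where the colimit is computed pointwise. This is pure Yoneda: evaluated at $Y \in \Cc$, both sides are naturally in bijection with $F(Y)$, the right-hand side by definition and the left-hand side as the colimit over $\Cc \downarrow F$ of $\hom_\Cc(Y, X)$.

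Finally, I would apply the second characterization of $\lex(\Cc)$ recalled before the lemma, by which $\lex(\Cc)$ is a reflective subcategory of $\ab^{\Cc^{\op}}$ via sheafification for the admissible epi pretopology. Since left adjoints preserve colimits, the colimit in $\lex(\Cc)$ is obtained by applying the sheafification to the pointwise colimit in $\ab^{\Cc^{\op}}$. But $F$ is already an object of $\lex(\Cc)$, so sheafifying it yields $F$ itself; likewise each $h_X$ is already left exact by the first step. Applying sheafification to the density isomorphism therefore produces the canonical map of the lemma, proving it is an isomorphism in $\lex(\Cc)$. There is no real obstacle here: the only step beyond formalities is the density theorem, which is settled by direct unraveling of Yoneda; everything else is the adjunction formalism granted by the Thomason--Trobaugh characterization.
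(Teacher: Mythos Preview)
Your argument is correct. The paper does not actually prove this lemma: it simply records it as ``a standard fact about categories of elements'' and cites Borceux \cite[Theorem 2.15.6]{Bor:94-1}. Your sketch is precisely the standard proof one would give---density in $\ab^{\Cc^{\op}}$ via co-Yoneda, then transport along the reflection $L\colon\ab^{\Cc^{\op}}\to\lex(\Cc)$, using that $L$ preserves colimits and fixes both $F$ and the representables $h_X$. The only point worth making explicit is that the canonical map in $\lex(\Cc)$ really is $L$ applied to the canonical map in $\ab^{\Cc^{\op}}$; this follows because the cocone $\{h_X\to F\}$ lives in $\lex(\Cc)$ to begin with, and $L$ applied to it is the identity on each leg.
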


\section{Admissible Ind-Objects}\label{sec:ind}
In this section, we develop the properties of admissible Ind-objects in an exact category $\Cc$. Admissible Ind-objects sit in relation to objects of $\Cc$ as the $R$-module $R[t]$ sits in relation to finitely generated free $R$-modules.\footnote{For definiteness, the $R$-module $R[t]$ is the direct sum $\bigoplus_{n\in\mathbb{N}} R$.} More generally, the results of this section should be viewed as an elaboration, in the setting of exact categories, of the dual to Artin--Mazur \cite[Appendix 2]{ArM:69}.

\subsection{The Category of Admissible Ind-Objects}
\begin{definition}
    Let $I$ and $J$ be directed posets. A functor $\varphi:I\rightarrow J$ is \emph{final} if for every $j\in J$, there exists $i\in I$ with $j\leq\varphi(i)$.
\end{definition}

\begin{definition}
    Let $\Cc$ be an exact category. Let $\kappa$ be a infinite cardinal. An \emph{admissible Ind-diagram of size at most $\kappa$} is a functor
    \begin{equation*}
        \begin{xy}
            \morphism[I`\Cc;X]
        \end{xy}
    \end{equation*}
    such that $I$ is a directed poset of cardinality at most $\kappa$, and such that $X$ takes any arrow in $I$ to an admissible monic in $\Cc$. Morphisms of admissible Ind-diagrams are 2-commuting triangles
    \begin{equation*}
        \begin{xy}
            \Vtriangle<350,400>[I`J`\Cc;\varphi`X`Y]
            \place(350,200)[\twoar(1,0)]
            \place(350,300)[_{\alpha}]
        \end{xy}.
    \end{equation*}
    Denote the category of admissible Ind-diagrams of cardinality at most $\kappa$ by $\Dirk(\Cc)$.
\end{definition}

An admissible Ind-diagram defines a left-exact presheaf by the assignment
\begin{equation*}
    \begin{xy}
        \morphism/|->/<750,0>[Y`\colim_I \hom_{\Cc}(Y,X_i);]
    \end{xy}.
\end{equation*}
This extends to a functor
\begin{equation}\label{eq:dirfun}
    \begin{xy}
        \morphism[\Dirk(\Cc)`\lex(\Cc);\widehat{(-)}]
    \end{xy}.
\end{equation}

\begin{definition}\label{defi:ind}
    Define the category $\Indk(\Cc)$ of \emph{admissible Ind-objects in $\Cc$ of size at most $\kappa$} to be the full sub-category of $\lex(\Cc)$ consisting of objects in the essential image of \ref{eq:dirfun}.
\end{definition}

We can also omit the cardinality bound $\kappa$.
\begin{definition}
    Denote by $\Dir(\Cc)$ the (large) category of admissible Ind-diagrams of arbitrary cardinality. Denote by $\Ind(\Cc)$ the full sub-category of $\lex(\Cc)$ consisting of objects in the essential image of $\Dir(\Cc)$.
\end{definition}

\begin{remark}
    Countable admissible Ind-objects have been studied for some time, for instance in \cite[Appendix A]{Kel:90}, or \cite{Sch:04}. In Section \ref{sec:countableind}, we show that for $\kappa=\aleph_0$, the category $\Ind_{\aleph_0}(\Cc)$ recovers Keller's treatment (Proposition \ref{prop:countableind}). By allowing for uncountable admissible Ind-objects, we can treat examples such as the categories of projective or flat Mittag-Leffler modules over a ring in terms of admissible Ind-objects (see Section \ref{sec:indRmod}).
\end{remark}

If $X\colon I\to\Cc$ and $Y\colon J\to\Cc$ are admissible Ind-diagrams in $\Cc$, then the definition ensures that
\begin{equation*}
    \hom_{\Indk(\Cc)}(\widehat{X},\widehat{Y})\cong\lim_I\colim_J\hom_{\Cc}(X_i,Y_j)
\end{equation*}
In particular, we see that, for any map $f\colon X\to \widehat{Y}$ in $\Indk(\Cc)$ with $X\in\Cc$, and for any admissible diagram $Y\colon J\to\Cc$ representing an $\widehat{Y}$, there exists $j\in J$ such that $f$ factors through some $Y_j\into\widehat{Y}$. This is a key property in what follows, and part of the general phenomenon that objects of $\Cc$ are \emph{finitely presentable} in $\lex(\Cc)$.

\begin{remark}
    We could define an admissible Ind-diagram to be a map $X\colon I\rightarrow\Cc$ such that $I$ is a filtering category, rather than a directed poset. However, this would give an equivalent notion of admissible Ind-object. Indeed, the image of such a diagram $X$ is a directed poset because the maps $X_i\rightarrow X_j$ are all monic.
\end{remark}

\begin{theorem}\label{thm:indleftspecial}
    For any infinite cardinal $\kappa$, the category $\Indk(\Cc)$ is closed under extensions in $\lex(\Cc)$. In particular, $\Indk(\Cc)$ admits a canonical structure of an exact category. Further, if $\Cc$ is weakly idempotent complete, then $\Indk(\Cc)$ is left special in $\lex(\Cc)$. The same is true for $\Ind(\Cc)$.
\end{theorem}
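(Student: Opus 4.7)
The plan is to construct an admissible Ind-diagram in $\Cc$ representing $F$. Given an exact sequence $\widehat{X} \into F \onto \widehat{Z}$ in $\lex(\Cc)$ with $\widehat{X} = \colim_i X_i$ and $\widehat{Z} = \colim_j Z_j$ represented by admissible Ind-diagrams, I would first reduce to the case $\widehat{Z} = Z \in \Cc$. Pulling back along each $Z_j \into \widehat{Z}$ gives $F_j := F \times_{\widehat{Z}} Z_j$, an extension of $Z_j$ by $\widehat{X}$. Exactness of directed colimits in the Grothendieck abelian category $\lex(\Cc)$ yields $F = \colim_j F_j$, and the transition maps $F_j \into F_{j'}$ have cokernel $Z_{j'}/Z_j \in \Cc$. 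A flattening argument (splicing the putative Ind-diagrams of each $F_j$ along these admissible transitions) then reduces the problem to showing $F_j \in \Indk(\Cc)$ for each $j$.

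In the reduced case $\widehat{Z} = Z \in \Cc$, Proposition \ref{prop:lexleftspecial} applied to the epic $F \onto Z$ yields a commuting diagram whose top row $X' \into Y \onto Z$ is exact in $\Cc$, together with maps $Y \to F$ and $X' \to \widehat{X}$. A direct computation in the abelian category $\lex(\Cc)$ identifies $F$ with the pushout $\widehat{X} \sqcup_{X'} Y$, i.e.\ the quotient of $\widehat{X} \oplus Y$ by the antidiagonal copy of $X'$ (which is injective because $X' \into Y$ is). Choosing $i_0$ so that $X' \to \widehat{X}$ factors through $X_{i_0}$, and using that pushouts commute with directed colimits, we obtain $F = \colim_{i \geq i_0} F_i$ where $F_i := X_i \sqcup_{X'} Y$ is the pushout in $\lex(\Cc)$. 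The transitions $F_i \into F_{i'}$ are pushouts of the admissible monics $X_i \into X_{i'}$, hence are admissible monics in $\Cc$ once each $F_i$ is known to lie in $\Cc$.

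The crux of the argument is therefore to show $F_i \in \Cc$, which reduces to showing that the antidiagonal $\iota \colon X' \to X_i \oplus Y$ is an admissible monic in $\Cc$ (so that the pushout can be formed in $\Cc$ via axiom (3) of exact categories). Composing $\iota$ with the projection $X_i \oplus Y \onto Y$ yields the admissible monic $X' \into Y$. Under the hypothesis of weak idempotent completeness, B\"{u}hler's ``obscure axiom'' (\cite[Proposition 7.6]{Buh:10})---which states that if $g \circ f$ is an admissible monic, then so is $f$---implies $\iota$ itself is admissible. This simultaneously completes the left-special statement, since $\colim_i F_i \onto Z$ (and its analogue for $\widehat{Z}$ from the first step) furnishes the required admissible epic in $\Indk(\Cc)$ covering $F \onto \widehat{Z}$. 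For the extension-closure assertion without weak idempotent completeness, the same construction runs inside the idempotent completion $\Cc^{\ic}$ (where the obscure axiom applies), giving $F \in \Indk(\Cc^{\ic})$; descent to $\Indk(\Cc)$ uses that $\lex(\Cc) \simeq \lex(\Cc^{\ic})$ together with a direct reorganization of the Ind-diagram to lie in $\Cc$. The principal obstacle throughout is precisely the admissibility of $\iota$, which is exactly what the obscure axiom delivers and which is why weak idempotent completeness is hypothesized in the left-special statement.
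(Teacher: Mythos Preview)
Your pushout construction is the same idea as the paper's, but you have misidentified where the difficulty lies, and this leads to two genuine problems.

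\textbf{The ``crux'' is a red herring.} You claim the key step is showing the antidiagonal $\iota\colon X'\to X_i\oplus Y$ is an admissible monic so that the pushout can be formed in $\Cc$. But axiom~(3) of exact categories already guarantees that the pushout of the admissible monic $X'\hookrightarrow Y$ along the arbitrary map $X'\to X_i$ exists in $\Cc$ and yields an admissible monic $X_i\hookrightarrow F_i$. The admissibility of $\iota$ is a \emph{consequence} of this (cf.\ \cite[Proposition~2.12]{Buh:10}), not a prerequisite. So the obscure axiom is irrelevant here, the extension-closure argument goes through in any exact category, and your detour through $\Cc^{\ic}$ is unnecessary.

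\textbf{The left-special argument is broken.} You write that ``$\colim_i F_i\onto Z$ \ldots\ furnishes the required admissible epic in $\Indk(\Cc)$ covering $F\onto\widehat{Z}$.'' But your diagram $\{F_i\}$ was built from the admissible Ind-diagram $\{X_i\}$ representing the kernel $\widehat{X}$; in the left-special setup the kernel $G$ of $F\onto\widehat{Z}$ is \emph{not} assumed to lie in $\Indk(\Cc)$, so there are no $X_i$ to begin with and $\colim_i F_i$ is simply $F$, which proves nothing. The paper instead constructs (Lemma~\ref{lemma:indleftspecial}, essentially your ``splicing'' step done carefully) an explicit cover $\widehat{Y'}\onto\widehat{Z}$ factoring through $F$ with each $Y'_k\onto Z_{\varphi(k)}$ an admissible epic in $\Cc$. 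Weak idempotent completeness then enters, via the obscure axiom, to show that the transition maps in the kernel diagram $X'_k:=\ker(Y'_k\onto Z_{\varphi(k)})$ are admissible monics---this is where the hypothesis is actually used, not in forming pushouts.

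Finally, the ``flattening argument'' you invoke in the reduction step is doing more work than you acknowledge: it is the content of Lemma~\ref{lemma:indleftspecial}, and carrying it out while controlling the cardinality bound $\kappa$ requires the finite-directed-subposet construction the paper gives.
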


The following lemma contains the core of the proof of the theorem.
\begin{lemma}\label{lemma:indleftspecial}
    Let $F\onto\widehat{Z}$ be an epic in $\lex(\Cc)$ with $\widehat{Z}\in\Indk(\Cc)$. For any $Z\colon I\to\Cc$ in $\Dirk(\Cc)$ representing $\widehat{Z}$, there exists a morphism in $\Dirk(\Cc)$
    \begin{equation*}
        \begin{xy}
            \Vtriangle<350,400>[K`J`\Cc;\varphi`Y`Z]
            \place(350,200)[\twoar(1,0)]
            \place(350,300)[_{\alpha}]
        \end{xy}
    \end{equation*}
    such that:
    \begin{enumerate}
        \item the map $\varphi$ is final,
        \item for all $k\in K$, the map $\alpha_k\colon Y_k\to Z_{\varphi(k)}$ is an admissible epic, and
        \item the induced map $\widehat{Y}\onto\widehat{Z}$ factors through the map $F\onto\widehat{Z}$ as in
        \begin{equation*}
            \begin{xy}
                \dtriangle/>`->>`->>/<400,400>[\widehat{Y}`F`\widehat{Z};``]
            \end{xy}
        \end{equation*}
    \end{enumerate}
\end{lemma}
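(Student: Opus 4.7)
The plan is to construct $K$ as a directed indexing set for admissible epics into the $Z_i$ that lift through $F$. Let $\widetilde{K}$ be the preorder whose elements are tuples $(i, Y, e, y)$ with $i \in I$, $Y \in \Cc$, $e\colon Y \twoheadrightarrow Z_i$ an admissible epic, and $y\colon Y \to F$ a map satisfying $(F \twoheadrightarrow \widehat{Z}) \circ y = (Z_i \hookrightarrow \widehat{Z}) \circ e$; declare $(i, Y, e, y) \leq (i', Y', e', y')$ when $i \leq i'$ and some admissible monic $m\colon Y \hookrightarrow Y'$ is compatible with $e'$, the embedding $Z_i \hookrightarrow Z_{i'}$, and the lifts to $F$. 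Passing to the quotient by mutual inequality, or rather to any directed-poset approximation of the filtering category obtained from this data (using the remark after Definition \ref{defi:ind}), produces the poset we really want; I shall conflate these for brevity. The desired $K$ will be extracted as a cofinal sub-poset of $\widetilde{K}$ of size at most $\kappa$.

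I would first verify that the projection $\varphi\colon \widetilde{K} \to I$ hits every $j \in I$, which gives finality. Form the pullback $F \times_{\widehat{Z}} Z_j$ in the abelian category $\lex(\Cc)$; the projection to $Z_j$ is epic in $\lex(\Cc)$ with target in $\Cc$, so Proposition \ref{prop:lexleftspecial} yields an admissible epic $Y \twoheadrightarrow Z_j$ in $\Cc$ together with a compatible lift $Y \to F \times_{\widehat{Z}} Z_j \to F$.

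For directedness, given two elements $(i, Y, e, y)$ and $(i', Y', e', y')$, pick $k \geq i, i'$ in $I$ and some $(k, W, w_0, w) \in \widetilde{K}$ over $k$ by the previous step. Set $\widetilde{Y} := Y \oplus Y' \oplus W$ with structure map to $Z_k$ the sum of $Y \to Z_i \hookrightarrow Z_k$, $Y' \to Z_{i'} \hookrightarrow Z_k$, and $w_0$, and with lift $(y, y', w)\colon \widetilde{Y} \to F$. Although only the $W$-summand is admissibly epic onto $Z_k$, the full sum is admissible epic: any $(f, g)\colon A \oplus C \to B$ with $g\colon C \twoheadrightarrow B$ admissible factors as $A \oplus C \xrightarrow{1_A \oplus g} A \oplus B \xrightarrow{(f, 1_B)} B$, and both arrows are admissible epic (the first is a direct sum of admissible epics; the second splits). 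The split inclusions $Y, Y' \hookrightarrow \widetilde{Y}$ are admissible monics respecting all of the data, witnessing $(i, Y, e, y),\, (i', Y', e', y') \leq (k, \widetilde{Y}, \ldots)$ in $\widetilde{K}$.

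Finally, cardinality is controlled by a transfinite extraction. Pick $K_0 \subseteq \widetilde{K}$ containing one element over each $i \in I$, so $|K_0| \leq \kappa$; given $K_n$, let $K_{n+1}$ adjoin one chosen upper bound for each ordered pair in $K_n$. The cardinal arithmetic lemma at the start of Section \ref{sec:prel} gives $|K_{n+1}| \leq \kappa$, so $K := \bigcup_n K_n$ is directed of cardinality at most $\kappa$, $\varphi|_K$ is final, and projection to the $Y$-coordinate defines the admissible Ind-diagram $Y\colon K \to \Cc$ with structure admissible epics $\alpha_k$; the lifts $y_k\colon Y_k \to F$ assemble into the required factorization $\widehat{Y} \to F \twoheadrightarrow \widehat{Z}$. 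The main obstacle I expect is the ``sum with an admissible epic is admissible'' point, which the explicit two-step factorization handles; the remaining poset-versus-preorder and cardinality arguments are routine bookkeeping.
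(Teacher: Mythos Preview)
Your proposal has the right ingredients (pulling back along $Z_j\into\widehat{Z}$, invoking Proposition \ref{prop:lexleftspecial}, and passing to direct sums), but there is a genuine coherence gap in the construction of the diagram $Y\colon K\to\Cc$.

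The problem is that your ordering on $\widetilde{K}$ records only the \emph{existence} of a compatible admissible monic $m\colon Y\hookrightarrow Y'$, not a specific one. When you later say ``projection to the $Y$-coordinate defines the admissible Ind-diagram $Y\colon K\to\Cc$'', you are implicitly choosing such monics as the transition maps, and nothing guarantees that these choices compose consistently. Concretely, in your extraction $K_0\subset K_1\subset\cdots$, suppose $a,b,c\in K_0$, with chosen upper bounds $d$ for $(a,b)$ and $e$ for $(b,c)$ in $K_1$, and $f$ for $(d,e)$ in $K_2$; then the two composites $Y_b\to Y_d\to Y_f$ and $Y_b\to Y_e\to Y_f$ need not agree. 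Equivalently, if you try to treat $\widetilde{K}$ as a category whose morphisms are the compatible monics, it is not filtered: the coequalizer condition fails. For a toy instance, take $\Cc=\Vect_k$, $\widehat{Z}=k$, $F=k$; then $(k,1,1)\rightrightarrows(k^2,\pi_1,\pi_1)$ via $m=(1,0)$ and $m'=(1,1)$ are both valid morphisms, and no admissible monic out of $k^2$ can coequalize them. Thus your $K$ does not carry a functor to $\Cc$, and the appeal to the remark after Definition \ref{defi:ind} (which presupposes a filtered indexing category) does not apply.

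The paper's argument sidesteps exactly this issue by making the transition maps canonical. It takes $K$ to be the poset of finite directed sub-posets $J_a\subset J$ under inclusion, sets $Y_{J_a}:=\bigoplus_{j\in J_a}Y_j$, and uses the inclusion of summands as transition maps; these are automatically strictly functorial. Your shear-type observation that a sum one of whose components is an admissible epic is itself admissible epic is then precisely what is needed to show each $\alpha_{J_a}$ is admissible epic. So the fix is to replace your abstract $\widetilde{K}$ by this concrete indexing by finite subsets; once you do that, the rest of your outline goes through, and indeed coincides with the paper's proof.
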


We also need the following minor restatement of the duals of \cite[Proposition A.3.1]{ArM:69} and \cite[Corollary A.3.2]{ArM:69}.\footnote{The only difference, besides notation, is that our assumption that all maps in the diagrams are monic allows us to replace ``$U$-small'' by a definite cardinality bound on the size of $I\downarrow_{\Cc}J$.}
\begin{lemma}[Straightening Morphisms]\label{lemma:straight}
    Let $X\colon I\to \Cc$ and $Y\colon J\to\Cc$ be diagrams in $\Cc$ where $I$ and $J$ are directed posets of cardinality at most $\kappa$, and where, for all $i\le i'$ in $I$ and $j\le j'$ in $J$, the maps $X_i\to X_{i'}$ and $Y_j\to Y_{j'}$ are (not necessarily admissible) monics. Denote by $\widehat{X}$ and $\widehat{Y}$ the colimits of these diagrams in $\lex(\Cc)$. Given a map $f\colon\widehat{X}\rightarrow\widehat{Y}$, define $I\downarrow_{\Cc} J$ to be the category whose objects are triples $(i,j,\alpha_{ij})$ where $i\in I$, $j\in J$ and $\alpha_{ij}$ is a morphism which fits into a commuting square (in $\lex(\Cc)$)
    \begin{equation}\label{eq:strtsq}
        \begin{xy}
            \square[X_i`Y_j`\widehat{X}`\widehat{Y};\alpha_{ij}```f]
        \end{xy}
    \end{equation}
    Then $I\downarrow_{\Cc} J$ is a directed poset of cardinality at most $\kappa$.

    Further, define $\varphi\colon I\downarrow_{\Cc} J\to I$ and $\psi\colon I\downarrow_{\Cc}J\to J$ to be the projections in the $I$ and $J$ factors. Both $\varphi$ and $\psi$ are final, and they give rise to a span in $\Dirk(\Cc)$
    \begin{equation*}
        \begin{xy}
            \Vtrianglepair/<-`>`>`>`>/[I`I\downarrow_{\Cc} J`J`\Cc;\varphi`\psi`X``Y]
            \place(700,350)[\twoar(1,0)^{\alpha}]
        \end{xy}
    \end{equation*}
    in which the triangle on the left strictly commutes, and the component of $\alpha$ at $(i,j,\alpha_{ij})$ is given by $\alpha_{ij}$.

    In particular, any map of admissible Ind-objects can be straightened to a map of admissible Ind-diagrams.
\end{lemma}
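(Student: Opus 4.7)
My plan is to reduce everything to two key facts about the ambient category $\lex(\Cc)$: (i) filtered colimits in $\lex(\Cc)$ are computed pointwise on $\Cc^{\op}$, since a filtered colimit of left-exact sequences of abelian groups is left-exact; and (ii) as a consequence, representable objects $X_i, Y_j$ are finitely presentable in $\lex(\Cc)$. Fact (i) also implies, since a filtered colimit of injections of abelian groups is an injection, that each structure map $X_i \to \widehat{X}$ and $Y_j \to \widehat{Y}$ is a monomorphism in $\lex(\Cc)$ (we only need the transition maps to be monic in $\Cc$, hence after Yoneda monic in $\lex(\Cc)$).

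From the monicity of $Y_j \into \widehat{Y}$, the arrow $\alpha_{ij}$ in a square \eqref{eq:strtsq} is uniquely determined by the pair $(i,j)$ whenever it exists. This immediately shows that $I\downarrow_{\Cc} J$ is a poset, with $(i,j,\alpha_{ij})\le (i',j',\alpha_{i'j'})$ iff $i\le i'$ and $j\le j'$, and it also yields the cardinality bound: the underlying set of objects injects into $I\times J$, which has cardinality at most $\kappa\cdot\kappa=\kappa$ by the cardinal arithmetic lemma from \S\ref{sec:prel}.

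Next I verify directedness and finality. Given any $i\in I$, the composite $X_i \into \widehat{X}\xrightarrow{f}\widehat{Y}$ is a map from a finitely presentable object into a filtered colimit, so it factors through some $Y_j\into\widehat{Y}$ via a (unique) arrow $\alpha_{ij}$; this produces an element of $I\downarrow_{\Cc} J$ above the given $i$, proving finality of $\varphi$. For finality of $\psi$, given $j\in J$, pick any $i\in I$, obtain a factorization through some $Y_{j'}$ as above, and then choose $j''\ge j, j'$; the monic $Y_{j'}\into Y_{j''}$ lets us produce $(i,j'',\alpha_{ij''})$ above $j$. Directedness of $I\downarrow_{\Cc} J$ follows in the same way: given $(i_1,j_1,\alpha_1)$ and $(i_2,j_2,\alpha_2)$, pick $i_3\in I$ above $i_1,i_2$, use finite presentability to factor $X_{i_3}\to\widehat{Y}$ through some $Y_{j_3'}$, and then pick $j_3\in J$ above $j_1,j_2,j_3'$; the resulting $(i_3,j_3,\alpha_{i_3 j_3})$ dominates both given triples, with the required compatibility coming once more from the monicity of $Y_{j_3}\into\widehat{Y}$.

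Finally, the natural transformation $\alpha$ of diagrams $X\circ\varphi\Rightarrow Y\circ\psi$ has component at $(i,j,\alpha_{ij})$ equal to $\alpha_{ij}$; naturality on morphisms $(i,j,\alpha_{ij})\le(i',j',\alpha_{i'j'})$ is again the uniqueness of $\alpha_{i'j'}$. Since $\varphi$ and $\psi$ strictly commute with the projection data, the triangle on the left of the span commutes on the nose, and the right triangle commutes up to $\alpha$. I expect the only mildly subtle point in the argument to be the repeated use that the monic transition maps promote to monics in $\lex(\Cc)$ — everything else is a direct application of finite presentability of objects of $\Cc$ inside $\lex(\Cc)$. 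The concluding sentence of the lemma, that any map of admissible Ind-objects can be straightened, is immediate: if $X$ and $Y$ are admissible Ind-diagrams then $X\circ\varphi$ and $Y\circ\psi$ are again admissible Ind-diagrams since $\varphi,\psi$ preserve the order, and $\alpha$ exhibits $f$ as induced by an honest morphism in $\Dirk(\Cc)$ up to the identification $\widehat{X\circ\varphi}\cong\widehat{X}$ coming from finality of $\varphi$.
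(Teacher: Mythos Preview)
Your proof is correct and is essentially the standard Artin--Mazur argument that the paper cites (dualized); the paper does not give its own proof but merely records the lemma as a restatement of \cite[Proposition A.3.1, Corollary A.3.2]{ArM:69}. The two points you single out --- that monicity of the transition maps makes each $\alpha_{ij}$ unique (so $I\downarrow_{\Cc}J$ injects into $I\times J$, yielding the cardinality bound) and that finite presentability of objects of $\Cc$ in $\lex(\Cc)$ supplies the factorizations --- are exactly the content of that argument.
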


\begin{proof}[Proof of Theorem \ref{thm:indleftspecial}]
    We use the lemmas to show that $\Indk(\Cc)\subset\lex(\Cc)$ is closed under extensions. If $\Cc$ is weakly idempotent complete, our argument will also show that $\Indk(\Cc)$ is left special in $\lex(\Cc)$.

    Let
    \begin{equation*}
        \widehat{X}\into F\onto \widehat{Z}
    \end{equation*}
    be a short exact sequence in $\lex(\Cc)$ where $\widehat{X}$ and $\widehat{Z}$ are in $\Indk(\Cc)$. Let $X\colon I\to\Cc$ and $Z\colon J\to\Cc$ be admissible Ind-diagrams of cardinality at most $\kappa$ representing $\widehat{X}$ and $\widehat{Z}$. Lemma \ref{lemma:indleftspecial} guarantees the existence of a map of admissible Ind-diagrams of cardinality at most $\kappa$
    \begin{equation*}
        \begin{xy}
            \Vtriangle<350,400>[K`J`\Cc;\varphi`Y'`Z]
            \place(350,200)[\twoar(1,0)]
            \place(350,300)[_{\alpha}]
        \end{xy}
    \end{equation*}
    such that all of the components of $\alpha$ are admissible epics and such that the induced map $\widehat{Y'}\onto\widehat{Z}$ factors through the map $F\onto\widehat{Z}$. Define a directed diagram
    \begin{equation*}
        \begin{xy}
            \morphism[K`\Cc;X']
            \morphism(0,-200)/|->/[k`\ker(\alpha_{k});]
        \end{xy}
    \end{equation*}
    The maps in this diagram are the monics induced by the universal property of kernels.\footnote{Note that, while the maps in this diagram are monics, we do not claim that they are admissible monics; in general, this is only the case under the additional assumption that $\Cc$ is weakly idempotent complete.} This diagram fits into an exact sequence with the admissible Ind-diagrams $Y'$ and $Z$
    \begin{equation*}
        \begin{xy}
            \Vtrianglepair/=`>`>`>`>/[K`K`J`\Cc;``X'`Y'`Z]
            \place(350,250)[\twoar(1,0)]
            \place(350,350)[_{\beta}]
            \place(650,250)[\twoar(1,0)]
            \place(650,350)[_{\alpha}]
        \end{xy}
    \end{equation*}
    where the map $K\rightarrow J$ is final, the components of $\alpha$ are admissible epics, and the components of $\beta$ are admissible monics. Taking the colimit of these diagrams in $\lex(\Cc)$, we obtain a commuting diagram with exact rows
    \begin{equation*}
        \begin{xy}
            \square/^{ (}->`>`>`^{ (}->/[\widehat{X'}`\widehat{Y'}`\widehat{X}`F;```]
            \square(500,0)/->>`>`=`->>/[\widehat{Y'}`\widehat{Z}`F`\widehat{Z};```]
        \end{xy}
    \end{equation*}
    Note that the map from $\widehat{X'}$ to $\widehat{X}$ is induced by the universal property of kernels. Using Lemma \ref{lemma:straight}, we straighten this map to a span
    \begin{equation*}
        \begin{xy}
            \Vtrianglepair/<-`>`>`>`>/[K`K\downarrow_{\Cc}I`I`\Cc;``X'``X]
        	\place(600,250)[\twoar(1,0)]
            \place(600,350)[_{\gamma}]
        \end{xy}
    \end{equation*}
    and define
    \begin{equation*}
        \begin{xy}
            \morphism<750,0>[K\downarrow_{\Cc}I`\Cc;Y]
            \morphism(0,-200)/|->/<750,0>[(k,i,\gamma_{ki})`X_i\cup_{X'_{k}}Y'_{k};]
        \end{xy}
    \end{equation*}
    A map $(k,i,\gamma_{ki})\to(\ell,j,\eta_{\ell j})$ induces a map $Y_{(k,i,\gamma_{ki})}\to Y_{(\ell,j,\eta_{\ell j})}$ by the universal property of pushouts.

    The diagram $Y$ is admissible. Indeed, for each arrow $(k,i,\gamma_{ki})\rightarrow(\ell,j,\eta_{\ell j})$ in $K\downarrow_{\Cc} I$, we have a diagram with exact rows
    \begin{equation*}
        \begin{xy}
            \square/^{ (}->`^{ (}->`^{ (}->`^{ (}->/<750,500>[X_i`Y_{(k,i,\gamma_{ki})}`X_j`Y_{(\ell,j,\eta_{\ell j})};```]
            \square(750,0)/->>`>`^{ (}->`->>/<750,500>[Y_{(k,i,\gamma_{ki})}`Z_{\varphi(k)}`Y_{(\ell,j,\eta_{\ell j})}`Z_{\varphi(\ell)};```]
        \end{xy}
    \end{equation*}
    The left and right vertical arrows are admissible monics. Therefore the middle vertical arrow is an admissible monic as well \cite[Corollary 3.2]{Buh:10}.

    Passing back to the associated admissible Ind-objects, we obtain a commuting diagram in $\lex(\Cc)$ with exact rows
    \begin{equation*}
        \begin{xy}
            \square/^{ (}->`=`>`^{ (}->/[\widehat{X}`\widehat{Y}`\widehat{X}`F;```]
            \square(500,0)/->>`>`=`->>/[\widehat{Y}`\widehat{Z}`F`\widehat{Z};```]
        \end{xy}
    \end{equation*}
    The 5-Lemma shows that the map from $\widehat{Y}$ to $F$ is an isomorphism. We conclude that $\Indk(\Cc)$ is closed under extensions in $\lex(\Cc)$, and is therefore a fully exact sub-category.

    If $\Cc$ is weakly idempotent complete, let
    \begin{equation*}
        G\into F\onto\widehat{Z}
    \end{equation*}
    be a short exact sequence in $\lex(\Cc)$ with $\widehat{Z}\in\Indk(\Cc)$. The construction above above yields a commuting diagram with exact rows
    \begin{equation*}
        \begin{xy}
            \square/^{ (}->`>`>`^{ (}->/[\widehat{X'}`\widehat{Y'}`G`F;```]
            \square(500,0)/->>`>`=`->>/[\widehat{Y'}`\widehat{Z}`F`\widehat{Z};```]
        \end{xy}
    \end{equation*}
    Because $\Cc$ is weakly idempotent complete, the maps in the diagram $X'\colon K\to\Cc$ are all admissible monics. We conclude that $\widehat{X'}\in\Indk(\Cc)$, and that $\Indk(\Cc)$ is left special in $\lex(\Cc)$.
\end{proof}

\begin{proof}[Proof of Lemma \ref{lemma:indleftspecial}]
    Our argument follows Keller \cite[Appendix B]{Kel:90}. The only change is that, in considering $\kappa>\aleph_0$, we need to work with directed posets rather than only linear orders. Let
    \begin{equation*}
        F\onto\widehat{Z}
    \end{equation*}
    be an epic in $\lex(\Cc)$ with $\widehat{Z}\in\Indk(\Cc)$. Let $Z\colon J\to\Cc$ be an admissible Ind-diagram representing $\widehat{Z}$. For all $j\in J$, we can pull back $F$ along the map $Z_j\into\widehat{Z}$. Denote the resulting epic by $F_j\onto Z_j$. The category $\Cc$ is left special in $\lex(\Cc)$, so there exists an admissible epic $Y_j\onto Z_j$ in $\Cc$ such that we have a triangle
    \begin{equation*}
        \begin{xy}
            \dtriangle/>`->>`->>/<400,400>[Y_j`F_j`Z_j;``]
        \end{xy}.
    \end{equation*}
    We use these triangles to construct an admissible Ind-diagram
    \begin{equation*}
        \begin{xy}
            \morphism[K`\Cc;Y]
        \end{xy}
    \end{equation*}
    with the desired properties.

    Let $K$ be the category whose objects are finite directed sub-posets $J_a\subset J$. Morphisms in $K$ are inclusions of sub-diagrams. We first observe that $|K|\leq\coprod_{n\in\mathbb{N}}|J|^n\leq\kappa$.

    We now show that $K$ is directed. Because $J$ is directed, for any finite collection $\{j_i\}_{i=0}^n$ of objects of $J$, there exists $j\in J$ such that $j_i\leq j$ for $i=0,\ldots,n$. Given any two objects $J_{a_0},J_{a_1}\in K$, their union in $J$ consists of a finite collection of objects of $J$. Taking an upper bound on this diagram, we obtain a finite directed sub-poset of $J$ which contains both $J_{a_0}$ and $J_{a_1}$.

    The assignment
    \begin{equation*}
        \begin{xy}
            \morphism/|->/[J_a`\max J_a;]
        \end{xy}
    \end{equation*}
    defines a final map from $K$ to $J$.

    Define an admissible Ind-diagram by
    \begin{equation*}
        \begin{xy}
            \morphism[K`\Cc;Y]
            \morphism(0,-200)/|->/[J_a`\bigoplus_{j\in J_a} Y_j;]
        \end{xy}
    \end{equation*}
    Morphisms are given by inclusions of summands.

    Define a natural transformation
    \begin{equation*}
        \begin{xy}
            \Vtriangle<350,400>[K`J`\Cc;`Y`Z]
        	\place(350,200)[\twoar(1,0)]
        	\place(350,300)[_{\alpha}]
            \end{xy}
    \end{equation*}
    by defining the restriction of the component $\alpha_{J_a}$ to the $Y_j$-summand of $Y_{J_a}$ to be the composite
    \begin{equation*}
        Y_j\to Z_j\to\cdots\to Z_{\max J_a}.
    \end{equation*}
    The $\alpha_{J_a}$ are admissible epics for all $J_a\in K$. Indeed, consider the shear map
    \begin{equation*}
        \begin{xy}
            \morphism<750,0>[\oplus_{j\in J_a} Y_j`\oplus_{j\in J_a} Y_j;\sigma]
        \end{xy}.
    \end{equation*}
    i.e. the map whose restriction to the $Y_j$ summand is given by
    \begin{equation*}
        \begin{xy}
            \morphism<650,0>[\sum_{j'\ge j}(Y_j`Y_{j'});]
        \end{xy}
    \end{equation*}
    The shear map is an isomorphism, because $|J_a|$ is finite. It factors $\alpha_{J_a}$ as
    \begin{equation*}
        \begin{xy}
            \morphism/->>/<750,0>[\oplus_{j\in J_a} Y_j`\oplus_{j\in J_a} Z_j;]
            \morphism(750,0)<750,0>[\oplus_{j\in J_a} Z_j`\oplus_{j\in J_a} Z_j;\sigma]
            \morphism(1500,0)<750,0>[\oplus_{j\in J_a} Z_j`Z_{\max J_a};]
        \end{xy}
    \end{equation*}
    where the last map is the projection onto the factor $Z_{\max J_a}$. Each map in this factorization is an admissible epic in $\Cc$, so $\alpha_{J_a}$ is an admissible epic as well.

    We now define a co-cone on the diagram $Y$ with co-cone point $F$. For $J_a\in K$, the restriction of the map $Y_{J_a}\to F$ to the $Y_j$ summand is given by the composite
    \begin{equation*}
        Y_j\to F_j\to\cdots\to F_{\max J_a}\to F
    \end{equation*}
    By the universal property of colimits, this determines a unique map $\widehat{Y}\to F$ fitting into the desired commuting triangle.
\end{proof}

\subsection{Properties of Admissible Ind-Objects}\label{sec:indprop}
\subsubsection{\texorpdfstring{$\Cc$}{C} as an Exact, Full Sub-Category of \texorpdfstring{$\Indk(\Cc)$}{Ind(C)}}
\begin{proposition}\label{prop:cleftsfiltinind}
    The category $\Cc$ is left s-filtering in $\Indk(\Cc)$.
\end{proposition}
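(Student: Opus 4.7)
The plan is to verify both halves of the definition of left s-filtering (Definition \ref{def:lsfilt}): that $\Cc$ is left special and left filtering in $\Indk(\Cc)$.

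For \emph{left special}, the strategy is to import this property from the larger ambient category $\lex(\Cc)$. Proposition \ref{prop:lexleftspecial} already gives that $\Cc$ is left special in $\lex(\Cc)$, and Theorem \ref{thm:indleftspecial} shows that $\Indk(\Cc)$ is closed under extensions in $\lex(\Cc)$, hence forms an exact sub-category with the induced exact structure. Applying the transitivity principle of Lemma \ref{lemma:leftstransitive}(2) to the chain $\Cc \subset \Indk(\Cc) \subset \lex(\Cc)$ then immediately yields left specialness of $\Cc$ in $\Indk(\Cc)$. This part is essentially automatic once the theorem is in place.

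For \emph{left filtering}, suppose $f\colon X \to \widehat{Y}$ is a morphism in $\Indk(\Cc)$ with $X\in\Cc$ and with $\widehat{Y}$ represented by an admissible Ind-diagram $Y\colon J \to \Cc$. Using the computation
\begin{equation*}
\hom_{\Indk(\Cc)}(X,\widehat{Y}) \;\cong\; \colim_J \hom_\Cc(X, Y_j),
\end{equation*}
recorded after Definition \ref{defi:ind}, we conclude that $f$ factors as $X \to Y_j \to \widehat{Y}$ for some $j\in J$. Since $Y_j\in\Cc$, it remains only to show that the canonical structure map $Y_j \to \widehat{Y}$ is an admissible monic in $\Indk(\Cc)$.

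I expect this last step to be the main obstacle, though it is still routine. The approach is to restrict to the cofinal sub-poset $J_{\ge j}\subset J$ and build the quotient diagram $Y/Y_j\colon J_{\ge j}\to\Cc$, $j'\mapsto Y_{j'}/Y_j$; the transition maps between these quotients are admissible monics by the snake/$3\times 3$ lemma \cite[Corollary 3.6]{Buh:10} applied to the admissible structure of $Y$. This produces a short exact sequence of admissible Ind-diagrams with constant term $Y_j$. Since filtered colimits of short exact sequences in $\lex(\Cc)$ remain exact, taking colimits yields an exact sequence
\begin{equation*}
Y_j \into \widehat{Y} \onto \widehat{Y/Y_j}
\end{equation*}
in $\lex(\Cc)$ with all three terms lying in $\Indk(\Cc)$. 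Because $\Indk(\Cc)$ inherits its exact structure from $\lex(\Cc)$ via closure under extensions, this is already exact in $\Indk(\Cc)$, so $Y_j \into \widehat{Y}$ is an admissible monic there. Combining the two halves completes the proof.
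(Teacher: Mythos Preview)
Your proposal is correct and follows essentially the same route as the paper. The paper imports left specialness from $\lex(\Cc)$ exactly as you do (implicitly via Lemma~\ref{lemma:leftstransitive}), and it proves left filtering by the same factorization through $Y_j$ together with the fact that $Y_j\into\widehat{Y}$ is an admissible monic; the paper simply isolates this last fact as a separate lemma (Lemma~\ref{lemma:indsubob}). One small remark: the admissibility of the transition maps $Y_{j'}/Y_j\into Y_{j''}/Y_j$ follows most cleanly from Noether's Lemma \cite[Lemma~3.5]{Buh:10} or the pushout axiom for admissible monics, rather than the $3\times 3$ lemma you cite, but the content is the same.
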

In Proposition \ref{prop:lexleftspecial} we showed that $\Cc$ is left special in $\lex(\Cc)$. To prove the proposition, it suffices to show that $\Cc$ is left filtering in $\Indk(\Cc)$. This is a consequence of the following.
\begin{lemma}\label{lemma:indsubob}
    Let $X\colon I\to\Cc$ be an admissible Ind-diagram. Then for any $i\in I$, the map $X_i\to\widehat{X}$ is an admissible monic in $\Indk(\Cc)$.
\end{lemma}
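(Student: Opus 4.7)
The plan is to exhibit a short exact sequence
\begin{equation*}
X_i \into \widehat{X} \onto \widehat{Q}
\end{equation*}
in $\lex(\Cc)$ with $\widehat{Q} \in \Indk(\Cc)$. Since $\Indk(\Cc) \subset \lex(\Cc)$ is closed under extensions by Theorem \ref{thm:indleftspecial}, and its exact structure is inherited from $\lex(\Cc)$ via Lemma \ref{lemma:extclosed}, this immediately shows that $X_i \into \widehat{X}$ is an admissible monic in $\Indk(\Cc)$.

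First, I would replace $I$ by the sub-poset $I_{\geq i}$ of elements $\geq i$. Because $I$ is directed, $I_{\geq i}$ is again directed and cofinal in $I$; it has cardinality at most $\kappa$, and reindexing does not change $\widehat{X}$. In this reindexed diagram $X_i$ is an initial object, so for each $j \in I_{\geq i}$ we may form $Q_j := \coker(X_i \into X_j)$ in $\Cc$. Given $i \leq j \leq k$ in $I_{\geq i}$, the composition $X_i \into X_j \into X_k$ yields the standard ``Noether isomorphism'' exact sequence
\begin{equation*}
X_j/X_i \into X_k/X_i \onto X_k/X_j
\end{equation*}
in $\Cc$, which exhibits the induced map $Q_j \to Q_k$ as an admissible monic. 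Hence $Q\colon I_{\geq i} \to \Cc$ is an admissible Ind-diagram of cardinality at most $\kappa$, and $\widehat{Q} \in \Indk(\Cc)$.

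Finally, for each $j \in I_{\geq i}$ the sequence $X_i \into X_j \onto Q_j$ is short exact in $\Cc$, and hence in $\lex(\Cc)$. Passing to the colimit over $I_{\geq i}$ produces the desired sequence: the constant diagram at $X_i$ over the non-empty directed poset $I_{\geq i}$ has colimit $X_i$; the colimit of the $X_j$ is $\widehat{X}$ by cofinality of $I_{\geq i} \subset I$; and the colimit of the $Q_j$ is $\widehat{Q}$. Exactness of the resulting sequence follows because filtered colimits are exact in $\lex(\Cc)$, which is a Grothendieck abelian category via the Thomason--Trobaugh presentation as a category of sheaves for the admissible-epic topology. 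The main step requiring care is the verification that the transition maps in the quotient diagram $Q$ are admissible monics; the rest of the argument is formal, relying only on cofinality and on the extension-closure of $\Indk(\Cc)$ already established in Theorem \ref{thm:indleftspecial}.
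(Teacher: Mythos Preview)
Your proof is correct and follows essentially the same approach as the paper: restrict to the final sub-poset $I_{\geq i}$, form the admissible quotient diagram $j\mapsto X_j/X_i$, and pass to the colimit of the resulting sequence of short exact sequences. The only cosmetic difference is that the paper justifies admissibility of the quotient diagram by the phrase ``admissible monics push out,'' whereas you invoke Noether's Lemma---these are equivalent here.
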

\begin{proof}
    Denote by $I_i\subset I$ the sub-poset consisting of all $j\in I$ such that $i\leq j$. The inclusion $I_i\into I$ is final. The diagram
    \begin{equation*}
        \begin{xy}
            \morphism[I_i`\Cc;X/X_i]
            \morphism(0,-200)/|->/[j`X_j/X_i;]
        \end{xy}
    \end{equation*}
    is admissible, because admissible monics push out. It fits into a sequence of admissible Ind-diagrams
    \begin{equation*}
        \begin{xy}
            \Vtrianglepair/=`=`>`>`>/[I_i`I_i`I_i`\Cc;``X_i`X`X/X_i]
            \place(375,250)[\twoar(1,0)]
            \place(375,350)[_{\alpha}]
            \place(650,250)[\twoar(1,0)]
            \place(650,350)[_{\beta}]
        \end{xy}
    \end{equation*}
    This sequence defines an exact sequence of admissible Ind-objects, whose first map is the canonical map $X_i\to\widehat{Y}$.
\end{proof}

\begin{proof}[Proof of Proposition \ref{prop:cleftsfiltinind}]
    Let $f\colon X\to\widehat{Y}$ be a morphism in $\Indk(\Cc)$ with $X\in\Cc$. For any diagram $Y\colon J\to\Cc$ presenting $\widehat{Y}$, there exists $j\in J$ such that $f$ factors through the map $Y_j\into\widehat{Y}$. This map is an admissible monic by the previous lemma. We conclude that $\Cc$ is left filtering, and thus left s-filtering, in $\Indk(\Cc)$.
\end{proof}

\subsubsection{Exact Sequences of Admissible Ind-Objects}
\begin{proposition}[Straightening Exact Sequences]\label{prop:inde=eind}
    The exact categories $\Indk(\Ec\Cc)$ and $\Ec\Indk(\Cc)$ are canonically equivalent.
\end{proposition}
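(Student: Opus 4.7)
The plan is to exhibit the equivalence via a single forward functor $\Phi\colon\Indk(\Ec\Cc)\to\Ec\Indk(\Cc)$, verify essential surjectivity by reusing the construction from the proof of Theorem \ref{thm:indleftspecial}, and check full faithfulness by a formal manipulation of the $\Indk$ hom-formula. First I would unpack the source: an object of $\Indk(\Ec\Cc)$ is an admissible Ind-diagram $W\colon I\to\Ec\Cc$, equivalently a triple of admissible Ind-diagrams $X,Y,Z\colon I\to\Cc$ together with natural transformations forming pointwise exact sequences $X_i\into Y_i\onto Z_i$. Applying the colimit functor \eqref{eq:dirfun} componentwise gives a candidate sequence $\widehat{X}\to\widehat{Y}\to\widehat{Z}$ in $\Indk(\Cc)$. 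To see that this is a short exact sequence in $\lex(\Cc)$, I would invoke the description of $\lex(\Cc)$ as a Serre localization of $\ab^{\Cc^{\op}}$ (the first characterization after Lemma \ref{lemma:qcoh=lex(coh)}): in $\ab^{\Cc^{\op}}$ filtered colimits of short exact sequences are computed pointwise and hence are exact, and this exactness is preserved by the localization. Theorem \ref{thm:indleftspecial} then promotes this to exactness in $\Indk(\Cc)$.

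For essential surjectivity, given an exact sequence $\widehat{X}\into F\onto\widehat{Z}$ in $\Indk(\Cc)$ with presenting diagrams $X\colon I\to\Cc$ and $Z\colon J\to\Cc$, I would reuse verbatim the construction from the proof of Theorem \ref{thm:indleftspecial}: it produces an admissible Ind-diagram $Y\colon K\downarrow_\Cc I\to\Cc$ with $Y_{(k,i,\gamma_{ki})}=X_i\cup_{X'_k}Y'_k$ (where $X'_k=\ker(Y'_k\onto Z_{\varphi(k)})$), yielding pointwise short exact sequences $X_i\into Y_{(k,i,\gamma_{ki})}\onto Z_{\varphi(k)}$ in $\Cc$ as pushouts of $X'_k\into Y'_k\onto Z_{\varphi(k)}$, with $\widehat{Y}\cong F$ established there by the 5-lemma. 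Pulling $X$ back along the projection $K\downarrow_\Cc I\to I$ and $Z$ back along $K\downarrow_\Cc I\to K\to J$ (both final, so the associated admissible Ind-objects are unchanged) assembles this data into a single admissible Ind-diagram in $\Ec\Cc$ whose image under $\Phi$ is canonically isomorphic to $\widehat{X}\into F\onto\widehat{Z}$.

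For full faithfulness, I would apply the $\Indk$ hom-formula twice. On one side, $\hom_{\Indk(\Ec\Cc)}(W,W')\cong\lim_I\colim_{I'}\hom_{\Ec\Cc}(W_i,W'_j)$, and $\hom_{\Ec\Cc}(W_i,W'_j)$ is a finite limit (equalizer-type) in $\set$ carving out the compatible triples inside $\hom_\Cc(X_i,X'_j)\times\hom_\Cc(Y_i,Y'_j)\times\hom_\Cc(Z_i,Z'_j)$. On the other side, $\hom_{\Ec\Indk(\Cc)}(\Phi W,\Phi W')$ is the corresponding finite limit applied to the three factors $\hom_{\Indk(\Cc)}\cong\lim_I\colim_{I'}\hom_\Cc$. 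Since $\lim$'s commute with $\lim$'s and, in $\set$, filtered colimits commute with finite limits, the two expressions agree, and the bijection is visibly induced by $\Phi$. The main obstacle is essential surjectivity, but because the construction has already been executed inside Theorem \ref{thm:indleftspecial}, the remaining work is only to reindex and confirm that the three component diagrams assemble into a single admissible Ind-diagram in $\Ec\Cc$; full faithfulness is then a purely formal consequence of the universal property of $\Indk$.
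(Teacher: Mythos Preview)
Your essential surjectivity argument is the same as the paper's, and your formal full-faithfulness argument via commuting $\lim_I$, $\colim_{I'}$, and the finite limit defining $\hom_{\Ec\Cc}$ is correct and considerably cleaner than the paper's approach: the paper establishes fullness by an explicit multi-step straightening (straighten $g_Z$, lift the epic, straighten $g_Y$, pass to kernels, straighten again, push out), whereas you get it in one line from filtered-colimits-commute-with-finite-limits.

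However, there is a genuine gap. The statement asserts an equivalence of \emph{exact} categories, and you only verify that $\Phi$ is exact, not that it reflects exact sequences. Concretely, given an admissible monic (or epic) in $\Ec\Indk(\Cc)$---i.e.\ a map of exact sequences which is an admissible monic in each of the three components---you must show its preimage under $\Phi$ is an admissible monic in $\Indk(\Ec\Cc)$, i.e.\ that it can be represented by an Ind-diagram of maps in $\Ec\Cc$ which are termwise admissible monics. Your formal hom-set computation does not give this: it matches morphisms, not the exact structures, and there is no general principle saying an exact functor which is an equivalence of categories is automatically fully exact.

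This is exactly where the paper's laborious fullness construction earns its keep. Because the paper builds an explicit Ind-diagram representing an arbitrary morphism in $\Ec\Indk(\Cc)$, it can then observe (in the final paragraph of the proof) that when the given morphism is a componentwise admissible monic or epic, the straightening can be arranged so that $\gamma_X$ and $\gamma_Z$ are stagewise admissible monics or epics, and the 5-Lemma forces the same for $\gamma_Y$. Your slick argument bypasses this construction, so you lose access to the tool needed for full exactness. You would need either to reinstate some version of the explicit straightening for this step, or to supply a separate argument that the induced exact structure on $\Ec\Indk(\Cc)$ via $\Phi$ coincides with the componentwise one.
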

\begin{proof}
    The category $\Ec\Cc$ has a canonical exact structure \cite{Hel:58} in which admissible monics (epics) are maps of exact sequences which are admissible monics (epics) at each term in the sequence.

    An admissible Ind-diagram in $\Ec\Cc$ consists of a pair of maps of admissible Ind-diagrams in $\Cc$
    \begin{equation*}
        \begin{xy}
            \Vtrianglepair/=`=`>`>`>/[I`I`I`\Cc;``X`Y`Z]
            \place(375,250)[\twoar(1,0)]
            \place(375,350)[_{\alpha}]
            \place(650,250)[\twoar(1,0)]
            \place(650,350)[_{\beta}]
        \end{xy}
    \end{equation*}
    where the components of $\alpha$ are admissible monics, and those of $\beta$ are admissible epics.

    Because directed colimits in $\lex(\Cc)$ preserve kernels and cokernels, an admissible Ind-diagram of exact sequences in $\Cc$ canonically defines an exact sequence of admissible Ind-objects. This extends to a faithful, exact functor
    \begin{equation}\label{fun:indetoeind}
        \begin{xy}
            \morphism<750,0>[\Indk(\Ec\Cc)`\Ec\Indk(\Cc);]
        \end{xy}
    \end{equation}
    The construction in the proof of Theorem \ref{thm:indleftspecial} implies that this functor is essentially surjective: every short exact sequence of admissible Ind-objects arises as the colimit of a directed diagram of short exact sequences in $\Cc$.

    We now show that the functor \ref{fun:indetoeind} is full. Let
    \begin{equation}\label{eq:indsqfull}
        \begin{xy}
            \square|almb|/^{ (}->`>`>`^{ (}->/[\widehat{X_0}`\widehat{Y_0}`\widehat{X_1}`\widehat{Y_1};`g_X`g_Y`]
            \square(500,0)|amrb|/->>`>`>`->>/[\widehat{Y_0}`\widehat{Z_0}`\widehat{Y_1}`\widehat{Z_1};`g_Y`g_Z`]
        \end{xy}
    \end{equation}
    be a morphism in $\Ec\Indk(\Cc)$. Let
    \begin{equation*}
        \begin{xy}
            \Vtrianglepair/=`=`>`>`>/[I`I`I`\Cc;``X_1`Y_1`Z_1]
            \place(375,250)[\twoar(1,0)]
            \place(375,350)[_{\alpha_1}]
            \place(650,250)[\twoar(1,0)]
            \place(650,350)[_{\beta_1}]
        \end{xy}
    \end{equation*}
    be an admissible Ind-diagram of exact sequences representing the bottom row of \ref{eq:indsqfull}. Straightening (Lemma \ref{lemma:straight}) allows us to represent the map $g_Z$ as a 2-commuting triangle
    \begin{equation*}
        \begin{xy}
            \Vtriangle<350,400>[J`I`\Cc;`Z_0`Z_1]
            \place(350,200)[\twoar(1,0)]
            \place(350,300)[_{\gamma_Z}]
        \end{xy}
    \end{equation*}
    We can represent the admissible epic $\widehat{Y_0}\onto\widehat{Z_0}$ as a 2-commuting triangle
    \begin{equation*}
        \begin{xy}
            \Vtriangle<350,400>[K`J`\Cc;`Y_0'`Z_0]
            \place(350,200)[\twoar(1,0)]
            \place(350,300)[_{\beta_0'}]
        \end{xy}
    \end{equation*}
    where the map $K\to J$ is final.

    We straighten $g_Y$ to a 2-commuting triangle
    \begin{equation*}
        \begin{xy}
            \Vtriangle<350,400>[L`I`\Cc;\varphi`Y_0'`Y_1]
            \place(350,200)[\twoar(1,0)]
            \place(350,300)[_{\gamma_{Y'}}]
         \end{xy}
    \end{equation*}
    The straightening construction also produces a final map $\psi'\colon L\rightarrow K$. Denote by $\psi$ the composite $L\to^{\psi'} K\to J$. The 2-commuting triangle above embeds in a 2-commuting pyramid
    \begin{equation*}
        \begin{xy}
            \square/=`>`>`=/<1000,1000>[L`L`I`I;`\varphi`\varphi`]
            \morphism(0,1000)|m|<500,-500>[L`\Cc;Y_0']
            \morphism(1000,1000)|m|<-500,-500>[L`\Cc;Z_0\psi]
            \morphism(0,0)|m|<500,500>[I`\Cc;Y_1]
            \morphism(1000,0)|m|<-500,500>[I`\Cc;Z_1]
            \place(500,250)[\twoar(1,0)]
            \place(500,150)[^{\beta_1}]
            \place(500,750)[\twoar(1,0)]
            \place(500,850)[_{\beta_0'\psi'}]
            \place(150,500)[\twoar(0,-1)]
            \place(250,500)[_{\gamma_{Y'}}]
            \place(850,500)[\twoar(0,-1)]
            \place(750,500)[_{\gamma_Z\psi}]
        \end{xy}.
    \end{equation*}
    Indeed, for every $l\in L$, we have a pair of commuting squares of admissible Ind-objects
    \begin{equation*}
        \begin{xy}
            \square|blrb|/>`^{ (}->`^{ (}->`>/<1000,500>[Y_{0,l}'`Z_{1,\varphi(l)}`\widehat{Y}_0`\widehat{Z}_1;\beta_{1,\varphi(l)}\circ \gamma_{Y',l}```]
            \morphism(0,500)|a|/{@<4pt>}/<1000,0>[Y_{0,l}'`Z_{1,\varphi(l)};\gamma_{Z,\psi(l)}\circ\beta_{0,\psi'(l)}']
        \end{xy}
    \end{equation*}
    Because the map $Z_{1,\varphi(l)}\to\widehat{Z}_1$ is monic, we see that $\gamma_{Z,\psi(l)}\circ\beta_{0,\psi'(l)}'=\beta_{1,\varphi(l)}\circ\gamma_{Y',l}$,

    The components of the natural transformation $\beta_0\psi'$ are admissible epics. Define a directed diagram
    \begin{equation*}
        \begin{xy}
            \morphism[L`\Cc;X_0']
            \morphism(0,-200)/|->/[l`\ker(\beta_{0,\psi'(l)});]
        \end{xy}
    \end{equation*}
    Denote by $\widehat{X'}_0$ the colimit of this diagram in $\lex(\Cc)$. The universal property of kernels induces a canonical isomorphism
    \begin{equation}\label{nuiso}
        \widehat{X'}_0\to^\cong \widehat{X}_0
    \end{equation}
    As in the proof of Theorem \ref{thm:indleftspecial}, $X'$ may not be an admissible Ind-diagram. Nevertheless, let
    \begin{equation*}
        \begin{xy}
            \Vtriangle<350,400>[M`I`\Cc;\xi`X_0`X_1]
            \place(350,200)[\twoar(1,0)]
            \place(350,300)[_{\gamma_X}]
        \end{xy}
    \end{equation*}
    be a 2-commuting triangle representing the map $g_X$. The isomorphism \eqref{nuiso} lifts to a span of directed diagrams
    \begin{equation*}
        \begin{xy}
            \Vtrianglepair/<-`>`>`>`>/[L`N`M`\Cc;\lambda`\mu`X'_0``X_0]
            \place(650,250)[\twoar(1,0)]
        \end{xy}
    \end{equation*}
    where the maps $\lambda$ and $\mu$ are final. We define a directed diagram
    \begin{equation*}
        \begin{xy}
            \morphism<750,0>[N`\Cc;Y_0]
            \morphism(0,-200)/|->/<750,0>[n`X_{0,\mu(n)}\cup_{X'_{0,\lambda(n)}}Y_{0,\lambda(n)}';]
        \end{xy}
    \end{equation*}
    Denote by $\beta_0$ the natural transformation $\beta'_0\psi\lambda$. The diagram $Y_0$ fits into a 2-commuting triangle
    \begin{equation}\label{eq:exindtopsq}
        \begin{xy}
            \Vtrianglepair/=`=`>`>`>/[N`N`N`\Cc;``X_0\mu`Y_0`Z_0\psi\lambda]
            \place(375,250)[\twoar(1,0)]
            \place(375,350)[_{\alpha_0}]
            \place(650,250)[\twoar(1,0)]
            \place(650,350)[_{\beta_0}]
        \end{xy}
    \end{equation}
    in which the components of $\alpha_0$ are admissible monics and the components of $\beta_0$ are admissible epics. The diagram $Y_0$ is an admissible Ind-diagram because $X_0$ and $Z_0$ are, just as at the end of the proof of Theorem \ref{thm:indleftspecial}. It is of cardinality at most $\kappa$ by construction.

    The 2-commuting triangle \ref{eq:exindtopsq} fits into 2-commuting diagram
    \begin{equation}\label{indemap}
        \begin{xy}
            \square|almb|/=`>`>`=/<1000,1000>[N`N`I`I;`\varphi\lambda`\varphi\lambda`]
            \morphism(0,1000)|m|<500,-500>[N`\Cc;X_0\mu]
            \morphism(1000,1000)|m|<-500,-500>[N`\Cc;Y_0]
            \morphism(0,0)|m|<500,500>[I`\Cc;X_1]
            \morphism(1000,0)|m|<-500,500>[I`\Cc;Y_1]
            \place(500,250)[\twoar(1,0)]
            \place(500,150)[^{\alpha_1}]
            \place(500,750)[\twoar(1,0)]
            \place(500,850)[_{\alpha_0}]
            \place(150,500)[\twoar(0,-1)]
            \place(250,500)[_{\gamma_X\mu}]
            \place(850,500)[\twoar(0,-1)]
            \place(750,500)[_{\gamma_Y}]
            \square(1000,0)|amrb|/=`>`>`=/<1000,1000>[N`N`I`I;`\varphi\lambda`\varphi\lambda`]
            \morphism(1000,1000)|m|<500,-500>[N`\Cc;Y_0]
            \morphism(2000,1000)|m|<-500,-500>[N`\Cc;Z_0\psi\lambda]
            \morphism(1000,0)|m|<500,500>[I`\Cc;Y_1]
            \morphism(2000,0)|m|<-500,500>[I`\Cc;Z_1]
            \place(1500,250)[\twoar(1,0)]
            \place(1500,150)[^{\beta_1}]
            \place(1500,750)[\twoar(1,0)]
            \place(1500,850)[_{\beta_0}]
            \place(1150,500)[\twoar(0,-1)]
            \place(1250,500)[_{\gamma_Y}]
            \place(1850,500)[\twoar(0,-1)]
            \place(1750,500)[_{\gamma_Z\lambda}]
        \end{xy}.
    \end{equation}
    This 2-commuting diagram represents the map of exact sequences of admissible Ind-objects \ref{eq:indsqfull}.

    Now suppose that the map \eqref{eq:indsqfull} is an admissible monic (epic). In this case, the straightening construction for exact sequence shows that we can assume, without loss of generality, that each component of $\gamma_X$ or $\gamma_Z$ is an admissible monic (epic). The same is therefore true for the maps $\gamma_X\mu$ and $\gamma_Z\lambda$ in \eqref{indemap}. By the 5-Lemma (\cite[Corollary 3.2]{Buh:10}), this implies that each component of $\gamma_Y$ is an admissible monic (epic). We conclude that every exact sequence in $\Ec\Indk(\Cc)$ is the image, under \eqref{fun:indetoeind}, of an exact sequence in $\Indk(\Ec\Cc)$.
\end{proof}

\subsubsection{Admissible Ind-Objects and the S-Construction}
\begin{definition}[Waldhausen]
    Let $\Cc$ be an exact category. For $k\ge 0$, define $S_k\Cc$ to be the category whose objects are chains of length $k$ of admissible monics in $\Cc$
    \begin{equation*}
        0\into X_1\into\cdots\into X_k.
    \end{equation*}
    Morphisms are commuting diagrams in $\Cc$ of the obvious form.
\end{definition}

The 5-Lemma \cite[Corollary 3.2]{Buh:10} shows that $S_k\Cc$ is closed under extensions in the exact category $\Fun([n],\Cc)$, so $S_k\Cc$ inherits a canonical exact structure in which admissible monics (epics) are maps of sequences which are admissible monics (epics) at each term in the sequence.

\begin{proposition}\label{prop:indsk=skind}
    Let $\Cc$ be an exact category. For each $k\ge 0$, the canonical map $\Indk(S_k\Cc)\to S_k\Indk(\Cc)$ is an exact equivalence.
\end{proposition}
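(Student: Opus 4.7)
The plan is to adapt the proof of Proposition \ref{prop:inde=eind} to chains of admissible monics of length $k$. The canonical functor $\Indk(S_k\Cc) \to S_k\Indk(\Cc)$ is built termwise: given an admissible Ind-diagram $X \colon I \to S_k\Cc$ consisting of compatible chains $0 \into X_{1,i} \into \cdots \into X_{k,i}$, taking colimits in $\lex(\Cc)$ at each slot yields admissible Ind-objects $\widehat{X}_1, \ldots, \widehat{X}_k$ together with induced maps $\widehat{X}_j \to \widehat{X}_{j+1}$. These maps are admissible monics by the colimit argument used in the proof of Theorem \ref{thm:indleftspecial}, so the functor lands in $S_k\Indk(\Cc)$. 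Faithfulness is automatic since morphisms of Ind-diagrams are determined by their components.

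For essential surjectivity, I proceed by downward induction along the chain. Given $0 \into \widehat{X}_1 \into \cdots \into \widehat{X}_k$ in $S_k\Indk(\Cc)$, first apply Proposition \ref{prop:inde=eind} to the short exact sequence $\widehat{X}_{k-1} \into \widehat{X}_k \onto \widehat{X}_k/\widehat{X}_{k-1}$ (the cokernel lies in $\Indk(\Cc)$ by Theorem \ref{thm:indleftspecial}), obtaining an admissible Ind-diagram of admissible monics $X_{k-1}^{(1)} \into X_k^{(1)}$ over some directed poset $I_1$ of size at most $\kappa$. Next, apply Proposition \ref{prop:inde=eind} to $\widehat{X}_{k-2} \into \widehat{X}_{k-1}$, producing a second Ind-diagram $X_{k-2}^{(2)} \into X_{k-1}^{(2)}$ indexed by $I_2$. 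The two representations of $\widehat{X}_{k-1}$ are related by an isomorphism that can be straightened via Lemma \ref{lemma:straight} to a span of indexing categories. Passing to the common refinement $I_1 \downarrow_{\Cc} I_2$ and then applying the push-out construction from the proof of Theorem \ref{thm:indleftspecial} produces a single admissible Ind-diagram of $2$-step chains $X_{k-2} \into X_{k-1} \into X_k$. Iterating this combine-and-refine step down the chain yields the desired admissible Ind-diagram in $S_k\Cc$.

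For fullness and exactness, morphisms and exact sequences of $k$-chains in $\Indk(\Cc)$ are straightened term-by-term using the same fiber-product-plus-push-out mechanism as in Proposition \ref{prop:inde=eind}, with the 5-Lemma applied levelwise to verify that the resulting maps of Ind-diagrams of chains remain admissible monics or admissible epics as appropriate. The main obstacle is bookkeeping: ensuring that the iterated fiber product construction respects the cardinality bound $\kappa$ as one works down the chain. This is guaranteed because $|I \downarrow_{\Cc} J| \leq \kappa$ whenever $|I|, |J| \leq \kappa$ (Lemma \ref{lemma:straight}), so finitely many iterations still produce an Ind-diagram of size at most $\kappa$, and the essential image and full image statements both fall out of the termwise analysis. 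Note also that the case $k = 1$ is the tautology $\Indk(\Cc) \simeq \Indk(\Cc)$, and the case $k = 2$ reduces directly to Proposition \ref{prop:inde=eind} via the equivalence between $2$-step chains and short exact sequences.
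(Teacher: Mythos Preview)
Your proposal is correct and follows essentially the same strategy as the paper's proof: both reduce to Proposition \ref{prop:inde=eind} via induction on $k$, using straightening (Lemma \ref{lemma:straight}) and the pushout construction from Theorem \ref{thm:indleftspecial} to merge representations over a common indexing poset. The paper organizes the induction slightly more cleanly---it applies the inductive hypothesis to the first $k-1$ terms all at once and then extends by a single step, feeding the already-obtained diagram for $\widehat{X}_{k-1}$ directly into the straightening of $\widehat{X}_{k-1}\into\widehat{X}_k$, which lets it skip your explicit pushout at the combine stage---but the content is the same.
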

\begin{proof}
    The proof is an elaboration of the proof of Proposition \ref{prop:inde=eind}. We prove the proposition by induction on $k$. For $k\le 1$, the statement is trivial.  For $k=2$, this is Proposition \ref{prop:inde=eind}. Now assume that we have shown the result for $k-1$.

    We begin by showing that the functor is an equivalence of categories. By inspection, it is faithful. That it is full follows by induction and the proof of Proposition Proposition \ref{prop:inde=eind}. We now show that the functor is essentially surjective. Let
    \begin{equation}\label{indsurj}
        \widehat{X}_1\into\cdots\into\widehat{X}_k
    \end{equation}
    be an object of $S_k\Indk(\Cc)$. By the inductive hypothesis, there exists a sequence of admissible monics
    \begin{equation}\label{indinduct}
        \begin{xy}
            \Vtrianglepair/=`=`>``>/[I`\cdots`I`\Cc;``X_1`\cdots`X_{k-1}]
            \place(350,350)[\twoar(1,0)]
            \place(650,350)[\twoar(1,0)]
        \end{xy}
    \end{equation}
    of admissible Ind-diagrams whose colimit is isomorphic to
    \begin{equation*}
        \widehat{X}_1\into\cdots\into\widehat{X}_{k-1}.
    \end{equation*}
    We next straighten the exact sequence
    \begin{equation*}
        \widehat{X}_{k-1}\into\widehat{X}_k\onto\widehat{X}_k/\widehat{X}_{k-1}
    \end{equation*}
    as in the proof of Proposition \ref{prop:inde=eind}, and obtain a directed poset $K$, a final map $K\to I$ and a 2-commuting diagram
    \begin{equation*}
        \begin{xy}
            \Vtrianglepair/<-`=`>`>`>/[I`K`K`\Cc;``X_{k-1}``X_k]
            \place(650,350)[\twoar(1,0)]
        \end{xy}
    \end{equation*}
    in which the left triangle strictly commutes, and the right triangle is an Ind-diagram of admissible monics. By restricting \eqref{indinduct} along the final map $K\to I$, we obtain an admissible Ind-diagram in $S_k\Cc$ whose colimit is isomorphic to \eqref{indsurj}.

    It remains to show that the functor is fully exact. Exactness is clear. That it is fully exact follows from the inductive hypothesis and the proof of Proposition \ref{prop:inde=eind}.
\end{proof}

\subsubsection{Admissible Ind-Objects as a Localization}
\begin{proposition}[See also \cite{Pre:11}]\label{prop:indloc}
    Denote by $W\subset\Dirk(\Cc)$ the sub-category consisting of all morphisms of admissible Ind-diagrams given by strictly commuting triangles
    \begin{equation*}
        \begin{xy}
            \Vtriangle<350,400>[I`J`\Cc;\varphi`X`Y]
        \end{xy}
    \end{equation*}
    in which the map $\varphi$ is final. The functor $\widehat{(-)}\colon\Dirk(\Cc)\to\Indk(\Cc)$ takes morphisms in $W$ to isomorphisms of admissible Ind-objects. The induced functor
    \begin{equation*}
        \Dirk(\Cc)[W^{-1}]\to\Indk(\Cc)
    \end{equation*}
    is an equivalence of categories.
\end{proposition}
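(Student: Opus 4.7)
The plan is to verify the universal property of the localization by constructing an explicit inverse to the functor induced by $\widehat{(-)}$.

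First, I establish that $\widehat{(-)}$ sends every morphism in $W$ to an isomorphism. If $(\varphi,\mathrm{id})\colon X\to Y$ lies in $W$, then $\varphi\colon I\to J$ is final between directed posets and $X = Y\circ\varphi$. Finality gives a canonical isomorphism $\colim_I (Y\circ\varphi) \cong \colim_J Y$ in $\lex(\Cc)$, and this isomorphism is exactly the induced map $\widehat{X}\to\widehat{Y}$. The universal property of localization then yields a unique functor $\Phi\colon \Dirk(\Cc)[W^{-1}]\to\Indk(\Cc)$ factoring $\widehat{(-)}$. This $\Phi$ is essentially surjective by the very definition of $\Indk(\Cc)$ as the essential image of $\widehat{(-)}$.

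To construct an inverse $\Psi$, I would choose, for each $V\in\Indk(\Cc)$, a representing admissible Ind-diagram $X_V\colon I_V\to\Cc$ and set $\Psi(V):=X_V$. For a morphism $f\colon V\to V'$ in $\Indk(\Cc)$, apply the straightening construction of Lemma \ref{lemma:straight} to $f\colon \widehat{X_V}\to\widehat{X_{V'}}$, producing a span
$$X_V \xleftarrow{(\varphi,\mathrm{id})} X_V\circ\varphi \xrightarrow{(\psi,\alpha)} X_{V'}$$
in $\Dirk(\Cc)$ indexed by $K = I_V\downarrow_\Cc I_{V'}$, where both $\varphi$ and $\psi$ are final. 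The left leg lies in $W$, so the span defines a candidate morphism $\Psi(f)$ in $\Dirk(\Cc)[W^{-1}]$.

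The main obstacle is verifying that $\Psi$ is a well-defined functor. The essential issues are: (a) independence of the straightening from choices, and (b) compatibility with composition and identities. Both reduce to the following key claim: if two spans $X_V \xleftarrow{w_i} X_i' \xrightarrow{g_i} X_{V'}$ (with $w_i\in W$, for $i=1,2$) induce the same map $\widehat{X_V}\to\widehat{X_{V'}}$, then there exists a third admissible Ind-diagram $X''$ equipped with final maps to both $X_1'$ and $X_2'$ intertwining the spans. I would prove this by taking a fibered comma construction analogous to $I_1' \downarrow_\Cc I_2'$ and verifying finality of its projections; the equality of the induced morphisms on admissible Ind-objects can then be witnessed at finite stages because each $X_{V,i}$ embeds into $\widehat{X_{V'}}$ as an admissible sub-object (Lemma \ref{lemma:indsubob}) and therefore maps from it factor through bounded stages of any representing diagram. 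Functoriality on composition follows by simultaneous straightening applied to $g\circ f$ together with the individual straightenings. Once $\Psi$ is well-defined, the identities $\Phi\circ\Psi = \mathrm{id}_{\Indk(\Cc)}$ and $\Psi\circ\Phi = \mathrm{id}_{\Dirk(\Cc)[W^{-1}]}$ are immediate: the first because $\Phi$ applied to the straightening span evaluates back to $f$, the second because a 2-commuting triangle and its straightening are related by a morphism in $W$.
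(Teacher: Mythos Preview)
Your approach is correct but takes a genuinely different and more laborious route than the paper. Instead of constructing an explicit inverse $\Psi$, the paper simply checks that the induced functor $\Phi$ is essentially surjective (by definition of $\Indk(\Cc)$), full (immediately from the straightening Lemma~\ref{lemma:straight}), and faithful. This sidesteps entirely the coherence issues you flag as the ``main obstacle'': there are no choices of representing diagrams to make, and no verification that $\Psi$ respects composition. Your ``key claim'' about equivalence of spans is essentially the faithfulness of $\Phi$; once you have that together with fullness and essential surjectivity, the equivalence follows without ever building $\Psi$, so the explicit inverse is redundant work.

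The paper's argument for faithfulness is also slicker than your proposed fibered comma construction. Given two morphisms $(\varphi_a,\alpha_a)\colon X\to Y$ in $\Dirk(\Cc)$ for $a=0,1$ inducing the same map $\widehat{X}\to\widehat{Y}$, each $(\varphi_a,\alpha_a)$ determines a section of the final projection $I\downarrow_\Cc J\to I$, and this section fits into a strictly commuting triangle over $\Cc$. Hence in the localization each $(\varphi_a,\alpha_a)$ equals the single zig-zag $I\leftarrow I\downarrow_\Cc J\to J$ coming from straightening, so the two agree in $\Dirk(\Cc)[W^{-1}]$. No new comma poset needs to be built; the one already produced by Lemma~\ref{lemma:straight} does all the work.
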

\begin{proof}
    The functor is essentially surjective by definition. Straightening (Lemma \ref{lemma:straight}) shows that it is full. It remains to show that it is faithful.

    Suppose there exist morphisms
    \begin{equation*}
        \begin{xy}
            \morphism|a|/{@<3pt>}/[X`Y;(\varphi_0,\alpha_0)]
            \morphism|b|/{@<-3pt>}/[X`Y;(\varphi_1,\alpha_1)]
        \end{xy}
    \end{equation*}
    in $\Dirk(\Cc)$ which induce equal maps of admissible Ind-objects. For $a=0,1$, the pair $(\varphi_a,\alpha_a)$ induces a section of the map $I\downarrow_{\Cc}J\rightarrow I$. These sections fit into a commuting triangle
    \begin{equation*}
        \begin{xy}
            \Vtriangle/<-`>`>/<350,400>[I`I\downarrow_{\Cc}J`\Cc;`X`]
            \morphism(0,400)|a|/{@<5pt>}/<700,0>[I`I\downarrow_{\Cc}J;(\varphi_a,\alpha_a)]
        \end{xy}
    \end{equation*}
    The existence of this commuting triangle implies that, for $a=0,1$, the image of the map $X\to^{(\varphi_a,\alpha_a)} Y$ in the localization $\Dirk(\Cc)[W^{-1}]$ is equal to the map represented by the zig-zag
    \begin{equation*}
        \begin{xy}
            \Vtrianglepair/<-`>`>`>`>/[I`I\downarrow_{\Cc}J`J`\Cc;``X``Y]
            \place(650,250)[\twoar(1,0)]
        \end{xy}
    \end{equation*}
    We conclude that the functor is faithful.
\end{proof}

\subsubsection{Functoriality of the Construction}
\begin{proposition}\label{prop:FtoIndF}
    An exact functor $F\colon\Cc\to\Dc$ extends canonically to an exact functor $\widetilde{F}\colon\Indk(\Cc)\to\Indk(\Dc)$ which fits into a 2-commuting diagram
    \begin{equation*}
        \begin{xy}
            \square/>`>`>`-->/<750,500>[\Cc`\Dc`\Indk(\Cc)`\Indk(\Dc);F```\exists\widetilde{F}]
            \place(375,250)[\cong]
        \end{xy}.
    \end{equation*}
    If $F$ is faithful, fully faithful, or an equivalence, then so is $\widetilde{F}$.
\end{proposition}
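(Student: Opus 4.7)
The plan is to build $\widetilde{F}$ by postcomposition on admissible Ind-diagrams and descend via the localization of Proposition \ref{prop:indloc}. Postcomposition with $F$ yields a functor $F_*\colon \Dirk(\Cc)\to\Dirk(\Dc)$ sending $X\colon I\to\Cc$ to $F\circ X\colon I\to\Dc$. This is again an admissible Ind-diagram because $F$ is exact and therefore preserves admissible monics; on morphisms, a $2$-commuting triangle $(\varphi,\alpha)\colon X\Rightarrow Y$ is sent to $(\varphi, F*\alpha)$. By inspection $F_*$ takes the class $W$ of strictly commuting triangles with final index map to itself, so it descends to a functor $\widetilde{F}\colon \Indk(\Cc)\to\Indk(\Dc)$. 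Since constant Ind-diagrams indexed by a point map to themselves under $F_*$ with $X$ replaced by $FX$, the $2$-commuting square involving the canonical embeddings $\Cc\into\Indk(\Cc)$ and $\Dc\into\Indk(\Dc)$ is built into the construction.

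For exactness, I would appeal to Proposition \ref{prop:inde=eind}: every exact sequence in $\Indk(\Cc)$ is canonically represented by an admissible Ind-diagram of short exact sequences in $\Cc$. Postcomposing such a diagram with $F$ produces an admissible Ind-diagram of short exact sequences in $\Dc$, because $F$ is exact; its colimit in $\lex(\Dc)$ is the image under $\widetilde{F}$ of the original sequence, and is exact in $\Indk(\Dc)$ by the same proposition.

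For the preservation properties, use the hom-formula
\begin{equation*}
    \hom_{\Indk(\Cc)}(\widehat{X},\widehat{Y})\cong\lim_I\colim_J\hom_{\Cc}(X_i,Y_j)
\end{equation*}
together with the analogous formula on the $\Dc$-side. If $F$ is faithful, each map $\hom_{\Cc}(X_i,Y_j)\to\hom_{\Dc}(FX_i,FY_j)$ is injective; injectivity is preserved under filtered colimits and under limits of sets, so $\widetilde{F}$ is faithful. If $F$ is fully faithful, the same maps are bijections, and the same argument yields fully faithfulness of $\widetilde{F}$. If $F$ is an equivalence, choose a quasi-inverse $G\colon\Dc\to\Cc$; it is automatically exact because it reflects the exact structure transported by $F$. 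Applying the construction to $G$ gives a functor $\widetilde{G}$, and the natural isomorphisms $GF\cong\mathrm{id}_\Cc$, $FG\cong\mathrm{id}_\Dc$ propagate through $F_*$ and the localization, yielding $\widetilde{G}\widetilde{F}\cong\mathrm{id}$ and $\widetilde{F}\widetilde{G}\cong\mathrm{id}$.

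The main conceptual obstacle is verifying that the assignment $\widehat{X}\mapsto\widehat{F\circ X}$ is well-defined on isomorphism classes of admissible Ind-objects (rather than just on diagrams), but the localization description of Proposition \ref{prop:indloc} reduces this to the observation that $F_*$ manifestly sends $W$-morphisms to $W$-morphisms. Essential surjectivity in the equivalence case could alternatively be proved by lifting an admissible Ind-diagram $Y\colon J\to\Dc$ objectwise using essential surjectivity of $F$ and then lifting the transition maps along the fully faithfulness, but going through $\widetilde{G}$ is cleaner.
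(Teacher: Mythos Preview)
Your proof is correct and differs from the paper's mainly in how $\widetilde{F}$ is constructed. The paper first extends $F$ to a colimit-preserving functor $\lex(\Cc)\to\lex(\Dc)$ via the category of elements (essentially a left Kan extension, $P\mapsto\colim_{\Cc\downarrow P}FX$), and then checks that this restricts to $\Indk$; you instead postcompose on diagrams and descend through the localization of Proposition~\ref{prop:indloc}. Both yield the same functor up to natural isomorphism, and the arguments for exactness (via Proposition~\ref{prop:inde=eind}) and for preservation of faithfulness and full faithfulness (via the hom-formula and the stability of injections/bijections under filtered colimits and limits) are essentially identical in the two proofs. The paper's route has the side benefit of producing a functor on all of $\lex$, while yours stays entirely within the Ind-world and avoids that ambient category. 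For the equivalence case, the paper argues essential surjectivity directly by lifting admissible Ind-diagrams along $F$, whereas you build $\widetilde{G}$ from a quasi-inverse $G$. One caution: your claim that $G$ is ``automatically exact because it reflects the exact structure transported by $F$'' is not literally true for an exact functor that merely happens to be an equivalence of underlying categories (take the identity from an additive category with the split exact structure to the same category with a strictly larger exact structure); both your argument and the paper's lifting argument tacitly require ``equivalence'' to mean equivalence in the $2$-category of exact categories, so that $G$ is exact by hypothesis rather than by deduction.
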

\begin{proof}
    Let $P$ be a left exact presheaf on $\Cc$. Recall that $\Cc\downarrow P$ denotes the category of elements of $P$ (Definition \ref{def:catel}), and that $\colim_{\Cc\downarrow P}X\cong P$. The functor $F$ induces a colimit-preserving functor
    \begin{equation*}
        \begin{xy}
            \morphism<750,0>[\lex(\Cc)`\lex(\Dc);\widetilde{F}]
            \morphism(0,-250)/|->/<750,0>[P`\colim_{\Cc\downarrow P}FX;]
        \end{xy}
    \end{equation*}
    We show that $\widetilde{F}$ preserves admissible Ind-objects. Let $\widehat{X}\in\Indk(\Cc)$ be represented by an admissible Ind-diagram
    \begin{equation*}
        \begin{xy}
            \morphism[I`\Cc;X]
        \end{xy}.
    \end{equation*}
    The functor $F$ is exact, so
    \begin{equation*}
        \begin{xy}
            \morphism[I`\Dc;FX]
            \morphism(0,-200)/|->/[i`FX_i;]
        \end{xy}
    \end{equation*}
    is an admissible Ind-diagram in $\Dc$. The canonical map
    \begin{equation*}
        \begin{xy}
            \morphism<750,0>[I`\Cc\downarrow\widehat{X};]
            \morphism(0,-200)/|->/<750,0>[i`(X_i\rightarrow\widehat{X});]
        \end{xy}
    \end{equation*}
    induces an isomorphism
    \begin{equation*}
        \begin{xy}
            \morphism<1000,0>[\widehat{X}\cong\colim_I X_i`\colim_{\Cc\downarrow\widehat{X}}X;\cong]
        \end{xy}.
    \end{equation*}
    Similarly, we obtain an isomorphism
    \begin{equation*}
        \begin{xy}
            \morphism<1250,0>[\colim_I FX_i`\colim_{\Cc\downarrow\widehat{X}}FX=:\widetilde{F}\widehat{X};\cong]
        \end{xy}.
    \end{equation*}
    This shows that $\widetilde{F}\widehat{X}\in\Indk(\Dc)$.

    Now let
    \begin{equation*}
        \begin{xy}
	       \morphism/^{ (}->/[\widehat{X}`\widehat{Y};]
	       \morphism(500,0)/->>/[\widehat{Y}`\widehat{Z};]
    	\end{xy}
    \end{equation*}
    be an exact sequence in $\Indk(\Cc)$. Represent this as a sequence of admissible Ind-diagrams
    \begin{equation*}
        \begin{xy}
            \Vtrianglepair[I`J`K`\Cc;``X`Y`Z]
            \place(375,250)[\twoar(1,0)]
            \place(375,350)[_{\alpha}]
            \place(650,250)[\twoar(1,0)]
            \place(650,350)[_{\beta}]
        \end{xy}
    \end{equation*}
    where the components of $\alpha$ are admissible monics, and the components of $\beta$ are admissible epics. Apply the exact functor $F$ to obtain a sequence of admissible Ind-diagrams in $\Dc$
    \begin{equation*}
        \begin{xy}
            \Vtrianglepair[I`J`K`\Dc;``FX`FY`FZ]
            \place(375,250)[\twoar(1,0)]
            \place(375,350)[_{F\alpha}]
            \place(650,250)[\twoar(1,0)]
            \place(650,350)[_{F\beta}]
        \end{xy}.
    \end{equation*}
    Taking the colimits, we obtain an exact sequence in $\Indk(\Dc)$ isomorphic to the sequence
    \begin{equation*}
        \begin{xy}
	       \morphism[\widetilde{F}\widehat{X}`\widetilde{F}\widehat{Y};]
	       \morphism(500,0)[\widetilde{F}\widehat{Y}`\widetilde{F}\widehat{Z};]
    	\end{xy}.
    \end{equation*}
    We conclude that $\widetilde{F}$ is exact.

    Now suppose $F$ is faithful. For any $X\colon I\to\Cc$ and $Y\colon J\to\Cc$ in $\Dirk(\Cc)$, we have
    \begin{equation*}
        \hom_{\Indk(\Cc)}(\widehat{X},\widehat{Y})\cong\lim_I\colim_J\hom_{\Cc}(X_i,Y_j).
    \end{equation*}
    The construction of directed colimits and inductive limits in the category of sets shows that a map of directed or inductive diagrams which is injective at each object in the diagram induces an injection in the colimit or limit. If $F$ is faithful, then
    \begin{align*}
        \hom_{\Indk(\Cc)}(\widehat{X},\widehat{Y})&\cong\lim_I\colim_J\hom_{\Cc}(X_i,Y_j)\\
        &\subseteq \lim_I\colim_J\hom_{\Dc}(FX_i,FY_j)\\
        &\cong \hom_{\Indk(\Dc)}(\widetilde{F}\widehat{X},\widetilde{F}\widehat{Y})
    \end{align*}
    and we conclude that $\widetilde{F}$ is faithful as well. If $F$ is fully faithful, the previous argument shows $\widetilde{F}$ is as well. If $F$ is fully faithful and essentially surjective, then any diagram in $\Dirk(\Dc)$ is equivalent to the image of a diagram in $\Dirk(\Cc)$; therefore $\widetilde{F}$ is essentially surjective as well.
\end{proof}

\subsubsection{Countable Admissible Ind-Objects}\label{sec:countableind}
Countable admissible Ind-objects have been in the literature for some time. We take Keller \cite{Kel:90} as a basic reference.

\begin{definition}\cite[Appendix B]{Kel:90}
    The \emph{countable envelope} $\Cc^{\thicksim}$ of an exact category $\Cc$ is the full sub-category of $\lex(\Cc)$ consisting of all left-exact presheaves $\widehat{X}$ which are representable by an admissible Ind-diagram $X\colon\mathbb{N}\to\Cc$.
\end{definition}

\begin{proposition}\label{prop:countableind}
    The embedding $\Cc^{\thicksim}\into\Indc(\Cc)$ is an equivalence of exact categories.
\end{proposition}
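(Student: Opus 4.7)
The plan is to observe that the inclusion $\Cc^{\thicksim}\hookrightarrow\Indc(\Cc)$ is a fully faithful exact functor by construction (both categories sit as fully exact subcategories of $\lex(\Cc)$ via the same definition of admissible Ind-object, only differing in the allowed shape of indexing diagrams), so the whole content of the proposition is essential surjectivity. For this I need to show that every admissible Ind-object presented by a diagram $X\colon I\to\Cc$ with $I$ a countable directed poset is also presentable by a diagram indexed by $\mathbb{N}$.

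The key step is a cofinality argument: any countable directed poset $I$ admits a final functor $\varphi\colon\mathbb{N}\to I$. Indeed, enumerate the elements of $I$ as $i_1,i_2,\ldots$ and construct $\varphi(n)$ recursively by choosing $\varphi(n)\in I$ to be an upper bound of $\{i_1,\ldots,i_n,\varphi(n-1)\}$, which exists because $I$ is directed. Then $\varphi$ is order-preserving (by construction $\varphi(n)\le\varphi(n+1)$), and it is final in the sense of the text: for each $i_k\in I$ we have $i_k\le\varphi(k)$.

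Given an admissible Ind-diagram $X\colon I\to\Cc$, restriction along $\varphi$ yields an admissible Ind-diagram $X\circ\varphi\colon\mathbb{N}\to\Cc$ (the maps remain admissible monics because the original diagram is admissible). By Proposition \ref{prop:indloc}, the strictly commuting triangle
\begin{equation*}
    \begin{xy}
        \Vtriangle<350,400>[\mathbb{N}`I`\Cc;\varphi`X\circ\varphi`X]
    \end{xy}
\end{equation*}
with $\varphi$ final lies in the class $W$, hence becomes an isomorphism $\widehat{X\circ\varphi}\cong\widehat{X}$ in $\Indc(\Cc)$. This places $\widehat{X}$ in the essential image of the embedding $\Cc^{\thicksim}\hookrightarrow\Indc(\Cc)$.

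Finally, the embedding is exact and reflects exact sequences because both $\Cc^{\thicksim}$ and $\Indc(\Cc)$ inherit their exact structure from $\lex(\Cc)$ (via Theorem \ref{thm:indleftspecial}): a sequence in $\Cc^{\thicksim}$ is exact iff it is exact in $\lex(\Cc)$, iff it is exact in $\Indc(\Cc)$. The only subtle point is verifying the cofinal sequence exists; this is where countability is essential, and it is the main (though modest) technical point of the argument. Once this is in place, the equivalence of exact categories follows immediately.
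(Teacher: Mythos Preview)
Your proof is correct and follows essentially the same approach as the paper: both reduce to the standard fact that every countable directed poset admits a final map from $\mathbb{N}$, constructed by enumerating $I$ and recursively choosing upper bounds, and then use that restriction along a final map induces an isomorphism of the associated Ind-objects. Your additional remarks on why the exact structures agree are accurate but not strictly needed, since both categories are by definition full subcategories of $\lex(\Cc)$ with the inherited exact structure.
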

\begin{proof}
    The embedding is fully faithful by definition. We show it is essentially surjective. Let $\widehat{X}\in\Indc(\Cc)$ be represented by a countable admissible Ind-diagram $X\colon I\to\Cc$. Every countable directed poset $I$ admits a final map $f:\mathbb{N}\to I$.\footnote{Pick a bijection $\mathbb{N}\rightarrow I$. We construct $f$ by induction. Let $f(0):=i_0$. Suppose we have defined $f(l)$ for $0\le l\le n$. Pick $f(n+1)\in I$ such that $f(n+1)\geq i_l$ for $0\leq l\leq n$ and such that $f(n+1)\geq f(n)$. This completes the induction step.}  The isomorphism $\colim_{n\in\mathbb{N}} X_{f(n)}\cong\widehat{X}$ shows that $\widehat{X}\in\Cc^{\thicksim}$.
\end{proof}

Recall that an exact category $\Cc$ is \emph{split exact} if every exact sequence in $\Cc$ splits.
\begin{corollary}\label{cor:indcsum}
    If $\Cc$ is split exact, then $\Indc(\Cc)\subset\lex(\Cc)$ is the full sub-category consisting of countable direct sums of objects in $\Cc$.
\end{corollary}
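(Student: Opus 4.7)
The plan is to prove the two containments separately using Proposition \ref{prop:countableind} to reduce to admissible Ind-diagrams indexed by $\mathbb{N}$.

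First I would show that countable direct sums land in $\Indc(\Cc)$. Given a sequence $\{X_n\}_{n\in\mathbb{N}}$ in $\Cc$, form the diagram $F\colon\mathbb{N}\to\Cc$ with $F(N):=\bigoplus_{n\le N}X_n$ and transition maps the canonical summand inclusions. Each such inclusion is a split monic, hence an admissible monic in any exact structure on $\Cc$, so $F$ is an admissible Ind-diagram. Its colimit in $\lex(\Cc)$ is the countable direct sum $\bigoplus_{n\in\mathbb{N}}X_n$, which is therefore an object of $\Indc(\Cc)$.

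For the reverse inclusion, let $\widehat{X}\in\Indc(\Cc)$. By Proposition \ref{prop:countableind}, I may represent $\widehat{X}$ by an admissible Ind-diagram $X\colon\mathbb{N}\to\Cc$, i.e.\ a chain of admissible monics
\begin{equation*}
    X_0\into X_1\into X_2\into\cdots.
\end{equation*}
The assumption that $\Cc$ is split exact means that each admissible monic $X_n\into X_{n+1}$ splits, so we may choose, inductively, objects $Y_{n+1}\in\Cc$ (with $Y_0:=X_0$) together with isomorphisms $X_{n+1}\cong X_n\oplus Y_{n+1}$ under which the transition map is the inclusion of the first summand. Unwinding the induction gives isomorphisms $X_n\cong\bigoplus_{k\le n}Y_k$ compatible with the transition maps, so the colimit in $\lex(\Cc)$ is $\widehat{X}\cong\bigoplus_{k\in\mathbb{N}}Y_k$, a countable direct sum of objects of $\Cc$.

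There is no serious obstacle: the only point to verify carefully is that one can choose the splittings coherently with the transition maps, which is immediate from the inductive construction. Combining the two directions gives the claimed identification of full sub-categories of $\lex(\Cc)$.
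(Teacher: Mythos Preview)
Your proof is correct and follows essentially the same approach as the paper: both invoke Proposition \ref{prop:countableind} to reduce to $\mathbb{N}$-indexed admissible diagrams and then inductively split each transition monic to exhibit the colimit as a countable direct sum. Your argument is in fact slightly more complete, since you also spell out the easy reverse inclusion (that a countable direct sum of objects of $\Cc$ lies in $\Indc(\Cc)$), which the paper leaves implicit.
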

\begin{proof}
    Let $\widehat{X}\in\Indc(\Cc)$. Proposition \ref{prop:countableind} shows that $\widehat{X}$ is the colimit of an admissible diagram $X\colon\Nb\to\Cc$. If $\Cc$ is split exact, then for each $i$, there exists a splitting of the admissible monic $X_i\into X_{i+1}$ into the inclusion of a summand $X_i\into X_i\oplus X_{i,i+1}\cong X_{i+1}$. By induction on $i$, we conclude that $\widehat{X}\cong X_0 \oplus \bigoplus_{i\in\Nb} X_{i,i+1}$.
\end{proof}

\begin{example}
    Let $R$ be a commutative Noetherian ring with $\Spec(R)$ connected. Denote by $P_{\aleph_0}(R)$ the category of countably generated projective $R$-modules. Bass's Theorem \cite[Corollary 4.5]{Bas:63} shows that every infinitely generated module $M\in P_{\aleph_0}(R)$ is a free. We conclude that $P_{\aleph_0}(R)\simeq\Indc(P_f(R))$.
\end{example}

Note that in the example above, every object in $\Indc(P_f(R))$ is a direct summand of the free module $R[t]$. This is part of a general phenomenon.
\begin{proposition}\label{prop:bigindobject}
    Let $\Cc$ be a split exact category for which there exists a collection of objects $\{S_i\}_{i\in\Nb}\subset\Cc$ such that every object $Y\in\Cc$ is a direct summand of $\bigoplus_{i=0}^n S_i$ for some $n$. Then every countable Ind-object in $\Cc$ is a direct summand of
    \begin{equation*}
        \widehat{\bigoplus_{\Nb} S}:=\bigoplus_{\Nb}(\bigoplus_{i\in\Nb} S_i).
    \end{equation*}
\end{proposition}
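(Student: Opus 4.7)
The plan is to reduce to a direct sum decomposition and then compare one factor at a time.

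First I would invoke Corollary \ref{cor:indcsum} to write any countable Ind-object $\widehat{X}\in\Indc(\Cc)$ as a countable direct sum $\widehat{X}\cong\bigoplus_{j\in\Nb}Y_j$ with each $Y_j\in\Cc$. (Implicit here is that $\Indc(\Cc)$ admits countable direct sums, which follows because a countable direct sum is the colimit of the admissible Ind-diagram of finite partial sums with respect to the canonical split admissible monics.) This reduces the problem to showing each $Y_j$ embeds as a direct summand of $\bigoplus_{i\in\Nb}S_i$, and then summing these embeddings.

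Next, for each $j$, by hypothesis there exists $n_j\in\Nb$ and a split admissible monic $\iota_j\colon Y_j\hookrightarrow\bigoplus_{i=0}^{n_j}S_i$ in $\Cc$. Composing with the canonical (split) inclusion $\bigoplus_{i=0}^{n_j}S_i\hookrightarrow\bigoplus_{i\in\Nb}S_i$ in $\Indc(\Cc)$ yields a split admissible monic
\begin{equation*}
    \widetilde{\iota}_j\colon Y_j\hookrightarrow \bigoplus_{i\in\Nb}S_i
\end{equation*}
with an explicit retraction obtained by projecting and then applying the retraction of $\iota_j$.

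Finally, I would take the countable direct sum of these split admissible monics. Direct sums of split monics are split (the sum of retractions is a retraction of the sum), so we obtain a split admissible monic
\begin{equation*}
    \bigoplus_{j\in\Nb}\widetilde{\iota}_j\colon \widehat{X}\cong\bigoplus_{j\in\Nb}Y_j\hookrightarrow \bigoplus_{j\in\Nb}\Bigl(\bigoplus_{i\in\Nb}S_i\Bigr)=\widehat{\bigoplus_{\Nb}S},
\end{equation*}
exhibiting $\widehat{X}$ as a direct summand of $\widehat{\bigoplus_{\Nb}S}$, as desired.

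The only mildly delicate point is confirming that the constructions of ``countable direct sum'' and ``direct sum of split monics'' behave as expected inside $\Indc(\Cc)$; both follow from the fact that $\lex(\Cc)$ is cocomplete, that $\Indc(\Cc)$ is closed under countable directed colimits of admissible diagrams, and that split exactness is preserved by directed colimits. No deeper obstruction arises, so the proof is essentially a two-step bookkeeping argument built on Corollary \ref{cor:indcsum}.
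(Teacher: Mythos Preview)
Your proof is correct and follows essentially the same approach as the paper: both start from Corollary~\ref{cor:indcsum} to write $\widehat{X}\cong\bigoplus_{j}Y_j$, then embed each $Y_j$ as a summand of $\bigoplus_{i\in\Nb}S_i$ and sum. The only cosmetic difference is that the paper explicitly names the complementary summand (writing $X_i\oplus Y_i\oplus\bigoplus_{n>n_i}S_n\cong\bigoplus_{n\in\Nb}S_n$ and rearranging), whereas you phrase the same computation in terms of a split monic with an explicit retraction.
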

\begin{proof}
    Let $\widehat{X}\in\Indc(\Cc)$. By Corollary \ref{cor:indcsum}, there exist $X_i\in\Cc$ for $i\in\Nb$, such that $\widehat{X}\cong\bigoplus_{i\in\Nb} X_i$. By assumption, for each $i\in\Nb$, there exists $Y_i\in\Cc$ and $n_i\in\Nb$ such that \begin{equation*}
        X_i\oplus Y_i\cong \bigoplus_{i=0}^{n_i} S_i.
    \end{equation*}
    As a result, we have
    \begin{align*}
        \bigoplus_{\Nb}(\bigoplus_{i\in\Nb} S_i)&\cong \bigoplus_{i\in\Nb}(X_i\oplus Y_i\oplus\bigoplus_{n>n_i} S_n)\\
        &\cong \widehat{X}\oplus(\bigoplus_{i\in\Nb} Y_i\oplus\bigoplus_{n>n_i} S_n).
    \end{align*}
\end{proof}

\begin{proposition}\label{prop:indcsplit}
    Let $\Cc$ be a split exact category. Then $\Indc(\Cc)$ is split exact.
\end{proposition}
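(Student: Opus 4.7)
My plan is to reduce the statement to an inductive construction of compatible splittings using the straightening results from Section \ref{sec:ind}. Let $\widehat{X} \into \widehat{Y} \onto \widehat{Z}$ be a short exact sequence in $\Indc(\Cc)$. Proposition \ref{prop:inde=eind} lets me represent it by an admissible Ind-diagram in $\Ec\Cc$ over a countable directed poset $I$, and, as in the proof of Proposition \ref{prop:countableind}, I can precompose with a final map $\Nb \to I$ to reindex over $\Nb$. This yields an $\Nb$-indexed diagram of short exact sequences
\[
X_n \into Y_n \onto Z_n
\]
in $\Cc$ whose transition maps $i_n \colon Z_n \into Z_{n+1}$, $j_n \colon Y_n \into Y_{n+1}$, and $X_n \into X_{n+1}$ are all admissible monics and whose colimit in $\lex(\Cc)$ recovers the given sequence.

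Next, I would inductively build splittings $s_n \colon Z_n \to Y_n$ of $Y_n \onto Z_n$ that intertwine the transition maps, i.e.\ satisfy $j_n \circ s_n = s_{n+1} \circ i_n$. Split exactness of $\Cc$ provides an initial splitting $s_0$. For the induction step, given $s_n$ I would choose any splitting $t \colon Z_{n+1} \to Y_{n+1}$; the difference $j_n \circ s_n - t \circ i_n$ composes trivially with $Y_{n+1} \onto Z_{n+1}$, so it factors uniquely as $\iota \circ d$ for a map $d \colon Z_n \to X_{n+1}$, where $\iota \colon X_{n+1} \into Y_{n+1}$. Split exactness of $\Cc$ also gives a retraction $r \colon Z_{n+1} \to Z_n$ of $i_n$. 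I then set
\[
s_{n+1} := t + \iota \circ d \circ r.
\]
A short check confirms that $s_{n+1}$ is still a splitting of $Y_{n+1} \onto Z_{n+1}$ and that $s_{n+1} \circ i_n = j_n \circ s_n$, closing the induction.

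Finally, the compatible family $\{s_n\}$ is a natural transformation of diagrams $Z_\bullet \to Y_\bullet$, so passing to colimits in $\lex(\Cc)$ yields a morphism $\widehat{s} \colon \widehat{Z} \to \widehat{Y}$ splitting $\widehat{Y} \onto \widehat{Z}$. The main obstacle is the correction step in the induction: a naive splitting at stage $n+1$ need not respect the one at stage $n$. The trick is that split exactness supplies not only splittings of the short exact sequences themselves but also retractions of the transition monics on the quotient side, and this second use of the hypothesis is exactly what is needed to absorb the discrepancy and continue the induction.
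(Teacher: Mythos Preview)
Your proof is correct and follows essentially the same strategy as the paper's: reduce via Propositions \ref{prop:inde=eind} and \ref{prop:countableind} to an $\Nb$-indexed diagram of exact sequences and then inductively build compatible splittings. The only variation is cosmetic---you construct sections of the epics $Y_n \onto Z_n$, correcting via a retraction of the transition map $Z_n \into Z_{n+1}$, whereas the paper constructs retractions of the monics $X_n \into Y_n$, correcting via chosen splittings of the transition maps on the $X$ and $Y$ sides.
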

\begin{proof}
    By Propositions \ref{prop:inde=eind} and \ref{prop:countableind}, it suffices to show that every exact sequence of countable admissible Ind-diagrams
    \begin{equation*}
        \begin{xy}
            \Vtrianglepair/=`=`>`>`>/[\mathbb{N}`\mathbb{N}`\mathbb{N}`\Cc;``X`Y`Z]
            \place(375,250)[\twoar(1,0)]
            \place(375,350)[_{\alpha}]
            \place(650,250)[\twoar(1,0)]
            \place(650,350)[_{\beta}]
        \end{xy}
    \end{equation*}
    splits. By the usual argument, it suffices to construct a splitting of $\alpha$. Denote by
    \begin{equation*}
        \alpha_{\le n}\colon X_{\le n}\to Y_{\le n}
    \end{equation*}
    the restriction of $\alpha$ to $\{0<\ldots <n\}\subset\mathbb{N}$. We induct on $n$ to show that a retract of $\alpha_{\le n}$ exists for all $n$.

    Because $\Cc$ is split exact, a retract of $\alpha_0\colon X_0\onto Y_0$ exists. Now suppose that a retract $\rho_{\le n}$ of $\alpha_{\le n}$ exists. It suffices to construct a retract $\rho_{n+1}$ of $\alpha_{n+1}$ which fits into a commuting square
    \begin{equation*}
        \begin{xy}
            \square/>`<<-`<<-`>/[X_n`X_{n+1}`Y_n`Y_{n+1};`\rho_n`\rho_{n+1}`]
        \end{xy}
    \end{equation*}
    The maps $\rho_n$ and $\alpha_n$ induce an isomorphism $Y_n\cong X_n\oplus B_0$. We can also choose splittings  $X_{n+1}\cong X_n\oplus A$ and $Y_{n+1}\cong Y_n\oplus B_1$. With respect to these splittings, we can write $\alpha_{n+1}$ as the map $(1_{X_n}+\chi,\sigma_0,\sigma_1)$ where $\chi\colon A\to X_n$, and $\sigma_i\colon A\to B_i$ for $i=0,1$.

    The map $(\sigma_0,\sigma_1)\colon A\to B_0\oplus B_1$ is an admissible monic, as it is isomorphic to the pushout of $\alpha_{n+1}$ along the projection $\pi_A\colon X_{n+1}\onto A$. Choose a retraction $\rho'$ of $(\sigma_0,\sigma_1)$, and define $\rho_{n+1}:=\pi_{X_n}\oplus\rho'-\chi\rho'\pi_{B_0\oplus B_n}$, where $\pi_{(-)}$ denotes the projection onto $(-)$. Then $\rho_{n+1}$ is a retraction of $\alpha_{n+1}$ and fits into the commuting square above. This completes the induction.
\end{proof}

\begin{remark}
    As explained to us by J. \v S\v tov\'\i\v cek and J. Trlifaj, this proposition fails badly for uncountable $\kappa$; see Proposition \ref{r:split} for a family of counter-examples.
\end{remark}

\subsubsection{\texorpdfstring{$\Indk(\Cc)$}{Ind(C)} is Not Generally Idempotent Complete}\label{sec:notic}
In this section, we present a simple example of an idempotent complete category $\Cc$ for which $\Indk(\Cc)$ is not idempotent complete.\footnote{We are grateful to J. \v S\v tov\'\i\v cek and J. Trlifaj for pointing us to this example, and thus providing a much simpler alternative to our original discussion.} Note that Freyd \cite{Fre:66} has shown that an additive category which admits infinite direct sums, such as $\Indk(\Cc)$, is weakly idempotent complete if and only if it is idempotent complete. This example shows that, in general, neither condition on $\Cc$ is inherited by $\Indk(\Cc)$.

\begin{proposition}
    Let $R$ be a ring for which there exists a countably generated projective $R$-module $M$ which is not a direct sum of finitely generated projective modules.\footnote{An example of such a ring dates to Kaplansky; see e.g.\ \cite[(2.12D)]{Lam:99}.} Then $\Indc(P_f(R))$ is not idempotent complete.
\end{proposition}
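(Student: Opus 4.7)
The plan is to exploit the identification, via Corollary \ref{cor:indcsum}, of $\Indc(P_f(R))$ with the full sub-category of $\Mod(R)$ consisting of countable direct sums of finitely generated projective $R$-modules; since $P_f(R)$ is split exact, every object of $\Indc(P_f(R))$ has this form. Under this identification, idempotent completeness of $\Indc(P_f(R))$ would translate to a closure property for the class of countable direct sums of finitely generated projectives, and the existence of $M$ will directly contradict such closure.

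First I would observe that the given module $M$, being countably generated projective, is a direct summand of the free module $F = R^{(\Nb)}$ on countably many generators. The module $F \cong \bigoplus_{\Nb} R$ lies in $\Indc(P_f(R))$ by Corollary \ref{cor:indcsum}, since $R \in P_f(R)$ and $P_f(R)$ is split exact. Writing $F \cong M \oplus N$ produces an idempotent $p \colon F \to F$ with image (isomorphic to) $M$, viewed as an endomorphism of an object of $\Indc(P_f(R))$.

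Next, if $\Indc(P_f(R))$ were idempotent complete, this idempotent $p$ would split inside $\Indc(P_f(R))$. Since $\Indc(P_f(R))$ embeds as a full sub-category of $\Mod(R)$ (via $\lex(P_f(R)) \simeq \Mod(R)$ from Lemma \ref{lemma:lexR}), any such splitting, computed in $\Indc(P_f(R))$, must agree with the splitting computed in $\Mod(R)$, and would therefore exhibit $M$ itself as an object of $\Indc(P_f(R))$.

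Finally, applying Corollary \ref{cor:indcsum} a second time, any object of $\Indc(P_f(R))$ is a countable direct sum of finitely generated projective $R$-modules. Thus $M$ would admit such a decomposition, contradicting the hypothesis on $M$. The main point of the argument is really packaged into Corollary \ref{cor:indcsum}; the only issue requiring any care is to confirm that splittings of idempotents in a full sub-category of $\Mod(R)$ coincide with those computed in $\Mod(R)$, which is automatic from fullness.
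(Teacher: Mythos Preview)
Your proof is correct and follows essentially the same approach as the paper's: exhibit $M$ as a direct summand of the countably generated free module, identify $\Indc(P_f(R))$ with countable direct sums of finitely generated projectives via Corollary \ref{cor:indcsum}, and observe that the corresponding idempotent cannot split inside $\Indc(P_f(R))$ since $M$ is not such a direct sum. Your added remark that splittings in a full sub-category agree with those in the ambient category is a nice clarification, but the argument is otherwise identical in content to the paper's.
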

\begin{proof}
    Because $M$ is projective and countably generated, it is a direct summand of the free $R$-module $R[t]$. Indeed, a generating set $\{x_i\}_{i\in\Nb}\subset M$ determines a surjection $\pi\colon R[t]\to M$ by the assignment $t^i\mapsto x_i$; the projectivity of $M$ ensures that a splitting $\alpha$ of this surjection exists. Recall that $\lex(P_f(R))\simeq\Mod(R)$ (Lemma \ref{lemma:lexR}). Corollary \ref{cor:indcsum} shows that $\Indc(P_f(R))\subset\Mod(R)$ is the full sub-category consisting of countable direct sums of finitely generated projective modules.  In particular, we have $R[t]\in\Indc(P_f(R))$, but $M$ is not an object in $\Indc(P_f(R))$. We conclude that the idempotent of $R[t]$ which corresponds to $M$ does not split in $\Indc(P_f(R))$.
\end{proof}

\begin{proposition}\label{prop:pc}
    Let $R$ be a ring. The category $P_{\aleph_0}(R)$ of countably generated projective $R$-modules is equivalent to the idempotent completion of $\Indc(P_f(R))$.
\end{proposition}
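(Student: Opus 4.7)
The strategy is to exhibit an explicit equivalence built from the natural embedding into $\Mod(R)$, and then identify the essential image with $P_{\aleph_0}(R)$. By Lemma \ref{lemma:lexR}, we have $\lex(P_f(R))\simeq\Mod(R)$, so the definition of admissible Ind-objects gives a fully faithful exact embedding $\Indc(P_f(R))\hookrightarrow\Mod(R)$. Since $\Mod(R)$ is idempotent complete, the universal property of the idempotent completion extends this to a functor
\begin{equation*}
    \Phi\colon\Indc(P_f(R))^{\ic}\to\Mod(R),
\end{equation*}
and $\Phi$ remains fully faithful because morphisms $(X,p)\to(Y,q)$ in the idempotent completion are, by definition, morphisms $f\colon X\to Y$ in $\Indc(P_f(R))$ with $qfp=f$, and these biject with morphisms between the corresponding images of idempotents in $\Mod(R)$.

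Next I would identify the essential image of $\Phi$ with $P_{\aleph_0}(R)$. For the inclusion of the essential image into $P_{\aleph_0}(R)$: since $P_f(R)$ is split exact, Corollary \ref{cor:indcsum} identifies $\Indc(P_f(R))\subset\Mod(R)$ with the full subcategory of countable direct sums $\bigoplus_{i\in\Nb}P_i$ of finitely generated projective modules. Each such sum is projective and countably generated (pick finite generating sets of each $P_i$), and direct summands of projective (resp.\ countably generated) modules are again projective (resp.\ countably generated). Hence every object in the image of $\Phi$ lies in $P_{\aleph_0}(R)$.

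For essential surjectivity onto $P_{\aleph_0}(R)$: if $M$ is countably generated and projective, a choice of generating set $\{x_i\}_{i\in\Nb}$ gives a surjection $R[t]=\bigoplus_{\Nb}R\onto M$, and projectivity of $M$ yields a splitting. Thus $M$ is a direct summand of $R[t]$, which belongs to $\Indc(P_f(R))$; hence $M$ is isomorphic to $\Phi(R[t],e)$ for the corresponding idempotent $e$. Combining full faithfulness of $\Phi$ with this essential surjectivity onto $P_{\aleph_0}(R)$ yields the desired equivalence $\Indc(P_f(R))^{\ic}\simeq P_{\aleph_0}(R)$.

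There is no substantive obstacle here beyond assembling the pieces; the only mild subtlety is the verification that the extension of a fully faithful functor to the idempotent completion remains fully faithful, which follows directly from the definition of morphisms in $\Cc^{\ic}$ together with the fact that idempotents in the target already split.
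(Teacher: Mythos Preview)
Your proof is correct and follows essentially the same approach as the paper: embed $\Indc(P_f(R))$ into $\Mod(R)$ via Lemma~\ref{lemma:lexR}, use Corollary~\ref{cor:indcsum} to identify its objects as countable sums of finitely generated projectives, and then observe that any countably generated projective is a summand of $R[t]$. The paper is terser (it leaves the idempotent-completion extension implicit), but the argument is the same.
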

\begin{proof}
    Lemma \ref{lemma:lexR} shows that $\Indc(P_f(R))$ is a full sub-category of $\Mod(R)$. Corollary \ref{cor:indcsum} shows that every module in $\Indc(P_f(R))$ is projective. Therefore, it suffices to observe that every countably generated projective module is a direct summand of a (free) module in $\Indc(P_f(R))$, as we did above.
\end{proof}

\subsection{The Categories \texorpdfstring{$\FM(\Cc)$}{FM(C)} and \texorpdfstring{$P(\Cc)$}{P(C)}}
We now introduce two categories suggested by the study of $R$-modules.
\begin{definition}\label{def:fmk}
    Let $\Cc$ be an exact category. The category $\FMk(\Cc)$ of \emph{$\kappa$-generated flat Mittag-Leffler objects} in $\Cc$ is the idempotent completion of $\Indk(\Cc)$, i.e.
    \begin{equation*}
        \FMk(\Cc):=\Indk(\Cc)^{\ic}.
    \end{equation*}
\end{definition}

\begin{remark}
    A bi-exact tensor product $\otimes$ on $\Cc$ extends to a tensor product $\otimes$ on $\lex(\Cc)$ (by Day convolution). One can show that every object in $\FM(\Cc)$ is flat with respect to this tensor product.
\end{remark}

Recall that every set is the colimit of its directed poset of finite subsets. As a result, the category $\Ind(\Cc)$ is closed under arbitrary direct sums. This justifies the following.
\begin{definition}
    Let $\Cc$ be a split exact category. The category $P_\kappa(\Cc)$ of \emph{$\kappa$-generated projective objects} in $\Cc$ is the full sub-category of $\Indk(\Cc)^{\ic}$ consisting of direct summands of arbitrary direct sums of objects in $\Cc$.
\end{definition}

\begin{remark}
    If $\Cc$ is split exact, then the definition of $\lex(\Cc)$ ensures that $\Cc$ includes as a sub-category of projectives. One can similarly show that $P(\Cc)$ is a sub-category of projectives in $\lex(\Cc)$.
\end{remark}

\subsection{Admissible Ind R-Modules}\label{sec:indRmod}
Let $R$ be a ring. In this section, we describe the categories of admissible Ind-objects in various categories of finitely generated $R$-modules.

We have already shown that countably generated projective $R$-modules are direct summands of objects in $\Indc(P_f(R))$ (Proposition \ref{prop:pc}). Other simple examples include the following.
\begin{example}\mbox{}
    \begin{enumerate}
        \item Let $R$ be a Noetherian ring. Denote by $\Mod_f(R)$ the abelian category of finitely presented $R$-modules. Every $R$-module is the directed colimit of its finitely presented sub-modules. As a consequence, the category $\Indk(\Mod_f(R))$ is equivalent to the abelian category of $R$-modules having at most $\kappa$ generators. If we omit the cardinality bound, we have $\Ind(\Mod_f(R))\simeq\Mod(R)$.
        \item Similarly, if $X$ is a Noetherian scheme, $\Ind(\Coh(X))\simeq\QCoh(X)$.
    \end{enumerate}
\end{example}

We now turn to two particular examples of interest: projective modules and flat Mittag-Leffler modules. Recall the following theorem of Kaplansky \cite{Kap:58}.
\begin{theorem}[Kaplansky]
    Let $R$ be a ring. Every projective $R$-module is a direct sum of countably generated projective modules.
\end{theorem}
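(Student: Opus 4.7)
The plan is to reduce to the case where the projective module is a direct summand of a free module, and then establish Kaplansky's decomposition lemma: if $F = \bigoplus_{i \in I} R$ is free on $\{e_i\}_{i\in I}$ and $F = P \oplus Q$, then there exists a partition $I = \bigsqcup_{j\in J} I_j$ with each $I_j$ countable such that the summand $F_{I_j} := \bigoplus_{i\in I_j} R$ is stable under both projections $\pi_P$ and $\pi_Q$. Given such a partition, one has $P = \bigoplus_{j\in J} (P \cap F_{I_j})$ and each $P \cap F_{I_j} = \pi_P(F_{I_j})$ is a direct summand of the countably generated $F_{I_j}$, hence a countably generated projective module. So the theorem follows once we produce the partition (after embedding the given projective $P$ as a summand of a free module by the standard trick).

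The central ingredient is a \emph{saturation} construction. Starting with an arbitrary countable subset $S_0 \subseteq I$, I build a countable $S \supseteq S_0$ such that $F_S = (P \cap F_S) \oplus (Q \cap F_S)$. Iteratively: set $S^{(0)} := S_0$, and having constructed $S^{(n)}$, for each $s \in S^{(n)}$ write $e_s = p_s + q_s$ with $p_s \in P$, $q_s \in Q$; since $p_s$ and $q_s$ have finite support in the basis $\{e_i\}_{i\in I}$, let $S^{(n+1)}$ adjoin all indices appearing in those supports. Then $S := \bigcup_n S^{(n)}$ is countable, and by construction $\pi_P(F_S) \subseteq F_S$ and $\pi_Q(F_S) \subseteq F_S$. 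From stability under the projections one gets $F_S = (P \cap F_S) \oplus (Q \cap F_S)$.

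The decomposition is then produced by a Zorn's lemma argument on the poset of families $\{I_j\}_{j\in J}$ of pairwise disjoint countable subsets of $I$ such that each $F_{I_j}$ splits compatibly with $P \oplus Q$, ordered by inclusion of families. A maximal family must exhaust $I$: if $i \in I$ fails to lie in $\bigcup_j I_j$, one saturates $\{i\}$ \emph{within the complementary free summand} $\bigoplus_{i \in I \setminus \bigsqcup_j I_j} R$ (using the induced direct sum decomposition on this complement, which is again a direct summand of the ambient free module) to produce a new countable saturated piece, contradicting maximality.

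The main obstacle is the saturation step: one must verify both that the recursive enlargement keeps the index set countable at each stage (clear, since only finitely many new indices are added per basis vector and the stages are indexed by $\mathbb{N}$) and that the resulting saturated submodule genuinely inherits the direct sum decomposition from $F$ (which reduces to the stability of $F_S$ under the projections, hence follows from the construction). Once the saturation lemma is in hand, the Zorn's lemma step is routine, and the reduction from projective modules to summands of free modules is standard.
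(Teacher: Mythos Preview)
The paper does not prove this theorem; it is stated with a citation to Kaplansky \cite{Kap:58} and used as a black box. Your saturation construction is exactly the key idea in the classical proof, but your Zorn's lemma assembly has a genuine gap.

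The problem is the extension step. Suppose $\{I_j\}_{j\in J}$ is a maximal family and set $M := \bigoplus_j F_{I_j}$, $N := F_{I \setminus \bigcup_j I_j}$. You observe that $N$ inherits a decomposition $N = P'' \oplus Q''$ with $P'' = \pi_N(P)$ and $Q'' = \pi_N(Q)$. But a subset $S \subseteq I \setminus \bigcup_j I_j$ saturated for $\pi_{P''}, \pi_{Q''}$ need not be saturated for the \emph{original} projections $\pi_P, \pi_Q$, so the enlarged family fails your poset condition and maximality is not contradicted. Concretely: take $F = R^3$, $P = R(e_1+e_2) \oplus Re_3$, $Q = Re_2$. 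Then $\{\{2\}\}$ lies in your poset (the span $Re_2$ is stable under both projections). The complement $N = Re_1 \oplus Re_3$ has induced decomposition $P'' = N$, $Q'' = 0$, so $\{1\}$ is already saturated in $N$; yet $Re_1$ is not stable under $\pi_P$ since $\pi_P(e_1) = e_1 + e_2$. Worse, for the partition $\{1\},\{2\},\{3\}$ one computes $\bigoplus_j (P \cap F_{I_j}) = 0 \oplus 0 \oplus Re_3 \subsetneq P$, so even a complete partition of this kind would not recover $P$.

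The classical remedy is to drop the disjointness requirement and build instead a continuous increasing transfinite chain $\emptyset = S_0 \subseteq S_1 \subseteq \cdots$ of saturated subsets of $I$ with $|S_{\alpha+1} \setminus S_\alpha| \le \aleph_0$: at successor stages one saturates $S_\alpha \cup \{i_\alpha\}$ using the \emph{original} projections (this adds only countably many indices, and overlap with $S_\alpha$ is harmless for a chain). Each quotient $(P \cap F_{S_{\alpha+1}})/(P \cap F_{S_\alpha})$ is then a direct summand of $F_{S_{\alpha+1}\setminus S_\alpha}$, hence countably generated projective, and $P$ is the direct sum of complements chosen at each successor step. Your saturation lemma survives unchanged; only the transfinite bookkeeping needs to be reorganized.
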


\begin{corollary}\label{cor:projRgeneral}
    The category $P(R)$ of projective $R$-modules is equivalent to the category $P(P_f(R))$.
\end{corollary}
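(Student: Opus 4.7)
The plan is to identify both $P(R)$ and $P(P_f(R))$ as the same full sub-category of $\Mod(R)$, using the equivalence $\lex(P_f(R)) \simeq \Mod(R)$ from Lemma \ref{lemma:lexR}. Under this identification, every object of $\Ind(P_f(R))^{\ic}$ is a particular $R$-module, so the comparison reduces to checking equality of essential images; the fully faithful part is automatic since both categories are full sub-categories of $\Mod(R)$.

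First I would verify the inclusion $P(P_f(R)) \subseteq P(R)$. By definition, an object of $P(P_f(R))$ is a direct summand of a module of the form $\bigoplus_{i \in I} N_i$ with each $N_i$ finitely generated projective. A direct sum of projective $R$-modules is projective, and a direct summand of a projective module is projective, so every object of $P(P_f(R))$ is an object of $P(R)$.

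For the reverse inclusion $P(R) \subseteq P(P_f(R))$, I would invoke Kaplansky's theorem to write a given projective $R$-module $M$ as $M \cong \bigoplus_{j \in J} M_j$, with each $M_j$ countably generated projective. Proposition \ref{prop:pc} identifies each $M_j$ as an object of $\Indc(P_f(R))^{\ic}$, i.e.\ a direct summand of some $\widehat{X}_j \in \Indc(P_f(R))$. Taking the direct sum over $j$, and recalling that $\Ind(P_f(R))$ is closed under arbitrary direct sums (since an arbitrary set is the directed colimit of its finite subsets), we obtain a direct summand decomposition exhibiting $M$ as a direct summand of an object of $\Ind(P_f(R))$. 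This places $M$ inside $P(P_f(R))$ by definition.

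The only thing I expect to require care is the observation that the direct sum $\bigoplus_j \widehat{X}_j$ really is an object of $\Ind(P_f(R))$ rather than merely an object of $\lex(P_f(R))$; this is handled by the indexing remark above (directed poset of finite subsets of $J$ applied coordinatewise to finite sub-Ind-diagrams of each $\widehat{X}_j$). Beyond this, the result is a formal consequence of Kaplansky, Proposition \ref{prop:pc}, and Lemma \ref{lemma:lexR}, with no genuine obstacle.
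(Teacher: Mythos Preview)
Your approach is essentially the paper's: embed both categories fully in $\Mod(R)$ via Lemma~\ref{lemma:lexR}, then compare essential images using Kaplansky's theorem and Proposition~\ref{prop:pc}. The paper phrases the reverse inclusion slightly differently---it observes that each $M_j\in P(P_f(R))$ and that $P(P_f(R))$ is closed under arbitrary direct sums---but the content is the same.

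One imprecision in your final step deserves attention. You conclude that $M$ is a direct summand of $\bigoplus_j \widehat{X}_j \in \Ind(P_f(R))$ and then write ``this places $M$ inside $P(P_f(R))$ by definition.'' But the definition of $P(P_f(R))$ requires $M$ to be a direct summand of a direct sum of objects of $P_f(R)$, not merely a direct summand of some object of $\Ind(P_f(R))$; the latter is the definition of $\FM(P_f(R))$, and for uncountable cardinalities $\Ind(P_f(R))$ contains non-projective flat Mittag-Leffler modules (see Proposition~\ref{r:split}), so the distinction is genuine. The fix is immediate: since each $\widehat{X}_j\in\Indc(P_f(R))$ and $P_f(R)$ is split exact, Corollary~\ref{cor:indcsum} gives $\widehat{X}_j\cong\bigoplus_{n\in\Nb} X_{j,n}$ with $X_{j,n}\in P_f(R)$, so $\bigoplus_j\widehat{X}_j$ is itself a direct sum of finitely generated projectives. (Alternatively, bypass $\widehat{X}_j$ entirely and just note that a countably generated projective module is a summand of a countably generated free module.)
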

\begin{proof}
    Lemma \ref{lemma:lexR} shows that $P(P_f(R))$ is a full sub-category of $\Mod(R)$. Further, the definition of $P(P_f(R))$ ensures that every object in $P(P_f(R))$ is projective. It suffices to show that all projective modules are objects in $P(P_f(R))$.

    Let $M$ be a projective module. Kaplansky's theorem shows that there exists a set $I$ and countable generated projective modules $\{M_i\}_{i\in I}$ such that $M\cong\bigoplus_I M_i$. By Proposition \ref{prop:pc}, $M_i\in P(P_f(R))$ for all $i$, and the definition ensures that the sub-category $P(P_f(R))\subset\Mod(R)$ is closed under arbitrary direct sums. We conclude that $M\in P(P_f(R))$.
\end{proof}

Given a finitely generated projective (left) module $M\in P_f(R)$, denote by $M^\vee$ the finitely generated projective (right) module $\hom_R(M,R)$.
\begin{definition}\label{def:fml}
    A \emph{flat Mittag-Leffler module} over $R$ is a (left) module $M$ which is isomorphic to the colimit of a directed diagram $M\colon I\rightarrow P_f(R)$ such that for every $i\in I$ there exists $j\geq i$, with
    \begin{equation*}
        Im(M_k^{\vee}\rightarrow M_i^{\vee})=Im(M_j^\vee\rightarrow M_i^\vee)
    \end{equation*}
    for all $k\geq j$.
\end{definition}

\begin{remark}
    Mittag-Leffler modules were introduced by Raynaud and Gruson \cite{RG:71}. As observed in \cite{Dri:06}, when a Mittag-Leffler module is also flat, the Govorov--Lazard characterization of flat modules (\cite{Gov:65}, \cite[Th\'{e}or\`{e}me 1.2(iii)]{Laz:69}) allows one to restate Raynaud and Gruson's definition in the above form.
\end{remark}

\begin{proposition}\label{prop:indfml}
    The category $\Ind(P_f(R))$ embeds as a full sub-category of the category $\FM(R)$ of flat Mittag-Leffler $R$-modules.
\end{proposition}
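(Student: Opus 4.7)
The plan is to factor the embedding as the composition
\begin{equation*}
\Ind(P_f(R))\into \lex(P_f(R))\to^{\simeq}\Mod(R),
\end{equation*}
where the second arrow is the equivalence of Lemma \ref{lemma:lexR}. The first arrow is fully faithful by definition of $\Ind(P_f(R))$ as a full sub-category of $\lex(P_f(R))$, so the composition is automatically fully faithful. Concretely, under this identification, an admissible Ind-object $\widehat{X}$ represented by an admissible Ind-diagram $X\colon I\to P_f(R)$ corresponds to the $R$-module $\colim_I X_i$ computed in $\Mod(R)$.

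The only thing left to verify is that the essential image of this functor lies in $\FM(R)$. Here the key observation is that $P_f(R)$ is split exact: every short exact sequence of finitely generated projective modules splits, because the cokernel is itself projective. Consequently every admissible monic in $P_f(R)$ is a split monic, and every admissible Ind-diagram $X\colon I\to P_f(R)$ consists of split monics between finitely generated projectives.

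Applying the contravariant functor $(-)^\vee=\hom_R(-,R)$, each transition map $X_j^\vee\to X_i^\vee$ (for $i\le j$ in $I$) becomes a split epic, and in particular is surjective. Hence for every $i\in I$ and every $k\ge i$, the image of $X_k^\vee\to X_i^\vee$ equals all of $X_i^\vee$. The Mittag-Leffler condition of Definition \ref{def:fml} is therefore satisfied trivially, with any choice of $j\ge i$, and the $R$-module $\colim_I X_i$ is a flat Mittag-Leffler module in the sense of that definition.

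I do not anticipate any real obstacle; the argument is essentially a combination of the equivalence $\lex(P_f(R))\simeq\Mod(R)$ with the split exactness of $P_f(R)$. The only subtlety is the observation that admissibility of monics in $P_f(R)$ forces the dual transition maps to be surjective, which makes the Mittag-Leffler condition automatic rather than a nontrivial constraint. Flatness itself needs no separate argument, since it is built into the definition of $\FM(R)$ by virtue of expressing the module as a filtered colimit of finitely generated projectives.
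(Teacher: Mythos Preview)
Your proof is correct and follows essentially the same route as the paper: identify $\Ind(P_f(R))$ with a full sub-category of $\Mod(R)$ via $\lex(P_f(R))\simeq\Mod(R)$, then observe that admissible monics in $P_f(R)$ are split inclusions, so the dual transition maps are surjective and the Mittag-Leffler condition holds automatically. The paper's version is terser but makes the same argument.
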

\begin{proof}
    By definition, $\Ind(P_f(R))$ is a full sub-category of $\lex(P_f(R))\simeq\Mod(R)$. We show that every module in $\Ind(P_f(R))$ is flat and Mittag-Leffler. Indeed, an admissible monic $N_i\into N_j$ in $P_f(R)$ is an inclusion of a direct summand. In particular, $N_i\into N_j$ induces a surjective map $N_j^\vee\onto N_i^\vee$. We conclude that the colimit in $\Mod(R)$ of any admissible Ind-diagram in $P_f(R)$ is a flat Mittag-Leffler module.
\end{proof}

The following is proven in Appendix \ref{sec:stovicektrlifaj} (see Lemma \ref{l:rel} and Proposition \ref{p:more}).
\begin{proposition}[J. \v S\v tov\'\i\v cek, J. Trlifaj]\label{prop:fmR}
    The categories $\FM(P_f(R))$ and $\FM(R)$ are equivalent. Similarly, the category $\FMk(P_f(R))$ is equivalent to the category $\FMk(R)$ of $\kappa$-generated flat Mittag-Leffler modules.
\end{proposition}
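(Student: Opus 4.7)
The plan is to produce the equivalence $\FM(P_f(R)) \simeq \FM(R)$ by exhibiting a fully faithful, essentially surjective functor between them, leveraging the embedding $\Ind(P_f(R)) \hookrightarrow \FM(R)$ provided by Proposition \ref{prop:indfml}. First, I would observe that $\FM(R)$ is idempotent complete: it sits inside $\Mod(R)$ as a full subcategory, and the flatness condition together with the Mittag-Leffler condition are each visibly preserved under passage to direct summands. By the universal property of the idempotent completion (applied to the functor from Proposition \ref{prop:indfml}), the embedding extends canonically to a fully faithful functor
\begin{equation*}
\Phi\colon \FM(P_f(R)) = \Ind(P_f(R))^{\ic} \hookrightarrow \FM(R).
\end{equation*}

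The substantive content is then essential surjectivity of $\Phi$: every flat Mittag-Leffler module $M$ must be realized as a direct summand of an admissible Ind-object in $P_f(R)$, i.e.\ of the colimit of a directed system of finitely generated projectives connected by split monomorphisms. The natural strategy is to start from a presentation of $M$ as guaranteed by Definition \ref{def:fml}, and then use the structure theory of flat Mittag-Leffler modules (in the spirit of Raynaud--Gruson, refined by \v S\v tov\'\i\v cek--Trlifaj) to replace the original presentation, whose transition maps $M_i \to M_j$ are only pure monics in $P_f(R)$, by one whose transition maps are split injections. The price paid is that $M$ may only appear as a direct summand of the new colimit, but this is exactly what is needed for the object to lie in $\Ind(P_f(R))^{\ic}$. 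For the cardinality-bounded version $\FMk(P_f(R)) \simeq \FMk(R)$, I would then check that the construction preserves the cardinality bound: a flat Mittag-Leffler module with at most $\kappa$ generators admits an Ind-presentation over a directed poset of cardinality at most $\kappa$, and conversely.

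The hard step is the replacement of a general Mittag-Leffler presentation by one along split monomorphisms (up to direct summand). A priori, the stabilization-of-duals condition in Definition \ref{def:fml} is strictly weaker than asking that the transition maps split, and it is not clear how to bridge this gap without leaving $\FM(R)$. I expect the \v S\v tov\'\i\v cek--Trlifaj appendix to handle this by a transfinite-induction argument: filter $M$ by a continuous, well-ordered chain of countably generated pure submodules (using the characterization of flat Mittag-Leffler modules in terms of such filtrations), apply a Kaplansky-type splitting result countably-generated-step-by-step to realize each successive quotient as a summand of a suitable $\Ind$-object of $P_f(R)$, and then patch these summands into a global admissible Ind-diagram whose colimit contains $M$ as a direct summand. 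This is precisely the kind of argument for which the flat Mittag-Leffler machinery was designed, and it is the reason the proof is deferred to the appendix rather than being carried out inline.
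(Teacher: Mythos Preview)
Your reduction to essential surjectivity is correct and matches the paper: one must show that every $M \in \FM(R)$ is a direct summand of some object of $\Ind(P_f(R))$. However, your expectation for how this is carried out is off the mark. The appendix (Proposition \ref{p:more}) handles it by a one-line application of Eilenberg's swindle, not by transfinite induction or Kaplansky-type patching.

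The key observation is that for any countably generated projective $P$ one has $P \oplus R^{(\omega)} \cong R^{(\omega)}$. By the Raynaud--Gruson characterization (Lemma \ref{l:rel}(1)), every finite subset of $M$ lies in some countably generated projective pure submodule $P \subset M$. Hence every finite subset of $M \oplus R^{(\omega)}$ lies in $P \oplus R^{(\omega)} \cong R^{(\omega)}$, which is free and pure in $M \oplus R^{(\omega)}$; in particular every finite subset lies in a \emph{finitely generated} free pure submodule. By Lemma \ref{l:rel}(2) this is exactly the criterion for membership in the image of $\Ind(P_f(R))$. Thus $M$ is a summand of $M \oplus R^{(\omega)} \in \Ind(P_f(R))$, and we are done. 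For the $\kappa$-bounded statement, $M \oplus R^{(\omega)}$ is still $\kappa$-generated, so the same argument goes through.

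So there is no transfinite filtration and no delicate assembly of local summand witnesses; the entire argument fits in a paragraph. Your proposed route is not obviously wrong, but the ``patching'' step you gesture at---assembling per-stage summands into a single global admissible Ind-diagram containing $M$ as a summand---is genuinely unclear as stated, and in any case is unnecessary once you spot the Eilenberg trick. (A minor side remark: in Definition \ref{def:fml} the transition maps $M_i \to M_j$ are arbitrary maps of finitely generated projectives subject to a stabilization condition on duals; they are not assumed to be pure monics, so your description of what needs to be upgraded is slightly off.)
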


\begin{remark}\mbox{}
    \begin{enumerate}
        \item In combination with \cite{EGT:11}, \v S\v tov\'\i\v cek and Trlifaj's proposition shows that the categories $\FMk(P_f(R))$ satisfy Zariski descent for all $\kappa$. At present, we do not know if this is a general phenomenon for other sheaves of exact categories.
        \item More generally, Mittag-Leffler conditions on modules have been the subject of considerable investigation, including \cite{RG:71}, \cite{AHH:08}, \cite{HT:12}, \cite{EGPT:12}, \cite{EGT:11} and \cite{BaS:12}.
    \end{enumerate}
\end{remark}

As an immediate corollary, we recover a classical result of Raynaud and Gruson \cite{RG:71}.
\begin{corollary}
    A countably generated $R$-module $M$ is projective if and only if it is flat and Mittag-Leffler.
\end{corollary}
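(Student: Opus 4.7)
The corollary reads as a chain of equivalences obtained by combining identifications already established in the paper. The plan is to recognize that being countably generated flat Mittag-Leffler and being countably generated projective are both characterizations of the same sub-category of $\Mod(R)$.

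First I would unpack the definitions at cardinality $\aleph_0$. By Definition \ref{def:fmk}, $\FM_{\aleph_0}(P_f(R)) = \Indc(P_f(R))^{\ic}$, and by Proposition \ref{prop:pc} this is equivalent, inside $\Mod(R)$, to the category $P_{\aleph_0}(R)$ of countably generated projective modules. On the other hand, the countable version of Proposition \ref{prop:fmR} (\v S\v tov\'\i\v cek--Trlifaj) identifies $\FM_{\aleph_0}(P_f(R))$ with $\FM_{\aleph_0}(R)$, the category of countably generated flat Mittag-Leffler $R$-modules. Stringing these equivalences together inside $\Mod(R) \simeq \lex(P_f(R))$ yields an equality of full sub-categories
\begin{equation*}
    P_{\aleph_0}(R) = \FM_{\aleph_0}(R) \subset \Mod(R),
\end{equation*}
which is exactly the statement of the corollary.

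For the forward direction (projective implies flat Mittag-Leffler), I would note that every projective module is flat by a standard argument, and that a countably generated projective module lies in $P_{\aleph_0}(R) \simeq \Indc(P_f(R))^{\ic} = \FM_{\aleph_0}(P_f(R)) \simeq \FM_{\aleph_0}(R)$, so it is Mittag-Leffler. For the converse, a countably generated flat Mittag-Leffler module is by definition an object of $\FM_{\aleph_0}(R)$, and running the same chain of equivalences backwards places it in $P_{\aleph_0}(R)$, hence projective.

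There is essentially no obstacle: all the real content has been absorbed into Proposition \ref{prop:pc} (which uses Kaplansky's theorem and the equivalence $\lex(P_f(R)) \simeq \Mod(R)$) and into Proposition \ref{prop:fmR} (proved in the \v S\v tov\'\i\v cek--Trlifaj appendix). The only point to verify carefully is that the identifications $\FM_{\aleph_0}(P_f(R)) \simeq \FM_{\aleph_0}(R)$ and $\Indc(P_f(R))^{\ic} \simeq P_{\aleph_0}(R)$ are compatible as sub-categories of $\Mod(R)$ (and not merely abstractly equivalent), but this follows because both are realized via the fully faithful embedding $\Indc(P_f(R)) \into \lex(P_f(R)) \simeq \Mod(R)$ of Lemma \ref{lemma:lexR} together with idempotent completion inside $\Mod(R)$.
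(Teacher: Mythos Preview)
Your proposal is correct and follows essentially the same route as the paper: both arguments identify $P_{\aleph_0}(R)$ and $\FM_{\aleph_0}(R)$ with the common category $\FM_{\aleph_0}(P_f(R))=\Indc(P_f(R))^{\ic}$ via Proposition~\ref{prop:fmR} on one side and the identification of countably generated projectives with $\Indc(P_f(R))^{\ic}$ on the other. One small inaccuracy: Proposition~\ref{prop:pc} does not invoke Kaplansky's theorem (it only uses that a countably generated projective is a summand of a countably generated free module); Kaplansky's theorem enters in Corollary~\ref{cor:projRgeneral}, which the paper cites in its version of the proof but which you do not actually need here.
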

\begin{proof}
    By definition $\FM_{\aleph_0}(P_f(R))=P_{\aleph_0}(P_f(R))=\Indc(P_f(R))^{\ic}$. Corollary \ref{cor:projRgeneral} shows that$P_{\aleph_0}(R)\simeq P_{\aleph_0}(P_f(R))$. Proposition \ref{prop:fmR} shows that $\FM_{\aleph_0}(R)\simeq\FM_{\aleph_0}(P_f(R))$.
\end{proof}

This combines with Kaplansky's theorem to give the following.
\begin{theorem}[Kaplansky--Raynaud--Gruson]
    An $R$-module $M$ is projective if and only if $M$ is a direct sum of countably generated $R$-modules and $M$ is a flat Mittag-Leffler module.
\end{theorem}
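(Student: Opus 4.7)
The plan is to deduce this theorem directly by combining Kaplansky's theorem with the previous corollary (the Raynaud--Gruson characterization in the countably generated case) and the closure properties of $\FM(R)$ that come for free from the description $\FM(R)\simeq\FM(P_f(R))=\Ind(P_f(R))^{\ic}$ (Proposition \ref{prop:fmR}).

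For the forward direction, suppose $M$ is projective. Kaplansky's theorem, cited just before the statement, gives a decomposition $M\cong\bigoplus_{i\in I} M_i$ with each $M_i$ countably generated (and projective). By the preceding corollary, each $M_i$ is flat Mittag-Leffler, so it remains to verify that a direct sum of flat Mittag-Leffler modules is again flat Mittag-Leffler. This follows from the identification $\FM(R)\simeq\FM(P_f(R))=\Ind(P_f(R))^{\ic}$: the category $\Ind(P_f(R))$ is closed under arbitrary direct sums (as remarked just before the definition of $P_\kappa(\Cc)$), and passing to the idempotent completion preserves this closure, since an idempotent on $\bigoplus_i N_i$ given by a tuple of idempotents on the summands splits compatibly with the direct sum.

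For the backward direction, suppose $M\cong\bigoplus_{i\in I} M_i$ with each $M_i$ countably generated, and $M$ is flat Mittag-Leffler. Each $M_i$ sits as a direct summand of $M$. Because $\FM(R)$ is an idempotent completion, it is closed under direct summands, so each $M_i$ is flat Mittag-Leffler. Being countably generated and flat Mittag-Leffler, the previous corollary then implies each $M_i$ is projective. Hence $M=\bigoplus_i M_i$ is a direct sum of projectives and is itself projective.

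Neither direction presents a serious obstacle once Kaplansky's theorem and the countably generated case are in hand; the only thing to verify carefully is that $\FM(R)$ really is closed under arbitrary direct sums and under direct summands. The first is the nontrivial one (the second is automatic from idempotent completeness), but it reduces immediately to the analogous statement for $\Ind(P_f(R))$, which holds because an arbitrary direct sum of admissible Ind-diagrams is again an admissible Ind-diagram indexed by the product of the directed posets. With these closure properties, the theorem becomes a formal consequence of the results already proven.
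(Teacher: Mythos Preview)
Your proof is correct and follows the approach the paper implicitly intends: the paper gives no explicit proof, stating only that the theorem ``combines with Kaplansky's theorem'' and the preceding corollary. Your argument fills in exactly those details. One minor simplification is available in the forward direction: rather than arguing that $\FM(R)$ is closed under direct sums, you can observe directly that any projective module is flat Mittag-Leffler, since $P(R)\simeq P(P_f(R))\subset\FM(P_f(R))\simeq\FM(R)$ by Corollary~\ref{cor:projRgeneral} and Proposition~\ref{prop:fmR}.
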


\subsection{The Calkin Category}\label{sec:cal}
Let $R$ be a ring. The Calkin algebra $\Calk(R)$ is the quotient of the algebra $\End_R(R[t])$ by the ideal of finite rank endomorphisms. By analogy, we define the following.

\begin{definition}
    Let $\Cc$ be an exact category. Let $\kappa$ be an infinite cardinal. Define the \emph{$\kappa$-Calkin category} by
    \begin{equation*}
        \Calkk(\Cc):=(\Indk(\Cc)/\Cc)^{\ic}.
    \end{equation*}
\end{definition}

\begin{example}\mbox{}
    \begin{enumerate}
        \item For any ring $R$, we have
            \begin{equation*}
                \End_{\Calkc(P_f(R))}(R[t])\cong\Calk(R).
            \end{equation*}
        \item If $R$ is local, then $\Indc(P_f(R))/P_f(R)$ has two objects up to isomorphism, the zero object and $R[t]$.\footnote{Because $R$ is local, all projective modules are free, so Corollary \ref{cor:indcsum} shows that $R[t]$ is the only object in $\Indc(P_f(R))\setminus P_f(R)$.} By comparison, the category $\Calkc(P_f(R))$ has additional objects corresponding to $p\in\End_R(R[t])$ such that $p^2-p$ has finite rank.
    \end{enumerate}
\end{example}

\begin{remark}\label{rmk:Kcalk}
    One feature of the Calkin algebra $\Calk(R)$ is the $K$-theory isomorphism
    \begin{equation*}
        K_i(\Calk(R))\cong K_{i-1}(R)
    \end{equation*}
    Schlichting \cite{Sch:04} has also shown that
    \begin{equation*}
        K_i(\Calkk(\Cc))\cong K_{i-1}(\Cc)
    \end{equation*}
    when $\Cc$ is idempotent complete.
\end{remark}

\section{Admissible Pro-Objects}\label{sec:pro}
Admissible Pro-objects are dual to admissible Ind-objects. They sit in relation to objects of $\Cc$ as the topological $R$-module $R[[t]]$, with the $t$-adic topology, sits in relation to finitely generated free $R$-modules. More generally, the results of this section form an elaboration, in the setting of exact categories, of Artin--Mazur \cite[Appendix 2]{ArM:69}.

\subsection{The Category of Admissible Pro-Objects and its Properties}
\begin{definition}\label{defi:pro}
    Let $\Cc$ be an exact category. Let $\kappa$ be an infinite cardinal. The category $\Prok(\Cc)$ of \emph{admissible Pro-objects in $\Cc$ of size at most $\kappa$} is the opposite of the category $\Indk(\Cc^{\op})$, i.e.
    \begin{equation*}
        \Prok(\Cc):=\Indk(\Cc^{\op})^{\op}.
    \end{equation*}
    We can also omit the cardinality bound and define
    \begin{equation*}
        \Pro(\Cc):=\Ind(\Cc^{\op})^{\op}.
    \end{equation*}
\end{definition}

If $X\colon I\to\Cc$ and $Y\colon J\to\Cc$ are admissible Pro-diagrams in $\Cc$, then the definition ensures that
\begin{align*}
    \hom_{\Prok(\Cc)}(\widehat{X},\widehat{Y})=\lim_J\colim_I\hom_{\Cc}(X_i,Y_j)
\end{align*}
In particular, we see that, for any map $f\colon \widehat{X}\to Y$ in $\Prok(\Cc)$ with $Y\in\Cc$, and for any admissible diagram $X\colon I\to\Cc$ representing an $\widehat{X}$, there exists $i\in I$ such that $f$ factors through some $\widehat{X}\onto X_i$.

The proofs in the previous section dualize to give the following.
\begin{theorem}\label{thm:prox}\mbox{}
    \begin{enumerate}
        \item The category $\Prok(\Cc)$ is closed under extensions in $\lex(\Cc^{\op})^{\op}$. If $\Cc$ is weakly idempotent complete, then $\Prok(\Cc)$ is right special in $\lex(\Cc^{\op})^{\op}$.
        \item An exact category $\Cc$ embeds as a right s-filtering sub-category of $\Prok(\Cc)$.
        \item The exact category $\Prok(\Ec\Cc)$ is canonically equivalent to $\Ec\Prok(\Cc)$.
        \item Define the category $\Invk(\Cc)$ of admissible Pro-diagrams by
            \begin{equation*}
                \Invk(\Cc):=\Dirk(\Cc^{\op})^{\op}.
            \end{equation*}
            The category $\Prok(\Cc)$ is the localization of $\Invk(\Cc)$ at the sub-category of co-final morphisms.
        \item An exact functor $F:\Cc\to\Dc$ extends canonically to an exact functor
            \begin{equation*}
                \begin{xy}
                    \morphism<750,0>[\Prok(\Cc)`\Prok(\Dc);\widetilde{F}]
                \end{xy}
            \end{equation*}
            If $F$ is faithful, fully faithful, or an equivalence, then the same is true of $\widetilde{F}$.
        \item If $\Cc$ is split exact, then:
            \begin{enumerate}
                \item the category $\Proc(\Cc)$ is the full sub-category of $\Pro(\Cc)$ consisting of countable products of objects in $\Cc$,
                \item the category $\Proc(\Cc)$ is split exact, and
                \item if there exists a collection of objects $\{S_i\}_{i\in\Nb}\subset\Cc$ such that every object $Y\in\Cc$ is a direct summand of $\bigoplus_{i=0}^n S_i$ for some $n$. Then every countable Pro-object in $\Cc$ is a direct summand of
                    \begin{equation*}
                        \widehat{\prod_{\Nb} S}:=\prod_{\Nb}(\prod_{i\in\Nb} S_i).
                    \end{equation*}
            \end{enumerate}
        \item The category $\Prok(\Cc)$ is not in general idempotent complete.
    \end{enumerate}
\end{theorem}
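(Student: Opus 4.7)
The plan is to derive Theorem \ref{thm:prox} entirely by formal duality from the corresponding results in Section \ref{sec:ind}, invoking the defining identity $\Prok(\Cc) = \Indk(\Cc^{\op})^{\op}$ together with the dual identities $\Invk(\Cc) = \Dirk(\Cc^{\op})^{\op}$ and $\lex(\Cc^{\op})^{\op}$ for the ambient category. The key observations enabling this are that (i) weak idempotent completeness is self-dual (as already recorded in the excerpt), (ii) idempotent completeness is self-dual by construction of $(-)^{\ic}$, (iii) split exactness is self-dual (a split short exact sequence in $\Cc$ is a split short exact sequence in $\Cc^{\op}$), (iv) a left special (resp. left s-filtering, left filtering) embedding $\Cc \subset \Dc$ becomes by definition a right special (resp. right s-filtering, right filtering) embedding $\Cc^{\op} \subset \Dc^{\op}$, and (v) a functor $F \colon \Cc \to \Dc$ is exact if and only if $F^{\op} \colon \Cc^{\op} \to \Dc^{\op}$ is exact, and shares faithfulness, fullness, and essential surjectivity with $F^{\op}$.

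Concretely, I would dispatch the statements in order. Part (1) is Theorem \ref{thm:indleftspecial} applied to $\Cc^{\op}$ and then passed back under $(-)^{\op}$. Part (2) is Proposition \ref{prop:cleftsfiltinind} dualized, noting that the embedding $\Cc \into \Indk(\Cc^{\op})$ given by that proposition dualizes to the tautological embedding $\Cc \into \Prok(\Cc)$. Part (4) is Proposition \ref{prop:indloc} dualized, where the sub-category of final morphisms in $\Dirk(\Cc^{\op})$ becomes the sub-category of co-final morphisms in $\Invk(\Cc)$; since passing to opposites commutes with localization, $\Invk(\Cc)[W^{-1}] \simeq \Prok(\Cc)$. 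Part (5) is Proposition \ref{prop:FtoIndF} applied to $F^{\op} \colon \Cc^{\op} \to \Dc^{\op}$, producing the extension to $\Indk(\Cc^{\op}) \to \Indk(\Dc^{\op})$ which, after opposite-ing, yields the desired $\widetilde F$.

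Part (3) is the one step that deserves a brief separate check: one needs the canonical identification $(\Ec\Cc)^{\op} \cong \Ec(\Cc^{\op})$ as exact categories. This identification is routine — an exact sequence $X \into Y \onto Z$ in $\Cc$ is the same datum as $Z \into Y \onto X$ in $\Cc^{\op}$, and the exact structure on $\Ec\Cc$ inherited from $\Fun([2],\Cc)$ matches the one on $\Ec(\Cc^{\op})$ termwise. Granting this, Proposition \ref{prop:inde=eind} applied to $\Cc^{\op}$ gives $\Indk(\Ec(\Cc^{\op})) \simeq \Ec\Indk(\Cc^{\op})$, and opposite-ing yields the claim. Part (6) follows termwise from Proposition \ref{prop:countableind}, Corollary \ref{cor:indcsum}, Proposition \ref{prop:bigindobject}, and Proposition \ref{prop:indcsplit}, using that split exactness is self-dual and that finite biproducts coincide in $\Cc$ and $\Cc^{\op}$, so that the countable direct sum description dualizes to a countable product description. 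Part (7) follows from Section \ref{sec:notic} by the same duality, since the idempotent obstruction exhibited there in $\Indc(P_f(R))$ transports, via $R \leadsto R^{\op}$, to an obstruction in $\Proc$ of some $P_f(R^{\op})^{\op}$.

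I expect no serious obstacle. The only mildly delicate point is bookkeeping in part (3), making sure that the exact structure on $\Ec(\Cc^{\op})$ used by Proposition \ref{prop:inde=eind} coincides with the one induced by reversing $\Ec\Cc$; once this is laid out the rest is a mechanical translation. I would therefore write the proof as a single compact paragraph simply pointing to each Ind-theoretic result and asserting the identifications $\Prok(\Cc) = \Indk(\Cc^{\op})^{\op}$ and $\Invk(\Cc) = \Dirk(\Cc^{\op})^{\op}$ along with the self-duality of the hypotheses involved, flagging the identification $(\Ec\Cc)^{\op} \cong \Ec(\Cc^{\op})$ as the one verification worth spelling out.
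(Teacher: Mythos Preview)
Your proposal is correct and takes essentially the same approach as the paper: the paper's entire proof is the single sentence ``The proofs in the previous section dualize to give the following,'' and your plan is precisely a careful elaboration of that duality, matching each part to its Ind-theoretic counterpart. Your flagging of the identification $(\Ec\Cc)^{\op}\cong\Ec(\Cc^{\op})$ as the one point worth spelling out is a reasonable precaution, though the paper does not bother to make it explicit.
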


With minimal changes, the proof of Proposition \ref{prop:indsk=skind} can be modified to show the following.
\begin{proposition}
    For $k\ge 0$, the exact category $\Prok(S_k\Cc)$ is canonically equivalent to $S_k\Prok(\Cc)$.
\end{proposition}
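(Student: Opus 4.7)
The plan is to dualize the proof of Proposition \ref{prop:indsk=skind} by invoking the Pro-analogues of its ingredients, all of which are available as part of Theorem \ref{thm:prox}. By definition, $\Prok(\Dc) = \Indk(\Dc^{\op})^{\op}$ for any exact category $\Dc$, and the functoriality statement in Theorem \ref{thm:prox}(5) extends any exact functor to Pro-categories. The canonical comparison functor $\Prok(S_k\Cc) \to S_k\Prok(\Cc)$ sends an admissible Pro-diagram of flags of admissible monics in $\Cc$ to the flag in $\Prok(\Cc)$ obtained by taking inverse limits pointwise; that the resulting flag lies in $S_k\Prok(\Cc)$ follows from the Pro-analogue of Proposition \ref{prop:inde=eind}, i.e.\ Theorem \ref{thm:prox}(3), which ensures that admissible monics in $\Prok(\Cc)$ are produced by limits of admissible Pro-diagrams of admissible monics in $\Cc$.

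I would proceed by induction on $k$. The cases $k \le 1$ are trivial, and the case $k = 2$ is precisely Theorem \ref{thm:prox}(3). Assuming the result for $k-1$, I address the three properties (equivalence of categories, full exactness) in turn. Faithfulness is immediate from the definition of morphisms in $\Prok$. For fullness, a morphism in $S_k\Prok(\Cc)$ restricts to a morphism between the length $(k-1)$ sub-flags which, by the inductive hypothesis, can be straightened to a morphism of admissible Pro-diagrams in $S_{k-1}\Cc$; the added top admissible monic $\widehat{X}_{k-1} \into \widehat{X}_k$ is then handled by the dual of the straightening argument from the proof of Proposition \ref{prop:inde=eind}, passing to a cofinal sub-poset of the index when necessary. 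Essential surjectivity proceeds analogously: given a flag $\widehat{X}_1 \into \cdots \into \widehat{X}_k$ in $\Prok(\Cc)$, the inductive hypothesis provides an admissible Pro-diagram in $S_{k-1}\Cc$ representing its truncation, and the dual of the construction in the proof of Proposition \ref{prop:inde=eind} extends this to a Pro-diagram in $S_k\Cc$ representing the whole flag, after restricting to a cofinal sub-poset of the index. Full exactness follows similarly by combining the inductive hypothesis with the Pro-version of the 5-Lemma-based straightening used in Proposition \ref{prop:inde=eind}: any exact sequence in $\Ec(S_k\Prok(\Cc))$ can be represented diagram-wise by an exact sequence of admissible Pro-diagrams in $S_k\Cc$.

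The main obstacle is the bookkeeping of cofinal morphisms between indexing posets in the fullness and essential surjectivity steps, so that the various admissible Pro-diagrams produced at the level of $S_{k-1}\Cc$, $\Cc$, and the top admissible monic are all indexed over a common cofinal sub-poset on which the straightening can be assembled. This is the exact dual of the "final map" manipulations in the proof of Proposition \ref{prop:indsk=skind}, and since Theorem \ref{thm:prox}(4) provides the localization presentation $\Prok(\Cc) \simeq \Invk(\Cc)[\text{cofinal}^{-1}]$, no new ideas are required beyond those already developed in Section \ref{sec:ind} and transferred dually via $\Prok(\Cc) = \Indk(\Cc^{\op})^{\op}$.
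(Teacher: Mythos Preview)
Your proposal is correct and matches the paper's own treatment: the paper does not give an independent proof but simply remarks that the proof of Proposition~\ref{prop:indsk=skind} can be modified with minimal changes, which is exactly the inductive straightening argument you outline using the Pro-analogues supplied by Theorem~\ref{thm:prox}.
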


\subsection{The Categories \texorpdfstring{$\FM^\vee(\Cc)$}{FMv(C)} and \texorpdfstring{$P^\vee(\Cc)$}{Pv(C)}}
\begin{definition}
    Let $\Cc$ be an exact category. Define the category $\FMk^\vee(\Cc)$ of \emph{formal duals to $\kappa$-generated flat Mittag-Leffler objects} in $\Cc$ to be the idempotent completion of $\Prok(\Cc)$, i.e.
    \begin{equation*}
        \FMk^\vee(\Cc):=\Prok(\Cc)^{\ic}.
    \end{equation*}
\end{definition}

\begin{proposition}\label{prop:inddualpro}
    Let $\Cc$ and $\Dc$ be exact categories, and let $F\colon\Cc^{\op}\to\Dc$ be an exact functor. Let $\kappa$ be an infinite cardinal. Then $F$ extends canonically to an exact functor
    \begin{equation*}
        \begin{xy}
            \morphism<750,0>[\FMk(\Cc)^{\op}`\FMk^\vee(\Dc);\widetilde{F}]
        \end{xy}.
    \end{equation*}
    If $F$ is faithful, fully faithful, or an equivalence, then the same is true of $\widetilde{F}$.
\end{proposition}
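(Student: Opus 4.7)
The plan is to deduce everything formally from Proposition \ref{prop:FtoIndF} by passing through opposites. Starting from the exact functor $F\colon \Cc^{\op} \to \Dc$, consider the formally dual functor $F^{\op}\colon \Cc \to \Dc^{\op}$, which is again exact since admissible monics (resp.\ epics) in $\Cc$ correspond to admissible epics (resp.\ monics) in $\Cc^{\op}$. Proposition \ref{prop:FtoIndF} then yields a canonical exact extension
\[
\widetilde{F^{\op}}\colon \Indk(\Cc) \longrightarrow \Indk(\Dc^{\op}),
\]
and $\widetilde{F^{\op}}$ inherits any of the properties faithful, fully faithful, or equivalence from $F^{\op}$.

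Next I would lift this to the idempotent completions. Post-composition with the fully exact embedding $\Indk(\Dc^{\op}) \hookrightarrow \Indk(\Dc^{\op})^{\ic}$ lands in an idempotent complete exact category, so the 2-universal property of the idempotent completion recalled in Section \ref{sec:prel} extends $\widetilde{F^{\op}}$ uniquely to an exact functor
\[
G\colon \Indk(\Cc)^{\ic} \longrightarrow \Indk(\Dc^{\op})^{\ic}.
\]
Taking opposites and invoking the canonical identification $(\mathcal{X}^{\ic})^{\op} \cong (\mathcal{X}^{\op})^{\ic}$, which follows immediately from unpacking the definition of idempotent completion (a pair $(X,p)$ with $p^2=p$ in $\mathcal{X}$ is the same datum in $\mathcal{X}^{\op}$, and the relation $qfp=f$ is self-dual after reversal), produces the sought exact functor
\[
\widetilde{F} := G^{\op}\colon \FMk(\Cc)^{\op} = (\Indk(\Cc)^{\ic})^{\op} \longrightarrow (\Indk(\Dc^{\op})^{\ic})^{\op} = (\Indk(\Dc^{\op})^{\op})^{\ic} = \Prok(\Dc)^{\ic} = \FMk^\vee(\Dc).
\]

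For the preservation of faithful, fully faithful, and equivalence, the only step requiring comment is the passage from a functor between exact categories to its extension to their idempotent completions. Faithful and fully faithful descend transparently, because morphisms $(X,p) \to (Y,q)$ in $\mathcal{X}^{\ic}$ are precisely morphisms $X \to Y$ in $\mathcal{X}$ satisfying a condition that is visibly preserved and reflected by any (fully) faithful functor. For essential surjectivity under the stronger assumption that $F^{\op}$ is an equivalence, one uses full faithfulness to lift the idempotent on any $(Y,q)$ in the target to a matching idempotent on a preimage of $Y$, producing an object which $G$ carries isomorphically onto $(Y,q)$. I do not expect any serious obstacle; the content of the proposition is entirely formal, packaging Proposition \ref{prop:FtoIndF} with idempotent completion and opposite-category duality.
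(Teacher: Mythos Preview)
Your proposal is correct and essentially matches the paper's own proof. The only cosmetic difference is where the duality is applied: the paper invokes the Pro-functoriality statement from Theorem \ref{thm:prox} to extend $F\colon\Cc^{\op}\to\Dc$ to $\Indk(\Cc)^{\op}=\Prok(\Cc^{\op})\to\Prok(\Dc)$ and then idempotent-completes, whereas you apply Proposition \ref{prop:FtoIndF} to $F^{\op}$ and dualize at the end; since Theorem \ref{thm:prox} is itself obtained by dualizing Proposition \ref{prop:FtoIndF}, the two arguments are formally identical, and your explicit justification of the idempotent-completion step is more detailed than the paper's one-line appeal to the universal property.
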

\begin{proof}
    By definition, $\Prok(\Cc^{\op})=\Indk(\Cc)^{\op}$. Theorem \ref{thm:prox} guarantees that $F$ extends to an exact functor
    \begin{equation*}
        \begin{xy}
            \morphism/=/<500,0>[\Indk(\Cc)^{\op}`\Prok(\Cc^{\op});]
            \morphism(500,0)<750,0>[\Prok(\Cc^{\op})`\Prok(\Dc);\widetilde{F}]
        \end{xy}
    \end{equation*}
    which is faithful, fully faithful, or an equivalence if $F$ is. By the universal property of idempotent completion, $F$ extends to a functor $\FMk(\Cc)^{\op}\to\FMk^\vee(\Dc)$ with the same properties.
\end{proof}

\begin{example}\label{ex:Rindprodual}
    Let $R$ be a ring. Denote by $P_f(R^\circ)$ the category of finitely generated projective (right) $R$-modules. By Propositions \ref{prop:fmR} and \ref{prop:inddualpro}, the duality equivalences
    \begin{equation*}
        \begin{xy}
            \morphism|a|/@{->}@<5pt>/<1500,0>[\hom_{R^\circ}(-,R)\colon P_f(R^\circ)^{\op}`P_f(R)\colon\hom_R(-,R);\simeq]
            \morphism|b|/@{<-}@<-5pt>/<1500,0>[\hom_{R^\circ}(-,R)\colon P_f(R^\circ)^{\op}`P_f(R)\colon\hom_R(-,R);\simeq]
        \end{xy}
    \end{equation*}
    extend to the exact equivalences
    \begin{equation*}
        \begin{xy}
            \morphism|a|/@{->}@<5pt>/<2000,0>[\hom_{R^\circ}(-,R)\colon\FMk(R^\circ)^{\op}`\FMk^\vee(P_f(R))\colon\hom_{\FMk^\vee(P_f(R))}(-,R);\simeq]
            \morphism|b|/@{<-}@<-5pt>/<2000,0>[\hom_{R^\circ}(-,R)\colon\FMk(R^\circ)^{\op}`\FMk^\vee(P_f(R))\colon\hom_{\FMk^\vee(P_f(R))}(-,R);\simeq]
        \end{xy}.
    \end{equation*}
\end{example}

\begin{definition}
    Let $\Cc$ be a split exact category. Define the category $P_\kappa^\vee(\Cc)$ of \emph{formal duals of $\kappa$-generated projective objects} in $\Cc$ to be the full sub-category of $\Prok(\Cc)^{\ic}$ consisting of direct summands of arbitrary direct products of objects in $\Cc$.
\end{definition}

\subsection{Admissible Pro R-Modules}
Let $R$ be a ring and denote its opposite by $R^\circ$. We now relate admissible Pro-objects in $P_f(R)$ to categories of topological $R$-modules.

\begin{remark}
    By way of background, the category of admissible Pro objects in finite dimensional vector spaces over a discrete field dates back at least to Lefschetz \cite[Chapter II.25]{Lef:42}. Lefschetz described this as a category of ``linearly compact'' topological vector spaces. One might ask, ``What is a family of linearly compact vector spaces?''

    A first approach, over $\Spec(R)$, might be to consider linearly compact topological $R$-modules; these have been intensively studied by Zelinsky \cite{Zel:53}, Kaplansky \cite{Kap:53} and many others. However, a satisfactory answer must include finitely generated projective $R$-modules with the discrete topology (i.e. families of discrete linearly compact vector spaces); for many rings of interest (e.g. $\mathbb{Z}$ or $k[x]$), such modules fail to be linearly compact.\footnote{Finitely generated projective modules are linearly compact in the discrete topology if $R$ is Artinian, or, more generally, if $R$ is an inverse limit of Artinian rings.}${}^,~$\footnote{We understand this failure for general rings as follows: a topological module encodes the topology on the total space of a family, and this is necessarily a mixture of the topology on the fiber and the topology on the base.}

    In the spirit of \cite{Dri:06}, we instead propose two answers for families over $\Spec(R)$: $P^\vee(P_f(R))$ and $\FM^\vee(P_f(R))$. As we will show, the former admits a natural description in terms of topological $R$-modules, and corresponds to taking (big) projective modules as one's families of discrete vector spaces. The latter embeds faithfully, but not fully, in the category of topological $R$-modules, and corresponds to taking flat Mittag-Leffler modules in lieu of projectives. For more general $\Cc$, $P^\vee(\Cc)$ and $\FM^\vee(\Cc)$ might be thought of as non-commutative families of linearly compact vector spaces.
\end{remark}

Denote by $\Mod(R)_{top}$ the category of topological (left) $R$-modules and continuous homomorphisms. View $P_f(R)$ as a full sub-category of $\Mod(R)_{top}$ consisting of discrete modules. The assignment
\begin{equation*}
    \widehat{M}\mapsto\lim_{(\widehat{M}\downarrow (P_f(R))^{\op})^{\op}}P
\end{equation*}
extends to a faithful embedding
\begin{equation}\label{prointop}
    \begin{xy}
        \morphism<1000,0>[\FM^\vee(P_f(R))`\Mod(R)_{top};\tau]
    \end{xy}.
\end{equation}

\begin{proposition}\label{prop:tprointop}
    The sub-category $P^\vee(P_f(R))$ embeds fully in $\Mod(R)_{top}$ under \eqref{prointop}.
\end{proposition}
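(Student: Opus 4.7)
The approach is to show fullness of $\tau$ on $P^\vee(P_f(R))$; faithfulness is already included in the hypothesis for all of $\FM^\vee(P_f(R))$. I would proceed in two stages, first treating honest products and then bootstrapping to their direct summands via idempotents.

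First, suppose $\widetilde M = \prod_{i\in I} P_i$ and $\widetilde N = \prod_{j\in J} Q_j$ with $P_i, Q_j \in P_f(R)$. Since $P_f(R)$ is split exact, each finite-subset projection $\prod_{i \in F'} P_i \onto \prod_{i \in F} P_i$ is an admissible epic, so $\widetilde M$ is represented by the admissible Pro-diagram indexed by the finite subsets $F \subset I$ and sending $F$ to $\prod_{i \in F} P_i$. The hom formula of Theorem~\ref{thm:prox} then yields
\begin{equation*}
    \hom_{\Pro(P_f(R))}(\widetilde M, \widetilde N) \;\cong\; \prod_{j \in J} \colim_{F \subset I \text{ finite}} \hom_R\!\Bigl(\prod_{i\in F} P_i,\; Q_j\Bigr).
\end{equation*}
On the topological side, $\tau\widetilde M = \prod_{i\in I} P_i$ carries the product topology with each $P_i$ discrete, and a continuous $R$-linear map $f\colon \tau\widetilde M \to \tau\widetilde N$ is determined by its components $f_j \colon \tau\widetilde M \to Q_j$. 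Because $Q_j$ is discrete, $f_j^{-1}(0)$ is an open neighbourhood of $0$, hence contains a basic open set $\prod_{i\in I \setminus F_j} P_i$ for some finite $F_j \subset I$; this forces $f_j$ to factor through the sub-product projection $\tau\widetilde M \onto \prod_{i\in F_j} P_i$, and the resulting data assemble into an element of the right-hand side above which maps under $\tau$ to $f$.

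A general object of $P^\vee(P_f(R))$ has the form $(\widetilde M, e_M)$ with $\widetilde M$ a product as above and $e_M \in \End_{\Pro(P_f(R))}(\widetilde M)$ an idempotent; $\tau$ sends this to the image of the continuous idempotent $\tau e_M$ equipped with the subspace topology. Given $(\widetilde N, e_N)$ analogously and a continuous $R$-linear map $f\colon \tau M \to \tau N$, I would form the extension by zero
\begin{equation*}
    \tilde f \;:=\; \iota_N \circ f \circ \pi_M \colon \tau\widetilde M \longrightarrow \tau\widetilde N,
\end{equation*}
where $\pi_M$ and $\iota_N$ denote the continuous projection onto and inclusion of the summand induced by the splitting of the idempotents. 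The first stage produces a lift $g\colon \widetilde M \to \widetilde N$ in $\Pro(P_f(R))$ with $\tau g = \tilde f$. The composite $e_N g e_M$ then satisfies $e_N(e_N g e_M)e_M = e_N g e_M$, so it defines a morphism $(\widetilde M, e_M) \to (\widetilde N, e_N)$ in $\Pro(P_f(R))^{\ic} = \FM^\vee(P_f(R))$, and an immediate calculation with $\tau e_M = \iota_M \pi_M$ and $\tau e_N = \iota_N \pi_N$ shows that its image under $\tau$ restricted to $\tau M$ equals $f$.

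The only non-formal input is the standard topological fact invoked in the first stage, namely that a continuous homomorphism from a product of discrete modules with the product topology into a discrete module factors through a finite sub-product. Everything else is bookkeeping with idempotents together with the definition of morphisms in Pro-categories and in their idempotent completions, so I do not anticipate a serious obstacle beyond keeping track of the projections and inclusions.
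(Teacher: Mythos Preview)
Your proposal is correct, and the essential topological input---that a continuous homomorphism from a product of discrete modules to a discrete module factors through a finite sub-product---is exactly what the paper uses as well.

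The organization differs, however. The paper does not separate the argument into a product stage and a summand stage; instead, it asserts at the outset that every object $\widehat{X}\in P^\vee(P_f(R))$ is already isomorphic to a product $\prod_I\widehat{X}_i$ with $\widehat{X}_i\in\Proc(P_f(R))$, and then (using that $P_f(R)$ is split exact) writes each factor as a countable product of objects of $P_f(R)$. From there the argument proceeds as in your Stage~1, reducing to the bijection
\[
\bigoplus_I\bigoplus_{\Nb}\hom_R(X_{i,n},Z)\;\cong\;\hom_{cts}(\tau(\widehat{X}),Z)
\]
for $Z\in P_f(R)$. Your approach avoids relying on any structural claim that summands of products are again products: you prove fullness for honest products directly, and then transport it to arbitrary summands by the standard idempotent calculus in $\Pro(P_f(R))^{\ic}$. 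This is a perfectly clean way to close the argument and requires nothing beyond the definition of morphisms in an idempotent completion. The paper's route is marginally shorter once one accepts the product decomposition; yours is more self-contained.
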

\begin{proof}
    Let $\widehat{X}$ and $\widehat{Y}$ be in $P^\vee(P_f(R))$. By definition, there exist isomorphisms $\widehat{X}\cong\prod_I\widehat{X}_i$ and $\widehat{Y}\cong\prod_J\widehat{Y}_j$ for some sets $I$ and $J$, and $\widehat{X}_i,~\widehat{Y}_j\in\Proc(P_f(R))$.  Because $P_f(R)$ is split exact, for each $i\in I$ and $j\in J$, there exist isomorphisms $\widehat{X}_i\cong\prod_{\Nb} X_{i,n}$ and $\widehat{Y}_j\cong\prod_{\Nb} Y_{j,n}$ with $X_{i,n},~Y_{j,n}\in P_f(R)$ for all $n$ (Theorem \ref{thm:prox}). The universal property of limits now ensures that
    \begin{equation*}
        \hom_{cts}(\tau(\widehat{X}),\tau(\widehat{Y}))\cong\prod_J\prod_{\Nb}\hom_{cts}(\tau(\widehat{X}),Y_{j,n})
    \end{equation*}
    It suffices to show that for any $Z\in P_f(R)$, the canonical injection
    \begin{equation}\label{procts}
        \bigoplus_I\bigoplus_{\Nb}\hom_R(X_{i,n},Z)\into\hom_{cts}(\tau(\widehat{X}),Z)
    \end{equation}
    is bijective. Let $f\colon\tau(\widehat{X})\to Z$ be a continuous map. Because $Z$ is discrete, $\ker(f)=f^{-1}(0)$ is an open neighborhood of $0\in\tau(\widehat{X})$. The definition of the product topology ensures that every open neighborhood of $0$ contains the kernel of some projection $\pi_{\overline{\imath},n}\colon\widehat{X}\to\prod_{k=0}^n X_{\imath_k,k}$. The map $\pi_{\overline{\imath},n}$ is open, by definition, and it is a split surjection.\footnote{A right inverse is given by sending $(x_{\imath_0,0},\ldots,x_{\imath_n,n})$ to the point of $\widehat{X}$ with all the same $(\imath_0,0),\ldots,(\imath_n,n))$ coordinates, and all other coordinates equal to 0.} We see that $\ker(\pi_{\overline{\imath},n})\subset\ker(f)$ and $\widehat{X}/\ker(\pi_{\overline{\imath},n})\cong\prod_{k=0}^n X_{\imath_k,k}$. Thus, $f$ factors through the projection $\pi_{\overline{\imath},n}$, and therefore lies in the image of the injection \eqref{procts}.
\end{proof}
\begin{remark}\mbox{}
    \begin{enumerate}
        \item The above argument breaks down for more general objects in $\FM^\vee(P_f(R))$. Indeed, if $X\colon I\to P_f(R)$ is an admissible Pro-diagram with non-vanishing $\lim^1$,\footnote{For $I$ countable, the classical argument for Mittag-Leffler systems shows that $\lim^n_I X_i=0$ for $n>0$. However, this argument fails for $|I|>\aleph_0$. In general, the best one can show is that $\lim^{n+1}_I X_i=0$ if $|I|<\aleph_n$ (see e.g. \cite[Chapters 3 and 9]{Jen:72}).} then for any $i\in I$, we have a long exact sequence
            \begin{equation*}
                0\to\tau(\ker(\pi_i))\to\tau(\widehat{X})\to^{\pi_i} X_i\to^\partial\lim_I{}^1(\ker(X_j\to X_i))\to\lim_I{}^1 X_j\to 0.
            \end{equation*}
            In general the connecting map $\partial$ will be non-zero, and it should be possible to construct continuous maps from $\tau(\widehat{X})$ to a discrete module $Z$ which do not factor through any $\pi_i$.
        \item As the previous remark makes clear, even when $\Cc$ is complete, admissible Pro-objects carry more information than their underlying limit. The discrepancy between derived limits and classical limits is one instance of this. However, even were we to work in a derived setting, it is rarely the case that objects in categories of interest are finitely co-presentable; one passes to the category of Pro-objects in order to fix this.
    \end{enumerate}
\end{remark}

\begin{definition}\label{def:topdual}
    Let $R$ be a ring with the discrete topology. Let $M$ be a topological (right) $R$-module. The \emph{topological dual} $M^{\vee}$ is the (left) module of continuous homomorphisms $\hom_{cts}(M,R)$, endowed with the compact-open topology (i.e. as a sub-space of the space of continuous maps from the space underlying $M$ to the set underlying $R$).
\end{definition}

\begin{example}
    Suppose $M$ is a discrete (right) $R$-module. A basis of open neighborhoods for $M^\vee$ is given by the sets
    \begin{equation*}
        U_{N,I}:=\{f\in M^{\vee}|f(x)\in I\text{ for all }x\in N.\}
    \end{equation*}
    where $I$ is any ideal of $R$, and $N$ is a finitely generated (right) sub-module of $M$.
\end{example}

\begin{corollary}\label{cor:projdualinproic}
    Denote by $P(R^\circ)$ the category of projective (right) $R$-modules. The topological dual gives an equivalence of categories
    \begin{equation*}
        \begin{xy}
            \morphism<1000,0>[P(R^{\circ})^{\op}`P^\vee(P_f(R));(-)^{\vee}]
            \morphism|b|<1000,0>[P(R^{\circ})^{\op}`P^\vee(P_f(R));\simeq]
        \end{xy}.
    \end{equation*}
    and thus a fully faithful embedding
    \begin{equation*}
        P(R^{\circ})^{\op}\into\FM^\vee(P_f(R)).
    \end{equation*}
\end{corollary}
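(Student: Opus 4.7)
The plan is to bootstrap from the duality equivalence of Example \ref{ex:Rindprodual} and Proposition \ref{prop:tprointop}, so that all the serious work has already been done elsewhere.

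First, I would reduce to the easier categorical statement. Applying Corollary \ref{cor:projRgeneral} to the ring $R^\circ$ gives $P(R^\circ)\simeq P(P_f(R^\circ))$, so every projective right $R$-module is, by definition, a direct summand of a (possibly uncountable) direct sum of objects in $P_f(R^\circ)$. Dually, by definition $P^\vee(P_f(R))\subset\FM^\vee(P_f(R))$ consists of direct summands of arbitrary products of objects in $P_f(R)$. Now Example \ref{ex:Rindprodual} produces an exact equivalence
\begin{equation*}
    \hom_{R^\circ}(-,R)\colon\FM(R^\circ)^{\op}\to^{\simeq}\FM^\vee(P_f(R)),
\end{equation*}
which, being contravariant and exact, carries direct sums to direct products and preserves direct summands. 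Since the classical duality $\hom_{R^\circ}(-,R)$ restricts to the equivalence $P_f(R^\circ)^{\op}\to^\simeq P_f(R)$, the above equivalence restricts to an equivalence $P(R^\circ)^{\op}\to^\simeq P^\vee(P_f(R))$.

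Second, I would verify that the categorical $\hom_{R^\circ}(-,R)$ agrees, on $P(R^\circ)^{\op}$, with the topological dual $(-)^\vee$ of Definition \ref{def:topdual}, once we identify $P^\vee(P_f(R))$ with its image in $\Mod(R)_{top}$ under the fully faithful embedding $\tau$ of Proposition \ref{prop:tprointop}. By Kaplansky's theorem it suffices to treat a direct sum $M=\bigoplus_I M_i$ of countably generated projective right modules $M_i$, each a summand of $R^\circ[t]$. The topological dual of a discrete free module $\bigoplus_J R^\circ$ is the abstract module $\prod_J R$: any continuous $f\colon\bigoplus_J R^\circ\to R$ must vanish on an open sub-module, which, as $R$ is discrete and $\bigoplus_J R^\circ$ is discrete, forces nothing beyond $\mathbb{Z}$-linearity and $R^\circ$-linearity; the compact-open topology on $\prod_J R$ has as a basis the kernels of projections onto finite sub-products (since finitely generated sub-modules of $\bigoplus_J R^\circ$ live in finite sub-sums), which is precisely the product topology. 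Thus for free $M$ the topological dual coincides with $\tau$ applied to the categorical dual. Passing to summands (via a splitting of an idempotent), and then to arbitrary direct sums (where $\hom_{cts}(\bigoplus_I M_i, R)=\prod_I\hom_{cts}(M_i,R)$ as topological $R$-modules, with the product topology on the right), the identification extends to all of $P(R^\circ)$.

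Combining the two steps gives the desired equivalence $(-)^\vee\colon P(R^\circ)^{\op}\to^\simeq P^\vee(P_f(R))$, and the fully faithful embedding into $\FM^\vee(P_f(R))$ follows from the tautological inclusion $P^\vee(P_f(R))\subset\FM^\vee(P_f(R))$. The main obstacle is the second step: one must carefully check that the compact-open topology on $\hom_{cts}(\bigoplus_I M_i,R)$ agrees with the product topology coming from $\tau$ of the categorical dual, and that continuous linear functionals automatically factor through finite sub-sums (so that no ``extra'' duals appear from the uncountable index set). Both checks are straightforward from the definitions, but they are the point where the proof is doing actual work rather than formal manipulation.
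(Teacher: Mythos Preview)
Your argument is correct and takes a genuinely different route from the paper's. You first invoke the abstract equivalence $\hom_{R^\circ}(-,R)\colon\FM(R^\circ)^{\op}\to^{\simeq}\FM^\vee(P_f(R))$ of Example \ref{ex:Rindprodual} and observe that it restricts to an equivalence $P(R^\circ)^{\op}\simeq P^\vee(P_f(R))$ (so full faithfulness and essential surjectivity come for free), and only afterwards identify this categorical functor with the topological dual via $\tau$. The paper instead starts from the topological dual on free modules, uses Proposition \ref{prop:tprointop} to factor it through $\tau$ and deduce full faithfulness, passes to idempotent completions to reach all of $P(R^\circ)$, and then verifies essential surjectivity by a separate double-dual computation $\prod_I M_i\cong(\bigoplus_I M_i^\vee)^\vee$. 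Your decomposition is arguably cleaner in separating the purely categorical equivalence from the topological identification; the paper's approach is more hands-on with the topological dual throughout and avoids appealing to the larger equivalence on $\FM$.

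One small correction: when you say the equivalence ``being contravariant and exact, carries direct sums to direct products,'' exactness is not the relevant justification (exact functors need not preserve infinite coproducts). The correct reason is simply that it is an equivalence, hence preserves all limits; direct sums in $\FM(R^\circ)$ become products in $\FM(R^\circ)^{\op}$, and these are carried to products in $\FM^\vee(P_f(R))$.
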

\begin{proof}
    The category $P(R)$ is the idempotent completion of the category $F(R)$ of free $R$-modules. The universal property of colimits shows that
    \begin{equation*}
        (\bigoplus_I R)^\vee\cong\prod_I R\in P^\vee(P_f(R)).
    \end{equation*}
    Along with Proposition \ref{prop:tprointop}, this implies that the topological dual gives a fully faithful embedding
    \begin{equation*}
        (-)^\vee\colon F(R^\circ)^{\op}\to\Mod(R)_{top}
    \end{equation*}
    factors through the embedding
    \begin{equation}\label{tautpro}
        \tau\colon P^\vee(P_f(R))\to\Mod(R)_{top}.
    \end{equation}
    The universal property of idempotent completion ensures that this extends to a fully faithful embedding
    \begin{equation*}
        (-)^\vee\colon P(R^\circ)^{\op}\into P^\vee(P_f(R)).
    \end{equation*}
    To show that this is essentially surjective, it suffices to show that the essential image of \eqref{tautpro} is contained in the essential image of
    \begin{equation*}
        (-)^\vee\colon P(R^\circ)^{\op}\to\Mod(R)_{top}
    \end{equation*}
    Theorem \ref{thm:prox} shows that every object $\widehat{M}\in P^\vee(P_f(R))$ is a direct summand of $\prod_I M_i$ with $M_i\in P_f(R)$ for all $i$. We show $\tau(\prod_I M_i)$ is the topological dual of some $N\in P(R^\circ)$.

    Indeed, because $((M_i)^\vee)^\vee\cong M_i$ for $M_i\in P_f(R)$, we have
    \begin{align*}
        \widehat{M}&\cong\prod_I M_i\\
        &\cong\prod_I (M_i^\vee)^\vee\\
        &\cong(\bigoplus_I M_i^\vee)^\vee
    \end{align*}
    with $\bigoplus_I M_i^\vee\in P(R^\circ)$.
\end{proof}

\section{Tate Objects}\label{sec:tate}
We are now ready to introduce the category $\elTate(\Cc)$ of \emph{elementary Tate objects} and its idempotent completion $\Tate(\Cc)$. Elementary Tate objects sit in relation to objects of $\Cc$ as the topological $R$-module $R((t))$ sits in relation to finitely generated free $R$-modules.

\subsection{The Category of Elementary Tate Objects}
\begin{definition}
    Let $\Cc$ be an exact category and let $\kappa$ be an infinite cardinal. An \emph{admissible Ind-Pro object in $\Cc$ of size at most $\kappa$} is an object in the category $\Indk(\Prok(\Cc))$.
\end{definition}

\begin{definition}\label{defi:eltate}
    Let $\Cc$ be an exact category. An \emph{elementary Tate diagram in $\Cc$ of size at most $\kappa$} is an admissible Ind-diagram
    \begin{equation*}
        \begin{xy}
            \morphism[I`\Prok(\Cc);X]
        \end{xy}
    \end{equation*}
    of cardinality at most $\kappa$ such that, for all $i\le i'$ in $I$, the object $X_{i'}/X_i$ is in $\Cc$. Denote by $\mathbb{T}_{\kappa}(\Cc)\subset\Dirk(\Prok(\Cc))$ the category of elementary Tate diagrams in $\Cc$ of size at most $\kappa$.
\end{definition}

By definition, we have a canonical functor
\begin{equation}\label{tatecolim}
    \widehat{(-)}\colon\mathbb{T}_\kappa(\Cc)\to\Indk(\Prok(\Cc)).
\end{equation}

\begin{definition}
    Define the category $\elTatek(\Cc)$ of \emph{elementary Tate objects in $\Cc$ of size at most $\kappa$} to be the full sub-category of $\Indk(\Prok(\Cc))$ consisting of objects in the essential image of \eqref{tatecolim}. Denote by $\elTate(\Cc)$ the analogous full sub-category of $\Ind(\Pro(\Cc))$.
\end{definition}

\begin{theorem}\label{thm:eltatechar}
    Let $\Cc$ be an exact category. Let $\kappa$ be an infinite cardinal. The category $\elTatek(\Cc)$ is the smallest full sub-category of $\Indk(\Prok(\Cc))$ which
    \begin{enumerate}
        \item contains the sub-category $\Indk(\Cc)\subset\Indk(\Prok(\Cc))$,
        \item contains the sub-category $\Prok(\Cc)\subset\Indk(\Prok(\Cc))$, and
        \item is closed under extensions.
    \end{enumerate}
    In particular, $\elTatek(\Cc)$ admits a canonical structure as an exact category.
\end{theorem}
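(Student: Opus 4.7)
The plan is to establish the two containments separately. For the easy direction, given an elementary Tate object $\widehat{X}$ represented by a Tate diagram $X\colon I \to \Prok(\Cc)$ and any $i_0 \in I$, restriction to the cofinal sub-poset $\{i \in I : i_0 \le i\}$ produces a short exact sequence of admissible Ind-diagrams in $\Prok(\Cc)$ whose constituents are the constant diagram at $X_{i_0}$, the restriction of $X$ itself, and the cokernel diagram $i \mapsto X_i/X_{i_0}$; by the Tate condition this last diagram takes values in $\Cc$. Passing to colimits in $\Indk(\Prok(\Cc))$ and invoking Proposition \ref{prop:inde=eind} yields a short exact sequence
\[ X_{i_0} \into \widehat{X} \onto \widehat{X}/X_{i_0} \]
with $X_{i_0} \in \Prok(\Cc)$ and $\widehat{X}/X_{i_0} \in \Indk(\Cc)$. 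Thus $\widehat{X}$ lies in any full sub-category of $\Indk(\Prok(\Cc))$ satisfying (1)--(3).

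For the other direction I must verify that $\elTatek(\Cc)$ itself satisfies the three conditions. Conditions (1) and (2) are immediate: any admissible Ind-diagram $X\colon I \to \Cc \into \Prok(\Cc)$ trivially meets the Tate condition, and any Pro-object is the colimit of the constant Tate diagram on a one-point poset.

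The real work, and the main obstacle, is closure under extensions. Given a short exact sequence $A \into E \onto C$ in $\Indk(\Prok(\Cc))$ with $A, C \in \elTatek(\Cc)$, the plan is to fix Tate representations $A\colon I \to \Prok(\Cc)$ and $C\colon J \to \Prok(\Cc)$, and then apply Proposition \ref{prop:inde=eind} to the exact category $\Prok(\Cc)$ in order to lift the sequence to an admissible Ind-diagram of short exact sequences $A' \into E' \onto C'$ in $\Prok(\Cc)$ over some common directed poset $K$ of cardinality at most $\kappa$. The key refinement is that, upon tracing through the proof of Proposition \ref{prop:inde=eind} (which rests on the construction in Theorem \ref{thm:indleftspecial} and on Lemma \ref{lemma:straight}), the diagrams $A'$ and $C'$ can be arranged as restrictions of the chosen Tate diagrams along final maps $K \to I$ and $K \to J$; cofinal restrictions preserve the Tate condition, so $A'$ and $C'$ are themselves Tate. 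Granting this, for each $k \le k'$ the morphism of short exact sequences yields, by the $3 \times 3$-lemma in $\Prok(\Cc)$, a short exact sequence
\[ A'_{k'}/A'_k \into E'_{k'}/E'_k \onto C'_{k'}/C'_k \]
whose outer terms lie in $\Cc$. Because $\Cc \into \Prok(\Cc)$ is right s-filtering (Theorem \ref{thm:prox}), it is in particular closed under extensions by the dual of Lemma \ref{lemma:lsextclosed}, so $E'_{k'}/E'_k \in \Cc$. Hence $E'$ is a Tate diagram, and $E \in \elTatek(\Cc)$.

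The principal obstacle is precisely the refinement step: ensuring that the lift provided by Proposition \ref{prop:inde=eind} can be taken with Tate sub- and quotient-diagrams. If the direct application does not yield this, I would instead form the fibre-product indexing category $K \downarrow_{\Prok(\Cc)} I$ as in the proof of Theorem \ref{thm:indleftspecial}, reindex along the resulting final maps, and replace $A'$, $C'$ by the appropriate cofinal restrictions of the pre-chosen Tate diagrams; the subsequent $3 \times 3$-lemma argument then proceeds without change.
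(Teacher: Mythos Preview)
Your proposal is correct and follows essentially the same route as the paper. The paper likewise verifies conditions (1) and (2) by inspection, proves closure under extensions by lifting the exact sequence to a levelwise exact sequence of admissible Ind-diagrams in $\Prok(\Cc)$ (citing Lemma \ref{lemma:indleftspecial} and the proof of Theorem \ref{thm:indleftspecial} directly, rather than going through Proposition \ref{prop:inde=eind}, but with the same observation that restriction along final maps preserves the Tate condition), applies the $(3\times 3)$-Lemma together with the fact that $\Cc$ is closed under extensions in $\Prok(\Cc)$, and finishes by exhibiting each elementary Tate object as an extension of an Ind-object by a Pro-object via the same $X_{i_0}\into\widehat{X}\onto\widehat{X}/X_{i_0}$ sequence (invoking Lemma \ref{lemma:indsubob} where you invoke Proposition \ref{prop:inde=eind}). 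The ``principal obstacle'' you flag is exactly the point the paper addresses with its remark that restrictions of Tate diagrams along final maps remain Tate, and your proposed fallback reindexing is precisely the mechanism already built into the proof of Theorem \ref{thm:indleftspecial}.
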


The theorem allows us to quickly produce examples of elementary Tate objects.
\begin{example}
    Let $X$ be an integral curve over a field $k$. Denote the set of closed points by $|X|$.  For each closed point $x\in |X|$, let $\Oc_{X,x}$ denote the local ring at $x$, let $\widehat{\Oc_{X,x}}$ denote its completion with respect to the maximal ideal, and let $\Frac(\widehat{\Oc_{X,x}})$ denote the field of fractions of the completed local ring. The \emph{ring of ad\`{e}les} $\Ab(X)$ is the restricted product
    \begin{equation*}
        \Ab(X):=\sideset{}{'}\prod_{x\in |X|}\Frac(\widehat{\Oc_{X,x}})
    \end{equation*}
    where, for any $f\in\Ab(X)$, the factor $f(x)$ lies in $\widehat{\Oc_{X,x}}$ for all but finitely many $x\in |X|$.

    If the set of closed points of $X$ has cardinality $\kappa$, then $\Ab(X)$ is an elementary Tate vector space over $k$ of size $\kappa$. Indeed, $\Ab(X)$ is isomorphic as a $k$-vector space to the direct sum
    \begin{align*}
        \prod_{x\in |X|}\widehat{\Oc_{X,x}}\oplus\bigoplus_{x\in |X|}\Frac(\widehat{\Oc_{X,x}})/\widehat{\Oc_{X,x}}
    \end{align*}
    The product $\prod_{x\in |X|}\widehat{\Oc_{X,x}}$ is an admissible Pro-vector space of size $\kappa$, while the coproduct $\bigoplus_{x\in |X|}\Frac(\widehat{\Oc_{X,x}})/\widehat{\Oc_{X,x}}$ is an admissible Ind-vector space of size $\kappa$. The category $\elTatek(\Vect_k)$ is closed under extensions in $\Indk(\Prok(\Vect_k))$, so $\Ab(X)$ is an object in $\elTatek(\Vect_k)$.
\end{example}

\begin{proof}[Proof of Theorem \ref{thm:eltatechar}]
    The definition of an elementary Tate object immediately implies that the embeddings $\Indk(\Cc)\into\Indk(\Prok(\Cc))$ and $\Prok(\Cc)\into\Indk(\Prok(\Cc))$ factor through the inclusion $\elTatek(\Cc)\subset\Indk(\Prok(\Cc))$. We show that the sub-category $\elTatek(\Cc)$ is closed under extensions, and that every elementary Tate object arises as an extension of an admissible Ind-object by an admissible Pro-object.

    Let
    \begin{equation}\label{eq:pftate}
        \begin{xy}
            \morphism/^{ (}->/[\widehat{X}`F;]
            \morphism(500,0)/->>/[F`\widehat{Z};]
        \end{xy}
    \end{equation}
    be an exact sequence of admissible Ind-Pro objects such that $\widehat{X}$ and $\widehat{Z}$ are elementary Tate objects.

    Observe that for any elementary Tate diagram
    \begin{equation*}
        \begin{xy}
            \morphism[I`\Prok(\Cc);W]
        \end{xy}
    \end{equation*}
    and any final map $J\to I$, the restriction of $W$ to $J$ is also an elementary Tate diagram. Lemma \ref{lemma:indleftspecial} and the proof of Theorem \ref{thm:indleftspecial} imply that we can lift the short exact sequence \ref{eq:pftate} to a sequence of admissible Ind-diagrams of admissible Pro-objects
    \begin{equation*}
        \begin{xy}
            \Vtrianglepair[I`I`I`\Cc;``X`F`Z]
            \place(375,250)[\twoar(1,0)]
            \place(375,350)[_{\alpha}]
            \place(650,250)[\twoar(1,0)]
            \place(650,350)[_{\beta}]
        \end{xy}
    \end{equation*}
    such that $X$ and $Z$ are elementary Tate diagrams of size at most $\kappa$, such that the components of $\beta$ are admissible epics, and such that the components of $\alpha$ are admissible monics.

    For each $i\le j$ in $I$, we have a commuting diagram of admissible Pro-objects with exact rows
    \begin{equation*}
        \begin{xy}
            \square/^{ (}->`^{ (}->`^{ (}->`^{ (}->/[X_i`F_i`X_{j}`F_{j};```]
            \square(500,0)/->>`^{ (}->`^{ (}->`->>/[F_i`Z_i`F_{j}`Z_{j};```]
        \end{xy}
    \end{equation*}
    All vertical maps are admissible monics in $\Prok(\Cc)$. The $(3\times 3)$-Lemma \cite[Corollary 3.6]{Buh:10} shows that taking the cokernels of the vertical maps gives an exact sequence of admissible Pro-objects
    \begin{equation*}
        \begin{xy}
            \morphism/^{ (}->/[X_j/X_i`F_j/F_i;]
            \morphism(500,0)/->>/[F_j/F_i`Z_j/Z_i;]
        \end{xy}
    \end{equation*}
    The first and last terms are in $\Cc$, and $\Cc$ is closed under extensions in $\Prok(\Cc)$ (Theorem \ref{thm:prox}). We conclude that $F$ is an elementary Tate diagram.

    It remains to show that every elementary Tate object is an extension of an admissible Ind-object by an admissible Pro-object. Lemma \ref{lemma:indsubob} shows that, given an elementary Tate diagram $X\colon I\to\Prok(\Cc)$, for any $i\in I$, the map $X_i\into\widehat{X}$ is an admissible monic. The sub-object $X_i$ is an admissible Pro-object, and, because $X_j/X_i$ is in $\Cc$ for all $j\ge i$, the quotient $\widehat{X}/X_i$ is an admissible Ind-object.
\end{proof}

\subsection{Properties of Elementary Tate Objects}
The properties of admissible Ind-objects established in Section \ref{sec:indprop} have their counterparts for elementary Tate objects. We develop these here.

\subsubsection{\texorpdfstring{$\Prok(\Cc)$}{Pro(C)}, \texorpdfstring{$\Indk(\Cc)$}{Ind(C)} and \texorpdfstring{$\Cc$}{C} as Exact, Full Sub-Categories of \texorpdfstring{$\elTatek(\Cc)$}{Tate-el(C)}}
\begin{proposition}\label{prop:prointatesfilt}
    The sub-category $\Prok(\Cc)\subset\elTatek(\Cc)$ is left s-filtering.
\end{proposition}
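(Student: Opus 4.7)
The plan is to reduce to facts already established about $\Prok(\Cc) \subset \Indk(\Prok(\Cc))$ via the transitivity result of Lemma \ref{lemma:leftstransitive}. Applying that lemma with the chain
\begin{equation*}
\Prok(\Cc) \subset \elTatek(\Cc) \subset \Indk(\Prok(\Cc)),
\end{equation*}
it suffices to verify two things: (i) that $\Prok(\Cc)$ is left special in $\Indk(\Prok(\Cc))$, and (ii) that $\Prok(\Cc)$ is left filtering in $\elTatek(\Cc)$. Statement (i) is immediate: by Proposition \ref{prop:cleftsfiltinind} (applied to the exact category $\Prok(\Cc)$ in place of $\Cc$), $\Prok(\Cc)$ is in fact left s-filtering in $\Indk(\Prok(\Cc))$, hence in particular left special.

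The content is therefore (ii). Let $f\colon P \to V$ be a morphism in $\elTatek(\Cc)$ with $P \in \Prok(\Cc)$. Choose an elementary Tate diagram $X\colon I \to \Prok(\Cc)$ presenting $V = \widehat{X}$. Since $P$, viewed as an object of $\Prok(\Cc)$, is finitely presentable in $\Indk(\Prok(\Cc))$ in the sense recorded after Definition \ref{defi:ind} (i.e.\ $\hom_{\Indk(\Prok(\Cc))}(P,\widehat{X}) = \colim_I \hom_{\Prok(\Cc)}(P,X_i)$), the map $f$ factors through some $X_i \to \widehat{X}$ for some $i \in I$. By Lemma \ref{lemma:indsubob} (applied with base $\Prok(\Cc)$), the map $X_i \to V$ is an admissible monic in $\Indk(\Prok(\Cc))$.

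To conclude, I only need to check that $X_i \into V$ is an admissible monic \emph{in} $\elTatek(\Cc)$. Since $\elTatek(\Cc)$ inherits its exact structure as a sub-category closed under extensions in $\Indk(\Prok(\Cc))$ (Theorem \ref{thm:eltatechar}), this amounts to showing that the cokernel $V/X_i$ lies in $\elTatek(\Cc)$. But restricting the diagram $X$ to the final sub-poset $I_i := \{j \in I : i \le j\}$ and quotienting by $X_i$ gives an admissible Ind-diagram $j \mapsto X_j/X_i$ in $\Cc$ (since each $X_j/X_i$ lies in $\Cc$ by the defining condition on an elementary Tate diagram, Definition \ref{defi:eltate}). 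Thus $V/X_i \in \Indk(\Cc) \subset \elTatek(\Cc)$, and left filtering is established. There is no real obstacle here beyond unwinding definitions; the essential input is the combination of left filtering for $\Prok(\Cc)$ inside $\Indk(\Prok(\Cc))$ with the defining condition on elementary Tate diagrams, which is precisely what makes such sub-objects in $\Prok(\Cc)$ have cokernels of Ind-type.
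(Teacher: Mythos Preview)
Your proof is correct and follows essentially the same approach as the paper: reduce to Proposition \ref{prop:cleftsfiltinind} (applied to $\Prok(\Cc)$) together with Lemma \ref{lemma:leftstransitive}. Your write-up is in fact more careful than the paper's two-line argument, since you explicitly verify that the factoring monic $X_i \into V$ is admissible \emph{in} $\elTatek(\Cc)$ by checking $V/X_i \in \Indk(\Cc)$; the paper leaves this step implicit.
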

\begin{proof}
    The embedding $\Prok(\Cc)\into\Indk(\Prok(\Cc))$ is left s-filtering by Proposition \ref{prop:cleftsfiltinind}. Elementary Tate objects form a full sub-category of $\Indk(\Prok(\Cc))$, so the result follows from Lemma \ref{lemma:leftstransitive}.
\end{proof}

\begin{proposition}\label{prop:proandindintate}
    $\Cc\simeq\Ind(\Cc)\cap\Pro(\Cc)\subset\elTate(\Cc)$.
\end{proposition}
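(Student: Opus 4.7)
My plan is to establish the two inclusions. The easy direction, $\Cc \subset \Ind(\Cc) \cap \Pro(\Cc)$, is immediate from the fully faithful embeddings $\Cc \hookrightarrow \Ind(\Cc)$ and $\Cc \hookrightarrow \Pro(\Cc)$ (both factor as constant diagrams), and from the fully faithful embedding of both $\Ind(\Cc)$ and $\Pro(\Cc)$ into $\elTate(\Cc) \subset \Ind(\Pro(\Cc))$ established in Theorem \ref{thm:eltatechar}.

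For the nontrivial direction, let $V \in \Ind(\Cc) \cap \Pro(\Cc)$, viewed as an object of $\Ind(\Pro(\Cc))$. Since $V \in \Ind(\Cc)$, choose an admissible Ind-diagram $X\colon I \to \Cc$ together with an isomorphism $\alpha\colon \colim_i X_i \xrightarrow{\sim} V$ in $\Ind(\Pro(\Cc))$. Since $V \in \Pro(\Cc)$, we may identify $V$ with a single Pro-object $W \in \Pro(\Cc)$, viewed as a constant Ind-diagram in $\Pro(\Cc)$; let $\beta\colon V \xrightarrow{\sim} W$ be the isomorphism. Set $\gamma := \alpha^{-1} \circ \beta^{-1}\colon W \to \colim_i X_i$ and $\delta := \beta \circ \alpha\colon \colim_i X_i \to W$, so that $\delta \gamma = 1_W$ and $\gamma \delta = 1_{\colim_i X_i}$.

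The key step uses the finite-presentability of Pro-objects in $\Ind(\Pro(\Cc))$, which is the statement recorded just after Definition \ref{defi:ind}, applied to the ambient category $\Pro(\Cc)$: any morphism from $W \in \Pro(\Cc)$ to an admissible Ind-object in $\Pro(\Cc)$ factors through some term in any chosen presenting diagram. Hence $\gamma$ factors as
\[
W \xrightarrow{\gamma'} X_{i_0} \xhookrightarrow{\iota_{i_0}} \colim_i X_i
\]
for some $i_0 \in I$, where $\iota_{i_0}$ is the canonical admissible monic provided by Lemma \ref{lemma:indsubob}.

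It remains to check that $\gamma'$ is an isomorphism. On one hand, $\delta \iota_{i_0} \gamma' = \delta \gamma = 1_W$, so $\delta \iota_{i_0}$ is a left inverse of $\gamma'$. On the other hand, $\iota_{i_0} \gamma' \delta \iota_{i_0} = \gamma \delta \iota_{i_0} = \iota_{i_0}$, and since $\iota_{i_0}$ is an admissible monic, hence in particular a monomorphism, we may cancel to obtain $\gamma' \delta \iota_{i_0} = 1_{X_{i_0}}$. Thus $\gamma'\colon W \xrightarrow{\sim} X_{i_0}$ is an isomorphism in $\Ind(\Pro(\Cc))$, which gives $V \cong X_{i_0} \in \Cc$ as required. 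The only subtle point is ensuring that compactness of Pro-objects is available and that $\iota_{i_0}$ is a monomorphism in the ambient Ind-Pro category; both follow directly from the properties of admissible Ind-objects developed in Section \ref{sec:ind}, so no serious obstacle arises.
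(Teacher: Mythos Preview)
Your proof is correct and takes essentially the same approach as the paper: both use the compactness of Pro-objects in $\Ind(\Pro(\Cc))$ to factor the identity (or an isomorphism) through some $X_{i_0}$, and then deduce that $V \cong X_{i_0}$. The paper phrases the conclusion slightly differently---observing that the factorization forces the admissible monic $X_{i_0}\hookrightarrow \widehat{X}$ to be an epic, hence an isomorphism---while you instead construct an explicit two-sided inverse to $\gamma'$; these are two ways of packaging the same idea.
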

\begin{proof}
    Let $\widehat{X}\in\elTate(\Cc)$ be an object which is both an admissible Ind-object and an admissible Pro-object. Let $X\colon I\to\Cc$ be an admissible Ind-diagram representing $\widehat{X}$. Because $\widehat{X}$ is also an admissible Pro-object, the isomorphism $\widehat{X}\cong \colim_I X_i$ factors through the inclusion of $X_i$ for some $i$
    \begin{equation*}
        \begin{xy}
            \qtriangle/>`>`<-_{) }/<750,500>[\widehat{X}`\colim_I X_i`X_i;\cong``]
        \end{xy}
    \end{equation*}
    The inclusion is therefore an epic admissible monic, i.e. an isomorphism. We conclude that $\widehat{X}$ is in $\Cc$.
\end{proof}

\begin{proposition}\label{prop:indintate}\mbox{}
    \begin{enumerate}
        \item For any exact sequence in $\Indk(\Prok(\Cc))$
            \begin{equation*}
                \widehat{X}\into\widehat{Y}\onto\widehat{Z},
            \end{equation*}
            $\widehat{Y}$ is in $\Indk(\Cc)$ if and only if $\widehat{X}$ and $\widehat{Z}$ are in $\Indk(\Cc)$.
        \item The sub-category $\Indk(\Cc)\subset\elTatek(\Cc)$ is right filtering.
    \end{enumerate}
\end{proposition}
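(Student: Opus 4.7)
The two parts are linked: Part~(2) will follow quickly from Part~(1) combined with Theorem~\ref{thm:eltatechar}. Given $f\colon F\to\widehat{Y}$ with $F\in\elTatek(\Cc)$ and $\widehat{Y}\in\Indk(\Cc)$, I write $F_P\into F\onto F_I$ with $F_P\in\Prok(\Cc)$ and $F_I\in\Indk(\Cc)$. Since $F_P$ is finitely presentable in $\Indk(\Prok(\Cc))$, the composite $F_P\to\widehat{Y}=\colim_J Y_j$ factors through some $Y_{j_0}\in\Cc$, and the right $s$-filtering of $\Cc$ in $\Prok(\Cc)$ (Theorem~\ref{thm:prox}(2)) promotes this to an admissible epic $F_P\onto X_0$ with $X_0\in\Cc$. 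Setting $N:=\ker(F_P\onto X_0)$, the map $f$ vanishes on $N\into F$, yielding a factorization $F\onto F/N\to\widehat{Y}$; the $3\times 3$-lemma identifies $F/N$ as an extension of $F_I$ by $X_0$, so Part~(1) places $F/N$ in $\Indk(\Cc)$, giving the right-filtering factorization.

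For Part~(1), I proceed in two directions. The ``if'' direction is the substantive one. I first reduce to the case $\widehat{Z}=Z\in\Cc$: writing $\widehat{Z}=\colim_J Z_j$ and pulling back the SES along each $Z_j\into\widehat{Z}$ expresses $\widehat{Y}=\colim_J\widehat{Y}_j$ as a directed union of admissible sub-objects, each fitting in an SES $\widehat{X}\into\widehat{Y}_j\onto Z_j$. In the reduced situation, the left-specialness of $\Prok(\Cc)\subset\Indk(\Prok(\Cc))$ (Proposition~\ref{prop:cleftsfiltinind}) produces an SES $N\into P\onto Z$ in $\Prok(\Cc)$ sitting over the original, with an induced map $N\to\widehat{X}$. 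Writing $\widehat{X}=\colim_I X_i$ with $X_i\in\Cc$ and using that $N\in\Prok(\Cc)$ is finitely presentable in $\Indk(\Prok(\Cc))$, the map $N\to\widehat{X}$ factors through some $X_{i_0}$. The pushout $Y_0:=X_{i_0}\cup_N P$ then fits into an SES $X_{i_0}\into Y_0\onto Z$, and two closure-under-extension arguments --- Theorem~\ref{thm:prox}(1) for $\Prok(\Cc)\subset\Indk(\Prok(\Cc))$, together with the dual of Lemma~\ref{lemma:lsextclosed} applied to Theorem~\ref{thm:prox}(2) for $\Cc\subset\Prok(\Cc)$ --- place $Y_0$ in $\Cc$. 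The five-lemma identifies $\widehat{Y}_j\cong\widehat{X}\cup_{X_{i_0}}Y_0=\colim_{i\geq i_0}(X_i\cup_{X_{i_0}}Y_0)$, realizing $\widehat{Y}_j$ as an admissible Ind-object in $\Cc$; combining the $J$-indexed colimit with the inner Ind-diagrams yields $\widehat{Y}\in\Indk(\Cc)$.

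The ``only if'' direction is conceptually simpler: representing $\widehat{Y}=\colim_I Y_i$ in $\Cc$, I form the pullbacks $K_i:=Y_i\times_{\widehat{Y}}\widehat{X}$ and quotients $Z_i:=Y_i/K_i$, giving $\widehat{X}=\colim_I K_i$ and $\widehat{Z}=\colim_I Z_i$ with admissible monic transitions. The hard part of the proof --- and the principal obstacle --- is the verification that each $K_i$ and $Z_i$ lies in $\Cc$: that is, admissible sub-objects and admissible quotients of $\Cc$-objects inside $\Indk(\Prok(\Cc))$ must already lie in $\Cc$. I expect this to follow from Proposition~\ref{prop:indsk=skind} (straightening chains of admissible monics) applied to the admissible monic $K_i\into Y_i$, combined with the finite presentability of $Y_i$ in $\Indk(\Prok(\Cc))$ and the full exactness of the embeddings $\Cc\subset\Prok(\Cc)\subset\Indk(\Prok(\Cc))$; the lifted Ind-diagram representing the monic must essentially stabilize against $Y_i$, forcing its first term to live in $\Cc$.
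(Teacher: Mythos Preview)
Your treatment of Part~(2) and the ``if'' direction of Part~(1) is correct and close in spirit to the paper's, though the paper handles the ``if'' direction more economically: it simply invokes Proposition~\ref{prop:inde=eind} (straightening exact sequences) together with the fact that $\Cc\subset\Prok(\Cc)$ is closed under extensions (being right special), so any straightened Ind-diagram of the given SES already has all three terms in~$\Cc$. Your explicit pushout construction works, but the final step of ``combining the $J$-indexed colimit with the inner Ind-diagrams'' hides a nontrivial $\Indk(\Indk(\Cc))\simeq\Indk(\Cc)$-type statement that you do not justify; the paper's route avoids this entirely.

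The genuine gap is in your ``only if'' direction. The pullback $K_i:=Y_i\times_{\widehat{Y}}\widehat{X}$ exists \emph{a priori} only in the abelian category $\lex(\Prok(\Cc))$; nothing you have written places it in $\Indk(\Prok(\Cc))$, let alone in~$\Cc$. Your proposed fix via Proposition~\ref{prop:indsk=skind} is circular: that proposition requires an admissible monic in $\Indk(\Prok(\Cc))$ as input, which is exactly what is in doubt for $K_i\into Y_i$. Equivalently, the map $Y_i\to\widehat{Z}$ is neither an admissible epic nor has any evident kernel in the exact category, so there is no cheap way to produce $K_i$ or $Z_i=Y_i/K_i$ inside~$\Cc$.

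The paper's argument reverses the logic. It first straightens the SES via Proposition~\ref{prop:inde=eind} to an Ind-diagram $(X_i\into Y_i\onto Z_i)_{i\in I}$ with all terms in $\Prok(\Cc)$, and only \emph{then} uses the hypothesis $\widehat{Y}\in\Indk(\Cc)$: taking any admissible Ind-diagram $Y'\colon J\to\Cc$ for $\widehat{Y}$, the admissible monic $Y_i\into\widehat{Y}$ factors through some $Y'_j\in\Cc$ (finite presentability), making $Y_i\to Y'_j$ monic. Right filtering of $\Cc$ in $\Prok(\Cc)$ then factors this map as $Y_i\onto W\to Y'_j$ with $W\in\Cc$; but a monic admissible epic is an isomorphism, so $Y_i\in\Cc$. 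Once the middle terms lie in $\Cc$, the right s-filtering of $\Cc$ in $\Prok(\Cc)$ (Proposition~\ref{prop:buh}) forces $X_i,Z_i\in\Cc$ as well. This ``monic admissible epic $=$ iso'' trick is the key idea you are missing.
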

\begin{proof}
    Because $\Cc\subset\Prok(\Cc)$ is right special (Theorem \ref{thm:prox}), it is closed under extensions (Lemma \ref{lemma:lsextclosed}). Accordingly, by straightening short exact sequences in $\Indk(\Prok(\Cc))$ (Proposition \ref{prop:inde=eind}), we see that the central term is an admissible Ind-object in $\Cc$ if the outer two terms are.

    We now show the converse. Let
    \begin{equation*}
        \begin{xy}
            \Vtrianglepair/=`=`>`>`>/[I`I`I`\Prok(\Cc);``X`Y`Z]
            \place(375,250)[\twoar(1,0)]
            \place(375,350)[_{\alpha}]
            \place(650,250)[\twoar(1,0)]
            \place(650,350)[_{\beta}]
        \end{xy}
    \end{equation*}
    be an admissible diagram of exact sequences in $\Prok(\Cc)$, and suppose $\widehat{Y}\in\Ind(\Cc)$. We first show that $Y$ factors through $\Cc$. Let $Y'\colon J\to \Cc$ be an admissible Ind-diagram in $\Cc$ representing $\widehat{Y}$. For each $i\in I$, the admissible monic $Y_i\into\widehat{Y}$ factors through a map $Y_i\to Y'_j$ for some $j$; this factoring shows that the map $Y_i\to Y'_j$ is a monic. Because $\Cc$ is right filtering in $\Prok(\Cc)$, this map factors through an admissible epic $Y_i\onto W$ with $W\in\Cc$. Because the map $Y_i\to Y'_j$ is monic, the admissible epic $Y_i\onto W$ is also monic.  It is therefore an isomorphism, and we see that $Y\colon I\to\Cc$ is an admissible Ind-diagram in $\Cc$. Because $\Cc$ is right s-filtering in $\Prok(\Cc)$ (Theorem \ref{thm:prox}), that $Y_i\in\Cc$ for all $i$ combines with Proposition \ref{prop:buh} to imply that $X_i$ and $Z_i$ are in $\Cc$ for all $i$ as well. We conclude that $\widehat{X}$ and $\widehat{Z}$ are in $\Indk(\Cc)$ if $\widehat{Y}$ is.

    We now show that $\Indk(\Cc)$ is right filtering in $\elTatek(\Cc)$. We must show that, for any $f\colon\widehat{X}\to\widehat{Y}$ in $\elTatek(\Cc)$ with $\widehat{Y}\in\Indk(\Cc)$, there exists $\widehat{Z}\in\Indk(\Cc)$ such that $f$ factors through an admissible epic $\widehat{X}\onto\widehat{Z}$.

    Given $f$, let
    \begin{equation*}
        \widehat{L}\into\widehat{X}\onto{X}/\widehat{L}
    \end{equation*}
    be any exact sequence with $\widehat{L}\in\Prok(\Cc)$ and $\widehat{X}/\widehat{L}\in\Indk(\Cc)$ (the proof of Theorem \ref{thm:eltatechar} shows this exists). Note that we assume no relation between $\widehat{L}$ and $f$. Because $\Prok(\Cc)\subset\elTatek(\Cc)$ is left filtering, the composite
    \begin{equation*}
        \widehat{L}\into\widehat{X}\to^f\widehat{Y}
    \end{equation*}
    factors through an admissible monic $\widehat{P}\into\widehat{Y}$ with $\widehat{P}\in\Prok(\Cc)$. As we observed above, because $\widehat{P}$ is an admissible sub-object of the admissible Ind-object $\widehat{Y}$, $\widehat{P}$ is also in $\Indk(\Cc)$. By Proposition \ref{prop:proandindintate}, we conclude that $\widehat{P}\in\Cc$.

    Because $\Cc\subset\Prok(\Cc)$ is right s-filtering, the map $\widehat{L}\to\widehat{P}$ factors through an admissible epic $\widehat{L}\onto P'$ in $\Prok(\Cc)$ with $P'\in\Cc$. Let $\widehat{L}':=\ker(\widehat{L}\onto P')$.  Noether's Lemma \cite[Lemma 3.5]{Buh:10} guarantees that the sequence
    \begin{equation*}
        \widehat{L}'\into\widehat{L}\into\widehat{X}
    \end{equation*}
     of admissible monics in $\elTatek(\Cc)$ rise to an exact sequence
    \begin{equation*}
        P'\cong \widehat{L}/\widehat{L'} \into \widehat{X}/\widehat{L}'\onto \widehat{X}/\widehat{L}
    \end{equation*}
    Because the outer terms are in $\Indk(\Cc)$, we conclude that $\widehat{X}/\widehat{L}'\in\Indk(\Cc)$. By the universal property of cokernels, our construction implies that $f\colon\widehat{X}\to\widehat{Y}$ factors through the admissible epic $\widehat{X}\onto\widehat{X}/\widehat{L}'$.
\end{proof}

\subsubsection{Exact Sequences of Elementary Tate Objects}
The proof of Proposition \ref{prop:inde=eind} implies the following.
\begin{proposition}
    The exact category $\elTatek(\Ec\Cc)$ is canonically equivalent to $\Ec\elTatek(\Cc)$.
\end{proposition}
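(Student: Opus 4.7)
The plan is to adapt the proof of Proposition \ref{prop:inde=eind} by working one level deeper: I would combine the straightening of admissible Ind-diagrams with the analogous identification $\Prok(\Ec\Cc) \simeq \Ec\Prok(\Cc)$ from Theorem \ref{thm:prox}, and then impose the elementary Tate condition on each of the three resulting diagrams.

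First I would construct the comparison functor. An elementary Tate diagram in $\Ec\Cc$ is an admissible Ind-diagram $W\colon I \to \Prok(\Ec\Cc)$ with quotients $W_{i'}/W_i \in \Ec\Cc$ for $i \le i'$. Using $\Prok(\Ec\Cc) \simeq \Ec\Prok(\Cc)$, each $W_i$ unpacks to a kernel-cokernel pair $X_i \into Y_i \onto Z_i$ in $\Prok(\Cc)$, and the maps $W_i \to W_{i'}$ unpack to triples of admissible monics whose componentwise cokernels assemble to $W_{i'}/W_i \in \Ec\Cc$; in particular, $X_{i'}/X_i$, $Y_{i'}/Y_i$, and $Z_{i'}/Z_i$ all lie in $\Cc$. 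Thus each of $X, Y, Z$ is an elementary Tate diagram in $\Cc$, and since directed colimits in $\lex(\Prok(\Cc)^{op})$ preserve kernels and cokernels, passing to the induced admissible Ind-objects produces an exact sequence $\widehat{X} \into \widehat{Y} \onto \widehat{Z}$ in $\elTatek(\Cc)$. This assignment gives an exact functor $\elTatek(\Ec\Cc) \to \Ec\elTatek(\Cc)$.

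Next I would verify faithfulness, fullness, and essential surjectivity. Faithfulness is immediate from the construction. Fullness follows by repeating the straightening argument from the proof of Proposition \ref{prop:inde=eind} with $\Prok(\Cc)$ in place of $\Cc$: any morphism of exact sequences of admissible Ind-objects in $\Prok(\Cc)$ can be lifted to a morphism of admissible Ind-diagrams in $\Ec\Prok(\Cc)$, and after restricting along a suitable final map the domain diagram inherits the quotient-in-$\Ec\Cc$ condition from the target. For essential surjectivity, let $\widehat{X} \into \widehat{Y} \onto \widehat{Z}$ be an exact sequence with $\widehat{X}, \widehat{Z} \in \elTatek(\Cc)$. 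The construction in the proof of Theorem \ref{thm:eltatechar} already produces an admissible Ind-diagram of short exact sequences of Pro-objects representing this sequence, in which $X$ and $Z$ are elementary Tate diagrams. The $3\times 3$-Lemma argument used there gives, for every $i \le j$, an exact sequence $X_j/X_i \into Y_j/Y_i \onto Z_j/Z_i$ in $\Prok(\Cc)$; since $\Cc$ is closed under extensions in $\Prok(\Cc)$ (Theorem \ref{thm:prox}), this forces $Y_j/Y_i \in \Cc$ as well, so the diagram is in fact an elementary Tate diagram in $\Ec\Cc$.

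Finally I would check fully exactness. An exact sequence in $\Ec\elTatek(\Cc)$ is a $3\times 3$ grid of elementary Tate objects, and I would iterate the previous essential-surjectivity argument --- first straightening horizontally, then vertically --- to produce an elementary Tate diagram of $3\times 3$ exact grids in $\Cc$, i.e.\ an object of $\elTatek(\Ec\Ec\Cc)$ whose image recovers the given grid. The main obstacle I anticipate is bookkeeping the simultaneous quotient conditions under this iterated straightening: at each stage one must verify that the new intermediate diagrams still satisfy the quotient-in-$\Cc$ condition in every direction, but this follows from the same closure-under-extensions reasoning that appears in the $3\times 3$ step, now applied once more to the outer exact structure.
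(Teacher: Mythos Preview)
Your proposal is correct and matches the paper's approach: the paper's entire proof is the one-line remark that ``the proof of Proposition \ref{prop:inde=eind} implies the following,'' and what you have written is precisely an unfolding of that remark --- apply the Ind-level straightening of Proposition \ref{prop:inde=eind} with $\Prok(\Cc)$ in place of $\Cc$, invoke $\Prok(\Ec\Cc)\simeq\Ec\Prok(\Cc)$ from Theorem \ref{thm:prox}, and use the extension-closure argument from the proof of Theorem \ref{thm:eltatechar} (the $3\times 3$-Lemma step) to see that the quotient-in-$\Cc$ condition propagates. Your level of detail exceeds what the paper records, but the argument is the intended one.
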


\subsubsection{Elementary Tate Objects and the S-Construction}
The proof of Proposition \ref{prop:indsk=skind} implies the following.
\begin{proposition}
    For $k\ge 0$, the exact category $\elTatek(S_k\Cc)$ is canonically equivalent to $S_k\elTatek(\Cc)$.
\end{proposition}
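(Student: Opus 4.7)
The plan is to proceed by induction on $k$, mirroring the proof of Proposition \ref{prop:indsk=skind}. The cases $k \le 1$ are trivial; the case $k=2$ is the content of the Tate version of Proposition \ref{prop:inde=eind} (which we obtain essentially for free by the straightening arguments for Ind-objects together with the already-established equivalence $\Prok(S_k\Cc) \simeq S_k\Prok(\Cc)$ from the preceding proposition). So assume the result for $k-1$.

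The functor is obtained by restricting the chain of canonical functors
\begin{equation*}
    \elTatek(S_k\Cc) \subset \Indk(\Prok(S_k\Cc)) \simeq \Indk(S_k\Prok(\Cc)) \simeq S_k\Indk(\Prok(\Cc)),
\end{equation*}
where the first equivalence is the previous proposition and the second is Proposition \ref{prop:indsk=skind}. I would first check that this composite lands in $S_k\elTatek(\Cc)$, which is a termwise statement, and that it is faithful (which is immediate by inspection). Fullness follows by combining the inductive hypothesis with the full faithfulness argument from Proposition \ref{prop:inde=eind}, applied one step at a time.

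The main work is essential surjectivity. Given a chain of admissible monics of elementary Tate objects
\begin{equation*}
    \widehat{X}_1 \into \cdots \into \widehat{X}_k
\end{equation*}
in $\elTatek(\Cc)$, the inductive hypothesis produces an elementary Tate diagram $X^\bullet \colon I \to S_{k-1}\Prok(\Cc)$ whose colimit is $\widehat{X}_1 \into \cdots \into \widehat{X}_{k-1}$; concretely this is an admissible Ind-diagram of $(k-1)$-chains of admissible monics in $\Prok(\Cc)$ whose termwise successive quotients lie in $\Cc$. One then straightens the admissible monic $\widehat{X}_{k-1} \into \widehat{X}_k$ using the Tate version of Proposition \ref{prop:inde=eind} (equivalently, the construction in the proof of Proposition \ref{prop:indsk=skind} applied in $\Prok(\Cc)$), pulling back along a final map $J \to I$ if necessary, to obtain a compatible admissible Ind-diagram $X^{k-1} \into X^k$ in $\Prok(\Cc)$.

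The one subtlety, and what I would regard as the main obstacle, is verifying the elementary-Tate-diagram condition for the extended diagram, i.e.\ that for every $i \le j$ in the new indexing poset the quotient $X^k_j/X^k_i$ lies in $\Cc$. By the $(3\times 3)$-Lemma \cite[Corollary 3.6]{Buh:10}, this quotient fits into a short exact sequence
\begin{equation*}
    X^{k-1}_j/X^{k-1}_i \into X^k_j/X^k_i \onto (X^k_j/X^{k-1}_j)/(X^k_i/X^{k-1}_i)
\end{equation*}
in $\Prok(\Cc)$. The left-hand term is in $\Cc$ by the inductive hypothesis applied to the $(k-1)$-diagram, and the right-hand term is in $\Cc$ because the straightening construction arranges the successive quotients of $X^{k-1}_\bullet \into X^k_\bullet$ to lie in $\Cc$. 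Since $\Cc \subset \Prok(\Cc)$ is right special and hence closed under extensions (Theorem \ref{thm:prox}, Lemma \ref{lemma:lsextclosed}), the middle term lies in $\Cc$ as required. Exactness and full exactness of the functor then follow, as in Proposition \ref{prop:indsk=skind}, from the inductive hypothesis and the straightening of exact sequences supplied by Proposition \ref{prop:inde=eind}.
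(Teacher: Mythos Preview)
Your proposal is correct and follows the same route the paper intends: it literally says the result follows from the proof of Proposition~\ref{prop:indsk=skind}, and you have faithfully spelled out that induction together with the one extra verification genuinely needed in the Tate setting, namely that the successive quotients $X^k_j/X^k_i$ remain in $\Cc$ (via the $(3\times 3)$-Lemma and closure of $\Cc$ under extensions in $\Prok(\Cc)$, exactly as in the proof of Theorem~\ref{thm:eltatechar}).
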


\subsubsection{Elementary Tate Objects as a Localization}
 The proof of Proposition \ref{prop:indloc} implies the following.
\begin{proposition}[See also \cite{Bei:87}, \cite{Pre:11}]\label{prop:tateloc}
    Denote by $W\subset\mathbb{T}_\kappa(\Cc)$ the sub-category consisting of all morphisms of elementary Tate diagrams given by strictly commuting triangles
    \begin{equation*}
        \begin{xy}
            \Vtriangle<350,400>[I`J`\Cc;\varphi`X`Y]
        \end{xy}
    \end{equation*}
    in which the map $\varphi$ is final. The functor $\widehat{(-)}\colon\mathbb{T}_\kappa(\Cc)\to\elTatek(\Cc)$ takes morphisms in $W$ to isomorphisms of elementary Tate objects. The induced functor
    \begin{equation*}
        \mathbb{T}_\kappa(\Cc)[W^{-1}]\to\elTatek(\Cc)
    \end{equation*}
    is an equivalence of categories.
\end{proposition}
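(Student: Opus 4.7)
The approach is to directly adapt the proof of Proposition \ref{prop:indloc} for admissible Ind-objects, with the only additional verification being that the straightening construction of Lemma \ref{lemma:straight} preserves the elementary Tate condition on Ind-diagrams in $\Prok(\Cc)$. A morphism in $W$ is sent by $\widehat{(-)}$ to an isomorphism because a final order-preserving map of directed posets induces an isomorphism of colimits in $\lex(\Prok(\Cc))$, so the induced functor $\mathbb{T}_\kappa(\Cc)[W^{-1}]\to \elTatek(\Cc)$ is well-defined, and essential surjectivity is built into the definition of $\elTatek(\Cc)$.

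For fullness, given a morphism $f\colon \widehat{X}\to \widehat{Y}$ in $\elTatek(\Cc)$, with $X\colon I\to\Prok(\Cc)$ and $Y\colon J\to\Prok(\Cc)$ elementary Tate diagrams representing $\widehat{X}$ and $\widehat{Y}$, I apply Lemma \ref{lemma:straight} in $\Prok(\Cc)$. This yields a directed poset $I\downarrow_{\Prok(\Cc)} J$ of cardinality at most $\kappa$, with final projections $\varphi$ to $I$ and $\psi$ to $J$, a strictly commuting triangle $(\varphi,\mathrm{id})\colon (I\downarrow_{\Prok(\Cc)} J, X\circ\varphi)\to (I,X)$, and a 2-commuting triangle $(\psi,\alpha)\colon (I\downarrow_{\Prok(\Cc)} J, X\circ\varphi)\to (J,Y)$ whose composite with $(\varphi,\mathrm{id})^{-1}$ represents $f$ after passing to $\elTatek(\Cc)$. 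The crucial check is that both $X\circ\varphi$ and $Y\circ\psi$ remain elementary Tate diagrams: for $(i,j,\alpha_{ij})\le (i',j',\alpha_{i'j'})$, the quotient $(X\circ\varphi)_{(i',j',\alpha_{i'j'})}/(X\circ\varphi)_{(i,j,\alpha_{ij})}$ is simply $X_{i'}/X_i$, which lies in $\Cc$ since $X$ was elementary Tate to begin with. Thus the span lives in $\mathbb{T}_\kappa(\Cc)$ with left leg in $W$, and its class in $\mathbb{T}_\kappa(\Cc)[W^{-1}]$ is sent to $f$.

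For faithfulness, suppose $(\varphi_0,\alpha_0),(\varphi_1,\alpha_1)\colon X\to Y$ in $\mathbb{T}_\kappa(\Cc)$ induce the same morphism of elementary Tate objects. Each pair $(\varphi_a,\alpha_a)$ defines a section $s_a\colon I\to I\downarrow_{\Prok(\Cc)} J$ of $\varphi$ via $i\mapsto (i,\varphi_a(i),\alpha_{a,i})$. The identity $\varphi\circ s_a=\mathrm{id}_I$ promotes $s_a$ to a strictly commuting triangle in $\mathbb{T}_\kappa(\Cc)$ with $(\varphi,\mathrm{id})\circ s_a=\mathrm{id}_{(I,X)}$, while $(\psi,\alpha)\circ s_a=(\varphi_a,\alpha_a)$ by construction. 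Since $(\varphi,\mathrm{id})\in W$, inverting $W$ forces both $(\varphi_a,\alpha_a)$ to equal the zig-zag $(\psi,\alpha)\circ(\varphi,\mathrm{id})^{-1}$, which is independent of $a$.

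The main obstacle is purely bookkeeping: verifying that every diagram produced by straightening or by section formation retains the elementary Tate property. This reduces to the general observation that if $\rho\colon K\to I$ is an order-preserving map of directed posets and $X\colon I\to\Prok(\Cc)$ is elementary Tate, then the consecutive quotients of $X\circ\rho$ are quotients $X_{\rho(k')}/X_{\rho(k)}$ for $\rho(k)\le \rho(k')$ in $I$, and so automatically lie in $\Cc$. Once this is in place, the argument for Proposition \ref{prop:indloc} transfers verbatim.
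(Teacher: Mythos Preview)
Your proposal is correct and takes essentially the same approach as the paper, whose proof consists of the single sentence ``The proof of Proposition \ref{prop:indloc} implies the following.'' You have faithfully unpacked that reference and added the one extra verification needed in this setting---that pulling back an elementary Tate diagram along an order-preserving map of directed posets preserves the elementary Tate condition---which is exactly the observation the paper leaves implicit.
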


We present a slight modification of this for later use.
\begin{proposition}\label{prop:basedtateloc}
    Denote by $\mathbb{T}'_\kappa(\Cc)\subset\mathbb{T}_\kappa(\Cc)$ the full sub-category of \emph{based elementary Tate diagrams}, i.e. elementary Tate diagrams $X\colon I\to\Prok(\Cc)$ for which $I$ has an initial object.\footnote{We emphasize that we do not require that maps of based Tate diagrams map initial objects to initial objects.} Define $W'\subset\mathbb{T}'_\kappa(\Cc)$ to be the sub-category of final maps (i.e. $W':=W\cap\mathbb{T}'_\kappa(\Cc)$).

    The restriction of $\widehat{(-)}$ to $\mathbb{T}'_\kappa(\Cc)$ induces an equivalence of categories
    \begin{equation}\label{basedtateloc}
        \mathbb{T}'_\kappa(\Cc)[W'^{-1}]\to^\simeq\elTatek(\Cc)
    \end{equation}
\end{proposition}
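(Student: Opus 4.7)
The plan is to reduce to Proposition \ref{prop:tateloc} by showing that the inclusion $\mathbb{T}'_\kappa(\Cc) \hookrightarrow \mathbb{T}_\kappa(\Cc)$, which carries $W'$ into $W$, induces an equivalence on localizations. The central device is that for any directed poset $P$ and any $p_0 \in P$, the sub-poset $P_{p_0} := \{p \in P : p_0 \leq p\}$ is directed, has $p_0$ as its initial element, and the inclusion $P_{p_0} \hookrightarrow P$ is final.

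For essential surjectivity, any Tate diagram $X \colon I \to \Prok(\Cc)$ and any $i_0 \in I$ give a based Tate diagram $X|_{I_{i_0}}$ of cardinality at most $\kappa$, with strict morphism to $X$ lying in $W$; hence $\widehat{X|_{I_{i_0}}} \cong \widehat{X}$ in $\elTatek(\Cc)$. For fullness, given based $X, Y$ and $f \colon \widehat{X} \to \widehat{Y}$, Lemma \ref{lemma:straight} applied to $\Prok(\Cc)$ produces $K := I \downarrow_{\Prok(\Cc)} J$ with final projections $\varphi, \psi$ and a span $X \xleftarrow{\varphi} X \circ \varphi \xrightarrow{(\psi, \beta)} Y$ representing $f$. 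The poset $K$ is nonempty because $X_{i_0} \to \widehat{Y}$ factors through some $Y_j$, so picking $k_0 \in K$ and restricting the span to $K_{k_0}$ yields a based zigzag in $W'$ representing $f$.

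For faithfulness, suppose $(\varphi_a, \alpha_a) \colon X \rightrightarrows Y$, for $a = 0, 1$, are morphisms of based Tate diagrams inducing the same map on Tate objects. Following the proof of Proposition \ref{prop:indloc}, each $(\varphi_a, \alpha_a)$ determines a section $s_a \colon I \to K$ of $\varphi$ and factors as $(\psi, \beta) \circ (s_a, \mathrm{id})$. Setting $k_a := s_a(i_0)$, monotonicity of $s_a$ together with initiality of $i_0$ ensures that $s_a$ factors through $K_{k_a}$; the identity $\varphi|_{K_{k_a}} \circ s_a = \mathrm{id}_I$ combined with $\varphi|_{K_{k_a}} \in W'$ then gives, in $\mathbb{T}'_\kappa(\Cc)[W'^{-1}]$, the identity $\overline{(\varphi_a, \alpha_a)} = \overline{(\psi|_{K_{k_a}}, \beta)} \circ \overline{\varphi|_{K_{k_a}}}^{-1}$. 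Choosing $k \in K$ with $k \geq k_0, k_1$ by directedness, the inclusions $K_k \subseteq K_{k_a}$ are final, and composition with them shows both expressions agree with the common refinement $\overline{(\psi|_{K_k}, \beta)} \circ \overline{\varphi|_{K_k}}^{-1}$.

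The main obstacle is faithfulness: a na\"ive attempt to factor both sections through a single based sub-poset $K_{k}$ would require $k$ to be a lower bound of $s_0(i_0), s_1(i_0)$, which is obstructed since directed posets lack meets. The resolution is to express each morphism over its own based sub-poset $K_{k_a}$ and exploit an upper bound $k \geq k_0, k_1$ in $K$ to produce a common refinement witnessing their equality.
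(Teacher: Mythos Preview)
Your proof is correct and follows essentially the same approach as the paper: both arguments establish essential surjectivity by restricting to $I_{i_0}$, fullness by restricting the straightening poset $I\downarrow_{\Prok(\Cc)}J$ to a cone $K_{k_0}$, and faithfulness by first passing to separate cones $K_{k_a}$ (where $k_a=s_a(i_0)$) and then to a common refinement $K_k$ for an upper bound $k\geq k_0,k_1$. Your explicit identification of the obstruction (needing a lower bound for a single cone) and its resolution via an upper bound is exactly the mechanism the paper uses, just stated more transparently.
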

\begin{proof}
    The proof follows from Proposition \ref{prop:indloc} with minor changes. Given an elementary Tate diagram $X\colon I\to\Prok(\Cc)$, and $i\in I$, consider the sub-poset $I_i\subset I$ of all $j\geq i$ in $I$. The inclusion $I_i\into I$ is final. As a result, the diagram $X\colon I_i\to\Prok(\Cc)$ in $\mathbb{T}'_{\kappa}(\Cc)$ also represents $\widehat{X}$, and \eqref{basedtateloc} is essentially surjective.

    Fullness follows by a slight modification of the straightening construction (Lemma \ref{lemma:straight}). Let $X\colon I\to\Prok(\Cc)$ and $Y\colon J\to\Prok(\Cc)$ be based elementary Tate diagrams. Lemma \ref{lemma:straight} shows that any morphism $f\colon\widehat{X}\to\widehat{Y}$ in $\elTatek(\Cc)$ is the colimit of a span
    \begin{equation*}
        \begin{xy}
            \Vtrianglepair/<-`>`>`>`>/[I`I\downarrow_{\Prok(\Cc)}J`J`\Prok(\Cc);``X``Y]
            \place(650,250)[\twoar(1,0)]
        \end{xy}
    \end{equation*}
    where the maps $I\downarrow_{\Prok(\Cc)}J\to I$ and $I\downarrow_{\Prok(\Cc)}J\to J$ are final. For any $(i,j,\gamma_{ij})\in I\downarrow_{\Prok(\Cc)}J$, the sub-poset $(I\downarrow_{\Prok(\Cc)}J)_{(i,j,\gamma_{ij})}\subset I\downarrow_{\Prok(\Cc)}J$ of all elements $(l,k,\gamma_{lk})\geq (i,j,\gamma_{ij})$ is directed, final, and has an initial object. We see that the map $f$ is the image of the morphism
    \begin{equation*}
        \begin{xy}
            \Vtrianglepair/<-`>`>`>`>/<650,500>[I`(I\downarrow_{\Prok(\Cc)}J)_{(i,j,\gamma_{ij})}`J`\Prok(\Cc);``X``Y]
            \place(800,250)[\twoar(1,0)]
        \end{xy}
    \end{equation*}
    in $\mathbb{T}'_{\kappa}(\Cc)[W^{-1}]$.

    Faithfulness follows by a slight modification of the argument for Proposition \ref{prop:indloc}. Suppose that $X\colon I\to\Prok(\Cc)$ and $Y\colon J\to\Prok(\Cc)$ are based elementary Tate diagrams for which there exist morphisms
    \begin{equation*}
        \begin{xy}
            \morphism/{@<3pt>}/[X`Y;(\varphi_0,\alpha_0)]
            \morphism|b|/{@<-3pt>}/[X`Y;(\varphi_1,\alpha_1)]
        \end{xy}
    \end{equation*}
    which induce equal maps of elementary Tate objects. For $a=0,1$, the pair $(\varphi_a,\alpha_a)$ induces a section of the map $I\downarrow_{\Prok(\Cc)}J\to I$. Denote by $i_0\in I$ the initial object, and denote by $K_a\subset I\downarrow_{\Prok(\Cc)}J$ the final sub-category consisting of all $(i,j,\gamma_{ij})\ge(i_0,\varphi_a(i_0),\alpha_{a,i_0})$.

    These sections fit into commuting triangles
    \begin{equation*}
        \begin{xy}
            \Vtriangle/<-`>`>/<350,400>[I`K_a`\Prok(\Cc);`X`]
            \morphism(0,400)|a|/{@<5pt>}/<700,0>[I`K_a;(\varphi_a,\alpha_a)]
        \end{xy}
    \end{equation*}
    The existence of these commuting triangles implies that, for $a=0,1$, the image of the map $(\varphi_a,\alpha_a)\colon X\to Y$ in the localization $\mathbb{T}'_{\kappa}(\Cc)[W^{-1}]$ is equal to the map represented by the zig-zag
    \begin{equation*}
        \begin{xy}
            \Vtrianglepair/<-`>`>`>`>/[I`K_a`J`\Prok(\Cc);``X``Y]
            \place(650,250)[\twoar(1,0)]
        \end{xy}
    \end{equation*}
    Let $b\in I\downarrow_{\Prok(\Cc)}J$ be an element with $b\ge(i_0,\varphi_a(i_0),\alpha_{a,i_0})$ for $a=0,1$. Denote by $K\subset I\downarrow_{\Prok(\Cc)}J$ the final sub-category on all $(i,j,\gamma_{ij})\ge b$. For $a=0,1$, we have $K\subset K_a$ with $K$ final. This implies that the maps represented by the zig-zags above are isomorphic to the zig-zag
    \begin{equation*}
        \begin{xy}
            \Vtrianglepair/<-`>`>`>`>/[I`K`J`\Prok(\Cc);``X``Y]
            \place(650,250)[\twoar(1,0)]
        \end{xy}
    \end{equation*}
    We conclude that \eqref{basedtateloc} is faithful.
\end{proof}

\subsubsection{Functoriality of the Construction}
\begin{proposition}\label{prop:tatefun}
    An exact functor $F\colon\Cc\to\Dc$ extends canonically to an exact functor
    \begin{equation*}
        \begin{xy}
            \morphism<750,0>[\elTatek(\Cc)`\elTatek(\Dc);\widetilde{F}]
        \end{xy}.
    \end{equation*}
    If $F$ is faithful, fully faithful, or an equivalence, then so is $\widetilde{F}$.
\end{proposition}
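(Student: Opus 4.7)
The plan is to iterate Proposition \ref{prop:FtoIndF}. First, the Pro-analogue of that proposition (which is recorded as Theorem \ref{thm:prox}(5)) extends the exact functor $F\colon\Cc\to\Dc$ canonically to an exact functor $F_1\colon\Prok(\Cc)\to\Prok(\Dc)$ fitting into a 2-commuting square with the embeddings of $\Cc$ and $\Dc$. Applying Proposition \ref{prop:FtoIndF} to $F_1$ then yields an exact functor
\begin{equation*}
    F_2:=\widetilde{F_1}\colon\Indk(\Prok(\Cc))\to\Indk(\Prok(\Dc)).
\end{equation*}

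Next I would show that $F_2$ restricts to a functor $\widetilde{F}\colon\elTatek(\Cc)\to\elTatek(\Dc)$. Let $X\colon I\to\Prok(\Cc)$ be an elementary Tate diagram. By construction of $F_2$ (via the category-of-elements formula, or equivalently by straightening as in Proposition \ref{prop:FtoIndF}), the object $F_2(\widehat{X})$ is represented by the admissible Ind-diagram $F_1\circ X\colon I\to\Prok(\Dc)$. For each $i\le i'$ in $I$ we have $X_{i'}/X_i\in\Cc$; since $F_1$ is exact and restricts to $F$ on $\Cc$, we obtain
\begin{equation*}
    F_1(X_{i'})/F_1(X_i)\cong F_1(X_{i'}/X_i)\cong F(X_{i'}/X_i)\in\Dc,
\end{equation*}
so $F_1\circ X$ is an elementary Tate diagram in $\Dc$. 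Exactness of $\widetilde{F}$ follows from exactness of $F_2$ together with the fact that $\elTatek$ inherits its exact structure from $\Indk(\Prok(-))$ (Theorem \ref{thm:eltatechar}).

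For the last clause, faithfulness and full faithfulness transfer through each application of Proposition \ref{prop:FtoIndF}: if $F$ is faithful (resp.\ fully faithful), then so is $F_1$ by the Pro-analogue, and then so is $F_2$, and finally the restriction $\widetilde{F}$ inherits the property. For essential surjectivity when $F$ is an equivalence, I would argue that every elementary Tate object $\widehat{Y}\in\elTatek(\Dc)$ is represented by an elementary Tate diagram $Y\colon I\to\Prok(\Dc)$; since $F_1$ is an equivalence of categories by the Pro-analogue, each $Y_i$ is isomorphic to $F_1(X_i)$ for some $X_i\in\Prok(\Cc)$, and these isomorphisms may be assembled into an admissible Ind-diagram $X\colon I\to\Prok(\Cc)$ with $F_1\circ X\cong Y$. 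Because $F_1$ reflects exact sequences (it is an equivalence), the subquotients $X_{i'}/X_i$ lie in $\Cc$ iff their images $F(X_{i'}/X_i)\cong Y_{i'}/Y_i$ lie in $\Dc$, which they do, so $X$ is an elementary Tate diagram with $\widetilde{F}(\widehat{X})\cong\widehat{Y}$.

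The only genuine subtlety is the restriction step: one must check that the Tate condition on diagrams, which is not obviously preserved by arbitrary exact functors between Ind-Pro categories, \emph{is} preserved by the particular extensions produced by Proposition \ref{prop:FtoIndF}. This reduces, as above, to the observation that such extensions act on Ind-diagrams by post-composition, so that the subquotient condition at the level of diagrams is checked object-by-object in $\Prok$, where exactness of $F_1$ and its compatibility with $F|_{\Cc}$ do the work.
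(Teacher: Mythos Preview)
Your proposal is correct and follows essentially the same approach as the paper: iterate Proposition \ref{prop:FtoIndF} and its Pro-analogue to obtain the extension on the ambient $\Indk(\Prok(-))$, then check that post-composition with $F_1$ sends elementary Tate diagrams to elementary Tate diagrams. The paper's proof is slightly terser on the equivalence case (it relies on the fact that the inverse of $F$ also extends and preserves elementary Tate diagrams, rather than arguing directly as you do), but the two arguments are interchangeable.
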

\begin{proof}
    Proposition \ref{prop:FtoIndF} and the analogous clause in Theorem \ref{thm:prox} show that $F$ extends canonically to an exact functor $\widetilde{F}\colon\Indk(\Prok(\Cc))\to\Indk(\Prok(\Dc)$ such that $\widetilde{F}$ is faithful, fully faithful, or an equivalence if $F$ is. It suffices to show that $\widetilde{F}$ preserves elementary Tate objects.

    Represent $\widehat{X}\in\elTatek(\Cc)$ by an elementary Tate diagram of size at most $\kappa$
    \begin{equation*}
        \begin{xy}
            \morphism[I`\Prok(\Cc);X]
        \end{xy}.
    \end{equation*}
    Because $F$ is exact, the diagram
    \begin{equation*}
        \begin{xy}
            \morphism[I`\Prok(\Dc);FX]
        \end{xy}
    \end{equation*}
    is also an elementary Tate diagram. We conclude that $\widetilde{F}(\widehat{X})\in\elTatek(\Dc)$.
\end{proof}

\subsubsection{Countable Elementary Tate Objects}
Countable elementary Tate objects were introduced by Beilinson \cite[A.3]{Bei:87}. We show here that our approach is compatible with his.

\begin{definition}(Beilinson \cite[A.3]{Bei:87})
    Let $\Cc$ be an exact category. Let $\Pi\subset\mathbb{Z}\times\mathbb{Z}$ be the full sub-poset consisting of all $(i,j)$ with $i\leq j$. An \emph{admissible $\Pi$-diagram in $\Cc$} is a functor $X\colon\Pi\to\Cc$ such that for all $i\leq j\leq k$, the sequence
    \begin{equation*}
        \begin{xy}
            \morphism/^{ (}->/[X_{i,j}`X_{i,k};]
            \morphism(500,0)/->>/[X_{i,k}`X_{j,k};]
        \end{xy}
    \end{equation*}
    is short exact. Denote by $\Pi^a(\Cc)$ the category of admissible $\Pi$-diagrams in $\Cc$ and natural transformations between them.
\end{definition}

\begin{definition}
    A functor $\varphi\colon\mathbb{Z}\rightarrow\mathbb{Z}$ is \emph{bifinal} if $\varphi(n)\rightarrow\pm\infty$ as $n\rightarrow\pm\infty$. Let $\varphi_0$ and $\varphi_1$ be two bifinal maps. We say $\varphi_0\leq \varphi_1$ if, for all $n\in\mathbb{Z}$, $\varphi_0(n)\leq\varphi_1(n)$.
\end{definition}

A functor $\varphi\colon\mathbb{Z}\rightarrow\mathbb{Z}$ induces a functor $\varphi\colon\Pi\rightarrow\Pi$ by applying $\varphi$ in each factor.

\begin{definition}
    Denote by $U\subset\Pi^a(\Cc)$ the sub-category consisting of all morphisms of the form $X\varphi_0\rightarrow X\varphi_1$ for bifinal maps $\varphi_0\leq \varphi_1$. The \emph{Beilinson category} $\lim_{\leftrightarrow}\Cc$ is the localization $\Pi^a(\Cc)[U^{-1}]$.
\end{definition}

\begin{proposition}[\rm{Previdi \cite[Theorem 5.8, 6.1]{Pre:11}}]
    Let $\Cc$ be an exact category. The Beilinson category $\lim_{\leftrightarrow}\Cc$ embeds as a full sub-category of $\Indc(\Proc(\Cc))$ which is closed under extensions.
\end{proposition}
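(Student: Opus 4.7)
The plan is to construct an exact functor $F\colon\Pi^a(\Cc)\to\elTatec(\Cc)\subset\Indc(\Proc(\Cc))$, show it inverts morphisms in $U$, and verify that the induced functor $\tilde F\colon\lim_{\leftrightarrow}\Cc\to\Indc(\Proc(\Cc))$ is fully faithful with image closed under extensions. Given an admissible $\Pi$-diagram $X$, I would first define, for each $i\in\mathbb{Z}$, a countable Pro-object $V_i$ represented by the diagram $j\mapsto X_{j,i}$ on $\{j\leq i\}$, with transition admissible epics $X_{j,i}\onto X_{j',i}$ coming from the short exact sequences $X_{j,j'}\into X_{j,i}\onto X_{j',i}$. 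For $i\leq i'$, the family of short exact sequences $X_{j,i}\into X_{j,i'}\onto X_{i,i'}$ (parametrized by $j\leq i$) assembles into a short exact sequence of admissible Pro-diagrams in which the cokernel is constant, exhibiting $V_i\into V_{i'}$ as an admissible monic in $\Proc(\Cc)$ with cokernel $X_{i,i'}\in\Cc$. So $i\mapsto V_i$ is a based elementary Tate diagram in the sense of Definition \ref{defi:eltate}, and I define $F(X):=\colim_i V_i\in\elTatec(\Cc)$.

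Next, I would check that $F$ inverts any morphism $X\varphi_0\to X\varphi_1$ induced by bifinal $\varphi_0\leq\varphi_1$. Such a morphism reparametrizes $\mathbb{Z}$ by a final map in the Ind ($i$-)direction and by a cofinal map in the Pro ($j$-)direction; by Proposition \ref{prop:basedtateloc} and the Pro-analogue of Proposition \ref{prop:indloc} contained in Theorem \ref{thm:prox}(4), each such reparametrization induces an isomorphism of the corresponding Ind-Pro object. Hence $F$ descends to $\tilde F\colon\lim_{\leftrightarrow}\Cc\to\Indc(\Proc(\Cc))$. For fully faithfulness, I would combine the straightening machinery of Lemma \ref{lemma:straight} (applied at both the Ind and the Pro level) with the localization description of $\elTatec(\Cc)$ in Proposition \ref{prop:basedtateloc}. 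Every countable directed poset admits a final map from $\mathbb{Z}$ (as in the proof of Proposition \ref{prop:countableind}), and every countable Pro-index admits a cofinal sub-diagram of shape $\mathbb{Z}_{\leq 0}$, so a straightened map between admissible Ind-Pro representatives of $F(X)$ and $F(Y)$ can be converted, after pre- and post-composition with $U$-morphisms, into a genuine morphism in $\Pi^a(\Cc)$. Faithfulness follows from the analogous argument: two $\Pi$-morphisms yielding the same map in $\Indc(\Proc(\Cc))$ fit into a common refinement in $I\downarrow_{\Proc(\Cc)}J$ (cf.\ Lemma \ref{lemma:straight}) which itself admits a $\mathbb{Z}$-indexed cofinal sub-poset, so the two morphisms become equal in $\lim_{\leftrightarrow}\Cc$.

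Finally, closure of the image under extensions follows from Theorem \ref{thm:eltatechar}: any extension $F(X)\into W\onto F(Y)$ in $\Indc(\Proc(\Cc))$ has $W\in\elTatec(\Cc)$, and the construction in that theorem's proof lifts the extension to an exact sequence of elementary Tate diagrams whose outer terms are equivalent to $X$ and $Y$, built termwise via pushouts in $\Proc(\Cc)$. Reindexing along a final $\mathbb{Z}\to I$ converts the middle diagram into a $\Pi$-diagram representing $W$, so $W$ lies in the image of $\tilde F$. The hardest step overall is the reindexing in the fully faithful part: the rigid $\Pi$-shape forces the Ind and Pro indices to increase in lockstep, and tracking compatibility requires repeated use of cofinality and the localization at $U$; once this is handled, the remaining claims are formal consequences of the structural results of Sections \ref{sec:ind}--\ref{sec:tate}.
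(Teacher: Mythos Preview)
The paper does not prove this proposition; it is stated with attribution to Previdi, and the paper only supplies a remark describing the functor $\{X_{i,j}\}\mapsto\colim_j\lim_i X_{i,j}$ and identifying Previdi's target category $\mathsf{IP}^a(\Cc)$ with $\Indc(\Proc(\Cc))$ via Proposition~\ref{prop:countableind}. Your direct argument is therefore a genuinely different route, not a variant of the paper's.

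Your construction of $F$ and your argument that $F$ inverts $U$ are correct and agree with the remark following the proposition. Your use of Theorem~\ref{thm:eltatechar} and Proposition~\ref{prop:basedtateloc} is logically permissible, since both come earlier in the paper.

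The substantive gap is exactly where you flag it: the fully faithful step. After straightening at the Ind level you obtain, for each $i$, a map $V_i^X\to V_{\psi(i)}^Y$ in $\Proc(\Cc)$; straightening each of these at the Pro level gives, for each pair $(i,l)$, a map $X_{\sigma_i(l),i}\to Y_{l,\psi(i)}$ in $\Cc$ for some function $\sigma_i$. To produce a morphism in $\Pi^a(\Cc)$ modulo $U$ you must find \emph{single} bifinal reparametrizations of $\mathbb{Z}$ making all of these maps simultaneously the components of one natural transformation of $\Pi$-diagrams. This diagonal compression is precisely the content of Previdi's argument, and it does not follow formally from Lemma~\ref{lemma:straight} or from the existence of final maps $\mathbb{Z}\to I$; one has to interleave the Ind and Pro reindexing carefully because the $\Pi$-shape couples them. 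The same difficulty recurs in your closure-under-extensions argument: knowing $W\in\elTatec(\Cc)$ via Theorem~\ref{thm:eltatechar} is not enough---you still need that an arbitrary countable elementary Tate diagram can be rewritten as an admissible $\Pi$-diagram, which is again the reindexing problem (and is in fact proven in the paper only \emph{after} this proposition, as part of Proposition~\ref{prop:eltatec=bei}, using Previdi's result as input). So your outline is sound in spirit, but the step you label hardest is the entire proof, and it is not executed here.
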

\begin{remark}
     Let $X\colon\Pi\to\Cc$ be an admissible $\Pi$-diagram in $\Cc$. The assignment
     \begin{equation*}
         \{X_{i,j}\}\mapsto\colim_j\lim_i X_{i,j}
     \end{equation*}
     extends to a functor $\lim_{\leftrightarrow}\Cc\to\Indc(\Proc(\Cc))$. Previdi \cite{Pre:11} shows that this is a fully faithful embedding into the countable envelope of the dual of the countable envelope of $\Cc^{\op}$ (Previdi denotes this by $\textsf{IP}^a(\Cc)$). Proposition \ref{prop:countableind} and its analogue for countable admissible Pro-objects imply that $\textsf{IP}^a(\Cc)$ is equivalent to the category $\Indc(\Proc(\Cc))$.
\end{remark}

\begin{proposition}\label{prop:eltatec=bei}
    The Beilinson category $\lim_{\leftrightarrow}\Cc$ is equivalent to $\elTatec(\Cc)$ as an exact category.
\end{proposition}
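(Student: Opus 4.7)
The plan is to show that both $\lim_{\leftrightarrow}\Cc$ and $\elTatec(\Cc)$ coincide, as fully exact subcategories of $\Indc(\Proc(\Cc))$. Since both inherit their exact structure via closure under extensions in $\Indc(\Proc(\Cc))$ (cf.\ Lemma \ref{lemma:extclosed}), it suffices to identify their essential images; the equivalence of exact categories then follows automatically.

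Previdi's theorem recalled above provides a fully faithful, extension-closed embedding $\lim_{\leftrightarrow}\Cc \into \Indc(\Proc(\Cc))$ via the functor $\{X_{i,j}\} \mapsto \colim_j \lim_i X_{i,j}$. To prove $\elTatec(\Cc) \subseteq \lim_{\leftrightarrow}\Cc$, I would invoke Theorem \ref{thm:eltatechar} and verify that both $\Indc(\Cc)$ and $\Proc(\Cc)$ lie in the image of Previdi's embedding. For a countable admissible Ind-diagram $Y_1 \into Y_2 \into \cdots$ in $\Cc$, I would exhibit the admissible $\Pi$-diagram
\begin{equation*}
X_{i,j} = \begin{cases} 0 & \text{if } j \leq 0, \\ Y_j & \text{if } i \leq 0 < j, \\ Y_j/Y_i & \text{if } 1 \leq i \leq j. \end{cases}
\end{equation*}
The required short exact sequences $X_{i,j} \into X_{i,k} \onto X_{j,k}$ reduce to the third isomorphism theorem, and a direct computation (using cofinality of $\{j \geq 1\}$ in $\mathbb{Z}$, together with the fact that the Pro-diagram $\{X_{i,j}\}_{i \le j}$ is constant on the cofinal subsystem $i \le 0$) gives $\colim_j \lim_i X_{i,j} \cong \widehat{Y}$. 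Dually, for a countable admissible Pro-object $\cdots \onto Z_{-1} \onto Z_0$, extending by $Z_j := 0$ for $j \geq 1$, the formula $X_{i,j} := \ker(Z_i \onto Z_j)$ defines an admissible $\Pi$-diagram (the three-term sequences follow from the snake lemma), with $\colim_j \lim_i X_{i,j} \cong \widehat{Z}$.

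For the reverse inclusion, given an arbitrary admissible $\Pi$-diagram $X$, for each $j \geq 0$ the short exact sequences $X_{i,0} \into X_{i,j} \onto X_{0,j}$ (as $i \leq 0$ varies) assemble into a short exact sequence of admissible Pro-diagrams in $\Cc$, with the last term a constant Pro-diagram on $X_{0,j} \in \Cc$. By Proposition \ref{prop:inde=eind} applied to $\Cc^{\op}$, this yields a short exact sequence in $\Proc(\Cc)$. Varying $j \geq 0$ and applying Proposition \ref{prop:inde=eind} once more gives a short exact sequence in $\Indc(\Proc(\Cc))$
\begin{equation*}
\lim_i X_{i,0} \into \colim_j \lim_i X_{i,j} \onto \colim_j X_{0,j},
\end{equation*}
in which $\lim_i X_{i,0} \in \Proc(\Cc)$, $\colim_j X_{0,j} \in \Indc(\Cc)$, and the middle term coincides with the image of $X$ under Previdi's embedding (using cofinality of $\{j \geq 0\} \subset \mathbb{Z}$). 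By Theorem \ref{thm:eltatechar}, the middle term therefore lies in $\elTatec(\Cc)$, completing the identification of essential images.

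The main obstacle is ensuring that the levelwise short exact sequences really assemble into a short exact sequence of Ind-Pro objects, rather than merely into a compatible system of limits and colimits in $\lex$. This requires the double application of Proposition \ref{prop:inde=eind} indicated above: first to pass from an admissible Pro-diagram of short exact sequences in $\Cc$ to a short exact sequence in $\Proc(\Cc)$, and then from an admissible Ind-diagram of short exact sequences in $\Proc(\Cc)$ to a short exact sequence in $\Indc(\Proc(\Cc))$. Once this is justified, both directions of the comparison reduce to explicit manipulations of $\Pi$-diagrams and their cofinal subdiagrams.
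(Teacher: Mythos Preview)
Your proposal is correct and follows essentially the same strategy as the paper: both arguments reduce to matching essential images inside $\Indc(\Proc(\Cc))$, show $\lim_{\leftrightarrow}\Cc\subset\elTatec(\Cc)$ by exhibiting $\lim_i X_{i,0}$ as a lattice of the associated Ind-Pro object, and show the reverse inclusion by checking that $\Indc(\Cc)$ and $\Proc(\Cc)$ lie in the (extension-closed) Beilinson category and then invoking Theorem~\ref{thm:eltatechar}. The only differences are cosmetic: you spell out explicit $\Pi$-diagrams for Ind- and Pro-objects where the paper simply cites Proposition~\ref{prop:countableind}, and you justify the lattice exact sequence via a double application of Proposition~\ref{prop:inde=eind} where the paper appeals to Lemma~\ref{lemma:indsubob}.
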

\begin{proof}
    Both are fully exact categories of $\Indc(\Proc(\Cc))$, so it suffices to show that their essential images in Ind-Pro objects agree.

    Let $X\colon\Pi\to\Cc$ be an admissible $\Pi$-diagram in $\Cc$ representing $\widehat{X}\in\lim_{\leftrightarrow}(\Cc)\subset\Indc(\Proc(\Cc))$. The definition of admissible $\Pi$-diagram ensures that the assignment
    \begin{align*}
        n\mapsto X_{-n,0}
    \end{align*}
    defines a countable admissible Pro-diagram $X_{*,0}\colon\mathbb{N}\to\Cc^{\op}$. Denote by $\widehat{X}_0$ the associated admissible Pro-object. The canonical map $\widehat{X}_0\into\widehat{X}$ is an admissible monic (Lemma \ref{lemma:indsubob}). The quotient $\widehat{X}/\widehat{X}_0$ is an admissible Ind-object. Indeed, the quotient is represented by the admissible $\Pi$-diagram $X/X_0\colon\Pi\to\Cc$ which, for $j<0$, sends $(i,j)$ to $0\in\Cc$ and, for $j\geq 0$, sends $(i,j)$ to $X_{i,j}/X_{i,0}$. For any $i\leq 0\leq j$, we have that $X_{i,j}/X_{i,0}\cong X_{0,j}$ because $X$ is an admissible $\Pi$-diagram. In particular, we see that $X/X_0$ is constant in the Pro-direction (the first factor of $\Pi$). We conclude that $\widehat{X}/\widehat{X}_0\in\Indc(\Cc)$. Using Theorem \ref{thm:eltatechar}, we conclude that $\widehat{X}\in\elTatec(\Cc)$.

    Conversely, every countable elementary Tate object is an extension of a countable admissible Ind-object by a countable admissible Pro-object. Proposition \ref{prop:countableind} shows that the categories $\Indc(\Cc)$ and $\Proc(\Cc)$ are contained in $\lim_{\leftrightarrow}(\Cc)$. The proof of Previdi's \cite[Theorem 6.1]{Pre:11} shows that $\lim_{\leftrightarrow}(\Cc)$ is closed under extensions in $\Indc(\Proc(\Cc))$. We conclude that every elementary Tate object is isomorphic to an object in $\lim_{\leftrightarrow}(\Cc)$.
\end{proof}

\begin{proposition}\label{prop:tatesplit}
    The category $\elTatec(\Cc)$ is split exact if $\Cc$ is.
\end{proposition}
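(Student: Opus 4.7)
The plan is to reduce the statement to Proposition \ref{prop:indcsplit} applied to the exact category $\Proc(\Cc)$, rather than re-running the inductive splitting argument from scratch. The key observation is that $\elTatec(\Cc)$ sits as a fully exact subcategory of $\Indc(\Proc(\Cc))$ (Theorem \ref{thm:eltatechar} together with Lemma \ref{lemma:extclosed}), so it inherits any split-exactness enjoyed by the ambient category.

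Concretely, I would proceed in three steps. First, I would invoke Theorem \ref{thm:prox}(6)(b) to conclude that $\Proc(\Cc)$ is split exact whenever $\Cc$ is; this is the dual of Proposition \ref{prop:indcsplit}. Second, I would apply Proposition \ref{prop:indcsplit} to the split exact category $\Proc(\Cc)$, which yields that $\Indc(\Proc(\Cc))$ is itself split exact. Third, given any short exact sequence
\begin{equation*}
    \widehat{X}\into\widehat{Y}\onto\widehat{Z}
\end{equation*}
in $\elTatec(\Cc)$, Theorem \ref{thm:eltatechar} tells us that this is also a short exact sequence in $\Indc(\Proc(\Cc))$. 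By the previous step, it splits there; since the embedding $\elTatec(\Cc)\into\Indc(\Proc(\Cc))$ is full, the resulting splitting map is automatically a morphism in $\elTatec(\Cc)$, and so the original sequence splits in $\elTatec(\Cc)$ as well.

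There is no significant obstacle here: the entire content of the argument has been packaged into Proposition \ref{prop:indcsplit} and its dual, and the remaining work is purely formal. The one subtlety worth noting is the importance of full-exactness (not merely extension-closedness) of $\elTatec(\Cc)\subset\Indc(\Proc(\Cc))$, which ensures that splittings constructed in the ambient category genuinely live inside $\elTatec(\Cc)$; this is exactly what Theorem \ref{thm:eltatechar} provides.
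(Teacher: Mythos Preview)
Your proposal is correct and follows essentially the same approach as the paper's own proof: use Proposition~\ref{prop:indcsplit} (and its dual) to conclude that $\Indc(\Proc(\Cc))$ is split exact, then descend to the fully exact sub-category $\elTatec(\Cc)$. The paper's proof is simply a two-sentence compression of exactly the three steps you outline.
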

\begin{proof}
    Proposition \ref{prop:indcsplit} implies that $\Proc(\Cc)$ and $\Indc(\Proc(\Cc))$ are split exact if $\Cc$ is. As a fully exact sub-category of $\Indc(\Proc(\Cc))$, $\elTatec(\Cc)$ is split exact as well.
\end{proof}

\begin{proposition}\label{prop:bigtateobject}
    Let $\Cc$ be a split exact category for which there exists a collection of objects $\{S_i\}_{i\in\Nb}\subset\Cc$ such that every object $Y\in\Cc$ is a direct summand of $\bigoplus_{i=0}^n S_i$ for some $n$. Denote by $\widehat{\prod_{\Nb}S}$ and $\widehat{\bigoplus_{\Nb} S}$ the admissible Pro and Ind-objects
    \begin{align*}
        \widehat{\prod_{\Nb} S}&:=\prod_{\Nb}(\prod_{i\in\Nb} S_i)\text{, and}\\
        \widehat{\bigoplus_{\Nb} S}&:=\bigoplus_{\Nb}(\bigoplus_{i\in\Nb} S_i).
    \end{align*}
    Then every countable elementary Tate object in $\Cc$ is a direct summand of
    \begin{equation*}
        \widehat{\prod_{\Nb}S}\oplus\widehat{\bigoplus_{\Nb} S}.
    \end{equation*}
\end{proposition}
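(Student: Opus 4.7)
The plan is to reduce to the Ind and Pro cases, both of which have already been handled. The key ingredients in the excerpt are: the characterization of elementary Tate objects (Theorem \ref{thm:eltatechar}) which exhibits each countable elementary Tate object $\widehat{V}$ as an extension of a countable admissible Ind-object by a countable admissible Pro-object; the splitness of $\elTatec(\Cc)$ when $\Cc$ is split exact (Proposition \ref{prop:tatesplit}); and the analogous ``big object'' statements in the Ind and Pro cases (Proposition \ref{prop:bigindobject} and Theorem \ref{thm:prox}(6)(c)).

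More precisely, I would first represent $\widehat{V}$ by a countable elementary Tate diagram $X\colon I\to\Proc(\Cc)$ with $|I|\le\aleph_0$. Picking any $i_0\in I$, the proof of Theorem \ref{thm:eltatechar} shows that $X_{i_0}\into\widehat{V}$ is an admissible monic with quotient $\widehat{V}/X_{i_0}\in\Indc(\Cc)$, so I obtain an exact sequence
\begin{equation*}
    X_{i_0}\into\widehat{V}\onto\widehat{V}/X_{i_0}
\end{equation*}
in $\elTatec(\Cc)$ with $X_{i_0}\in\Proc(\Cc)$ and $\widehat{V}/X_{i_0}\in\Indc(\Cc)$. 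Because $\Cc$ is split exact, Proposition \ref{prop:tatesplit} guarantees that this sequence splits, so $\widehat{V}\cong X_{i_0}\oplus \widehat{V}/X_{i_0}$.

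Next I apply Proposition \ref{prop:bigindobject} to the Ind-summand: under the hypothesis on $\{S_i\}_{i\in\Nb}$, every countable admissible Ind-object is a direct summand of $\widehat{\bigoplus_{\Nb}S}$. Dually, Theorem \ref{thm:prox}(6)(c) (whose hypotheses are verified by the same collection $\{S_i\}$) shows that every countable admissible Pro-object is a direct summand of $\widehat{\prod_{\Nb}S}$. Combining these, $X_{i_0}$ is a direct summand of $\widehat{\prod_{\Nb}S}$ and $\widehat{V}/X_{i_0}$ is a direct summand of $\widehat{\bigoplus_{\Nb}S}$, hence $\widehat{V}$ is a direct summand of $\widehat{\prod_{\Nb}S}\oplus\widehat{\bigoplus_{\Nb}S}$, as desired.

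There is no real obstacle here: the only small point to double-check is that the sub-object $X_{i_0}$ produced from a countable elementary Tate diagram is itself a \emph{countable} admissible Pro-object, and that the quotient is a \emph{countable} admissible Ind-object, so that Proposition \ref{prop:bigindobject} and Theorem \ref{thm:prox}(6)(c) apply. This follows immediately from the cardinality bound on the diagram $X\colon I\to\Proc(\Cc)$ together with the explicit presentations of $X_{i_0}$ and $\widehat{V}/X_{i_0}$ used in the proof of Theorem \ref{thm:eltatechar}.
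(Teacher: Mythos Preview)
Your proposal is correct and essentially identical to the paper's own proof: both obtain a short exact sequence $L\into\widehat{V}\onto\widehat{V}/L$ with $L\in\Proc(\Cc)$ and $\widehat{V}/L\in\Indc(\Cc)$ from the proof of Theorem~\ref{thm:eltatechar}, split it via Proposition~\ref{prop:tatesplit}, and then invoke Proposition~\ref{prop:bigindobject} and the Pro-analogue in Theorem~\ref{thm:prox} for the two summands. Your extra remark on the countability of $X_{i_0}$ and $\widehat{V}/X_{i_0}$ is a fine sanity check but is not made explicit in the paper.
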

\begin{proof}
    The proof of Theorem \ref{thm:eltatechar} shows that every elementary Tate object $\widehat{X}$ fits into an exact sequence
    \begin{equation*}
        \begin{xy}
            \morphism/^{ (}->/[\widehat{L}`\widehat{X};]
            \morphism(500,0)/->>/[\widehat{X}`\widehat{X/L};]
        \end{xy}
    \end{equation*}
    where $\widehat{L}\in\Proc(\Cc)$ and $\widehat{X/L}\in\Indc(\Cc)$. Proposition \ref{prop:tatesplit} shows that $\widehat{X}\cong\widehat{L}\oplus\widehat{X/L}$. By Proposition \ref{prop:bigindobject}, $\widehat{X/L}$ is a direct summand of $\widehat{\bigoplus_{\Nb} S}$. The analogous result for Pro-objects (in Theorem \ref{thm:prox}) shows that $\widehat{L}$ is a direct summand of $\widehat{\prod_{\Nb} S}$.
\end{proof}

\begin{example}
    Let $R$ be a ring. The category $\elTatec(P_f(R))$ is split exact, and every countable elementary Tate module is a direct summand of $R((t))$.
\end{example}

\subsection{The Category of Tate Objects}
\begin{definition}
    Let $\Cc$ be an idempotent complete exact category. Define the category $\Tatek(\Cc)$ of \emph{Tate objects in $\Cc$ of size at most $\kappa$} to be the idempotent completion of $\elTatek(\Cc)$.
\end{definition}

The discussion of Section \ref{sec:notic} shows that $\elTatek(\Cc)$ does not coincide with $\Tatek(\Cc)$ in general. Drinfeld \cite[Example 3.2.2.2]{Dri:06} provides another example of a Tate module over a commutative ring which is not elementary.

An analogue of Proposition \ref{prop:proandindintate} holds.
\begin{proposition}\label{prop:fmandfmveeintate}
    Let $\Cc$ be idempotent complete. Then
    \begin{equation*}
        \Cc\simeq\FM(\Cc)\cap\FM^\vee(\Cc)\subset\Tate(\Cc).
    \end{equation*}
\end{proposition}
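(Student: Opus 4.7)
The plan is to reduce the statement to Proposition \ref{prop:proandindintate} by producing an object $W\in\Cc$ witnessing $\widehat{X}$ as a retract of $W$ in $\Tate(\Cc)$; idempotent completeness of $\Cc$ then finishes. The containment $\Cc\subseteq\FM(\Cc)\cap\FM^\vee(\Cc)$ is immediate from the definitions, so let $\widehat{X}\in\FM(\Cc)\cap\FM^\vee(\Cc)\subset\Tate(\Cc)$. Using $\FM(\Cc)=\Ind(\Cc)^{\ic}$ and $\FM^\vee(\Cc)=\Pro(\Cc)^{\ic}$, I pick $\widehat{Y}\in\Ind(\Cc)$ and $\widehat{Z}\in\Pro(\Cc)$ such that $\widehat{X}$ is a retract of each in $\Tate(\Cc)$, with sections $i_Y,i_Z$ and retractions $r_Y,r_Z$ satisfying $r_Y i_Y=1_{\widehat{X}}=r_Z i_Z$.

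The key step is to factor the composite $f:=i_Y r_Z\colon\widehat{Z}\to\widehat{Y}$ through an object of $\Cc$. Because $\widehat{Z},\widehat{Y}\in\elTate(\Cc)$ and $\elTate(\Cc)\into\Tate(\Cc)$ is fully faithful (as the inclusion of a full subcategory into its idempotent completion), $f$ is a morphism in $\elTate(\Cc)$. Left s-filtering of $\Pro(\Cc)$ in $\elTate(\Cc)$ (Proposition \ref{prop:prointatesfilt}), applied to the source $\widehat{Z}\in\Pro(\Cc)$, factors $f=hg$ with $g\colon\widehat{Z}\to W$, $h\colon W\into\widehat{Y}$ an admissible monic, and $W\in\Pro(\Cc)$. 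The exact sequence $W\into\widehat{Y}\onto\widehat{Y}/W$ in $\elTate(\Cc)$ is also exact in $\Ind(\Pro(\Cc))$; because its middle term $\widehat{Y}$ lies in $\Ind(\Cc)$, Proposition \ref{prop:indintate}(1) places $W$ in $\Ind(\Cc)$. Hence $W\in\Ind(\Cc)\cap\Pro(\Cc)=\Cc$ by Proposition \ref{prop:proandindintate}.

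Setting $\alpha:=g i_Z\colon\widehat{X}\to W$ and $\beta:=r_Y h\colon W\to\widehat{X}$, a direct calculation yields $\beta\alpha=r_Y(hg)i_Z=r_Y f i_Z=r_Y i_Y r_Z i_Z=1_{\widehat{X}}$, so $\widehat{X}$ is a retract of $W\in\Cc$ in $\Tate(\Cc)$. The idempotent $\alpha\beta\colon W\to W$ is therefore a morphism in $\Tate(\Cc)$ between objects of $\Cc$, so it lies in $\End_\Cc(W)$ by fullness of $\Cc\into\Tate(\Cc)$; idempotent completeness of $\Cc$ splits it in $\Cc$, and its image is isomorphic to $\widehat{X}$ in $\Tate(\Cc)$, forcing $\widehat{X}\in\Cc$. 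I expect the main obstacle to be recognising that the filtering results proven for $\elTate(\Cc)$ (rather than $\Tate(\Cc)$) already suffice: the composite $f$ is manufactured from $\widehat{X}\in\Tate(\Cc)$, but descends along the fully faithful embedding $\elTate(\Cc)\into\Tate(\Cc)$, so Propositions \ref{prop:prointatesfilt} and \ref{prop:indintate}(1) apply directly, without any need to develop $\Tate(\Cc)$-analogues.
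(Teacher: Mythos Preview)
Your proof is correct and follows essentially the same approach as the paper: form the composite $\widehat{Z}\to\widehat{X}\to\widehat{Y}$ in $\elTate(\Cc)$, factor it through an object of $\Cc$, and split the resulting idempotent. The paper is marginally more direct at the factoring step: it represents $\widehat{Y}$ by an admissible Ind-diagram $Y\colon I\to\Cc$ and uses compactness of $\widehat{Z}\in\Pro(\Cc)$ to factor through some $Y_i$, which lies in $\Cc$ by construction---so it bypasses the appeal to Proposition~\ref{prop:indintate}(1) and Proposition~\ref{prop:proandindintate} that you need to place your $W$ in $\Cc$.
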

\begin{proof}
    Given $X\in\FM(\Cc)\cap\FM^\vee(\Cc)$, there exist objects $P,Q\in\Tate(\Cc)$ such that $X\oplus P\in\Pro(\Cc)$ and $X\oplus Q\in\Ind(\Cc)$.

    The composition
    \begin{equation}\label{eq:pfgrproj}
        \begin{xy}
            \morphism/->>/<500,0>[X\oplus P`X;]
            \morphism(500,0)/^{ (}->/<500,0>[X`X\oplus Q;]
        \end{xy}
    \end{equation}
    is a morphism of elementary Tate objects, because the embedding of an exact category into its idempotent completion is fully faithful. Let $Y\colon I\to \Cc$ be an admissible Ind-diagram representing $X\oplus Q$. Because $X\oplus P\in\Pro(\Cc)$, there exists $i\in I$ for which the map \ref{eq:pfgrproj} factors as
    \begin{equation*}
        \begin{xy}
            \Vtrianglepair/->>`^{ (}->`>`-->`<-_{) }/<750,500>[X\oplus P`X`X\oplus Q`Y_i;``f`\exists\tilde{f}`]
        \end{xy}.
    \end{equation*}
    Note that the map $\tilde{f}$ is induced from the map $f$ by the universal property of cokernels. Because $X$ is a retract of $X\oplus Q$ in $\Ind(\Cc)$, the object $X$ is also a retract of $Y_i$ in $\Cc$. The composite
    \begin{equation*}
        Y_i\to X\to Y_i
    \end{equation*}
    is an idempotent. If $\Cc$ is idempotent complete, this idempotent splits, and we conclude that $X$ is an object of $\Cc$.
\end{proof}

\subsection{Tate R-Modules}
The category of Tate objects in finite dimensional vector spaces over a discrete field dates back at least to Lefschetz \cite[Chapter II.25]{Lef:42}. Lefschetz described this as a category of ``locally linearly compact'' topological vector spaces.\footnote{The terminology appears to be inspired by Pontrjagin duality for locally compact abelian groups.} Recently, Drinfeld \cite{Dri:06} asked, ``What is a family of Tate spaces?'' He proposed a notion of Tate modules over discrete rings $R$. We relate his notion to Tate objects in the category $P_f(R)$ of finitely generated projective (left) $R$-modules.

\begin{definition}[Drinfeld]
    Let $R$ be a ring. An \emph{elementary Tate module \textit{\`{a} la} Drinfeld} is a topological $R$-module isomorphic to $P\oplus Q^{\vee}$ where $P$ is a discrete projective (left) module and $Q^\vee$ is the topological dual of a discrete projective (right) module. A \emph{Tate module \textit{\`{a} la} Drinfeld} is a topological direct summand of an elementary Tate module $P\oplus Q^{\vee}$.

    Denote by $\Drate(R)$ the category of Tate modules \textit{\`{a} la} Drinfeld and continuous homomorphisms. Denote by $\Dratec(R)\subset\Drate(R)$ the full sub-category of direct summands of modules $P\oplus Q^{\vee}$ where $P$ and $Q$ are countably generated.
\end{definition}

\begin{theorem}\label{thm:drincomp}
    The category $\Drate(R)$ of Tate modules \textit{\`{a} la} Drinfeld is equivalent to a full sub-category of $\Tate(P_f(R))$. The categories $\Dratec(R)$ and $\Tatec(P_f(R))$ are equivalent.
\end{theorem}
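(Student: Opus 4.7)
The strategy is to construct a fully faithful functor $\Phi\colon\Drate(R)\to\Tate(P_f(R))$ using the identifications of projective and formal-dual-projective objects established earlier, and then prove essential surjectivity in the countable case. On an elementary Drinfeld Tate module $M=P\oplus Q^{\vee}$, with $P$ a discrete projective left $R$-module and $Q$ a discrete projective right $R$-module, set $\Phi(M):=\iota_{\Ind}(P)\oplus\iota_{\Pro}(Q^{\vee})$, the sum being taken in $\Tate(P_f(R))$. Here $\iota_{\Ind}$ is the composite $P(R)\simeq P(P_f(R))\into\FM(P_f(R))\subset\Tate(P_f(R))$ from Corollary \ref{cor:projRgeneral}, and $\iota_{\Pro}$ is the composite $P(R^{\circ})^{\op}\simeq P^{\vee}(P_f(R))\into\dFM(P_f(R))\subset\Tate(P_f(R))$ from Corollary \ref{cor:projdualinproic}. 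Idempotents in $\Mod(R)_{top}$ split via kernels of continuous endomorphisms, so $\Drate(R)$ is the idempotent completion of its full sub-category of elementary objects; the target $\Tate(P_f(R))$ is idempotent complete by definition. By the universal property of idempotent completion, it suffices to prove that $\Phi$ is fully faithful on elementary objects.

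For elementary $M=P\oplus Q^{\vee}$ and $M'=P'\oplus Q'^{\vee}$, decompose both $\hom_{\Mod(R)_{top}}(M,M')$ and $\hom_{\Tate(P_f(R))}(\Phi(M),\Phi(M'))$ into four components indexed by summands, and check that $\Phi$ induces a bijection on each. The $P\to P'$ component coincides with $\hom_R(P,P')$ on both sides by Corollary \ref{cor:projRgeneral}, and the $Q^{\vee}\to Q'^{\vee}$ component with $\hom_{R^{\circ}}(Q',Q)$ by combining Proposition \ref{prop:tprointop} with Corollary \ref{cor:projdualinproic}. The $P\to Q'^{\vee}$ component is straightforward: continuity is automatic since $P$ is discrete, and the $\Ind$-$\Pro$ hom formula reduces both sides to the same limit of products. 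The most delicate entry is the $Q^{\vee}\to P'$ component, and this is the technical heart of the proof. The key input is the argument from the proof of Proposition \ref{prop:tprointop}: any continuous map from $Q^{\vee}$ to a discrete module factors through the projection onto some $Q_0^{\vee}$ for $Q_0\subset Q$ a finitely generated sub-module. Combined with the observation that any $R$-linear map from a finitely generated projective into $P'$ factors through a finitely generated summand of $P'$, both the topological and Tate-theoretic hom-sets reduce to the same colimit $\colim_{Q_0,P_0'}\hom_R(Q_0^{\vee},P_0')$ over pairs of finitely generated sub-/summand-data.

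For the second assertion, apply Proposition \ref{prop:bigtateobject} to $\Cc=P_f(R)$ with generating set $\{R\}$: every object of $\elTatec(P_f(R))$ is a direct summand of the elementary Tate object $\widehat{\prod_{\Nb}R}\oplus\widehat{\bigoplus_{\Nb}R}$, which is precisely $\Phi$ applied to the countable elementary Drinfeld Tate module $\bigoplus_{\Nb}R\oplus(\bigoplus_{\Nb}R)^{\vee}$. Since $\Tatec(P_f(R))$ is by definition the idempotent completion of $\elTatec(P_f(R))$, every object of $\Tatec(P_f(R))$ is a summand of $\Phi$ applied to a countable elementary Drinfeld Tate module. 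Together with full faithfulness of $\Phi$ and the idempotent completeness of $\Dratec(R)$, this promotes the restriction $\Phi|_{\Dratec(R)}$ to an equivalence $\Dratec(R)\simeq\Tatec(P_f(R))$.
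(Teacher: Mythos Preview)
Your proof is correct and, for the full faithfulness assertion, follows essentially the same four-block decomposition as the paper. The one organisational difference is the direction of the comparison functor: you build $\Phi\colon\Drate(R)\to\Tate(P_f(R))$ directly, whereas the paper constructs a functor $\tilde{\tau}\colon\FM(\Pro(P_f(R)))\to\Mod(R)_{top}$ going the other way and then checks that it is full on the relevant objects. The paper's direction has the small advantage that $\tilde{\tau}$ is manifestly a functor (it is a colimit of limits), while your $\Phi$ is initially only defined on objects as $\iota_{\Ind}(P)\oplus\iota_{\Pro}(Q^{\vee})$; the action on the off-diagonal morphism components must be read off from the hom-set bijections you establish, and functoriality then needs a word. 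This is a routine check, not a gap.

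For the countable equivalence you take a genuinely different route. The paper shows directly that every object of $\elTatec(P_f(R))$ is an elementary Drinfeld module: pick a lattice $L\hookrightarrow V$, invoke the split exactness of $\elTatec(P_f(R))$ to get $V\cong L\oplus V/L$, and identify $L$ with the dual of a countably generated projective right module (Corollary~\ref{cor:projdualinproic}) and $V/L$ with a countably generated projective left module (Corollary~\ref{cor:projRgeneral}). You instead invoke Proposition~\ref{prop:bigtateobject} to see that every countable elementary Tate object is already a summand of the single object $R[[t]]\oplus R[t]=\Phi\bigl(R[t]\oplus(R[t])^{\vee}\bigr)$, and then conclude by full faithfulness and idempotent completeness of $\Dratec(R)$. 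Your argument is a clean application of an earlier result and avoids re-examining lattices; the paper's argument is more hands-on but yields the slightly stronger statement that the elementary categories already coincide before passing to idempotent completions.
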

\begin{proof}
    It suffices to show that the category $\elDrate(R)$ of elementary Tate modules \emph{\`{a} la} Drinfeld is equivalent to a full sub-category of $\Tate(P_f(R))$.

    Restricting \eqref{prointop} to $\Pro(P_f(R))$, we obtain a faithful embedding
    \begin{equation*}
        \begin{xy}
            \morphism<1000,0>[\Pro(P_f(R))`\Mod(R)_{top};\tau]
        \end{xy}.
    \end{equation*}
    This naturally extends to a functor
    \begin{equation*}
        \begin{xy}
            \morphism<1000,0>[\Dir(\Pro(P_f(R)))`\Mod(R)_{top};]
            \morphism(0,-200)|l|/>/<0,-400>[I`\Pro(P_f(R));M]
            \morphism(50,-400)/|->/<950,0>[`\colim_I\tau(M_i);]
        \end{xy}
    \end{equation*}
    which induces
    \begin{equation}\label{indprointop}
        \begin{xy}
            \morphism<1000,0>[\FM(\Pro(P_f(R)))`\Mod(R)_{top};\tilde{\tau}]
        \end{xy}.
    \end{equation}
    Unpacking the definition, we see that $\tilde{\tau}$ is faithful. As with $\tau$, $\tilde{\tau}$ will fail, in general, to be full. However this failure does not affect several cases of interest.

    First, for any $M\colon I\to P_f(R)$ representing $\widehat{M}\in\Ind(P_f(R))\subset\Ind(\Pro(P_f(R)))$, and for any $\widehat{N}\in\Ind(\Pro(P_f(R)))$, we have
    \begin{align*}
        \hom_{cts}(\tilde{\tau}(\widehat{M}),\tilde{\tau}(\widehat{N}))&\cong\lim_I\hom_{cts}(M_i,\tilde{\tau}(\widehat{N}))\\
        &\cong\lim_I\hom_R(M_i,\widehat{N})\\
        &\cong\lim_I\colim_J\lim_{K_j}\hom_R(M_i,N_{j,k})\\
        &\cong\hom_{\Ind(\Pro(P_f(R)))}(\widehat{M},\widehat{N}).
    \end{align*}
    where $N\colon J\to\Pro(P_f(R))$ is an admissible Ind-diagram representing $\widehat{N}$, and where, for each $j$, $N_j\colon K_j\to P_f(R)$ is an admissible Pro-diagram representing $N_j$. This implies that for any $\widehat{M}\in\FM(P_f(R))$ and for any $\widehat{N}\in\FM(\Pro(P_f(R))$, we have
    \begin{align*}
        \hom_{cts}(\tilde{\tau}(\widehat{M}),\tilde{\tau}(\widehat{N}))\cong\hom_{\FM(\Pro(P_f(R)))}(\widehat{M},\widehat{N}).
    \end{align*}

    Second, $\tilde{\tau}$ restricts to a full embedding $\tau\colon P^\vee(P_f(R))\to\Mod(R)_{top}$ by Proposition \ref{prop:tprointop}.

    Third, given $\widehat{N}\in\FM(P_f(R))$, $\tilde{\tau}(\widehat{N})$ is discrete. Therefore, the same argument as in the proof of Proposition \ref{prop:tprointop} shows that for $\widehat{M}\in P^\vee(P_f(R))$, we have
    \begin{align*}
        \hom_{cts}(\tilde{\tau}(\widehat{M}),\tilde{\tau}(\widehat{N}))&\cong\hom_{\FM(\Pro(P_f(R)))}(\widehat{M},\widehat{N}).
    \end{align*}

    We can now show that the essential image of $\Tate(P_f(R))$ in $\Mod(R)_{top}$ contains $\elDrate(R)$ as a full sub-category. We first observe that every elementary Tate module \textit{\`{a} la} Drinfeld $P\oplus Q^{\vee}$ is in $\Tate(P_f(R))$. The module $P\oplus Q^{\vee}$ is trivially an extension of the discrete projective (left) module $P$ by the topological dual of a discrete projective (right) module $Q$. Corollaries \ref{cor:projRgeneral} and \ref{cor:projdualinproic} show that $P$ and $Q^{\vee}$ are objects in $\Tate(P_f(R))$, so $P\oplus Q^{\vee}$ is as well.

    Given $M_0\cong P_0\oplus Q_0^\vee$ and $M_1\cong P_1\oplus Q_1^\vee$, we have
    \begin{align*}
        \hom_{\elDrate(R)}(M_0,M_1):=&\hom_{cts}(M_0,M_1)\\
        \cong&\hom_{cts}(P_0,P_1)\times\hom_{cts}(Q_0^\vee,P_1)\\
        &\times\hom_{cts}(P_0,Q_1^\vee)\times\hom_{cts}(Q_0^\vee,Q_1^\vee).
    \end{align*}
    Our observations above imply that each factor is isomorphic to the analogous hom-set in $\Tate(P_f(R))$, and thus
    \begin{equation*}
        \hom_{\elDrate(R)}(M_0,M_1)\cong\hom_{\Tate(P_f(R))}(M_0,M_1)
    \end{equation*}
    as claimed.

    To prove $\Dratec(R)\simeq\Tatec(P_f(R))$, we show that every countable elementary Tate object in $P_f(R)$ is elementary \textit{\`{a} la} Drinfeld. Indeed, every countable elementary Tate module $V\in\elTatec(P_f(R))$ admits a lattice $L$. The lattice $L$ is isomorphic to the topological dual of a discrete, countably generated, projective (right) module by Corollary \ref{cor:projdualinproic}. The quotient $V/L$ is discrete and projective by Corollary \ref{cor:projRgeneral}.
\end{proof}

\begin{remark}\label{rmk:drate}\mbox{}
    \begin{enumerate}
        \item It is possible to give a purely categorical description of $\Drate(R)$ as follows. For idempotent complete, split exact $\Cc$, define
            \begin{equation*}
                \elDrate(\Cc)\subset\Tate(\Cc)
            \end{equation*}
            to be the smallest full sub-category of $\Tate(\Cc)$ which contains the categories $P(\Cc)$ and $P^\vee(\Cc)$ and which is closed under extensions. Denote by $\Drate(\Cc)$ the idempotent completion of $\elDrate(\Cc)$. The discussion above shows that $\Drate(P_f(R))\simeq\Drate(R)$.
        \item Drinfeld \cite[p. 266]{Dri:06} suggests the possibility of a notion of infinite dimensional vector bundle using flat Mittag-Leffler modules in lieu of projective modules. Some consequences and pathologies of this suggestion have been investigated in \cite{EGPT:12} and \cite{EGT:11}.

            If one attempts to use topological language to formulate the analogous notion of Tate module, one encounters related problems. The most serious of these is that if $M$ and $N$ are flat Mittag-Leffler modules, we can say very little about the image of a map $M^\vee\to N$, whereas a key property of the Tate formalism is that any such map should factor through a finitely generated admissible sub-module. A closely related problem is that the topological dual does not preserve exact sequences of flat Mittag-Leffler modules. Both of these are consequences of the non-vanishing of $\lim^1$ for uncountable Mittag-Leffler systems, and we interpret them as a sign that we should abandon the topological framework, and work categorically instead.

            If one is willing to make this switch, then, in light of Proposition \ref{p:more}, Example \ref{ex:Rindprodual} and Theorem \ref{thm:eltatechar}, we propose $\Tate(P_f(R))$ as the category of Tate modules \emph{\`{a} la} Drinfeld modeled on flat Mittag-Leffler modules.
    \end{enumerate}
\end{remark}

\subsection{Tate Objects and the Calkin Category}
Let $R$ be a ring. Denote by $\pi_{R[[t]]}\colon R((t))\to R((t))$ the projection onto $R[[t]]\subset R((t))$. Tate objects provide a categorical analogue of the $R$-algebra
\begin{equation*}
    \Mat^\pm_\infty(R):=\{A\in\End_R(R((t)))~\vert~ [A,\pi_{R[[t]]}]\text{ has finite rank.}\}
\end{equation*}
The assignment $A\mapsto (1-\pi_{R[[t]]})A(1-\pi_{R[[t]]})$ defines a surjective algebra homomorphism
\begin{equation*}
    \Mat^\pm_\infty(R)\to\Calk(R).
\end{equation*}
This homomorphism admits a categorical analogue.

\subsubsection{The Map \texorpdfstring{$\Tatek(\Cc)\rightarrow\Calkk(\Cc)$}{from Tate to Calk}}
Recall from Proposition \ref{prop:basedtateloc} that $\mathbb{T}'_\kappa(\Cc)$ is the category of based elementary Tate diagrams, $X\colon I\to\Prok(\Cc)$. Denote by $i_0\in I$ the initial object. The assignment
\begin{equation*}
    \begin{xy}
        \morphism<0,-500>[I`\Prok(\Cc);X]
        \morphism(50,-250)/|->/<400,0>[`;]
        \morphism(500,0)|r|<0,-500>[I`\Cc;X/X_{i_0}]
    \end{xy}
\end{equation*}
extends to a functor
\begin{equation*}
    \begin{xy}
        \morphism<750,0>[\mathbb{T}'_\kappa(\Cc)`\Calkk(\Cc);q]
    \end{xy}.
\end{equation*}
This functor sends a morphism
\begin{equation*}
    \begin{xy}
        \Vtriangle<350,400>[I`J`\Prok(\Cc);\psi`X`Y]
        \place(350,200)[\twoar(1,0)]
        \place(350,300)[_{\alpha}]
    \end{xy}
\end{equation*}
to the morphism
\begin{equation*}
    \begin{xy}
        \Vtriangle/{}`>`<-/<1000,0>[\colim_I X_i/X_{i_0}`\colim_J Y_j/Y_{j_0}`\colim_I Y_{\psi(i)}/Y_{\psi(i_0)};``\cong]
    \end{xy}.
\end{equation*}
The isomorphism on the right is the inverse, in $\Calkk(\Cc)$, of the map in $\Calkk(\Cc)$ given by the zig-zag
\begin{equation*}
    \begin{xy}
        \Atriangle/>`>`{}/<1000,0>[\colim_I Y_{\psi(i)}/Y_{j_0}`\colim_I Y_{\psi(i)}/Y_{\psi(i_0)}`\colim_J Y_j/Y_{j_0};``]
    \end{xy}.
\end{equation*}
Recall that $W'\subset\mathbb{T}'_\kappa(\Cc)$ denotes the sub-category of final maps (which are not required to preserve the initial object!). By inspection, $q$ takes maps in $W'$ to isomorphisms. Proposition \ref{prop:basedtateloc} and the universal property of localization guarantee that $q$ induces a unique functor
\begin{equation}\label{eltatetocalk}
    \begin{xy}
        \morphism<750,0>[\elTatek(\Cc)`\Calkk(\Cc);]
    \end{xy}.
\end{equation}
Our construction guarantees that \eqref{eltatetocalk} is exact. By the universal property of idempotent completion, \eqref{eltatetocalk} extends uniquely to an exact functor
\begin{equation*}
    \begin{xy}
        \morphism<750,0>[\Tatek(\Cc)`\Calkk(\Cc);\tilde{q}]
    \end{xy}.
\end{equation*}

\subsubsection{The Kernel of \texorpdfstring{$\Tatek(\Cc)\rightarrow\Calkk(\Cc)$}{the map from Tate to Calk}}
If $A\in\Mat^\pm_\infty(R)$ has its image contained in $R[[t]]\oplus M\subset R((t))$ for some finitely generated sub-module $M$, then $A\in\ker(\Mat^\pm_\infty(R)\to\Calk(R))$. Similarly, the construction of $\tilde{q}$ shows that it takes admissible Pro-objects to the zero object in $\Calkk(\Cc)$. As a result, it takes admissible epics (monics) in $\elTatek(\Cc)$ whose (co)kernels are in $\Prok(\Cc)$ to isomorphisms. It therefore factors through an exact functor
\begin{equation}\label{tatetocalk}
    \begin{xy}
        \morphism<1000,0>[(\elTatek(\Cc)/\Prok(\Cc))^{\ic}`\Calkk(\Cc);\tilde{q}]
    \end{xy}.
\end{equation}

\begin{proposition}[\rm{See also Saito \cite[Lemma 3.3]{Sai:13}}]\label{prop:quot}
    The map \eqref{tatetocalk} is an equivalence of exact categories.
\end{proposition}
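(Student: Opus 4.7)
The strategy is to exhibit a quasi-inverse. The fully exact embedding $\Indk(\Cc)\into\elTatek(\Cc)$, composed with the exact quotient $\elTatek(\Cc)\to\elTatek(\Cc)/\Prok(\Cc)$, annihilates $\Cc\subset\Prok(\Cc)$; since $\Cc\subset\Indk(\Cc)$ is left s-filtering (Proposition \ref{prop:cleftsfiltinind}), this composite factors through an exact functor $G\colon\Indk(\Cc)/\Cc\to\elTatek(\Cc)/\Prok(\Cc)$. Note that before idempotent completion, $\tilde{q}$ restricts to an exact functor $q_0\colon\elTatek(\Cc)/\Prok(\Cc)\to\Indk(\Cc)/\Cc$, since by construction it sends a based elementary Tate diagram $X\colon I\to\Prok(\Cc)$ with base point $i_0$ to $\colim_I X_i/X_{i_0}\in\Indk(\Cc)$. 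I will show that $G$ and $q_0$ are mutually quasi-inverse equivalences of exact categories, whence $\tilde{q}$ is an equivalence upon idempotent completion.

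The key technical step is the following lemma: if $V\into\widehat{N}$ is an admissible monic in $\elTatek(\Cc)$ with $V\in\Prok(\Cc)$ and $\widehat{N}\in\Indk(\Cc)$, then $V\in\Cc$. To see this, represent $\widehat{N}$ by an admissible Ind-diagram $N\colon J\to\Cc$; the Hom-formula for admissible Ind-objects shows that $V\into\widehat{N}$ factors as $V\to N_j\into\widehat{N}$ for some $j\in J$, with the first arrow monic. By right s-filteringness of $\Cc$ in $\Prok(\Cc)$ (Theorem \ref{thm:prox}), $V\to N_j$ in turn factors through an admissible epic $V\onto V'$ with $V'\in\Cc$; monicity of the composite $V\onto V'\to N_j\into\widehat{N}$ forces $V\onto V'$ to be monic as well, hence an isomorphism, and we conclude $V\cong V'\in\Cc$. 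This is the main obstacle; everything else is a formal consequence of it together with the calculus of fractions.

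Granting this lemma, essential surjectivity of $q_0$ is immediate: for any $V\in\elTatek(\Cc)$, Theorem \ref{thm:eltatechar} supplies an exact sequence $L\into V\onto V/L$ with $L\in\Prok(\Cc)$ and $V/L\in\Indk(\Cc)$, so $V\cong V/L$ in $\elTatek(\Cc)/\Prok(\Cc)$ while $q_0(V)=V/L$ lies in $\Indk(\Cc)/\Cc$. For fullness of $G$, use left s-filteringness of $\Prok(\Cc)$ in $\elTatek(\Cc)$ (Proposition \ref{prop:prointatesfilt}): any morphism $\widehat{M}\to\widehat{N}$ in $\elTatek(\Cc)/\Prok(\Cc)$ with $\widehat{M},\widehat{N}\in\Indk(\Cc)$ is represented by a zig-zag consisting of a morphism $\widehat{M}\to W$ together with an admissible epic $s\colon\widehat{N}\onto W$ in $\elTatek(\Cc)$ having $\ker s\in\Prok(\Cc)$. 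The lemma forces $\ker s\in\Cc$, Proposition \ref{prop:indintate} then forces $W\in\Indk(\Cc)$, and the fullness of $\Indk(\Cc)\subset\elTatek(\Cc)$ shows the zig-zag lies entirely in $\Indk(\Cc)$ and represents a morphism in $\Indk(\Cc)/\Cc$. Faithfulness is the analogous application of the second axiom of the calculus of left fractions. Passing to idempotent completions then yields the stated equivalence of exact categories.
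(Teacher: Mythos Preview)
Your argument is correct and follows essentially the same strategy as the paper: both construct the inverse functor $G=\iota$ from the inclusion $\Indk(\Cc)\into\elTatek(\Cc)$, and your key lemma is exactly the ingredient (a special case of Propositions \ref{prop:proandindintate} and \ref{prop:indintate}) needed to make everything go through. The paper's proof is terser, checking directly that $\tilde{q}\iota\cong\mathrm{id}$ and $\iota\tilde{q}\cong\mathrm{id}$ on objects, whereas you spell out fullness and faithfulness of $G$ via the calculus of left fractions; these are the same argument in slightly different packaging. One small slip: the paragraph you label ``essential surjectivity of $q_0$'' actually establishes essential surjectivity of $G$ (equivalently, $G\circ q_0\cong\mathrm{id}$ on objects), and to conclude that $q_0$ itself is the quasi-inverse you should also note that $q_0\circ G\cong\mathrm{id}$, which is immediate since an Ind-diagram in $\Cc$ is already a based elementary Tate diagram with base object in $\Cc$.
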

\begin{proof}
    The universal property of localization and idempotent completion ensure that the map $\Indk(\Cc)\into\Tatek(\Cc)\to(\elTatek(\Cc)/\Prok(\Cc))^{\ic}$ induces a canonical map
    \begin{equation*}
        \begin{xy}
            \morphism<1000,0>[\Calkk(\Cc)`(\elTatek(\Cc)/\Prok(\Cc))^{\ic};\iota]
        \end{xy}
    \end{equation*}
    We will show that $\iota$ is inverse to $\tilde{q}$. From the construction of idempotent completion, it suffices to show that:
    \begin{enumerate}
        \item if $\widehat{Y}\in\Calkk(\Cc)$ is isomorphic to an admissible Ind-object, then $\tilde{q}\iota(\widehat{Y})$ is naturally isomorphic to $\widehat{Y}$, and
        \item if $\widehat{X}\in(\elTatek(\Cc)/\Prok(\Cc))^{\ic}$ is isomorphic to an elementary Tate object, then $\iota\tilde{q}(\widehat{X})$ is naturally isomorphic to $\widehat{X}$.
    \end{enumerate}
    The first is immediate from the construction of $\iota$ and $\tilde{q}$. Conversely, if $\widehat{X}$ is an elementary Tate object, the construction of $\tilde{q}$ defines a map
    \begin{equation*}
        \begin{xy}
            \morphism[\widehat{Y}`\iota\tilde{q}(\widehat{Y});]
        \end{xy}
    \end{equation*}
    whose kernel is an admissible Pro-object.
\end{proof}

Similarly, we have the following.
\begin{proposition}\label{prop:promodc}
    Let $\Cc$ be idempotent complete. The assignment $X\mapsto X_{i_0}$ extends to a functor
    \begin{equation}\label{preTatequot}
        \mathbb{T}'_\kappa(\Cc)\to\Prok(\Cc)/\Cc.
    \end{equation}
    This induces an equivalence of categories
    \begin{equation*}
        \begin{xy}
            \morphism<1000,0>[\elTatek(\Cc)/\Indk(\Cc)`\Prok(\Cc)/\Cc;\simeq]
        \end{xy}
    \end{equation*}
    Its inverse $\Prok(\Cc)/\Cc\into\elTatek(\Cc)/\Indk(\Cc)$ is the map induced by the inclusion $\Prok(\Cc)\subset\elTatek(\Cc)$.
\end{proposition}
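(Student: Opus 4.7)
The plan is to mirror the strategy of Proposition \ref{prop:quot}, exchanging the roles of $\Prok(\Cc)$ and $\Indk(\Cc)$ via the duality built into the definition of an elementary Tate object.

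First, I would verify that the assignment $X \mapsto X_{i_0}$ extends to a functor on $\mathbb{T}'_\kappa(\Cc)$. A morphism $(\psi,\alpha)\colon(I,X)\to(J,Y)$ of based Tate diagrams provides a component $\alpha_{i_0}\colon X_{i_0}\to Y_{\psi(i_0)}$ in $\Prok(\Cc)$. Because $Y$ is an elementary Tate diagram and $j_0\le\psi(i_0)$, the admissible monic $Y_{j_0}\into Y_{\psi(i_0)}$ has cokernel $Y_{\psi(i_0)}/Y_{j_0}\in\Cc$, so it is inverted in $\Prok(\Cc)/\Cc$. I would define the image of $(\psi,\alpha)$ to be the resulting composite $X_{i_0}\to Y_{\psi(i_0)}\xleftarrow{\sim}Y_{j_0}$; functoriality is a straightforward diagram chase. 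If furthermore $\psi$ is final and $X=Y\psi$ (i.e. the morphism lies in $W'$), then $\alpha_{i_0}$ is the identity on $Y_{\psi(i_0)}$, so the image is an isomorphism. By Proposition \ref{prop:basedtateloc}, this descends to an exact functor $\elTatek(\Cc)\to\Prok(\Cc)/\Cc$.

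Second, I would show this functor sends $\Indk(\Cc)$ to zero. Given $\widehat{X}\in\Indk(\Cc)$ presented by $X\colon I\to\Cc$, I may restrict to the final sub-poset $I_{i_0}\subset I$ of elements above a chosen basepoint; the resulting based Tate diagram takes values in $\Cc$, so $X_{i_0}\in\Cc$ goes to $0$ in $\Prok(\Cc)/\Cc$. By exactness and the universal property of the quotient by a right s-filtering sub-category (which $\Indk(\Cc)\subset\elTatek(\Cc)$ is, by the dual of Proposition \ref{prop:indintate}), the functor descends to an exact functor
\[
q\colon\elTatek(\Cc)/\Indk(\Cc)\longrightarrow\Prok(\Cc)/\Cc.
\]
In the opposite direction, the inclusion $\Prok(\Cc)\subset\elTatek(\Cc)$ sends $\Cc$ into $\Indk(\Cc)$, hence composing with the localization yields, by the universal property of the quotient $\Prok(\Cc)/\Cc$, an exact functor $\iota\colon\Prok(\Cc)/\Cc\to\elTatek(\Cc)/\Indk(\Cc)$.

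Third, I would check that $\iota$ and $q$ are mutually inverse. The composition $q\circ\iota$ is the identity by construction, since a Pro-object $\widehat{L}$ presents a based Tate diagram whose value at the initial object is $\widehat{L}$ itself. For $\iota\circ q$, take $\widehat{X}\in\elTatek(\Cc)$ presented by $X\colon I\to\Prok(\Cc)$ with initial object $i_0$. By Lemma \ref{lemma:indsubob} the natural map $X_{i_0}\into\widehat{X}$ is an admissible monic, and its cokernel $\widehat{X}/X_{i_0}$ is presented by the diagram $j\mapsto X_j/X_{i_0}$, which takes values in $\Cc$ by the Tate diagram condition. Hence $\widehat{X}/X_{i_0}\in\Indk(\Cc)$ and $X_{i_0}\into\widehat{X}$ becomes an isomorphism in $\elTatek(\Cc)/\Indk(\Cc)$, providing a natural isomorphism $\iota q(\widehat{X})\cong\widehat{X}$.

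The main obstacle is not conceptual but bookkeeping: ensuring the basepoint-dependent construction is natural under morphisms that need not preserve initial objects. The elementary Tate condition is exactly what makes this go through, because all ``discrepancies'' between initial objects of source and target lie in $\Cc$ and are therefore killed in $\Prok(\Cc)/\Cc$. Once that point is made precise, the remainder is a direct translation of the argument for Proposition \ref{prop:quot}.
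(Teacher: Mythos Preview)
Your argument has a genuine gap at the point where you invoke ``the universal property of the quotient by a right s-filtering sub-category (which $\Indk(\Cc)\subset\elTatek(\Cc)$ is, by the dual of Proposition \ref{prop:indintate}).'' Proposition \ref{prop:indintate} establishes only that $\Indk(\Cc)$ is \emph{right filtering} in $\elTatek(\Cc)$ and closed under sub-objects, quotients, and extensions; it does not establish \emph{right special}, and no dual of it does either. The paper is explicit about this in the Remark following the statement: ``we do not know if it is right special. Therefore, Schlichting's theory of quotients of exact categories is not entirely available.'' So you cannot appeal to Schlichting's universal property here, and the claim that the descended functor $q$ is exact on the quotient is also unsupported, since without right special the quotient is merely a localization with no canonical exact structure.

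The paper works around this by arguing directly that the functor $\elTatek(\Cc)\to\Prok(\Cc)/\Cc$ inverts each admissible monic $\widehat{V}_0\into\widehat{V}_1$ with cokernel in $\Indk(\Cc)$. Concretely, given a lattice $L_0\into\widehat{V}_0$, one first shows via Noether's Lemma and Proposition \ref{prop:indintate} that the composite $L_0\into\widehat{V}_1$ is again a lattice. Then, factoring through any lattice $L_1\into\widehat{V}_1$, the map $L_0\to L_1$ is a morphism between two lattices of $\widehat{V}_1$, and Lemma \ref{lemma:grdirkey} (which is where idempotent completeness enters) shows it is an admissible monic in $\Prok(\Cc)$ with cokernel in $\Cc$, hence an isomorphism in $\Prok(\Cc)/\Cc$. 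Your approach could be repaired along similar lines: drop the s-filtering claim, justify exactness of $\elTatek(\Cc)\to\Prok(\Cc)/\Cc$ by straightening exact sequences to based Tate diagrams (as in the proof of Theorem \ref{thm:eltatechar}), and then observe directly that an exact functor killing $\Indk(\Cc)$ inverts the relevant monics. But as written, the step you flag as mere ``bookkeeping'' is exactly where the substantive content lies.
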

\begin{remark}
    The category $\Indk(\Cc)$ is right filtering in $\elTatek(\Cc)$ by Proposition \ref{prop:indintate}. However, we do not know if it is right special. Therefore, Schlichting's theory of quotients of exact categories is not entirely available. Nonetheless, using results from Section \ref{sec:applat}, we are able to show that this quotient is well behaved. While this result logically follows the results of Section \ref{sec:applat}, we include it here to connect it to the discussion of the Calkin category.
\end{remark}
\begin{proof}
    Just as for quotients by right s-filtering sub-categories, we define the quotient $\elTatek(\Cc)/\Indk(\Cc)$ to be the localization of $\elTatek(\Cc)$ at the class of admissible monics with cokernels in $\Indk(\Cc)$.

    By inspection, the functor \eqref{preTatequot} factors through the localization
    \begin{equation*}
        \mathbb{T}'_\kappa(\Cc)\to\elTatek(\Cc)
    \end{equation*}
    of Proposition \ref{prop:basedtateloc}.  The induced functor
    \begin{equation}\label{prelocquot}
        \elTatek(\Cc)\to\Prok(\Cc)/\Cc
    \end{equation}
    sends an elementary Tate object $\widehat{V}$ to any Pro-object $L$ appearing in an admissible elementary Tate diagram representing $\widehat{V}$. We refer to the admissible monic $L\into\widehat{V}$ as a \emph{lattice} of $\widehat{V}$ (cf. Definition \ref{def:lattice} and Remark \ref{rmk:lattice}). We now show that \eqref{prelocquot} factors through $\elTatek(\Cc)/\Indk(\Cc)$.

    Let
    \begin{equation*}
        \widehat{V}_0\into \widehat{V}_1\onto \widehat{X}
    \end{equation*}
    be a short exact sequence of elementary Tate objects with $\widehat{X}\in\Indk(\Cc)$. By the universal property of localizations, it suffices to show that \eqref{prelocquot} sends the map $V_0\hookrightarrow V_1$ to an isomorphism in $\Prok(\Cc)/\Cc$.

    To check this, we let $L_0\into\widehat{V}_0$ be any lattice of $\widehat{V}_0$. By the definition of morphisms in $\elTatek(\Cc)$, the map
    \begin{equation*}
        L_0\into V_0\into V_1
    \end{equation*}
    factors through a lattice $L_1\into \widehat{V}_1$. Therefore, the functor \eqref{prelocquot} sends the map $V_0\into V_1$ in $\elTatek(\Cc)$ to the map $L_0\to L_1$ in $\Prok(\Cc)/\Cc$. We claim that this map is an isomorphism, i.e. that $L_0\to L_1$ is an admissible monic in $\Prok(\Cc)$ with cokernel in $\Cc$.

    Because $\Cc$ is idempotent complete, by Lemma \ref{lemma:grdirkey}, it suffices to show that the admissible monic $L_0\into V_1$ is also a lattice. This follows from Noether's lemma and Proposition \ref{prop:indintate}. Indeed, we have a short exact sequence in $\elTatek(\Cc)$
    \begin{equation*}
        \widehat{V}_0/L_0\into \widehat{V}_1/L_0\onto \widehat{V}_1/\widehat{V}_0.
    \end{equation*}
    By assumption $\widehat{V}_0/L_0$ and $\widehat{V}_1/\widehat{V}_0$ are both in $\Indk(\Cc)$. Therefore $\widehat{V}_1/L_0$ is as well. By straightening the exact sequence
    \begin{equation*}
        L_0\into \widehat{V}_1\onto \widehat{V}_1/L_0
    \end{equation*}
    we see that this implies that $L_0\into \widehat{V}_1$ is a lattice as claimed. We have therefore shown that \eqref{prelocquot} induces a functor
    \begin{equation*}
        \elTatek(\Cc)/\Indk(\Cc)\to\Prok(\Cc)/\Cc.
    \end{equation*}
    From the definitions, we see that this is an inverse to the map 
    \begin{equation*}
        \Prok(\Cc)/\Cc\to\elTatek(\Cc)/\Indk(\Cc)
    \end{equation*}
    as claimed.
\end{proof}

\begin{remark}\label{rmk:KTate}
    As S. Saito \cite{Sai:13} has observed in the countable case, the propositions above combine with the Eilenberg swindle and Schlichting's localization theorem \cite{Sch:04} to show that $K_i(\Tatek(\Cc))\cong K_{i-1}(\Cc)$, when $\Cc$ is idempotent complete.
\end{remark}

\section{Sato Grassmannians}\label{sec:applat}
Let $k$ be a field and consider the Tate vector space $k((t))$. Sato and Sato \cite{SaS:83} introduced an infinite dimensional Grassmannian $\Gr(k((t)))$ whose points correspond to \emph{lattices}, i.e. members of a certain class of subspaces $L\subset k((t))$. They then constructed a \emph{determinant line bundle} $\mathcal{L}\to\Gr(k((t)))$ and employed this to great effect in applications. The key properties required for the construction of the determinant line are:
\begin{enumerate}
    \item for any nested pair of lattices $L_0\subset L_1\subset k((t))$, the quotient $L_1/L_0$ is finite dimensional, and
    \item for any pair of lattices $L_0$ and $L_1$, there exists a common enveloping lattice $N$ with $L_i\subset N$ for $i=0,1$.
\end{enumerate}

In this section, we recall the definition of lattices and Sato Grassmannians for elementary Tate objects in general exact categories (see also Previdi \cite{Pre:12}). We show that, for any $\Cc$, the construction of the Grassmannian is natural with respect to exact functors, and that the analogue of the first property holds. If $\Cc$ is idempotent complete, we show that the analogue of the second holds as well.

\begin{definition}\label{def:lattice}
    Let $V\in\elTatek(\Cc)$.
    \begin{enumerate}
        \item A \emph{lattice} $L$ of $V$ consists of an admissible monic
            \begin{equation*}
                \begin{xy}
                    \morphism/^{ (}->/[L`V;]
                \end{xy}
            \end{equation*}
            such that $L\in\Prok(\Cc)$ and $V/L\in\Indk(\Cc)$.
        \item A \emph{co-lattice} $L^\perp$ of $V$ consists of an admissible epic
            \begin{equation*}
                \begin{xy}
                    \morphism/->>/[V`L^\perp;q]
                \end{xy}
            \end{equation*}
            such that $L^\perp\in\Indk(\Cc)$ and $\ker(q)\in\Prok(\Cc)$.
    \end{enumerate}
\end{definition}

\begin{remark}\label{rmk:lattice}\mbox{}
    \begin{enumerate}
        \item We use the term ``lattice'' to refer to what Drinfeld \cite{Dri:06} calls a ``co-projective lattice.''
        \item The proof of Theorem \ref{thm:eltatechar} shows that any object in an elementary Tate diagram is a lattice of the associated Tate object.
    \end{enumerate}
\end{remark}

\begin{definition}
    Let $V$ be an elementary Tate object in $\Cc$. The \emph{Sato Grassmannian} $\Gr(V)$ is the poset of lattices of $V$, where
    \begin{equation*}
        (L_0\into V)\le (L_1\into V)
    \end{equation*}
    if and only if there exists a commuting triangle
    \begin{equation*}
        \begin{xy}
            \Vtriangle/^{ (}->`^{ (}->`^{ (}->/<350,400>[L_0`L_1`V;f``]
        \end{xy}
    \end{equation*}
    in $\elTatek(\Cc)$ with $f$ an admissible monic.
\end{definition}

\begin{proposition}\label{prop:grfun}
    Let $V$ be an elementary Tate object in $\Cc$. An exact functor $F\colon\Cc\to\Dc$ induces an order preserving map
    \begin{equation*}
        \begin{xy}
            \morphism<750,0>[\Gr(V)`\Gr(\widetilde{F}(V));F_V]
            \morphism(0,-200)/|->/<750,0>[L`\widetilde{F}(L);]
        \end{xy}.
    \end{equation*}
\end{proposition}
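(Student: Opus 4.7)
The proof is essentially a formal unpacking of earlier functoriality results; the strategy is to apply the canonical extension $\widetilde{F}\colon\elTatek(\Cc)\to\elTatek(\Dc)$ to a lattice $L\into V$ and verify that the three defining conditions of a lattice are preserved.

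First I would invoke Proposition \ref{prop:tatefun} to extend $F$ to an exact functor $\widetilde{F}\colon\elTatek(\Cc)\to\elTatek(\Dc)$. Given a lattice $L\into V$, exactness of $\widetilde{F}$ immediately guarantees that $\widetilde{F}(L)\into\widetilde{F}(V)$ is an admissible monic in $\elTatek(\Dc)$, and furthermore identifies the cokernel with $\widetilde{F}(V/L)$. So the only substantive checks are that $\widetilde{F}(L)\in\Prok(\Dc)$ and $\widetilde{F}(V/L)\in\Indk(\Dc)$.

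For these two checks I would appeal to the compatibility of $\widetilde{F}$ with the sub-categories $\Prok(\Cc),\Indk(\Cc)\subset\elTatek(\Cc)$. Concretely, the extension of $F$ constructed on $\Indk(\Prok(\Cc))$ in the proof of Proposition \ref{prop:tatefun} is built by first extending $F$ to an exact functor $\Prok(\Cc)\to\Prok(\Dc)$ (Theorem \ref{thm:prox}(5)) and then extending the result to admissible Ind-objects via Proposition \ref{prop:FtoIndF}. Thus $\widetilde{F}$ restricts to exact functors on $\Prok(\Cc)$ and $\Indk(\Cc)$; since $L\in\Prok(\Cc)$ and $V/L\in\Indk(\Cc)$, we obtain $\widetilde{F}(L)\in\Prok(\Dc)$ and $\widetilde{F}(V/L)\in\Indk(\Dc)$, so $\widetilde{F}(L)\into\widetilde{F}(V)$ is a lattice.

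For order preservation: if $L_0\le L_1$ is witnessed by a commuting triangle of admissible monics in $\elTatek(\Cc)$ with the diagonal $L_0\into L_1$ an admissible monic, then applying the exact functor $\widetilde{F}$ produces a commuting triangle in $\elTatek(\Dc)$ with $\widetilde{F}(L_0)\into\widetilde{F}(L_1)$ an admissible monic and both vertical maps admissible monics into $\widetilde{F}(V)$; hence $F_V(L_0)\le F_V(L_1)$. I do not anticipate a serious obstacle here: the only point that requires care is tracking that the several extensions of $F$ to $\Prok(\Cc)$, $\Indk(\Cc)$, and $\Indk(\Prok(\Cc))$ agree on the nose, which is built into the construction used in Proposition \ref{prop:tatefun}.
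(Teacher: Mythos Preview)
Your proposal is correct and follows essentially the same approach as the paper: invoke Proposition \ref{prop:tatefun} for the exact extension $\widetilde{F}$, apply it to the exact sequence $L\into V\onto V/L$, and note that $\widetilde{F}$ preserves admissible Pro and Ind-objects. The paper's proof is slightly terser (it simply asserts that $\widetilde{F}$ preserves Pro and Ind-objects rather than unpacking the construction as you do), but the argument is the same.
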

\begin{proof}
    Because $F$ extends to an exact functor $\widetilde{F}\colon\elTatek(\Cc)\to\elTatek(\Dc)$ (Proposition \ref{prop:tatefun}), order preserving will follow once we show that $F_V$ is well defined. Let
    \begin{equation*}
        \begin{xy}
            \morphism/^{ (}->/[L`V;]
        \end{xy}
    \end{equation*}
    be a lattice in $V$. It defines an exact sequence
    \begin{equation*}
        \begin{xy}
            \morphism/^{ (}->/[L`V;]
            \morphism(500,0)/->>/[V`V/L;]
        \end{xy}
    \end{equation*}
    of elementary Tate objects in $\Cc$, where $V/L$ is an admissible Ind-object. The functor $\widetilde{F}$ preserves exact sequences as well as admissible Pro and Ind-objects, so
    \begin{equation*}
        \begin{xy}
            \morphism/^{ (}->/[\widetilde{F}(L)`\widetilde{F}(V);]
            \morphism(500,0)/->>/[\widetilde{F}(V)`\widetilde{F}(V/L);]
        \end{xy}
    \end{equation*}
    is an exact sequence of elementary Tate objects in $\Dc$, such that $\widetilde{F}(L)$ is an admissible Pro-object, and such that $\widetilde{F}(V/L)$ is an admissible Ind-object.
\end{proof}

\begin{remark}
    Let $R$ be a commutative ring. Let $V$ be an elementary Tate $R$-module. Proposition \ref{prop:grfun} shows that the Sato Grassmannian $\Gr(V)$ defines a presheaf over $\text{Spec}(R)$. The Sato Grassmannian $\Gr(V)$ can in fact be viewed as an Ind-projective Ind-scheme which is Ind-proper over $\text{Spec}(R)$ \cite[Proposition 3.8; Remark b)]{Dri:06}.
\end{remark}

\begin{proposition}\label{prop:latcomm}
    Let $L_0\into L_1\into V$ be a nested pair of lattices in $V\in\elTatek(\Cc)$. The quotient $L_1/L_0$ is an object of $\Cc$.
\end{proposition}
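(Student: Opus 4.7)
The plan is to realize $L_1/L_0$ as an object lying in both $\Prok(\Cc)$ and $\Indk(\Cc)$, and then apply the identification $\Cc \simeq \Indk(\Cc) \cap \Prok(\Cc)$ inside $\elTatek(\Cc)$ (Proposition \ref{prop:proandindintate}).

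First I would observe that $L_1/L_0$ lies in $\Prok(\Cc)$. Since $L_0 \hookrightarrow L_1$ is an admissible monic in $\elTatek(\Cc)$ between objects of $\Prok(\Cc)$, and $\Prok(\Cc)$ is closed under extensions in $\elTatek(\Cc)$ (indeed, it is left s-filtering there by Proposition \ref{prop:prointatesfilt}, hence in particular extension-closed by Lemma \ref{lemma:lsextclosed}), the cokernel $L_1/L_0$ formed in $\elTatek(\Cc)$ coincides with the cokernel taken in $\Prok(\Cc)$, and so $L_1/L_0 \in \Prok(\Cc)$.

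Next I would show $L_1/L_0 \in \Indk(\Cc)$. Applying Noether's Lemma to the chain $L_0 \hookrightarrow L_1 \hookrightarrow V$ in $\elTatek(\Cc)$ yields a short exact sequence
\begin{equation*}
    L_1/L_0 \hookrightarrow V/L_0 \twoheadrightarrow V/L_1.
\end{equation*}
Since $L_0$ and $L_1$ are lattices, both $V/L_0$ and $V/L_1$ belong to $\Indk(\Cc)$. Now I invoke the first clause of Proposition \ref{prop:indintate}: in a short exact sequence in $\Indk(\Prok(\Cc))$, if the middle and right terms lie in $\Indk(\Cc)$, so does the left term. Hence $L_1/L_0 \in \Indk(\Cc)$.

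Combining the two steps, $L_1/L_0 \in \Prok(\Cc) \cap \Indk(\Cc)$, so by Proposition \ref{prop:proandindintate} we conclude $L_1/L_0 \in \Cc$. There is no real obstacle here once the two intersection memberships are established; the only subtlety is making sure that the cokernel formed in $\elTatek(\Cc)$ really does agree with the cokernel formed in $\Prok(\Cc)$, which is where left s-filtering (and in particular extension-closedness) of $\Prok(\Cc) \subset \elTatek(\Cc)$ is used.
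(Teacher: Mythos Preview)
Your approach is essentially the same as the paper's: show $L_1/L_0\in\Prok(\Cc)\cap\Indk(\Cc)$ via left s-filtering of $\Prok(\Cc)$ and Proposition~\ref{prop:indintate}, then apply Proposition~\ref{prop:proandindintate}.

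There is one slip in your justification of Step~1. Extension-closedness of $\Prok(\Cc)\subset\elTatek(\Cc)$ says only that if the \emph{outer} terms of an exact sequence lie in $\Prok(\Cc)$ then so does the middle; it does not imply that an admissible monic in $\elTatek(\Cc)$ between two Pro-objects has cokernel in $\Prok(\Cc)$. What you actually need is closure of $\Prok(\Cc)$ under admissible \emph{quotients} in $\elTatek(\Cc)$. This does follow from left s-filtering, but via Proposition~\ref{prop:buh} (the equivalence with Schlichting's formulation, whose first condition says $Y\in\Prok(\Cc)$ iff both $X$ and $Z$ are), not via Lemma~\ref{lemma:lsextclosed}. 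The paper's proof invokes exactly this: $L_1/L_0$ is an admissible quotient of $L_1\in\Prok(\Cc)$, and $\Prok(\Cc)$ is left s-filtering, hence $L_1/L_0\in\Prok(\Cc)$. Once you replace ``extension-closed'' by ``closed under admissible quotients'' in your argument, it is correct and matches the paper.

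A minor remark on Step~2: Proposition~\ref{prop:indintate} actually gives the stronger statement that the middle term lies in $\Indk(\Cc)$ iff both outer terms do, so you only need $V/L_0\in\Indk(\Cc)$ to conclude $L_1/L_0\in\Indk(\Cc)$; knowing $V/L_1\in\Indk(\Cc)$ is not required.
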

\begin{proof}
    By Proposition \ref{prop:proandindintate}, it suffices to show that $L_1/L_0$ lies in $\Indk(\Cc)\cap\Prok(\Cc)$.

    By Noether's Lemma \cite[Lemma 3.5]{Buh:10}, a nested pair of lattices gives rise to an exact sequence
    \begin{equation*}
        \begin{xy}
            \morphism/^{ (}->/[L_1/L_0`V/L_0;]
            \morphism(500,0)/->>/[V/L_0`V/L_1;]
        \end{xy}.
    \end{equation*}
    The object $L_1/L_0$ is an admissible Pro-object. Indeed, it is an admissible quotient in $\elTatek(\Cc)$ of an admissible Pro-object, and $\Prok(\Cc)\subset\elTatek(\Cc)$ is left s-filtering. Because $V/L_0$ is an admissible Ind-object, Proposition \ref{prop:indintate} implies that $L_1/L_0$ is also an admissible Ind-object.
\end{proof}

\begin{theorem}\label{thm:grdir}
    Let $\Cc$ be idempotent complete. Let $V\in\elTatek(\Cc)$. The Sato Grassmannian $\Gr(V)$ is a directed and co-directed poset.
\end{theorem}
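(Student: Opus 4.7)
I prove directedness first; co-directedness follows by a parallel argument. Given lattices $L_0, L_1 \into V$, my plan is to construct a common enveloping lattice $N$, using $L_0$ as the reference point and exploiting that $V/L_0 \in \Indk(\Cc)$.

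Present $V/L_0 \cong \colim_j Y_j$ via an admissible Ind-diagram with $Y_j \in \Cc$. Because $L_1$ is a Pro-object, $\hom_{\elTatek(\Cc)}(L_1, \colim_j Y_j) \cong \colim_j \hom_{\Prok(\Cc)}(L_1, Y_j)$, so the composite $L_1 \into V \onto V/L_0$ factors through some $L_1 \to Y_j \into V/L_0$. I pull back the admissible epic $V \onto V/L_0$ along the admissible monic $Y_j \into V/L_0$ to form $N$; this gives an admissible monic $N \into V$ and an exact sequence $L_0 \into N \onto Y_j$ in $\elTatek(\Cc)$. Closure under extensions (Theorem \ref{thm:prox}) yields $N \in \Prok(\Cc)$, and $V/N \cong (V/L_0)/Y_j \in \Indk(\Cc)$ by Proposition \ref{prop:indintate}, so $N$ is a lattice with $L_0 \into N$ canonical. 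The universal property of the pullback produces a map $\iota \colon L_1 \to N$ whose composite with $N \into V$ is the admissible monic $L_1 \into V$.

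For co-directedness, I mirror the construction: present $V/L_1 \cong \colim_j Z_j$ with $Z_j \in \Cc$, factor $L_0 \to V/L_1$ through some $L_0 \to Z_j$, and apply right filtering of $\Cc \subset \Prok(\Cc)$ (Theorem \ref{thm:prox}) to refine to $L_0 \onto Q \to Z_j$ with $Q \in \Cc$ and $L_0 \onto Q$ an admissible epic. Setting $K := \ker(L_0 \onto Q)$ in $\Prok(\Cc)$ makes $K \into L_0$ admissible monic, and the composite $K \into V \onto V/L_1$ vanishes (it already vanishes on $Q$), so $K \into V$ factors through $L_1$, producing $K \to L_1$. The exact sequence $Q \into V/K \onto V/L_0$, combined with closure under extensions (Theorem \ref{thm:indleftspecial}), gives $V/K \in \Indk(\Cc)$, confirming $K$ is a lattice.

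The main obstacle, in both constructions, is to verify that the induced maps $\iota \colon L_1 \to N$ and $K \to L_1$ are \emph{admissible} monics, not merely arbitrary morphisms. Since in each case the composite with the larger admissible monic into $V$ recovers the given admissible inclusion, what is needed is the obscure axiom for $\elTatek(\Cc)$. The ambient $\elTatek(\Cc)$ need not be weakly idempotent complete (cf.\ Section \ref{sec:notic}), so this is precisely where the hypothesis that $\Cc$ is idempotent complete is essential: idempotent completeness of $\Cc$ lets one split any obstructing idempotent sitting in the small $\Cc$-quotient ($Y_j$ in the directedness construction, $Q$ in the co-directedness construction) that measures the failure of the factoring map to be admissible. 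Isolating this promotion-of-factorizations-to-admissible-monics as a separate technical lemma is the heart of the proof.
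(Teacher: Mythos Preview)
Your proposal is correct and follows essentially the same architecture as the paper: construct a candidate common over/under-lattice, then invoke a separate technical lemma (the paper's Lemma~\ref{lemma:grdirkey}) asserting that any map between lattices compatible with the inclusions into $V$ is automatically an admissible monic when $\Cc$ is idempotent complete. Your constructions of the candidates differ cosmetically from the paper's---you build $N$ as a pullback along a stage $Y_j$ of $V/L_0$, while the paper simply takes a stage $V_i$ of an elementary Tate diagram for $V$ through which $L_0\oplus L_1\to V$ factors; you build $K$ as a kernel inside $L_0$, while the paper uses right filtering of $\Indk(\Cc)$ in $\elTatek(\Cc)$ applied to $V\to V/L_0\oplus V/L_1$---but these are equivalent in difficulty and both reduce to the same lemma.

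One caution on your sketch of that lemma: the obstructing cokernel is not literally a summand of $Y_j$ or $Q$. The paper's argument instead shows that the cokernel $X$ (which exists in the idempotent completion $\Tatek(\Cc)$ by the obscure axiom) is simultaneously a retract of an auxiliary Pro-object and of an auxiliary Ind-object, obtained via the left filtering of $\Prok(\Cc)$ and the right filtering of $\Indk(\Cc)$ in $\elTatek(\Cc)$; hence $X\in\FM^\vee(\Cc)\cap\FM(\Cc)$, which equals $\Cc$ by Proposition~\ref{prop:fmandfmveeintate} when $\Cc$ is idempotent complete. The idempotents being split live in those auxiliary objects, not in the ``small $\Cc$-quotient'' you name.
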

\begin{remark}
    As we remarked in the introduction, we view this as the most important theorem of this paper. It implies that, given a pair of lattices $L_0,~L_1$ in a Tate module $V$, one can define the \emph{index bundle} of $L_0$ and $L_1$ to be the finitely generated $\mathbb{Z}/2$-graded projective $R$-module
    \begin{equation*}
        L_0/N\oplus L_1/N,
    \end{equation*}
    where $N$ is any common sub-lattice. This definition recalls Atiyah's construction of the index of a continuous family of Fredholm operators \cite[Appendix A]{Ati:67}. In \cite{BGW:13}, we show that the assignment of an index bundle to a pair of lattices extends, independent of the choice of sub-lattice, to a natural map from $\Gr(V)\times\Gr(V)$ to the algebraic $K$-theory space of $R$.
\end{remark}

The following lemma contains the core of the proof of the theorem.
\begin{lemma}\label{lemma:grdirkey}
    Let $\Cc$ be idempotent complete.
    \begin{enumerate}
        \item Let $\imath_0\colon L_0\into V$ and $\imath_1\colon L_1\into V$ be lattices of an elementary Tate object $V\in\elTatek(\Cc)$, and let
            \begin{equation}\label{latmon}
                \begin{xy}
                    \Vtriangle/>`^{ (}->`^{ (}->/<350,450>[L_0`L_1`V;f`\imath_0`\imath_1]
                \end{xy}
            \end{equation}
            be a commuting triangle in $\elTatek(\Cc)$. Then $f\colon L_0\to L_1$ is an admissible monic in $\Prok(\Cc)$.
        \item Let $q_0\colon V\to L_0^\perp$ and $q_1\colon V\to L_1^\perp$ be co-lattices of $V$, and let
            \begin{equation}\label{colatep}
                \begin{xy}
                    \Atriangle/->>`->>`>/<350,450>[V`L_0^\perp`L_1^\perp;q_0`q_1`g]
                \end{xy}
            \end{equation}
            be a commuting triangle in $\elTatek(\Cc)$. Then $g\colon L_0\to L_1$ is an admissible epic in $\Indk(\Cc)$.
    \end{enumerate}
\end{lemma}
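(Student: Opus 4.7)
My plan for (1) is to construct the cokernel of $f$ directly inside the abelian category $\lex(\Prok(\Cc))$, show it lies in $\Cc$, and then transfer exactness back to $\Prok(\Cc)$.

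I would begin by noting that $f$ is at least a monomorphism: since $\imath_0 = \imath_1 \circ f$ is monic and $\imath_1$ is monic, a standard cancellation argument shows $f$ is monic as well. In fact, forming the abelian pullback $P := L_0 \times_V L_1$ in $\lex(\Prok(\Cc))$, the commuting triangle provides a canonical section $s = (\mathrm{id}_{L_0}, f)$ of the first projection $g \colon P \to L_0$. Since $g$ is a monomorphism (as a pullback of the monic $\imath_1$) and has a section, it is an isomorphism, and under this isomorphism $f$ coincides with the second projection $P \to L_1$.

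Next I would identify the abelian cokernel $Q := L_1/L_0$, which fits into the abelian short exact sequence $Q \hookrightarrow V/L_0 \twoheadrightarrow V/L_1$ by Noether's lemma applied to $L_0 \hookrightarrow L_1 \hookrightarrow V$. Both $V/L_0$ and $V/L_1$ lie in $\Indk(\Cc)$ since $\imath_0$ and $\imath_1$ are lattices. Upgrading this to an exact sequence in $\Indk(\Prok(\Cc))$ allows one to invoke Proposition \ref{prop:indintate}(1) to conclude $Q \in \Indk(\Cc)$. Independently, $Q$ arises as an epic image of $L_1 \in \Prok(\Cc)$, so applying the left filtering property of $\Prok(\Cc) \subset \elTatek(\Cc)$ (Proposition \ref{prop:prointatesfilt}) produces an admissible monic $X' \hookrightarrow Q$ with $X' \in \Prok(\Cc)$ through which this epic factors. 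The composite being epic forces the admissible monic $X' \hookrightarrow Q$ to be epic, hence an isomorphism (since an epic admissible monic in any exact category is automatically an isomorphism). Thus $Q \cong X' \in \Prok(\Cc)$, and combining $Q \in \Indk(\Cc) \cap \Prok(\Cc)$ with Proposition \ref{prop:proandindintate} yields $Q \in \Cc$. Since $L_0, L_1, Q$ all lie in $\Prok(\Cc)$ and the sequence $L_0 \hookrightarrow L_1 \twoheadrightarrow Q$ is exact in the abelian ambient, closure of $\Prok(\Cc) \subset \elTatek(\Cc)$ under extensions (Lemma \ref{lemma:lsextclosed}) promotes this to an exact sequence in $\Prok(\Cc)$, so $f$ is an admissible monic there.

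The main obstacle will be verifying that the sequence $Q \hookrightarrow V/L_0 \twoheadrightarrow V/L_1$ is exact in $\Indk(\Prok(\Cc))$ and not merely in the abelian ambient category; equivalently, one must show that the induced map $V/L_0 \to V/L_1$ is admissible epic in $\Indk(\Cc)$. I expect to handle this by straightening both $V \twoheadrightarrow V/L_1$ and its factor $V \twoheadrightarrow V/L_0$ through the elementary Tate diagrams representing $V$, and using the idempotent completeness of $\Cc$ to split the retract that identifies the abelian cokernel of the diagram-level maps with the exact-category cokernel. Part (2) will then follow formally from (1) applied to the idempotent complete exact category $\Cc^{\op}$: under the identification $\Prok(\Cc^{\op}) \cong \Indk(\Cc)^{\op}$, admissible monics between lattices in $\Cc$ correspond to admissible epics between co-lattices in $\Cc^{\op}$, so the dual statement transfers directly.
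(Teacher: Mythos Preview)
Your argument contains a genuine circularity at exactly the point you flag as ``the main obstacle,'' and your proposed fix does not close it. Every filtering property you invoke---Proposition~\ref{prop:indintate}(1), Proposition~\ref{prop:prointatesfilt}---is stated for morphisms and exact sequences \emph{inside} $\elTatek(\Cc)$ or $\Indk(\Prok(\Cc))$. But the object $Q=L_1/L_0$ you form is, at the outset, only known to lie in the abelian ambient $\lex(\Prok(\Cc))$; you have no a~priori reason to believe $Q\in\Indk(\Prok(\Cc))$. Concretely, to apply left filtering to the map $L_1\to Q$ you need the target to be an elementary Tate object, and to apply Proposition~\ref{prop:indintate}(1) to the Noether sequence $Q\hookrightarrow V/L_0\twoheadrightarrow V/L_1$ you need it to be exact in $\Indk(\Prok(\Cc))$, which again presupposes $Q\in\Indk(\Prok(\Cc))$. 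Your suggestion to straighten the admissible epics $V\twoheadrightarrow V/L_a$ does not help: knowing that the composite $V\twoheadrightarrow V/L_0\to V/L_1$ is an admissible epic would let you conclude that $V/L_0\to V/L_1$ is admissible epic only if the exact category were weakly idempotent complete (this is \cite[Proposition~7.6]{Buh:10}), and $\elTatek(\Cc)$ is \emph{not} weakly idempotent complete in general---this is precisely the remark following the lemma in the paper.

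The paper escapes the circularity by passing to the idempotent completion $\Tatek(\Cc)$: there $f$ \emph{is} an admissible monic by \cite[Proposition~7.6]{Buh:10}, so its cokernel $X$ exists as an honest Tate object. One then shows $X$ is a retract of an admissible Pro-object (via the left filtering of $\Prok(\Cc)$ applied to a map between \emph{elementary} Tate objects, namely $L_1\to V\cup_{L_0}L_1$) and also a retract of an admissible Ind-object (via right filtering of $\Indk(\Cc)$), whence $X\in\FM^\vee(\Cc)\cap\FM(\Cc)=\Cc$ by Proposition~\ref{prop:fmandfmveeintate}. Only \emph{then} does the sequence $L_0\hookrightarrow L_1\twoheadrightarrow X$ descend to $\Prok(\Cc)$. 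Your duality argument for part~(2) would work, but the paper instead derives~(2) from~(1) via Noether's lemma applied to the kernels of $q_0,q_1$---this is legitimate because at that stage~(1) has already been established.
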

\begin{remark}\mbox{}
    \begin{enumerate}
        \item When $\elTatek(\Cc)$ is idempotent complete, this lemma is an immediate consequence of \cite[Proposition 7.6]{Buh:10}. However, this is not generally the case.
        \item If $\Cc$ is split exact, the category $\elDrate(\Cc)$ of elementary Tate objects \emph{\`{a} la} Drinfeld (see Remark \ref{rmk:drate}) is split exact as well. This simplifies the proof of the lemma considerably, since for split exact categories, every admissible sub-object or quotient is in fact a direct summand. Hence, once we adapt the definition of lattices and co-lattices to $\elDrate(\Cc)$ in the natural fashion, it immediately follows that $L_1/L_0$ lies in $P^\vee(\Cc)$ and, by Noether's lemma, also in $P(\Cc)$, and similarly for $\ker(g)$.
    \end{enumerate}
\end{remark}
\begin{proof}[Proof of Lemma \ref{lemma:grdirkey}]
    We begin by proving that for any triangle of the form \eqref{latmon}, $f$ is an admissible monic in $\Prok(\Cc)$. Because the category $\Prok(\Cc)\subset\elTatek(\Cc)$ is closed under extensions, it suffices to show that $f$ fits into an exact sequence
    \begin{equation*}
        0\to L_0\to^f L_1\to X\to 0
    \end{equation*}
    in $\elTatek(\Cc)$ with $X\in\Cc$.

    Because $\imath_1f=\imath_0$ is an admissible monic in $\elTatek(\Cc)$, the dual of \cite[Proposition 7.6]{Buh:10} shows that $f$ is an admissible monic in the idempotent complete category $\Tatek(\Cc)$. We can construct the cokernel of $f$ as follows. Consider the pushout square
    \begin{equation*}
        \begin{xy}
            \square/>`^{ (}->`^{ (}->`>/[L_0`L_1`V`V\cup_{L_0} L_1;f`\imath_0``f']
        \end{xy}
    \end{equation*}
    in $\elTatek(\Cc)$. The map $\imath_1$ determines a map
    \begin{equation*}
        1\oplus \imath_1\colon V\cup_{L_0} L_1\to V.
    \end{equation*}
    By construction, $(1\oplus\imath_1)f'=1_V$. Therefore $f'(1\oplus\imath_1)$ gives an idempotent of the elementary Tate object $V\cup_{L_0} L_1$. In $\Tatek(\Cc)$, this idempotent splits, and we obtain a diagram
    \begin{equation*}
        \begin{xy}
            \square/^{ (}->`^{ (}->`^{ (}->`^{ (}->/[L_0`L_1`V`V\cup_{L_0} L_1;f`\imath_0``f']
            \square(500,0)/>`^{ (}->`=`>/[L_1`X`V\cup_{L_0}L_1`X;```]
        \end{xy}
    \end{equation*}
    whose bottom row is split exact and whose left square is a pushout in which all maps are admissible monics. We conclude by \cite[Proposition 2.12]{Buh:10} that $\coker(f)\cong\coker(f')=X$.

    We will now show that $X$ is isomorphic both to the image of an idempotent of an admissible Pro-object, and also to the image of an idempotent of an admissible Ind-object. As a result, $X\in \FM^\vee(\Cc)\cap\FM(\Cc)$. When $\Cc$ is idempotent complete, this shows that $X\in\Cc$ (Proposition \ref{prop:fmandfmveeintate}).

    Denote by $X\to^s V\cup_{L_0}L_1$ the inclusion of the summand, and denote by $q$ the projection back onto $X$. Consider the map of elementary Tate objects $\alpha\colon L_1\to V\cup_{L_0}L_1$ given in $\Tatek(\Cc)$ by the composition
    \begin{equation*}
        L_1\onto X\into^s V\cup_{L_0} L_1.
    \end{equation*}
    (we are using that a category embeds fully faithfully into its idempotent completion). Because $\Prok(\Cc)\subset\elTatek(\Cc)$ is left filtering (Proposition \ref{prop:prointatesfilt}), the map $\alpha$ factors through an admissible monic
    \begin{equation*}
        \begin{xy}
            \Vtriangle/>`>`^{ (}->/<350,400>[L_1`A`V\cup_{L_0}L_1;`\alpha`]
        \end{xy}
    \end{equation*}
    with $A\in\Prok(\Cc)$. By the universal property of cokernels, this triangle determines a commuting triangle
    \begin{equation*}
        \begin{xy}
            \Vtriangle/>`>`^{ (}->/<350,400>[X`A`V\cup_{L_0}L_1;`s`]
        \end{xy}.
    \end{equation*}
    This triangle guarantees that the map $A\to(V\cup_{L_0}L_1)\to^q X$ is left inverse to the map $X\to A$. Accordingly, the map
    \begin{equation*}
        A\to (V\cup_{L_0}L_1)\to^q X\to A
    \end{equation*}
    is an idempotent, and $X\in\FM^\vee(\Cc)$.

    Now consider the map of elementary Tate objects $\beta\colon V\cup_{L_0}L_1\to V/L_0$ given in $\Tatek(\Cc)$ by the composition
    \begin{equation*}
        V\cup_{L_0} L_1\onto^q X\cong L_1/L_0\into V/L_0.
    \end{equation*}
    Because the category $\Indk(\Cc)\subset\elTatek(\Cc)$ is right filtering (Proposition \ref{prop:indintate}), the map $\beta$ factors through an admissible epic
    \begin{equation*}
        \begin{xy}
            \Vtriangle/->>`>`>/<350,400>[V\cup_{L_0}L_1`B`V/L_0;`\beta`]
        \end{xy}
    \end{equation*}
    with $B\in\Indk(\Cc)$. This triangle shows that the map $B\to\coker(\beta)$ is $0$. Because $X\cong\ker(V/L_0\to\coker(\beta))$, the universal property of kernels determines a commuting triangle
    \begin{equation*}
        \begin{xy}
            \Vtriangle/>`>`>/<350,400>[V\cup_{L_0}L_1`B`X;``]
        \end{xy}.
    \end{equation*}
    As above, this triangle shows that the map $B\to X$ is left inverse to the map $X\to^s V\cup_{L_0}L_1\to B$. Accordingly, the map
    \begin{equation*}
        B\to X\to^s V\cup_{L_0}L_1\to B
    \end{equation*}
    is an idempotent, and $X\in\FM(\Cc)$ as well.

    We have shown that $X\in\FM^\vee(\Cc)\cap\FM(\Cc)$. When $\Cc$ is idempotent complete, we conclude that $X\in\Cc$ by Proposition \ref{prop:fmandfmveeintate}. Because $\elTatek(\Cc)\subset\Tatek(\Cc)$ is a fully exact sub-category, and because $\Prok(\Cc)\subset\elTatek(\Cc)$ is closed under extensions, we conclude that $f$ is an admissible monic in $\Prok(\Cc)$.

    Now suppose we are given a triangle of the form \eqref{colatep}. As above, it suffices to show that $\coker(g)\in\Indk(\Cc)$. By the universal property of kernels, \eqref{colatep} determines a commuting triangle
    \begin{equation*}
        \begin{xy}
            \Vtriangle/>`^{ (}->`^{ (}->/<350,450>[\ker(q_0)`\ker(q_1)`V;g'``]
        \end{xy}
    \end{equation*}
    where both maps to $V$ are inclusions of lattices. Our argument above shows that $g'$ is an admissible monic in $\Prok(\Cc)$, and Noether's Lemma \cite[Lemma 3.5]{Buh:10} shows that $\ker(g)\cong\ker(q_1)/\ker(q_0)$. By Proposition \ref{prop:latcomm}, we have $\ker(q_1)/\ker(q_0)\in\Cc$. We conclude that $g$ is an admissible epic in $\Indk(\Cc)$.
\end{proof}

\begin{proof}[Proof of Theorem \ref{thm:grdir}]
    Let $L_0$ and $L_1$ be two lattices of $V$. We begin by showing that there exists a lattice with both $L_0$ and $L_1$ as admissible sub-objects. Let $V\colon I\to\Prok(\Cc)$ be an elementary Tate diagram representing $V$. Because $\Prok(\Cc)\subset\elTatek(\Cc)$ is left s-filtering (Proposition \ref{prop:prointatesfilt}), and because $L_0\oplus L_1$ is an admissible Pro-object, there exists $i\in I$, such that the morphism $L_0\oplus L_1\to V$ factors through the admissible monic $V_i\into V$. By Lemma \ref{lemma:grdirkey}, both maps $L_0\to V_i$ and $L_1\to V_i$ are admissible monics in $\Prok(\Cc)$. We conclude that $\Gr(V)$ is directed.

    We now show that there exists a common sub-lattice of $L_0$ and $L_1$. Because $\Indk(\Cc)\subset\elTatek(\Cc)$ is right filtering (Proposition \ref{prop:indintate}), the map
   \begin{equation*}
        V\to V/L_0\oplus V/L_1
    \end{equation*}
    factors through an admissible epic $V\onto V/N$ for some $V/N\in\Indk(\Cc)$. Define
    \begin{equation*}
        N:=\ker(V\onto V/N).
    \end{equation*}
    The universal property of kernels ensures that the admissible monic
    \begin{equation*}
        N\into V
    \end{equation*}
    factors through the admissible monics $L_i\into V$ for $i=0,~1$. By Lemma \ref{lemma:grdirkey}, it suffices to show that $N$ is a lattice, or, equivalently, that $N\in\Prok(\Cc)$.

    Because $\Prok(\Cc)\subset\elTatek(\Cc)$ is left s-filtering, it is closed under admissible sub-objects (Proposition \ref{prop:buh}). Therefore, it suffices to show that $N\to L_0$ is an admissible monic in $\elTatek(\Cc)$. Further, because $\elTatek(\Cc)\subset\lex(\Prok(\Cc))$ is closed under extensions, it suffices to show that the left exact presheaf $L_0/N$ is actually an object in $\elTatek(\Cc)$. The same argument as in the proof of Lemma \ref{lemma:grdirkey} can be used to show that, because $\Cc$ is idempotent complete, $L_0/N$ is actually an object in $\Cc$. We conclude that $N\into V$ is a common sub-lattice of $L_0$ and $L_1$.
\end{proof}

\section{n-Tate Objects}\label{sec:ntate}
In this section we consider $n$-Tate objects, and record their basic properties. We then recall the Beilinson--Parshin theory of $n$-dimensional ad\`{e}les and we show that the $n$-dimensional ad\`{e}les of an $n$-dimensional scheme are naturally an $n$-Tate object.

\subsection{The Category of n-Tate Objects and its Properties}
\begin{definition}
    Let $\Cc$ be idempotent complete. Define the category $\nelTatek(\Cc)$ of \emph{elementary $n$-Tate objects of size at most $\kappa$} by
    \begin{equation*}
        \nelTatek(\Cc):=\elTatek((\mathsf{n}-1)\text{-}\Tatek(\Cc))
    \end{equation*}
    The category $\nTatek(\Cc)$ of \emph{$n$-Tate objects of size at most $\kappa$} is the idempotent completion of $\nelTatek(\Cc)$.
\end{definition}

\begin{example}
    Define a \emph{$0$-dimensional local field} to be a finite field. Let $k$ be a finite field. Define an \emph{$n$-dimensional local field over $k$} to be a complete discrete valuation field $F$ with ring of integers $R$ such that the residue field of $R$ is an $(n-1)$-dimensional local field over $k$. Vector spaces over $n$-dimensional local fields are canonically elementary $n$-Tate objects in the category of finitely generated abelian groups.
\end{example}

The results of Sections \ref{sec:tate} and \ref{sec:applat} carry over to $n$-Tate objects.
\begin{theorem}\label{thm:ntatex}
    Let $\Cc$ be an idempotent complete exact category.
    \begin{enumerate}
        \item $\elTatek(\Cc)$ is a the smallest exact sub-category of $\Indk(\Prok((\mathsf{n}-1)\text{-}\Tatek(\Cc)))$ which contains $\Prok((\mathsf{n}-1)\text{-}\Tatek(\Cc))$, and $\Indk((\mathsf{n}-1)\text{-}\Tatek(\Cc))$ and which is closed under extensions.
        \item For $k\ge 0$, the exact categories $\nelTatek(S_k\Cc)$ and $S_k\nelTatek(\Cc)$ are canonically equivalent. The exact categories $\nTatek(S_k\Cc)$ and $S_k\nTatek(\Cc)$ are canonically equivalent as well.
        \item An exact functor $F\colon\Cc\to\Dc$ extends canonically to a pair of exact functors
            \begin{equation*}
                \begin{xy}
                    \square/>`^{ (}->`^{ (}->`>/<1000,500>[\nelTatek(\Cc)`\nelTatek(\Dc)`\nTatek(\Cc)`\nTatek(\Dc);\widetilde{F}```\widetilde{F}]
                \end{xy}
            \end{equation*}
            If $F$ is faithful, fully faithful, or an equivalence, then so are both functors $\widetilde{F}$.
        \item The sub-category $\Prok((\mathsf{n}-1)\text{-}\Tatek(\Cc))\subset\nelTatek(\Cc)$ is left s-filtering.
        \item The category $\nelTatec(\Cc)$ is split exact if $\Cc$ is.
        \item Every elementary $n$-Tate object has a lattice.
        \item Let $V$ be an elementary $n$-Tate object in $\Cc$. An exact functor $F\colon\Cc\to\Dc$ induces an order-preserving map
            \begin{equation*}
                \begin{xy}
                    \morphism<750,0>[\Gr(V)`\Gr(\widetilde{F}(V));F_V]
                    \morphism(0,-200)/|->/<750,0>[L`\widetilde{F}(L);]
                \end{xy}.
            \end{equation*}
        \item The quotient of a lattice by a sub-lattice is an $(n-1)$-Tate object.
        \item The Sato Grassmannian of an elementary $n$-Tate object is a directed and co-directed poset.
    \end{enumerate}
\end{theorem}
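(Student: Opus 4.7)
The plan is to reduce the statement directly to Theorem \ref{thm:grdir} applied in the ambient exact category $\Dc := (n{-}1)\text{-}\Tatek(\Cc)$. By the recursive definition $\nelTatek(\Cc) = \elTatek((n{-}1)\text{-}\Tatek(\Cc))$, an elementary $n$-Tate object $V$ in $\Cc$ is literally an elementary Tate object in $\Dc$; and, unwinding Definition \ref{def:lattice}, the $n$-Tate Sato Grassmannian of $V$ is nothing other than $\Gr(V)$ computed with $\Dc$ playing the role of the base exact category.

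To invoke Theorem \ref{thm:grdir}, I need $\Dc$ to be idempotent complete. For $n \geq 2$ this is automatic: $\nTatek(\Cc)$ is by definition the idempotent completion of $\nelTatek(\Cc)$, so in particular $(n{-}1)\text{-}\Tatek(\Cc)$ is idempotent complete. For the base case $n = 1$, we have $\Dc = \Cc$, which is idempotent complete by hypothesis. Theorem \ref{thm:grdir} then yields that $\Gr(V)$ is both directed and co-directed, which is the claim.

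The ``hard part'' is thus not hard at all; it is purely a matter of bookkeeping, namely that the definition of lattice in the $n$-Tate setting agrees with Definition \ref{def:lattice} when $V$ is viewed as an object of $\elTatek(\Dc)$. This is built into the recursive set-up. Note that, in the same way, part (8) of Theorem \ref{thm:ntatex} -- the quotient of a lattice by a sub-lattice is an $(n{-}1)$-Tate object -- is obtained by the identical reduction from Proposition \ref{prop:latcomm}: both results ultimately rest on the combination of $\Prok(\Dc)\subset\elTatek(\Dc)$ being left s-filtering (Proposition \ref{prop:prointatesfilt}) and $\Indk(\Dc)\subset\elTatek(\Dc)$ being right filtering (Proposition \ref{prop:indintate}), both of which are already established at the level of a general idempotent complete exact category. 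No new ideas beyond those developed in Section \ref{sec:applat} are required.
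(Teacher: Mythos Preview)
Your proposal is correct and matches the paper's own approach: the paper does not give an explicit proof of Theorem \ref{thm:ntatex}, instead prefacing it with the sentence ``The results of Sections \ref{sec:tate} and \ref{sec:applat} carry over to $n$-Tate objects,'' which is exactly the recursive reduction you describe. Your treatment of part (9) via Theorem \ref{thm:grdir} applied to $\Dc=(n{-}1)\text{-}\Tatek(\Cc)$, together with the observation that $\Dc$ is idempotent complete by construction (or by hypothesis when $n=1$), is precisely what is intended; the same reduction handles parts (1)--(8) by invoking the corresponding results of Sections \ref{sec:tate} and \ref{sec:applat}.
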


\begin{remark}
    The observations behind remarks \ref{rmk:Kcalk} and \ref{rmk:KTate} similarly show that
    \begin{equation*}
        K_i(\nTatek(\Cc))\cong K_{i-n}(\Cc)
    \end{equation*}
    when $\Cc$ is idempotent complete.
\end{remark}

\begin{proposition}
    For any $X$ in an exact category $\Cc$, denote by $X[t]$, $X[[t]]$ and $X((t))$ the admissible Ind, Pro and elementary Tate objects
    \begin{align*}
        X[t]&:=\bigoplus_{\Nb} X\text{, }\\
        X[[t]]&:=\prod_{\Nb} X\text{, and}\\
        X((t))&:=X[[t]]\oplus X[t]\in\elTatec(\Cc).
    \end{align*}
    Similarly, we define $X((t_1))\cdots((t_n))\in\nelTatec(\Cc)$ by
    \begin{equation*}
        X((t_1))\cdots((t_n)):=(X((t_1))\cdots((t_{n-1})))((t_n))
    \end{equation*}
    Now suppose that $\Cc$ is a split exact category for which there exists a collection of objects $\{S_i\}_{i\in\Nb}\subset\Cc$ such that every object $Y\in\Cc$ is a direct summand of $\bigoplus_{i=0}^n S_i$ for some $n$. As above, denote by $\widehat{\prod_{\Nb}S}$ and $\widehat{\bigoplus_{\Nb} S}$ the admissible Pro and Ind-objects
    \begin{align*}
        \widehat{\prod_{\Nb} S}&:=\prod_{\Nb}(\prod_{i\in\Nb} S_i)\text{, and}\\
        \widehat{\bigoplus_{\Nb} S}&:=\bigoplus_{\Nb}(\bigoplus_{\Nb}S_i).
    \end{align*}
    Then every countable $n$-Tate object in $\Cc$ is a direct summand of
    \begin{equation*}
        (\widehat{\prod_{\Nb} S}\oplus\widehat{\bigoplus_{\Nb} S})((t_2))\cdots((t_n)).
    \end{equation*}
\end{proposition}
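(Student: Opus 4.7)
The plan is to prove the proposition by induction on $n$, with the base case $n=1$ being exactly Proposition \ref{prop:bigtateobject}. For the induction, let me write $T_k := (\widehat{\prod_{\Nb} S}\oplus\widehat{\bigoplus_{\Nb} S})((t_2))\cdots((t_k))$ (with $T_1 := \widehat{\prod_{\Nb} S}\oplus\widehat{\bigoplus_{\Nb} S}$), so the claim is that every object of $\nTatec(\Cc)$ is a direct summand of $T_n$.

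Assume the result for $n-1$, and set $\Cc_{n-1} := (\mathsf{n}{-}1)\text{-}\Tatec(\Cc)$. I would first verify that the category $\Cc_{n-1}$ satisfies the hypotheses of Proposition \ref{prop:bigtateobject}. For split-exactness, I would argue by a secondary induction: the split-exactness of $\Cc$ is preserved by $\Indc(-)$ (Proposition \ref{prop:indcsplit}), by $\Proc(-)$ (Theorem \ref{thm:prox}), by $\elTatec(-)$ (Proposition \ref{prop:tatesplit}), and by idempotent completion (since a split exact sequence remains split in summands). Iterating $n-1$ times shows $\Cc_{n-1}$ is split exact. For the generating-family hypothesis, the inductive hypothesis gives that every object $Y \in \Cc_{n-1}$ is already a direct summand of the single object $T_{n-1}$, so taking the constant family $S_i := T_{n-1}$ for all $i \in \Nb$ satisfies the requirement trivially (each $Y$ is a summand of $\bigoplus_{i=0}^0 S_i$).

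Next, I would apply Proposition \ref{prop:bigtateobject} to $\Cc_{n-1}$ with this generating family. Since every object of $\nTatec(\Cc) = \Tatec(\Cc_{n-1})$ is a direct summand of a countable elementary Tate object in $\Cc_{n-1}$ (by definition of idempotent completion), we conclude that any object of $\nTatec(\Cc)$ is a direct summand of
\begin{equation*}
    \prod_{\Nb}\Bigl(\prod_{i\in\Nb} T_{n-1}\Bigr)\ \oplus\ \bigoplus_{\Nb}\Bigl(\bigoplus_{i\in\Nb} T_{n-1}\Bigr).
\end{equation*}
Finally, choosing a bijection $\Nb\times\Nb\cong\Nb$ yields isomorphisms $\prod_{\Nb}\prod_{\Nb} T_{n-1}\cong\prod_{\Nb} T_{n-1}$ and $\bigoplus_{\Nb}\bigoplus_{\Nb} T_{n-1}\cong\bigoplus_{\Nb} T_{n-1}$, so the displayed object is isomorphic to $T_{n-1}((t_n)) = T_n$, closing the induction.

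The only real obstacle is the inductive verification of split exactness for $\Cc_{n-1}$; everything else is essentially a bookkeeping matter of feeding the inductive hypothesis into Proposition \ref{prop:bigtateobject} and reindexing countable products and coproducts. The split-exactness argument is itself routine once one observes that each of $\Indc$, $\Proc$, $\elTatec$, and idempotent completion preserves the property, so the main work in writing the proof is confirming that each of these preservation facts is already established in the paper (as they are).
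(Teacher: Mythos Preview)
Your proposal is correct and follows essentially the same approach as the paper: induct on $n$, verify that $(\mathsf{n}{-}1)\text{-}\Tatec(\Cc)$ is split exact with a single generating object $T_{n-1}$, and apply Proposition \ref{prop:bigtateobject}. You are in fact more careful than the paper in two places---you spell out the chain of references establishing split-exactness, and you make explicit the reindexing $\prod_{\Nb}\prod_{\Nb}\cong\prod_{\Nb}$ and $\bigoplus_{\Nb}\bigoplus_{\Nb}\cong\bigoplus_{\Nb}$ needed to identify the output of Proposition \ref{prop:bigtateobject} with $T_{n-1}((t_n))$---both of which the paper leaves implicit.
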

\begin{proof}
    We induct on $n$. Proposition \ref{prop:bigtateobject} establishes the case $n=1$. Assume the result is true for $n$. Then the category $\nTatec(\Cc)$ is split exact and every object is a direct summand of $(\widehat{\prod_{\Nb} S}\oplus\widehat{\bigoplus_{\Nb} S})((t_2))\cdots((t_n))$. We can therefore apply Proposition \ref{prop:bigtateobject} to $\nTatec(\Cc)$ and conclude that every object in $\elTate(\nTatec(\Cc))$ is a direct summand of
    \begin{equation*}
        (\widehat{\prod_{\Nb} S}\oplus\widehat{\bigoplus_{\Nb} S})((t_2))\cdots((t_n))((t_{n+1})).
    \end{equation*}
    This completes the induction.
\end{proof}

\begin{example}
    Let $R$ be a ring. Every countable $n$-Tate $R$-module is a direct summand of $R((t_1))\cdots((t_n))$.
\end{example}

\subsection{Beilinson--Parshin Ad\`{e}les}\label{sec:bp}
\begin{definition}[Beilinson--Parshin]\label{def:globaladeles}
    Let $X$ be an $n$-dimensional Noetherian scheme. For $0\leq i\leq n$, denote by $|X|_i$ the set of points $p\in X$ such that the closure of $p$ is an $i$-dimensional sub-scheme of $X$. Given $p\in X$, denote the inclusion of its closure by $j_{\overline{p}}\colon\overline{p}\into X$. Denote the inclusion of the $r^{th}$-order formal neighborhood of its closure by $j_{\overline{p^r}}\colon\overline{p^r}\into X$. Define the \emph{$n$-dimensional ad\`{e}les}
    \begin{equation*}
        \begin{xy}
            \morphism<1000,0>[\QCoh(X)`\Mod(\Oc_X);\Ab^n_X(-)]
        \end{xy}
    \end{equation*}
    to be the functor which commutes with direct limits and whose restriction to $\Coh(X)$ is inductively given by,
    \begin{enumerate}
        \item for $n=0$, $\Ab^0_X(\Fc):=\Fc$, and
        \item for $n>0$,
            \begin{enumerate}
                \item if $X$ is irreducible with generic point $\eta$, denote by $j_\eta$ the inclusion of the generic point, and define
                    \begin{equation*}
                        \Ab^n_X(\Fc):=\colim_i \prod_{p\in |X|_{n-1}}\lim_r j_{\overline{p^r},\ast}\Ab^{n-1}_{\overline{p^r}}(j_{\overline{p^r}}^\ast\Fc_i)
                    \end{equation*}
                    where the colimit is over the poset of coherent subsheaves $\Fc_i\subset j_{\eta,\ast}j_\eta^\ast\Fc$ such that $j_\eta^\ast\Fc_i=j_\eta^\ast\Fc$.
                \item if $X$ has irreducible components $\{X_a\}$, then denote by $j_{X_a}$ the inclusion of the component $X_a$ and define
                    \begin{equation*}
                        \Ab^n_X(\Fc):=\bigoplus_{a}j_{X_a,\ast}\Ab^n_{X_a}(j_{X_a}^\ast\Fc).
                    \end{equation*}
            \end{enumerate}
    \end{enumerate}
\end{definition}
\begin{remark}
    This definition corresponds to the \emph{reduced} $n$-dimensional ad\`{e}les of \cite{Hub:91}.
\end{remark}

\begin{example}
    When $X=\Spec(\mathbb{Z})$, $\Ab^1_X(\Oc_X)$ is the finite ad\`{e}les $\mathbb{Q}\otimes(\prod_{p}\mathbb{Z}_p)$.
\end{example}

We can also discuss the ad\`{e}les at a single place, or at a specified collection of places.
\begin{definition}
    Let $X$ be an $n$-dimensional Noetherian scheme.
    \begin{enumerate}
        \item Let $\xi:=(p_0<\ldots<p_n)$ be an increasing sequence of points in $X$, with $\overline{\{p_i\}}$ of dimension $i$. Denote by $d(\xi)$ the sequence $d(\xi):=(p_0<\ldots<p_{n-1})$. Using the notation of Definition \ref{def:globaladeles}, we define the \emph{$n$-dimensional ad\`{e}les at the place $\xi$}
            \begin{equation*}
                \begin{xy}
                    \morphism<1000,0>[\QCoh(X)`\Mod(\Oc_X);\Ab^n_{X,\xi}(-)]
                \end{xy}
            \end{equation*}
            to be the functor which commutes with direct limits and whose restriction to $\Coh(X)$ is inductively given by,
            \begin{enumerate}
                \item for $n=0$, $\Ab^0_{X,\xi}(\Fc):=\Fc$, and
                \item for $n>0$, define
                    \begin{equation*}
                        \Ab^n_{X,\xi}(\Fc):=\colim_i \lim_r j_{\overline{p_{n-1}^r},\ast}\Ab^{n-1}_{\overline{p_{n-1}^r},d(\xi)}(j_{\overline{p_{n-1}^r}}^\ast\Fc_i)
                    \end{equation*}
                    where the colimit is over the poset of coherent subsheaves $\Fc_i\subset j_{{p_n},\ast}j_{p_n}^\ast\Fc$ such that $j_{p_n}^\ast\Fc_i=j_{p_n}^\ast\Fc$.\footnote{Note that dimension considerations imply that $p_n$ is the generic point of the irreducible component of $X$ which contains $\xi$.}
            \end{enumerate}
        \item Let $T:=\{\xi_a\}_{a\in A}$ be a collection of sequences $\xi$ as above. We define the \emph{$n$-dimensional ad\`{e}les at the collection of places $T$}
            \begin{equation*}
                \begin{xy}
                    \morphism<1000,0>[\QCoh(X)`\Mod(\Oc_X);\Ab^n_{X,T}(-)]
                \end{xy}
            \end{equation*}
            to be the functor which commutes with direct limits and whose restriction to $\Coh(X)$ is inductively given by,
            \begin{enumerate}
                \item for $n=0$, $\Ab^0_{X,T}(\Fc):=\Fc$, and
                \item for $n>0$, define
                    \begin{equation*}
                        \Ab^n_{X,T}(\Fc):=\colim_i \prod_{\xi\in T}\Ab^n_{X,\xi}(\Fc_i)
                    \end{equation*}
                    where the colimit is over the poset of coherent sub-sheaves $\Fc_i$ of $\bigoplus_{p_n\in X} j_{{p_n},\ast}j_{p_n}^\ast\Fc$ such that for each $n$-dimensional point $p_n$, which is contained in some $\xi\in T$, we have $j_{p_n}^\ast\Fc_i=j_{p_n}^\ast\Fc$.
            \end{enumerate}
    \end{enumerate}
\end{definition}

Beilinson \cite{Bei:80} formulated the $n$-dimensional ad\`{e}les as the top degree piece of a functorial flasque resolution $\Fc\to\Ab^\bullet_X(\Fc)$ of a coherent sheaf $\Fc$.\footnote{For a detailed description of the full ad\`{e}lic resolution, see Huber \cite{Hub:91}.} In particular, $\Ab^n_X(-)$ is an exact functor with a canonical natural surjection
\begin{equation*}
    \Ab^n_X(-)\to H^n(X;-).
\end{equation*}
Following Parshin, Beilinson used this to express the Grothendieck trace map via a sum of residues, in analogy with Tate's work \cite{Tat:68} on curves.

Denote by $\Coh_0(X)\subset\Coh(X)$ the full sub-category consisting of sheaves with $0$-dimensional support, and define $0-\elTate(\Coh_0(X)):=\Coh_0(X)$.

\begin{theorem}\label{thm:adeles}
    Let $X$ be an $n$-dimensional Noetherian scheme.
    \begin{enumerate}
        \item The $n$-dimensional ad\`{e}les factor through an exact functor
            \begin{equation*}
                \begin{xy}
                    \morphism<1000,0>[\Coh(X)`\nelTate(\Coh_0(X));\Ab^n_X(-)]
                \end{xy}.
            \end{equation*}
        \item Let $\xi:=(p_0<\ldots<p_n)$ be an increasing sequence of points in $X$, with $\overline{\{p_i\}}$ of dimension $i$, and let $T=\{\xi_a\}_{a\in A}$ be a collection of such sequences. The $n$-dimensional ad\`{e}les at $\xi$ and $T$ factor through exact functors
            \begin{align*}
                &\begin{xy}
                    \morphism<1000,0>[\Coh(X)`\nelTate(\Coh_0(X));\Ab^n_{X,\xi}(-)]
                \end{xy},\intertext{and}
                &\begin{xy}
                    \morphism<1000,0>[\Coh(X)`\nelTate(\Coh_0(X));\Ab^n_{X,T}(-)]
                \end{xy}.
            \end{align*}
    \end{enumerate}
\end{theorem}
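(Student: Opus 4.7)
The plan is to induct on $n$. The case $n=0$ is immediate, since $\Ab^0_X = \mathrm{id}$ and $0\text{-}\elTate(\Coh_0(X)) := \Coh_0(X) = \Coh(X)$ for a $0$-dimensional scheme. Assume the theorem for all Noetherian schemes of dimension $<n$, and let $X$ be $n$-dimensional. The second clause of Definition \ref{def:globaladeles} reduces the general case to that of $X$ irreducible, so fix $X$ irreducible with generic point $\eta$. I must show that the three formal operations in
\[
\Ab^n_X(\Fc) = \colim_i \prod_{p \in |X|_{n-1}} \lim_r j_{\overline{p^r},*} \Ab^{n-1}_{\overline{p^r}}(j_{\overline{p^r}}^* \Fc_i)
\]
combine to produce an elementary Tate object in the exact category $(n-1)\text{-}\Tate(\Coh_0(X))$.

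First, each $\overline{p^r}$ is a Noetherian scheme of dimension $n-1$, so by the inductive hypothesis $\Ab^{n-1}_{\overline{p^r}}(j_{\overline{p^r}}^*\Fc_i)$ is an object of $(n-1)\text{-}\elTate(\Coh_0(\overline{p^r}))$. The closed immersion $j_{\overline{p^r}}$ induces a fully exact embedding $\Coh_0(\overline{p^r}) \into \Coh_0(X)$, and by clause (3) of Theorem \ref{thm:ntatex} this extends canonically to a fully exact embedding of the corresponding $(n-1)$-Tate categories. Because $\overline{p^r} \into \overline{p^{r+1}}$ induces surjections of structure sheaves, the inductive exactness of $\Ab^{n-1}$ makes the transitions admissible epics, so $\{j_{\overline{p^r},*}\Ab^{n-1}_{\overline{p^r}}(j_{\overline{p^r}}^*\Fc_i)\}_{r \in \mathbb{N}}$ is an admissible Pro-diagram in $(n-1)\text{-}\Tate(\Coh_0(X))$. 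Indexing the product over $|X|_{n-1}$ by its directed poset of finite subsets and pairing with these $r$-Pro-diagrams yields an admissible Pro-diagram representing $\prod_p \lim_r (\cdots)$. Finally, the directed poset of coherent subsheaves $\Fc_i \subset j_{\eta,*}j_\eta^*\Fc$ with $j_\eta^*\Fc_i = j_\eta^*\Fc$, together with admissibility of the monics $\Fc_i \into \Fc_{i'}$ at each Pro-level, gives an admissible Ind-diagram in $\Pro((n-1)\text{-}\Tate(\Coh_0(X)))$.

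To complete the proof I must verify the elementary Tate condition, exactness, and the variants. For the Tate condition one needs the quotient of the level-$i$ and level-$i'$ Pro-objects to lie in $(n-1)\text{-}\Tate(\Coh_0(X))$ itself. The quotient $\Fc_{i'}/\Fc_i$ is coherent with codimension-$\geq 1$ support, hence its intersection with $|X|_{n-1}$ is finite; moreover the reduced adèle construction, as defined inductively, vanishes on sheaves with zero generic stalk (the Ind-poset over such a sheaf reduces to the zero subsheaf), so every $p \notin \mathrm{supp}(\Fc_{i'}/\Fc_i)$ contributes the zero factor and $\prod_p$ collapses to a finite direct sum. At each of the finitely many remaining $p$, the stalk $(\Fc_{i'}/\Fc_i)_p$ is a finite-length $\Oc_{X,p}$-module, so $j_{\overline{p^r}}^*(\Fc_{i'}/\Fc_i)$ and its adèles are essentially constant for $r \gg 0$, and the Pro-limit in $r$ likewise collapses. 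The resulting quotient is a finite direct sum of objects of $(n-1)\text{-}\elTate(\Coh_0(X))$, confirming Definition \ref{defi:eltate}. Exactness of the functor then follows termwise: pullback and closed pushforward are exact, $\Ab^{n-1}$ is exact by induction, and Proposition \ref{prop:inde=eind} together with its Pro-dual (clause (3) of Theorem \ref{thm:prox}) shows that the product, Pro-limit, and Ind-colimit preserve exactness of admissible diagrams. The same argument applies verbatim to $\Ab^n_{X,\xi}$ and $\Ab^n_{X,T}$ by restricting $\prod_p$ to the specified places.

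The main obstacle will be the stabilization argument in the preceding paragraph: making precise the claim that the quotient of two Pro-objects built from nested subsheaves with identical generic stalk really descends from $\Pro((n-1)\text{-}\Tate(\Coh_0(X)))$ into $(n-1)\text{-}\Tate(\Coh_0(X))$. This rests on two ingredients: first, the vanishing of the reduced adèles on sheaves with trivial generic stalk, which follows from Definition \ref{def:globaladeles} but must be tracked inductively through the identifications of $\Coh_0(\overline{p^r})$ with subcategories of $\Coh_0(X)$; and second, a uniform-killing argument, rooted in Artin--Rees for coherent sheaves at the height-one primes $p$, that makes the $r$-Pro-systems essentially constant. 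Once these points are secured, the remaining steps are bookkeeping within the Pro-Ind-Tate hierarchy.
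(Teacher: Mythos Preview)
Your proposal is correct and follows essentially the same inductive strategy as the paper: reduce to the irreducible case, use the inductive hypothesis to place each $j_{\overline{p^r},\ast}\Ab^{n-1}_{\overline{p^r}}(j_{\overline{p^r}}^\ast\Fc_i)$ in $(n-1)\text{-}\Tate(\Coh_0(X))$, build the Ind-Pro object, and then argue that the successive quotients in the Ind-direction collapse to $(n-1)$-Tate objects because $\Fc_{i'}/\Fc_i$ has support of dimension at most $n-1$.

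There is one organisational difference worth noting. The paper verifies the Tate condition by exhibiting a lattice and invoking Theorem~\ref{thm:eltatechar} (every extension of an Ind-object by a Pro-object is elementary Tate), whereas you check Definition~\ref{defi:eltate} directly by showing each quotient $X_{i'}/X_i$ lies in $(n-1)\text{-}\Tate$. The underlying computation is identical. In fact, your phrasing is slightly more careful at one point: the paper asserts that $j_{\overline{p^r}}^\ast Q_j = 0$ for all but finitely many $p \in |X|_{n-1}$, which is not literally true (the support of $Q_j$ can meet $\overline{p}$ in lower dimension), while you correctly observe that what one actually needs is the vanishing of the \emph{reduced} ad\`eles $\Ab^{n-1}_{\overline{p^r}}$ on sheaves whose generic stalk is zero. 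Your closing paragraph also correctly flags the stabilisation in $r$ as the point requiring the most care; the paper handles this with the same coherence argument you sketch.
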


\begin{remark}\mbox{}
    \begin{enumerate}
        \item This result should not be surprising to experts, but we have been unable to find it recorded in the literature. Yekutieli \cite[Theorem 3.3.2]{Yek:92} shows that for a complete flag $\xi$ in an integral, excellent, Noetherian $n$-dimensional scheme $X$, the ring $\Ab^n_{X,\xi}(\Oc_X)$ is a finite product of $n$-dimensional local fields; in particular, if we forget the ring structure and take global sections, this shows that $\Ab^n_{X,\xi}(\Oc_X)$ is an $n$-Tate abelian group (in the mixed characteristic case) or an $n$-Tate vector space (in the equi-characteristic case).
        \item More recently, for schemes over a base field $k$, Osipov \cite{Osi:07} has given a similar characterization of higher ad\`{e}les using a formalism of ``categories $C_n$''. It seems likely that there is a fully faithful embedding of $\nTate(\Vect_f(k))$ into $C_n$, but we do not pursue this here.
    \end{enumerate}
\end{remark}

\begin{proof}
    We prove the result for the global ad\`{e}les $\Ab_X^n$ by induction on $n$; the proofs for $\Ab^n_{X,\xi}$ and $\Ab^n_{X,T}$ follow by similar reasoning.

    For $n=0$, there is nothing to show. Suppose that we have shown the result for $0\le m<n$. From the definition, it is enough to prove the result for $X$ irreducible. Let $\Fc$ be a coherent sheaf on $X$. To show the factorization exists, we use the inductive hypothesis to show that $\Ab^n_X(\Fc)$ is naturally an object in $\Ind(\Pro((n-1)\text{-}\Tate(\Coh_0(X))))$. We then exhibit a lattice of $\Ab^n_X(\Fc)$ to show
    \begin{equation*}
        \Ab^n_X(\Fc)\in\nelTate(\Coh_0(X)).
    \end{equation*}

    Denote the generic point of $X$ by $\eta$, and the inclusion of the generic point by $j_\eta$. By definition
    \begin{equation*}
        \Ab^n_X(\Fc):=\colim_i \prod_{p\in |X|_{n-1}}\lim_r j_{\overline{p^r},\ast}\Ab^{n-1}_{\overline{p^r}}(j_{\overline{p^r}}^\ast\Fc_i)
    \end{equation*}
    where the colimit ranges over coherent subsheaves $\Fc_i\subset j_{\eta,\ast}j_\eta^\ast\Fc$ such that $j_\eta^\ast\Fc_i=j_\eta^\ast\Fc$. Our inductive hypothesis states that
    \begin{align*}
        \Ab^{n-1}_{\overline{p^r}}(j_{\overline{p^r}}^\ast\Fc_i)\in(n-1)\text{-}\Tate(\Coh_0(\overline{p^r})),\intertext{so }
        j_{\overline{p^r},\ast}\Ab^{n-1}_{\overline{p^r}}(j_{\overline{p^r}}^\ast\Fc_i)\in(n-1)\text{-}\Tate(\Coh_0(X)),\intertext{and we see that}
        \Ab^n_X(\Fc)\in\Ind(\Pro((n-1)\text{-}\Tate(\Coh_0(X)))).
    \end{align*}
    To show that $\Ab^n_X(\Fc)$ is an $n$-Tate object, it suffices to produce a lattice (Theorem \ref{thm:eltatechar}). Denote by $I$ the directed poset indexing coherent subsheaves $\Fc_i\subset j_{\eta,\ast}j_\eta^\ast\Fc$ such that $j_\eta^\ast\Fc_i=j_\eta^\ast\Fc$. For any $i\in I$, denote by $I_i$ the final subset of $I$ consisting of all $\Fc_j$ containing $\Fc_i$. Note that $\colim_{j\in I_i}\Fc_j\cong j_{\eta,\ast}j_\eta^\ast\Fc$.

    We claim that the inclusion
    \begin{equation}
        \begin{xy}
            \morphism/^{ (}->/<1250,0>[\prod_{p\in |X|_{n-1}}\lim_r j_{\overline{p^r},\ast}\Ab^{n-1}_{\overline{p^r}}(j_{\overline{p^r}}^\ast\Fc_i)`\Ab^n_X(\Fc);]
        \end{xy}
    \end{equation}
    is a lattice. Our inductive hypothesis guarantees that
    \begin{equation*}
        \prod_{p\in |X|_{n-1}}\lim_r j_{\overline{p^r},\ast}\Ab^{n-1}_{\overline{p^r}}(j_{\overline{p^r}}^\ast\Fc_i)\in\Pro((n-1)\text{-}\Tate(\Coh_0(X))).
    \end{equation*}
    For each $j\in I_i$, we have a short exact sequence
    \begin{equation}
        0\to\Fc_i\to\Fc_j\to Q_j\to 0
    \end{equation}
    Our inductive hypothesis ensures that for each $p\in|X|_{n-1}$ and each $r$, we have an exact sequence
    \begin{equation}
        0\to j_{\overline{p^r},\ast}\Ab^{n-1}_{\overline{p^r}}(j_{\overline{p^r}}^\ast\Fc_i)\to j_{\overline{p^r},\ast}\Ab^{n-1}_{\overline{p^r}}(j_{\overline{p^r}}^\ast\Fc_j)\to j_{\overline{p^r},\ast}\Ab^{n-1}_{\overline{p^r}}(j_{\overline{p^r}}^\ast Q_j)\to 0.
    \end{equation}
    These exact sequences fit into an admissible Pro-diagram of exact sequences, and thus an exact sequence of admissible Pro-objects
    \begin{equation}\label{adellat}
        \begin{xy}
            \morphism(0,300)<1000,0>[0`\prod_{p\in|X|_{n-1}}\lim_r j_{\overline{p^r},\ast}\Ab^{n-1}_{\overline{p^r}}(j_{\overline{p^r}}^\ast\Fc_i);]
            \morphism(1000,300)<1650,0>[\prod_{p\in|X|_{n-1}}\lim_r j_{\overline{p^r},\ast}\Ab^{n-1}_{\overline{p^r}}(j_{\overline{p^r}}^\ast\Fc_i)`\prod_{p\in|X|_{n-1}}\lim_r j_{\overline{p^r},\ast}\Ab^{n-1}_{\overline{p^r}}(j_{\overline{p^r}}^\ast\Fc_j);]
            \morphism(2650,300)/-/<1000,0>[\prod_{p\in|X|_{n-1}}\lim_r j_{\overline{p^r},\ast}\Ab^{n-1}_{\overline{p^r}}(j_{\overline{p^r}}^\ast\Fc_j)`;]
            \morphism(1500,0)<1000,0>[`\prod_{p\in|X|_{n-1}}\lim_r j_{\overline{p^r},\ast}\Ab^{n-1}_{\overline{p^r}}(j_{\overline{p^r}}^\ast Q_j);]
            \morphism(2500,0)<1000,0>[\prod_{p\in|X|_{n-1}}\lim_r j_{\overline{p^r},\ast}\Ab^{n-1}_{\overline{p^r}}(j_{\overline{p^r}}^\ast Q_j)`0;]
        \end{xy}.
    \end{equation}
    We claim that
    \begin{equation*}
        \prod_{p\in|X|_{n-1}}\lim_r j_{\overline{p^r},\ast}\Ab^{n-1}_{\overline{p^r}}(j_{\overline{p^r}}^\ast Q_j)\in(n-1)\text{-}\Tate(\Coh_0(X)).
    \end{equation*}
    Indeed, by definition, $j_\eta^\ast\Fc_i\cong j_\eta^\ast F_j$, so the support of $Q_j$ has dimension at most $n-1$. Since $X$ is Noetherian, the support of $Q_j$ is Noetherian as well. The support therefore contains finitely many irreducible components. In particular, $j_{\overline{p^r}}^\ast Q_j=0$ for all but finitely many $p\in|X|_{n-1}$.  Further, because $Q_j$ is coherent, for each $p$ such that $Q_j$ is non-zero on $\overline{p}$, there exists $r<\infty$ such that $j_{\overline{p^s}}^\ast Q_j\cong j_{\overline{p^r}}^\ast Q_j$ for all $s\ge r$. We conclude that
    \begin{equation*}
        \prod_{p\in|X|_{n-1}}\lim_r j_{\overline{p^r},\ast}\Ab^{n-1}_{\overline{p^r}}(j_{\overline{p^r}}^\ast Q_j)\in(n-1)\text{-}\Tate(\Coh_0(X))
    \end{equation*}
    is a finite direct sum of $(n-1)$-Tate objects, so is an $(n-1)$-Tate object itself.

    We now take the colimit over $I_i$ of the short exact sequences \eqref{adellat} to obtain a short exact sequence
    \begin{equation*}
        \begin{xy}
            \morphism(0,300)<1000,0>[0`\prod_{p\in|X|_{n-1}}\lim_r j_{\overline{p^r},\ast}\Ab^{n-1}_{\overline{p^r}}(j_{\overline{p^r}}^\ast\Fc_i);]
            \morphism(1000,300)<1000,0>[\prod_{p\in|X|_{n-1}}\lim_r j_{\overline{p^r},\ast}\Ab^{n-1}_{\overline{p^r}}(j_{\overline{p^r}}^\ast\Fc_i)`\Ab^n_X(\Fc);]
            \morphism(2000,300)/-/[\Ab^n_X(\Fc)`;]
            \morphism(500,0)<1250,0>[`\colim_{I_i}\prod_{p\in|X|_{n-1}}\lim_r j_{\overline{p^r},\ast}\Ab^{n-1}_{\overline{p^r}}(j_{\overline{p^r}}^\ast Q_j);]
            \morphism(1750,0)<1250,0>[\colim_{I_i}\prod_{p\in|X|_{n-1}}\lim_r j_{\overline{p^r},\ast}\Ab^{n-1}_{\overline{p^r}}(j_{\overline{p^r}}^\ast Q_j)`0;]
        \end{xy}.
    \end{equation*}
    By construction,
    \begin{align*}
        \prod_{p\in|X|_{n-1}}\lim_r j_{\overline{p^r},\ast}\Ab^{n-1}_{\overline{p^r}}(j_{\overline{p^r}}^\ast\Fc_i)\in\Pro((n-1)\text{-}\Tate(\Coh_0(X)))\intertext{while }
        \colim_{I_i}\prod_{p\in|X|_{n-1}}\lim_r j_{\overline{p^r},\ast}\Ab^{n-1}_{\overline{p^r}}(j_{\overline{p^r}}^\ast Q_j)\in\Ind((n-1)\text{-}\elTate(\Coh_0(X))).
    \end{align*}
    We conclude that $\Ab^n_X(\Fc)\in\nelTate(\Coh_0(X))$.

    It remains to show that the functor $\Ab^n_X(-)$ is exact. Because $\nelTate(\Coh_0(X))$ is closed under extenstions in $\Ind(\Pro((n-1)\text{-}\Tate(\Coh_0(X))))$ (Theorem \ref{thm:ntatex}), it suffices to show that $\Ab^n_X(-)$ takes exact sequences in $\Coh(X)$ to exact sequences in $\Ind(\Pro((n-1)\text{-}\Tate(\Coh_0(X))))$.

    Let
    \begin{equation*}
        \Fc^0\into \Fc^1\onto \Fc^2
    \end{equation*}
    be a short exact sequence in $\Coh(X)$. Because the inclusion of the generic point is an affine morphism, the sequence of quasi-coherent sheaves
    \begin{equation}\label{adelesexact}
        j_{\eta,\ast}j_\eta^\ast\Fc^0\into j_{\eta,\ast}j_\eta^\ast\Fc^1\onto j_{\eta,\ast}j_\eta^\ast\Fc^2
    \end{equation}
    is exact. Just as above, for $a=0$, 1 or 2, we can write
    \begin{equation*}
        j_{\eta,\ast}j_\eta^\ast\Fc^a\cong\colim_i \Fc_i^a
    \end{equation*}
    where the colimit ranges over the directed set of coherent sub-sheaves $\Fc_i^a\subset j_{\eta,\ast}j_\eta^\ast\Fc^a$ such that $j_\eta^\ast\Fc_i^a=j_\eta^\ast\Fc^a$. Because $X$ is Noetherian, $\QCoh(X)\simeq\lex(\Coh(X))$. Theorem \ref{thm:indleftspecial} shows that $\Ind(\Coh(X))\subset\QCoh(X)$ is closed under extensions; therefore \eqref{adelesexact} is an exact sequence in $\Ind(\Coh(X))$. By Proposition \ref{prop:inde=eind}, we can straighten this to obtain an admissible Ind-diagram of exact sequences of coherent sheaves indexed by a directed poset $I$. Our inductive hypothesis guarantees that for each point $p\in X$ of codimension 1, each $r\ge 0$ and each $i\in I$, the sequence
    \begin{equation*}
         j_{\overline{p^r},\ast}\Ab^{n-1}_{\overline{p^r}}(j_{\overline{p^r}}^\ast\Fc_i^0)\into j_{\overline{p^r},\ast}\Ab^{n-1}_{\overline{p^r}}(j_{\overline{p^r}}^\ast\Fc_i^1)\onto j_{\overline{p^r},\ast}\Ab^{n-1}_{\overline{p^r}}(j_{\overline{p^r}}^\ast\Fc_i^2)
    \end{equation*}
    is exact in $(n-1)\text{-}\elTate(\Coh_0(X))$. Taking the appropriate limits and colimits, we conclude that the sequence
    \begin{equation*}
        \Ab^n_X(\Fc^0)\into\Ab^n_X(\Fc^1)\onto\Ab^n_X(\Fc^2)
    \end{equation*}
    is exact in $\Ind(\Pro((n-1)\text{-}\elTate(\Coh_0(X))))$.
\end{proof}

\appendix
\section{Remarks on the left s-filtering condition, after T. B\"{u}hler}\label{sec:buhler}
In this appendix, we recall Schlichting's definition of left s-filtering \cite{Sch:04}, and we reproduce a proof, due to T. B\"{u}hler, that this definition is equivalent to Definition \ref{def:lsfilt}.

\begin{definition}[Schlichting]\label{def:schlsfilt}
    Let $\Dc$ be an exact category. An exact, full sub-category $\Cc\subset\Dc$ is \emph{Schlichting left s-filtering} if
    \begin{enumerate}
        \item for any exact sequence $X\into Y\onto Z$ in $\Dc$, $X$ and $Z$ are in $\Cc$ if and only if $Y$ is,
        \item $\Cc$ is left filtering in $\Dc$ (in the sense of Definition \ref{def:lfilt}), and
        \item for every admissible epic $F\onto X$ in $\Dc$, with $X\in\Cc$, there exists an admissible monic $Y\into F$, with $Y\in\Cc$, such that the composite map $Y\to X$ is an admissible epic in $\Cc$.
    \end{enumerate}
\end{definition}

\begin{proposition}[B\"{u}hler]\label{prop:buh}
    Definitions \ref{def:lsfilt} and \ref{def:schlsfilt} are equivalent, i.e. an exact, full sub-category $\Cc\subset\Dc$ satisfies Definition \ref{def:schlsfilt} if and only if $\Cc$ is left filtering and left special in $\Dc$.
\end{proposition}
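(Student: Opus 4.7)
The plan is to verify both implications of the equivalence.

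For Schlichting $\Rightarrow$ left special $+$ left filtering: left filtering is Schlichting's condition (2). For left special, given an admissible epic $F\onto X$ in $\Dc$ with $X\in\Cc$, condition (3) produces an admissible monic $Y\into F$ with $Y\in\Cc$ and composite $\beta\colon Y\onto X$ an admissible epic in $\Cc$. Set $Z:=\ker(\beta)\in\Cc$ and $G:=\ker(F\onto X)\in\Dc$; the universal property of kernels produces a canonical map $Z\to G$ completing the required commutative diagram of Definition \ref{def:lsfilt}.

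For the reverse direction, left filtering is the hypothesis, and closure under extensions (the forward part of Schlichting's (1)) is Lemma \ref{lemma:lsextclosed}. The remaining work is to verify Schlichting's (3) and the converse of (1). To show $Z\in\Cc$ when $Y\in\Cc$ and $X\into Y\onto Z$ is exact in $\Dc$, apply left filtering to $Y\to Z$ to factor it as $Y\to Z'\into Z$ with $Z'\in\Cc$; since the composition is the admissible epic $Y\onto Z$, composing with the cokernel of $Z'\into Z$ kills this epic, hence kills the cokernel, so $Z'\into Z$ is both an admissible monic and an epimorphism, thus an isomorphism by \cite[Proposition 2.16]{Buh:10}, giving $Z\cong Z'\in\Cc$.

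For Schlichting's (3): given $F\onto X$ admissible epic in $\Dc$ with $X\in\Cc$, apply left special to get $W\into Y'\onto X$ exact in $\Cc$ with a lift $Y'\to F$; apply left filtering to refine this to $Y'\to Y''\into F$ with $Y''\in\Cc$ and $Y''\into F$ admissible monic. The crux is showing $Y''\to X$ is an admissible epic in $\Cc$, not merely an epimorphism. Form the pushout $P:=X\sqcup_{Y'}Y''$ along $Y'\onto X$: this yields an admissible epic $Y''\onto P$ with kernel $W\in\Cc$; the argument just used for $Z\in\Cc$ applied to $Y''\onto P$ places $P\in\Cc$, and the universal property of the pushout (invoking the morphism $Y''\to X$) supplies a retraction $P\to X$ of the canonical map $X\to P$, from which the admissible epic structure on $Y''\to X$ can be extracted. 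With Schlichting's (3) in hand, apply it to $Y\onto Z$ (now that $Z\in\Cc$) to obtain an admissible monic $Y_\star\into Y$ with $Y_\star\in\Cc$ and $Y_\star\onto Z$ an admissible epic in $\Cc$, whose kernel $X_\star$ lies in $\Cc$. The snake lemma applied to the morphism of short exact sequences $X_\star\into Y_\star\onto Z$ and $X\into Y\onto Z$ (with identity on $Z$) yields $X/X_\star\cong Y/Y_\star$; the $Z$-argument applied to $Y\onto Y/Y_\star$ places $Y/Y_\star\in\Cc$, and closure under extensions forces $X\in\Cc$.

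The main obstacle is establishing that the composite $Y''\to X$ appearing in Schlichting's (3) is genuinely an admissible epic in $\Cc$ rather than just an epimorphism; the pushout-and-retraction device, combined with the kernel-cokernel analysis used for the $Z$-case, threads this needle without invoking any form of the \emph{obscure axiom} or weak idempotent completeness of $\Cc$.
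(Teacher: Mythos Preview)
Your argument for Schlichting's condition (3) has a genuine gap. You write ``Form the pushout $P:=X\sqcup_{Y'}Y''$ along $Y'\onto X$: this yields an admissible epic $Y''\onto P$ with kernel $W\in\Cc$.'' But in an exact category, only pushouts of admissible \emph{monics} along arbitrary maps are guaranteed to exist; pushouts along admissible epics need not exist, and even when they do, there is no axiom ensuring the parallel map is an admissible epic. Concretely, the pushout exists precisely when the composite $W\into Y'\to Y''$ is an admissible monic, and nothing in your setup forces this: the map $Y'\to Y''$ is just the first factor of a left-filtering factorization and carries no exactness properties. Even granting the pushout, your extraction of an admissible epic $Y''\to X$ from the retraction $P\to X$ tacitly uses that retractions are admissible epics --- exactly the weak idempotent completeness you claim to avoid.

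The paper's proof sidesteps both issues by applying left filtering to the \emph{kernel} map $Z\to G$ rather than to $Y'\to F$. This produces $Z\to Z'\into G$ with $Z'\in\Cc$, and then one forms the pushout $Y':=Z'\cup_Z Y$ along the admissible \emph{monic} $Z\into Y$, which always exists. The resulting diagram has all vertical maps admissible monics by the 5-lemma, giving Schlichting's (3) in a strengthened form. This strengthened form (the left vertical $W\into X$ is already an admissible monic) then feeds directly into the proof that $X\in\Cc$ via \cite[Proposition 2.12]{Buh:10}, whereas your snake-lemma route needs the separate verification that $X_\star\to X$ is an admissible monic --- which, while plausible, you have not supplied and which is not automatic from Schlichting's (3) alone.
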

\begin{proof}
    For the ``only if'', it suffices to show that $\Cc\subset\Dc$ is left special if it satisfies the third condition of Definition \ref{def:schlsfilt}. By assumption, given an admissible epic $F\onto X$ in $\Dc$ with $X$ in $\Cc$, there exists a commuting square
    \begin{equation*}
        \begin{xy}
            \square/->>`>`=`->>/[Y`X`F`X;```]
        \end{xy}
    \end{equation*}
    with $Y\onto X$ an admissible epic in $\Cc$. The universal property of kernels implies that this square extends to a commuting diagram
    \begin{equation*}
        \begin{xy}
            \square/^{ (}->`>`>`^{ (}->/[Z`Y`G`F;```]
            \square(500,0)/->>`>`=`->>/[Y`X`F`X;```]
        \end{xy}
    \end{equation*}
    in which the top row is an exact sequence in $\Cc$, while the bottom row is an exact sequence in $\Dc$. We conclude that $\Cc$ is left special in $\Dc$.

    For the ``if'', suppose $\Cc\subset\Dc$ is left filtering and left special. We begin by showing that this implies that for every admissible epic $F\onto X$ in $\Dc$ with $X\in\Cc$, there exists a commuting diagram
    \begin{equation*}
        \begin{xy}
            \square/^{ (}->`^{ (}->`^{ (}->`^{ (}->/[Z`Y`G`F;```]
            \square(500,0)/->>`^{ (}->`=`->>/[Y`X`F`X;```]
        \end{xy}
    \end{equation*}
    in which the top row is an exact sequence in $\Cc$, the bottom row is an exact sequence in $\Dc$, and the vertical maps are admissible monics in $\Dc$; note that if such a diagram always exists, then $\Cc$ satisfies the third condition of Definition \ref{def:schlsfilt}, because we can take $Y$ to be as in the diagram above.

    To see that the diagram above exists, observe that, because $\Cc$ is left special, there exists a diagram
    \begin{equation*}
        \begin{xy}
            \square/^{ (}->`>`>`^{ (}->/[Z`Y`G`F;```]
            \square(500,0)/->>`>`=`->>/[Y`X`F`X;```]
        \end{xy}
    \end{equation*}
    with no assumptions on the vertical maps, in which the top row is exact in $\Cc$ and the bottom row is exact in $\Dc$. Because $\Cc$ is left filtering in $\Dc$, the map $Z\to G$ factors through an admissible monic $Z'\into G$ with $Z'\in\Cc$. Because pushouts of admissible monics along arbitrary maps exist and are admissible monics, we denote $Y':=Z'\cup_Z Y$ and observe that we have a commuting diagram
    \begin{equation*}
        \begin{xy}
            \square(0,500)/^{ (}->`>`>`^{ (}->/[Z`Y`Z'`Y';```]
            \square(500,500)/->>`>`=`>/[Y`X`Y'`X;```]
            \square(0,0)/^{ (}->`^{ (}->`>`^{ (}->/[Z'`Y'`G`F;```]
            \square(500,0)/>`>`=`->>/[Y'`X`F`X;```]
        \end{xy}.
    \end{equation*}
    Because the top sequence is exact, and the upper left square is a pushout in which the horizontal maps are admissible monics, the middle row is exact by \cite[Proposition 2.12]{Buh:10}. Because the lower left vertical map is an admissible monic (by assumption), the 5-lemma \cite[Corollary 3.2]{Buh:10} implies that lower middle vertical map is also an admissible monic \cite[Corollary 3.2]{Buh:10}. We conclude that the bottom rectangle of this diagram is of the desired form.

    It remains to show that if $\Cc\subset\Dc$ is left filtering and left special, then the first condition in Definition \ref{def:schlsfilt} is satisfied. Let
    \begin{equation*}
        X\into Y\onto Z
    \end{equation*}
    be a short exact sequence in $\Dc$. Lemma \ref{lemma:lsextclosed} shows that $Y$ is in $\Cc$ if $X$ and $Z$ are. Conversely, suppose $Y\in\Cc$. Because $\Cc$ is left filtering in $\Dc$, there exists a commuting triangle
    \begin{equation*}
        \begin{xy}
            \qtriangle/->>`>`<-^{) }/[Y`Z`W;``]
        \end{xy}
    \end{equation*}
    with $W$ in $\Cc$. The commuting triangle implies that the map $W\to Z$ is an epic admissible monic, i.e. an isomorphism. This shows that $Z\in\Cc$.

    To show that $X$ is in $\Cc$, we observe that there exists a commuting diagram
    \begin{equation*}
        \begin{xy}
            \square/^{ (}->`^{ (}->`^{ (}->`^{ (}->/[W`Y'`X`Y;```]
            \square(500,0)/->>`^{ (}->`=`->>/[Y'`Z`Y`Z;```]
        \end{xy}
    \end{equation*}
    in which the vertical maps are admissible monics in $\Dc$, and the top row is an exact sequence in $\Cc$. By \cite[Proposition 2.12]{Buh:10}, the left hand square is a pushout, and
    \begin{equation*}
        \coker(W\into X)\cong\coker(Y'\into Y).
    \end{equation*}
    We showed above that $\coker(Y'\into Y)$ is in $\Cc$. Because $\Cc$ is closed under extensions (Lemma \ref{lemma:lsextclosed}), we conclude that $X$ is in $\Cc$ as well.
\end{proof}

\section{The Structure of \texorpdfstring{$\Ind(P_f(R))$}{Ind(Pf(R))}\\ by J. \v S\v tov\'\i\v cek and J. Trlifaj}\label{sec:stovicektrlifaj}
Denote by $\mathcal {FM}(R)$ the category of all flat Mittag-Leffler modules over a ring $R$ (see Definition \ref{def:fml}). By Proposition \ref{prop:indfml}, $\Ind(P_f(R))$ is equivalent to the full sub-category $\Cc$ of $\mathcal {FM}(R)$ consisting of all $M \in \mathcal {FM}(R)$ such that $M$ is the direct limit of a direct system consisting of split monomorphisms between finitely generated projective modules. In this appendix, we will describe the structure and properties of $\Cc$ in more detail.

The close relation between $\Cc$ and $\mathcal {FM}(R)$ is clear from the following lemma:

\begin{lemma} \label{l:rel} Let $R$ be a ring and $M$ a module.
    \begin{enumerate}
        \item $M \in \mathcal {FM}(R)$, if and only if each finite (or countable) subset of $M$ is contained in a countably generated projective and pure sub-module of $M$.
        \item $M \in \Cc$, if and only if each finite subset of $M$ is contained in a finitely generated projective and pure sub-module of $M$.
        \item The following conditions are equivalent:
            \begin{enumerate}
                \item $\Cc = \mathcal {FM}(R)$,
                \item each (countably generated) projective module is a direct sum of finitely generated modules,
                \item $\Cc$ is closed under direct summands.
            \end{enumerate}
    \end{enumerate}
\end{lemma}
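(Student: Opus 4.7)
The plan is to prove the three parts in order, with (3) following from (1) and (2) combined with Kaplansky's theorem. Throughout I will freely use standard facts about purity (in particular that pure submodules of flat modules are flat with flat quotients, that finitely presented pure submodules are direct summands, and that a directed colimit of pure inclusions is pure) together with the Raynaud--Gruson theorem, already recalled in the text, that countably generated flat Mittag-Leffler modules are projective.

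For (1), the easy direction starts from the hypothesis that every countable subset of $M$ lies in a countably generated projective pure submodule: such submodules form a directed system of pure inclusions whose union is $M$, and since each is projective (hence flat Mittag-Leffler) and $\mathcal{FM}(R)$ is closed under directed unions of pure submodules, we conclude $M\in\mathcal{FM}(R)$. For the converse, represent $M$ as a directed colimit $\colim_{i\in I}M_i$ of finitely generated projectives satisfying the Mittag-Leffler condition on duals. Given a countable $S\subseteq M$, the core step is to construct by recursion a countable directed subsystem $i_0\le i_1\le\cdots$ of $I$ such that (a) $S$ is contained in the image of $M_{i_0}\to M$, (b) at each stage, the Mittag-Leffler stabilization point for $M_{i_n}^{\vee}$ has been included by some $i_m$ with $m>n$, and (c) enough equations witnessing purity of $M_{i_n}\hookrightarrow M$ are realized already in some $M_{i_m}$. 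The colimit $N$ of this countable subsystem is then countably generated, flat Mittag-Leffler (restricting the Mittag-Leffler system suffices), and by Raynaud--Gruson projective; conditions (a)--(c) ensure $S\subseteq N$ and that $N\hookrightarrow M$ is pure. This iterative construction is the main technical obstacle.

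For (2), the argument parallels (1). In one direction, if every finite subset of $M$ lies in a finitely generated projective pure submodule $P$, the collection of such submodules forms a directed system; given $P_1\subseteq P_2$ both finitely generated projective and pure in $M$, purity of $P_1$ in $P_2$ together with finite presentability of $P_2/P_1$ forces $P_2/P_1$ to be finitely presented flat, hence projective, so the inclusion splits. Thus $M$ is exhibited as an admissible Ind-diagram in $P_f(R)$, i.e.\ $M\in\mathcal{C}$. Conversely, if $M=\colim_i M_i$ with split transitions, any finite $S$ factors through some $M_{i_0}$; its image in $M$ is finitely generated projective, and it is pure in $M$ because the split (hence pure) transition maps $M_{i_0}\hookrightarrow M_j$ pass to a pure inclusion in the colimit.

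For (3), the implication (a)$\Rightarrow$(b) goes as follows: any countably generated projective $P$ is in $\mathcal{FM}(R)=\mathcal{C}$, so by (2) it is a countable increasing union of finitely generated projective pure submodules $P_0\subseteq P_1\subseteq\cdots$; the splitting argument of (2) gives $P_{n+1}=P_n\oplus Q_n$, whence $P\cong P_0\oplus\bigoplus_n Q_n$, and Kaplansky's theorem promotes this to arbitrary projectives. For (b)$\Rightarrow$(a), given $M\in\mathcal{FM}(R)$ and a finite $S\subseteq M$, apply (1) to engulf $S$ in a countably generated projective pure submodule $N$; by (b), $N$ decomposes as $\bigoplus_i N_i$ with each $N_i$ finitely generated, $S$ meets only finitely many summands, and the resulting finite direct sum is a finitely generated projective direct summand of $N$, hence pure in $M$, so (2) gives $M\in\mathcal{C}$. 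Finally, (a)$\Rightarrow$(c) is automatic since $\mathcal{FM}(R)$ is closed under pure (in particular direct) summands, and for (c)$\Rightarrow$(b) one observes that every free module lies in $\mathcal{C}$ (finite subsums form a directed system of split inclusions), so by (c) every projective lies in $\mathcal{C}$, and the (a)$\Rightarrow$(b) argument applied to countably generated projectives closes the loop.
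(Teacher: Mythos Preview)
Your proposal is correct and follows essentially the same approach as the paper. The paper treats part~(1) as known (simply citing \cite{RG:71} and \cite[Corollary~3.19]{GT:12}), while you sketch the standard recursive construction; your argument for~(2) matches the paper's almost verbatim; and for~(3) you prove the same cycle of implications, with your direct proof of (c)$\Rightarrow$(b) being the contrapositive of the paper's $\lnot$(b)$\Rightarrow\lnot$(c).

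One small slip in your sketch of~(1): condition~(a) as written, ``$S$ is contained in the image of $M_{i_0}\to M$'', cannot hold for a single index when $S$ is countably infinite. You should instead interleave an enumeration $S=\{s_0,s_1,\dots\}$ into the recursion, ensuring $s_n$ lies in the image of $M_{i_n}\to M$ at stage $n$; this is the usual bookkeeping and does not affect the substance of the argument.
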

\begin{proof}
    (1) is well known (see \cite{RG:71} or \cite[Corollary 3.19]{GT:12}).

    (2) Assume that $M$ is a directed union of a direct system of finitely generated projective modules $(P_i \mid i \in I)$ such that for each $i \leq j \in I$, the inclusion of $P_i$ into $P_j$ splits. Then each $P_i$ is a pure sub-module of $M$ (because each $R$-linear system solvable in $M$ is solvable in some $P_j$ with $i \leq j \in I$, and the split projection provides for a solution in $P_i$). This proves the only-if part.

    Conversely, the assumption on $M$ makes it possible to express $M$ as a directed union of a direct system of finitely generated projective modules $(P_i \mid i \in I)$ such that each $P_i$ is pure in $M$. Then each embedding $P_i \subseteq P_j$ ($i \leq j \in I$) has a flat and finitely presented cokernel, hence it splits.

    (3) Clearly (a) implies (c), and (b) implies (a) by the characterizations in (1) and (2).

    Assume (b) fails, and let $P$ be a projective module which is not a direct sum of finitely generated modules. By Kaplansky's structure theorem for projective modules, we can assume that $P$ is countably generated. Then $P \notin \Cc$, since otherwise $P = \bigcup_{i < \omega} P_i$ where $P_i \subseteq P_{i+1}$ are split inclusions for all $i < \omega$, a contradiction. However, $P$ is a direct summand in a countably generated free module $F \in \Cc$, so (c) fails.
\end{proof}

\begin{remark}\label{r:semiher}
    Condition (b) of Lemma \ref{l:rel}(3) is known to hold in a number of cases, for example, when $R$ is a right semi-hereditary ring, or $R$ is semi-perfect (e.g., local), or $R$ is a commutative Noetherian domain (by Bass's Theorem).

    However, the class $\Cc$ is not closed under direct summands in general (that is, $\Ind(P_f(R))$ is not idempotent complete). A simple example of this phenomenon goes back to Kaplansky: there is a commutative ring $R$ and a countably generated projective ideal $P$ in $R$, such that $P$ is not a direct sum of finitely generated ideals in $R$ (see e.g.\ \cite[(2.12D)]{Lam:99}).
\end{remark}

$\mathcal {FM}(R)$ is always the closure of $\Cc$ under direct summands; in fact, we have

\begin{proposition} \label{p:more}
    Let $R$ be a ring, and $M \in \mathcal {FM}(R)$. Then $M \oplus R^{(\omega)} \in \Cc$.
\end{proposition}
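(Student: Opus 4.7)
The plan is to verify the characterization from Lemma \ref{l:rel}(2): I will show that every finite subset $X$ of $M \oplus R^{(\omega)}$ is contained in a finitely generated projective pure submodule of $M \oplus R^{(\omega)}$. The key observation is that the summand $R^{(\omega)}$ provides room to ``swallow'' any countably generated projective supplement that appears in $M$.

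Given a finite set $X \subseteq M \oplus R^{(\omega)}$, first project to $M$ to obtain a finite subset $X_M \subseteq M$. By Lemma \ref{l:rel}(1) applied to $M \in \mathcal{FM}(R)$, there is a countably generated projective submodule $P \subseteq M$ which is pure in $M$ and contains $X_M$. Since $X$ has only finitely many coordinates in $R^{(\omega)}$, there exists $n < \omega$ with $X \subseteq P \oplus R^{(n)} \subseteq P \oplus R^{(\omega)}$. Purity of $P$ in $M$ gives purity of $P \oplus R^{(\omega)}$ in $M \oplus R^{(\omega)}$.

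Next I apply an Eilenberg swindle to $P$. Since $P$ is countably generated projective, there is a module $Q$ with $P \oplus Q \cong R^{(\omega)}$, and hence
\begin{equation*}
    P \oplus R^{(\omega)} \;\cong\; P \oplus (P \oplus Q)^{(\omega)} \;\cong\; (P \oplus Q)^{(\omega)} \;\cong\; R^{(\omega)}.
\end{equation*}
Fix such an isomorphism $\varphi\colon P \oplus R^{(\omega)} \to R^{(\omega)}$. The image $\varphi(X)$ is finite, hence contained in a finitely generated free direct summand $F \subseteq R^{(\omega)}$ (spanned by finitely many of the standard basis vectors). Then $\varphi^{-1}(F)$ is a finitely generated projective direct summand of $P \oplus R^{(\omega)}$ containing $X$.

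Finally, any direct summand is pure, and purity is transitive, so $\varphi^{-1}(F)$ is pure in $P \oplus R^{(\omega)}$ and therefore pure in $M \oplus R^{(\omega)}$. This furnishes a finitely generated projective pure submodule of $M \oplus R^{(\omega)}$ containing $X$, and Lemma \ref{l:rel}(2) concludes that $M \oplus R^{(\omega)} \in \mathcal{C}$. The only delicate point is the swindle step; it relies crucially on the countable generation of $P$ supplied by Lemma \ref{l:rel}(1), which is why the statement is formulated with the countable free supplement $R^{(\omega)}$ rather than with a finite free module.
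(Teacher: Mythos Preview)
Your proof is correct and follows essentially the same approach as the paper's: both use Lemma \ref{l:rel}(1) to find a countably generated projective pure submodule $P \subseteq M$ containing the projection of the given finite set, apply Eilenberg's trick $P \oplus R^{(\omega)} \cong R^{(\omega)}$, and then conclude via Lemma \ref{l:rel}(2). Your version is slightly more explicit about the isomorphism $\varphi$ and the transitivity of purity, but the argument is the same.
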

\begin{proof}
    We will use Eilenberg's Trick: for each countably generated projective module $P$, there is an isomorphism $P \oplus R^{(\omega)} \cong R^{(\omega)}$. Take $M \in \mathcal {FM}(R)$ and consider the set $\mathcal P$ of all countably generated projective and pure sub-modules in $M$.  Then $\mathcal F = \{ P \oplus R^{(\omega)} \mid P \in \mathcal P \}$ is a set of countably generated free and pure sub-modules in $M \oplus R^{(\omega)}$. Since each finite subset of $M$ is contained in some $P \in \mathcal P$ by Lemma \ref{l:rel}(i), each finite subset of $M \oplus R^{(\omega)}$ is contained in a finitely generated, free and pure sub-module in $M \oplus R^{(\omega)}$. By Lemma \ref{l:rel}(ii), we conclude that $M \oplus R^{(\omega)} \in \Cc$.
\end{proof}

The matter is easy when $R$ is a perfect ring (for a right Noetherian ring, this just means that $R$ is right Artinian, so for commutative Noetherian rings, these are just the ones of Krull dimension $0$). Then $\mathcal {FM}(R) = \Cc$ is the class of all projective modules, and each projective module is a direct sum of the cyclic ones. In particular, all short exact sequences in $\Ind(P_f(R))$ split.

Things completely change in the case when $R$ is not right perfect (which is the case relevant for our context). Then $\mathcal {FM}(R)$ coincides with the class of all $\aleph_1$-projective modules, see \cite{HT:12} or \cite[Corollary 3.19]{GT:12}. This class was studied by the methods of set-theoretic homological algebra in \cite{EM:02}. In order to indicate its complexity, we only note that $\mathcal {FM}(R)$ is closed under transfinite extensions, but it cannot be obtained by transfinite extensions from any set of its elements, see \cite{HT:12} or \cite[Theorem 10.13]{GT:12}. Moreover, $\mathcal {FM}(R)$ is not a pre-covering class, \cite{AST}.

For each infinite cardinal $\kappa$, we denote by $\Cc _\kappa$ the image of $\Indk(P_f(R))$ in the equivalence of Proposition \ref{prop:indfml}. Since $\Cc _{\aleph_0}$ is the class of all countably generated projective modules, all short exact sequences in $\Indc(P_f(R))$ split (see also Proposition \ref{prop:indcsplit}). This completely fails for uncountable cardinalities:

\begin{proposition}\label{r:split}
    Assume that $R$ is not right perfect. Then for each infinite cardinal $\kappa$, there is a non-projective $\kappa ^+$-generated module\footnote{where $\kappa^+$ denotes the successor cardinal to $\kappa$.} $M_{\kappa^+} \in \Cc _{\kappa^+}$ whose canonical presentation in $\Cc _{\kappa^+}$ does not split.
\end{proposition}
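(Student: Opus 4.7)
The plan is to reduce the statement to a classical existence result from set-theoretic module theory: when $R$ is not right perfect, for every infinite cardinal $\kappa$ there is a $\kappa^+$-generated, non-projective flat Mittag-Leffler $R$-module. Such a module arises from a Shelah-type transfinite construction seeded by a strictly descending chain of principal right ideals (the existence of which is equivalent, by Bass's theorem, to the failure of right perfectness): one builds the module along a $\kappa^+$-filtration so that each countable subset lies in a projective pure submodule (giving flatness together with the Mittag-Leffler property via Lemma \ref{l:rel}(1)), while the global direct-sum decomposition into countably generated pieces that would be required for projectivity is systematically obstructed. The case $R = \mathbb{Z}$ specialises to the classical existence of $\aleph_1$-free non-free abelian groups of arbitrary uncountable cardinality. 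I will take this existence result as a black box.

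Given such an $N_{\kappa^+}$, set $M_{\kappa^+} := N_{\kappa^+} \oplus R^{(\omega)}$. This module is still $\kappa^+$-generated, still non-projective (retracts of projectives are projective), and lies in $\Cc_{\kappa^+}$ by Proposition \ref{p:more}. Now fix an admissible Ind-presentation $M_{\kappa^+} = \colim_{i \in I} P_i$ with $|I| \le \kappa^+$ and each transition $\phi_{ij}\colon P_i \hookrightarrow P_j$ a split monomorphism in $P_f(R)$. The presentation comes with a canonical short exact sequence
\[
    0 \longrightarrow L \longrightarrow \bigoplus_{i \in I} P_i \longrightarrow M_{\kappa^+} \longrightarrow 0
\]
in $\Cc_{\kappa^+}$: the middle term is the coproduct in $\Ind(P_f(R))$ of the $P_i$ (which exists and coincides with the $R$-module direct sum, since $\Cc$ is closed under colimits in $\lex(P_f(R))$), the surjection is the canonical map from coproduct to colimit, and the kernel $L$ acquires a natural admissible Ind-structure from the bar construction for the directed colimit, using that every transition map splits.

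Suppose this sequence splits in $\Cc_{\kappa^+}$. Since $\Cc_{\kappa^+}$ is a full subcategory of $\lex(P_f(R)) \simeq \Mod(R)$ (Lemma \ref{lemma:lexR}), any splitting in $\Cc_{\kappa^+}$ descends to $\Mod(R)$, and so exhibits $M_{\kappa^+}$ as a direct summand of the projective module $\bigoplus_i P_i$, forcing $M_{\kappa^+}$ itself to be projective --- contradicting the choice of $N_{\kappa^+}$. The essential obstacle in the proof is the existence statement for $N_{\kappa^+}$: this lies outside the categorical formalism developed in the body of the paper and requires genuine transfinite combinatorics from almost-free module theory. The remaining steps --- the reduction via Proposition \ref{p:more}, the assembly of the canonical sequence, and the descent of a splitting to $\Mod(R)$ --- are essentially formal.
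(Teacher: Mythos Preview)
Your proposal is correct and follows essentially the same route as the paper: cite the black-box existence of a non-projective $\kappa^+$-generated flat Mittag-Leffler module (the paper invokes \cite[Theorem 10.13]{GT:12}), push it into $\Cc_{\kappa^+}$ via Proposition~\ref{p:more}, form the canonical presentation coming from $\bigoplus_i P_i \twoheadrightarrow M$, and observe that a splitting would force projectivity.

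The one place where you diverge slightly is in handling the kernel. You assert that $L$ ``acquires a natural admissible Ind-structure from the bar construction'', but this is not entirely obvious: the kernels of the finite-stage maps $\bigoplus_{j\in J}P_j \to P_{\max J}$ are finitely generated projective, yet the transition maps between them as $J$ grows are not evidently split monomorphisms. The paper sidesteps this by noting only that $K$ is a pure submodule of a projective module, hence lies in $\mathcal{FM}(R)$, and then applies Proposition~\ref{p:more} a second time---adding a countably generated free module $F$ to obtain $0\to(K\oplus F)\to(P\oplus F)\to M\to 0$ with all three terms in $\Cc_{\kappa^+}$. This is what the paper means by the ``canonical presentation'', and it avoids the verification you gestured at. The difference is minor and does not affect the logic of the contradiction, but the paper's version is cleaner.
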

\begin{proof}
    Let $\kappa$ be an infinite cardinal and $M \in \Cc _\kappa$, so $M = \varinjlim_{i \in I} P_i$ is the direct limit of a direct system consisting of split monomorphisms between finitely generated projective modules $P_i$, and $I$ is a directed set of cardinality $\leq \kappa$. Then there is a pure exact sequence $0 \to K \to P \to M \to 0$ coming from the presentation of $M$ as the pure-epimorphic image of the direct sum $P = \bigoplus_{i \in I}  P_i$.
    Since $K$ is a pure sub-module in $P$, also $K \in \mathcal {FM}(R)$, and $K$ is $\leq \kappa$-generated. By adding a countably generated free module $F$ (see Proposition \ref{p:more}), we obtain the canonical presentation of $M$ in $\Cc _\kappa$:
    $$0 \to (K \oplus F) \to (P \oplus F) \to M \to 0.$$
    The latter sequence only splits if $M$ is projective.

    The existence of a non-projective $\kappa ^+$-generated module $M_{\kappa^+} \in \mathcal {FM}(R)$ follows immediately from \cite[Theorem 10.13]{GT:12}. By Proposition \ref{p:more}, we can assume that $M_{\kappa^+} \in \Cc _{\kappa^+}$. By the above, the canonical presentation of $M_{\kappa^+}$ in $\Cc _{\kappa^+}$ does not split.
\end{proof}

\begin{remark} \label{r:gamma}
    In the case of $\kappa = \aleph_0$, the construction of the non-projective $\aleph_1$-generated module $M_{\aleph_1} \in \mathcal {FM}(R)$ in \cite[Section 10.2]{GT:12} can be modified so that $M_{\aleph_1}$ has an arbitrary prescribed non-zero $\Gamma$-invariant (in the sense of \cite[p.118]{EM:02}). This $\Gamma$-invariant measures the distance of $M_{\aleph_1}$ from being projective. It takes values in the Boolean algebra of all subsets of $\aleph_1$ modulo the ideal of all thin (= non-stationary) subsets.
\end{remark}

\bibliographystyle{amsplain}
\bibliography{masterthesis}
\end{document}